\begin{document}

\newtheorem{theorem}{Theorem}[section]
\newtheorem{prop}[theorem]{Proposition}
\newtheorem{lemma}[theorem]{Lemma}
\newtheorem{cor}[theorem]{Corollary}
\newtheorem{defn}[theorem]{Definition}
\newtheorem{conj}[theorem]{Conjecture}
\newtheorem{claim}[theorem]{Claim}
\newtheorem{example}[theorem]{Example}
\newtheorem{rem}[theorem]{Remark}
\newtheorem{rmk}[theorem]{Remark}
\newtheorem{obs}[theorem]{Observation}

\newcommand{\map}{\rightarrow}
\newcommand{\C}{\mathcal C}
\newcommand\AAA{{\mathcal A}}
\newcommand\BB{{\mathcal B}}
\newcommand\DD{{\mathcal D}}
\newcommand\EE{{\mathcal E}}
\newcommand\FF{{\mathcal F}}
\newcommand\GG{{\mathcal G}}
\newcommand\HH{{\mathcal H}}
\newcommand\II{{\mathcal I}}
\newcommand\JJ{{\mathcal J}}
\newcommand\KK{{\mathcal K}}
\newcommand\LL{{\mathcal L}}
\newcommand\MM{{\mathcal M}}
\newcommand\NN{{\mathcal N}}
\newcommand\OO{{\mathcal O}}
\newcommand\PP{{\mathcal P}}
\newcommand\QQ{{\mathcal Q}}
\newcommand\RR{{\mathcal R}}
\newcommand\SSS{{\mathcal S}}
\newcommand\TT{{\mathcal T}}
\newcommand\UU{{\mathcal U}}
\newcommand\VV{{\mathcal V}}
\newcommand\WW{{\mathcal W}}
\newcommand\XX{{\mathcal X}}
\newcommand\YY{{\mathcal Y}}
\newcommand\ZZ{{\mathcal Z}}
\newcommand\hhat{\widehat}

\title{ A Combination Theorem for Metric Bundles}

\author{Mahan Mj}

\author{Pranab Sardar}
\address{RKM Vivekananda University, Belur Math, WB-711 202, India}
\thanks{Research of first author partially supported by a CEFIPRA Indo-French Research grant.
The second author is partly supported by a CSIR  Research Fellowship. This paper is part
of PS's PhD thesis  written under the supervision of MM} 
\date{\today}

\begin{abstract}
We introduce the notion of metric (graph) bundles
 which provide a coarse-geometric generalization of
the notion of trees of metric spaces a la Bestvina-Feighn in the special case that the inclusions of the edge spaces
into the vertex spaces are uniform coarsely surjective quasi-isometries. We prove the existence of
quasi-isometric sections in this generality.
Then we prove a combination theorem for  metric (graph) bundles  that establishes
sufficient conditions, particularly flaring, under which the metric
bundles  are hyperbolic. We use this  to give  examples of surface bundles over hyperbolic disks, whose
universal cover is Gromov-hyperbolic.
We also show that in typical situations, flaring is also a necessary condition.
\end{abstract}

\maketitle
\tableofcontents
\section{Introduction}
In this paper we introduce the notions of  metric bundles and metric graph bundles
which provide  a  purely  coarse-geometric generalization of the notion of trees of metric spaces
a la Bestvina-Feighn \cite{BF} (see Section \ref{thrhms}) in the special case that the inclusions of the edge spaces
into the vertex spaces are uniform coarsely surjective quasi-isometries. We generalize the base space from
a  tree to an arbitrary hyperbolic metric space. In \cite{farb-mosher}, Farb and Mosher introduced the notion of metric fibrations
which was used by Hamenstadt to give a combination theorem in \cite{hamenst-word}. Metric fibrations can be thought of as metric bundles (in our terminology)
equipped with a {\it foliation by totally geodesic sections} of the base space. We first prove the following Proposition which ensures the existence 
of  {\it q(uasi)-i(sometric)} sections in the general context of metric bundles, generalizing and giving a different proof of
 a result due to Mosher \cite{mosher-hypextns} in the context of exact sequences of groups (see Example $\ref{eg-mbdl}$).

\smallskip

\noindent {\bf Proposition \ref{existence-qi-section}  (Existence of qi sections):} {\it
Let $\delta,N\geq 0$ and 
suppose $p : X \rightarrow B$ is an $(f,K)$-metric graph bundle with the following properties:
\begin{enumerate}
\item Each of the fibers $F_b$, $b \in \mathcal{ V}(B)$ is a $\delta$-hyperbolic geodesic metric space with respect to the
 path metric $d_b$ induced from $X$.
\item The barycenter maps $\phi_b : \partial^3 F_b \rightarrow F_b$ are uniformly coarsely surjective, i.e. 
 $F_b$ is contained in the $N$-neighborhood of the image of $\phi_b$
for all $b\in \mathcal{ V} (B)$.
\end{enumerate}
Then there is a $K_0=K_0(f,\delta,N)$-qi section through each point of $\mathcal V(X)$.}

\smallskip

 Proposition \ref{existence-qi-section} 
 provides a context for developing a `coarse theory of bundles' and proving the following combination theorem,
which is the main theorem of this paper. \\

\smallskip

\noindent {\bf Theorem \ref{combthm}:} {\it 
Suppose $p:X\rightarrow B$ is a metric bundle (resp. metric graph bundle) such that\\
$(1)$ $B$ is a $\delta$-hyperbolic metric space.\\
$(2)$ Each of the fibers $F_b$, $b\in B$ (resp. $b\in \mathcal{ V}(B)$) is a $\delta^{'}$-hyperbolic geodesic
metric space with respect to the path metric induced from $X$. \\
$(3)$ The barycenter maps $\partial^3F_b\rightarrow F_b$, $b\in B$ (resp. $b\in \mathcal{ V}(B)$) are uniformly coarsely  surjective.\\
$(4)$ A flaring condition is satisfied.\\
Then $X$ is a hyperbolic metric space.}

\medskip

This
is a first step towards proving a combination Theorem for more general complexes of spaces
(cf. Problem 90 of \cite{kap-prob}).

Theorem \ref{combthm} generalizes Hamenstadt's combination theorem  (Corollary 3.8 of \cite{hamenst-word}) in two ways:\\
a) It removes
 the hypothesis of properness of the base space $B$ -- a hypothesis that is crucial in \cite{hamenst-word} to ensure compactness of the boundary of the base space
and hence allow the arguments in \cite{hamenst-word} to work. 
This generalization is relevant for two reasons. First, underlying trees in trees of spaces are frequently non-proper. Secondly,  curve complexes of surfaces are mostly
 non-proper metric spaces and occur as natural base spaces for metric bundles. See \cite{mahan-uct} by Leininger-Mj-Schleimer for a closely related
example.\\
b) It removes the hypothesis on existence of totally geodesic sections in \cite{hamenst-word} altogether. Proposition \ref{existence-qi-section} 
ensures the existence 
of  qi sections under mild technical assumptions. 


A word about the proof of Theorem \ref{combthm} 
ahead of time.  Proposition \ref{existence-qi-section} ensures the existence 
of  qi sections through points of $X$. We use the notion of flaring from  Bestvina-Feighn \cite{BF} and a criterion for hyperbolicity introduced by Hamenstadt in  \cite{hamenst-teich} to
 construct certain path families  and use them
to prove hyperbolicity. Another crucial ingredient is a `ladder-construction' due to the first author
 \cite{mitra-trees}, which may be regarded as an analog of the
hallways of  \cite{BF}.

Recall \cite{farb-relhyp} that for a pair
$(X, \mathcal{H})$ of a metric space $(X,d_X)$ and a family of path-connected subsets $\mathcal{H}$ of $X$, the electric space
$\mathcal{E} (X, \mathcal{H})$ is the pseudo-metric space 
  $X \sqcup_{H \in  \mathcal{H}} H \times [0,1]$ with $H \times \{0\}$ identified with $H \subset X$
and $H \times \{1\}$ equipped with the zero metric. Each $\{ h \} \times [0,1]$
is isometric to the unit interval. There is a natural inclusion map $E: X \rightarrow \mathcal{E} (X, \mathcal{H})$
which is referred to as the electrocution map. The image $E(X) $ inherits a metric called the electric metric $d_e$.

As an application of Theorem \ref{combthm} we obtain a rather plentiful supply of examples from the following
Proposition, where the base space
need not be a tree (as in all previously known examples). Let $S$ be a closed surface of genus greater than one
and  $Teich(S)$ be the Teichmuller space
of $S$.   The Teichmuller metric on  $Teich(S)$ is  denoted as  $d_T$ and $d_e$ denotes the electric metric on  $Teich(S)$ obtained by electrocuting
the $\alpha$-thin parts of $Teich(S)$ for every 
essential simple closed curve $\alpha$ on $S$. For $j: K \rightarrow (Teich(S), d_T)$ a map, let
$U(S,K)$ denote the pullback (under $j$) of the universal curve over $Teich(S)$
equipped with the natural path metric. Also, the universal cover of the  universal curve over $Teich(S)$
is a hyperbolic plane bundle over $Teich(S)$. Let  $\widetilde{U(S,K)}$ denote the pullback to $K$ of this hyperbolic plane bundle.

\smallskip

\noindent {\bf Proposition \ref{eg}:} {\it 
Let $(K,d_K)$ be a hyperbolic metric space satisfying the following:\\
There exists $C \geq 0$ such that
for any two points $u, v\in K$, there exists a bi-infinite $C$-quasigeodesic $\gamma \subset K$ with $d_K(u, \gamma ) \leq C$
and $d_K(v, \gamma ) \leq C$. \\
Let $j: K \rightarrow (Teich(S), d_T)$ be a quasi-isometric embedding
such that $E\circ j: K \rightarrow (Teich(S), d_e)$ is also a quasi-isometric embedding.  Then
$\widetilde{U(S,K)}$ is a hyperbolic metric space. }

\smallskip

It is an open question (cf. \cite{kl} \cite{farb-mosher}) to find purely pseudo Anosov surface groups $Q$ ($=\pi_1(\Sigma)$,
say) in $MCG(S)$.
This is equivalent to constructing surface bundles  over surfaces with total space $W$, fiber $S$, and base $\Sigma$, such that
$\pi_1(W)$ does not contain a copy of $\mathbb{Z} \oplus \mathbb{Z}$. One way of ensuring this is to find an example
where the total space has  (Gromov) hyperbolic fundamental group $\pi_1(W)$.  A quasi-isometric model for the universal
cover $\widetilde{W}$ is a metric graph bundle where the fibers are Cayley graphs of $\pi_1(S)$ and the base $K$ a 
Cayley graph of $\pi_1(\Sigma)$. 
Using a construction of Leininger and Schleimer \cite{ls-disk} in conjunction with Proposition \ref{eg} we construct examples 
of hyperbolic metric graph bundles where fibers are Cayley graphs of $\pi_1(S)$ and $K$ is a 
hyperbolic disk.  However the disks $K$  are not invariant under a surface group; so we only obtain surface bundles $W$ over
$K$ with fiber $S$ such that the universal cover $\widetilde{W}$ is hyperbolic.

We also obtain the following  characterization of convex cocompact subgroups of mapping class groups of surfaces
$S^h$ with punctures. Recall that the pure mapping class group is the (finite index) subgroup of the mapping class
group that keeps the punctures fixed.

\smallskip

\noindent {\bf Proposition \ref{coco}:} {\it
Let $K=\pi_1(S^h)$  be the fundamental group of a surface with finitely many punctures
and let $K_1, \cdots, K_n$ be its peripheral subgroups.  Let $Q$ be a convex cocompact subgroup of the  
 pure mapping class group of $S^h$.
Let
\[
1\rightarrow K\rightarrow G\stackrel{p}{\rightarrow}Q\rightarrow 1
\]
and
\[
1\rightarrow K_i\rightarrow N_G(K_i) \stackrel{p}{\rightarrow}Q_i\rightarrow 1
\]
be the induced short exact sequences of   groups.
Then
$G$ is  strongly hyperbolic  relative to the collection $\{ N_G(K_i)\}, i=1, \cdots, n$. 

Conversely, if $G$ is  (strongly) hyperbolic  relative to the collection $\{ N_G(K_i)\}, i=1, \cdots, n$,
 then $Q$ is convex-cocompact. }

\smallskip

Theorem \ref{combthm} also provides the following
 combination theorem whenever we have  an exact sequence with hyperbolic quotient and kernel. This
 gives a converse to a result of Mosher \cite{mosher-hypextns}.

\smallskip

\noindent {\bf Theorem \ref{combthmgps}:} {\it
Suppose that the short exact sequence of finitely generated groups 

\begin{center}
$1\rightarrow K\rightarrow G\rightarrow Q\rightarrow 1$
\end{center}
satisfies a flaring condition such that $K,~ Q$ are word hyperbolic and $K$ is non-elementary. Then $G$ is hyperbolic.
}

\smallskip

The next Proposition links the flaring condition to hyperbolicity of the base.

\smallskip

\noindent {\bf Proposition \ref{converseflare}: }{\it
Consider the short exact sequence of finitely generated groups 
\begin{center}
$1\rightarrow K\rightarrow G\rightarrow Q\rightarrow 1$
\end{center}
such that $K$ is non-elementary word hyperbolic but $Q$ is not hyperbolic.
Then the short exact sequence cannot satisfy a flaring condition.}

\smallskip

We also prove an analog of Proposition \ref{converseflare} for relatively hyperbolic groups and use it to generalize a 
 result  of Mosher \cite{mosher-hypextns}
as follows.

\smallskip

\noindent {\bf Proposition \ref{mosher-genlzn}: }{\it Suppose we have a short exact sequence of finitely generated groups 
\[
1\rightarrow (K,K_1)\rightarrow (G,N_G(K_1))\stackrel{p}{\rightarrow}(Q,Q_1)\rightarrow 1
\]
with $K$  (strongly) hyperbolic  relative to the cusp subgroup $K_1$ such that $G$ preserves cusps and
  $Q_1= {N_G(K_1)}/{K_1}$. Suppose further that 
$G$ is  (strongly) hyperbolic  relative to $N_G(K_1)$. Then $Q (=Q_1)$ is hyperbolic. }

\smallskip

Finally we show the necessity of flaring.

\smallskip

\noindent {\bf Proposition \ref{necflaring}: }{\it Let $P: X \rightarrow B$ be a metric (graph) bundle such that \\
1) $X$ is hyperbolic. \\
2) There exists $\delta_0$ such that each fiber $F_z = p^{-1} (z) \subset X$ equipped with the inherited path metric is
$\delta_0$-hyperbolic.\\
Then the metric bundle satisfies a flaring condition.

\smallskip

In particular,  any exact sequence of finitely generated groups $1 \rightarrow N \rightarrow G \rightarrow Q \rightarrow 1$
with $N$ and $G$ hyperbolic, satisfies a flaring condition.}

\medskip

\noindent {\bf Outline of the main steps:}\\
There are four main steps in the proof of  Combination Theorem  \ref{combthm}. 
Precise definitions of terms  are given in the next subsection.\\
1) First we construct a metric graph bundle (see Definition $\ref{defn-mgbdl}$) out of a  given metric bundle.
The bundles have quasi-isometric base space and total space. Next we set out to prove 
that this metric graph bundle is hyperbolic under the given conditions on the metric bundle.\\
2) Proposition \ref{existence-qi-section}  proves the existence of qi sections and is the coarse geometric analog
of the statement that any fiber bundle with contractible base admits a section. The main ingredient of the
proof is the definition of a `discrete flow' of one fiber to another fiber. 
This is the content of Section 2.1. The main idea is elaborated upon in the first paragraph
of Section 2.1.\\
3) Any two such qi sections bound a `ladder' between them (cf. Definition
 \ref{defn-ladder} below, \cite{mitra-ct}, \cite{mitra-trees}). The next step is to
prove the hyperbolicity of these ladders. In Section \ref{small-ladder}
we prove hyperbolicity of small-girth ladders (Proposition \ref{main-lem}). In Section \ref{big-ladder} we break up
a big ladder into small-girth ladders and use a consequence (Proposition \ref{hyp-tree})
 of a combination theorem due to Mj-Reeves \cite{mahan-reeves} to conclude that
the whole ladder is hyperbolic. \\
4) In Section 4, we assemble the pieces to prove Theorem  \ref{combthm}.

For the reader interested in getting to the main ideas of the proof of Theorem  \ref{combthm} without getting into technical details,
we have  sketched  Step   (2) above in the first paragraph
of Section 2.1,  and Step   (3) above in the the first paragraph
of Section 3 and the paragraph following the statement of Proposition \ref{main-lem} in Section \ref{small-ladder}.

\smallskip

%
%

\smallskip

\noindent {\bf Acknowledgments:} We would like to thank Panos Papasoglu for explaining the proof
of the last statement of Theorem \ref{bigon} to us. We would also like
to thank Chris Leininger for sharing his examples in \cite{ls-disk} with us.
 This paper  owes an intellectual debt to Hamenstadt's paper  \cite{hamenst-word}, which inspired
us to find a combination Theorem in the generality described here. Finally we would like to thank the referee for a meticulous reading
of the manuscript and for several helpful
remarks and comments. In particular, the notion of a metric graph bundle arose out of the referee's comments on an earlier draft.

\subsection{Metric Bundles} \label{mbdls}

\subsubsection{Some Basic Concepts}
We  recall some basic notions from large scale geometry. 

Let $X$, $Y$ be metric spaces and let $k\geq 1, \, \epsilon \geq 0$. 

\begin{enumerate}
\item A map $\phi:X\rightarrow Y$ is said to be metrically proper if for all  $N\geq 0$ there exists 
$M\geq 0$ such that $x,y\in X$, and $d(\phi(x),\phi(y))\leq N$ implies $d(x,y)\leq M$.

Suppose $\{(X_{\alpha}, d_{X_{\alpha}}) \}$ and $\{(Y_{\alpha}, d_{Y_{\alpha}})\}$ are  families of metric spaces.
For   any function $f:\mathbb R^{+} \rightarrow \mathbb R^{+}$,
a  family of maps $\phi_{\alpha}:X_{\alpha}\rightarrow Y_{\alpha}$ is said to be {\bf uniformly metrically proper as measured
by} $f$ if for all $\alpha$ and $x,y \in X_{\alpha}$, $d_{Y_{\alpha}}(\phi_{\alpha}(x),\phi_{\alpha} (y))\leq N$
implies  $d_{X_{\alpha}}(x,y)\leq f(N)$. If such an $f$ exists we shall say that the 
collection of maps $\phi_{\alpha}$ is  uniformly metrically proper or, more simply, uniformly proper.

\item Suppose $A$ is a set. A map $\phi:A \rightarrow Y$ is said to be $\epsilon-${\bf coarsely surjective}
if $Y$ is contained in the $\epsilon$-neighborhood $\phi(A)$.

Suppose $\{A_{\alpha} \}$ and $\{ Y_{\alpha} \}$ are respectively a family of sets and a family of metric spaces.
A family of maps $\phi_{\alpha}:A_{\alpha}\rightarrow Y_{\alpha}$ is said to be {\bf uniformly coarsely surjective} 
if there is a constant $D\geq 0$, such that for all $\alpha$,  $ Y_{\alpha}$ is contained in the $D$-neighborhood of 
$\phi_{\alpha}(A_{\alpha})$.

\item A map $\phi: X\rightarrow Y$ is said to be  $\epsilon$-{\bf coarsely Lipschitz} if $\forall x_1,x_2\in X$ we have
$d(\phi(x_1),\phi(x_2))\leq \epsilon.d(x_1,x_2) + \epsilon $. A map $\phi$ is coarsely Lipschitz if it is 
$\epsilon$- coarsely Lipschitz for some $\epsilon \geq 1$.

\item $(i)$ Recall \cite{gromov-hypgps} \cite{GhH} that a map $\phi:X\rightarrow Y$ is said to be
a $(k,\epsilon)$-{\bf quasi-isometric embedding} if   $\forall x_1,x_2\in X$ one has
$$ d(x_1,x_2)/k  -\epsilon\leq d(\phi(x_1),\phi(x_2))\leq k.d(x_1,x_2) + \epsilon .$$
A map $\phi:X\rightarrow Y$ will simply be referred to as
a quasi-isometric embedding if it is a $( k, \epsilon)$-quasi-isometric embedding for some $k\geq 1$ and $\epsilon\geq 0$.
A $(k,k)$-quasi-isometric embedding will  be referred to as a $k$-{\bf quasi-isometric embedding}. \\
$(ii)$ A map $\phi:X\rightarrow Y$ is said to be a $(k,\epsilon)$-{\bf quasi-isometry}
(resp. $k$-{\bf quasi-isometry}) if it is a $(k,\epsilon)$-quasi-isometric embedding (resp. 
$k$-quasi-isometric embedding) and if $\phi$ is $D-$coarsely surjective for some $D\geq 0$.\\
$(iii)$ A $(k,\epsilon)$-{\bf quasi-geodesic} (resp. a $k$-{\bf quasi-geodesic}) in a metric space $X$ is 
a $(k,\epsilon)$-quasi-isometric embedding (resp. a $k$-quasi-isometric embedding) $\gamma:I\rightarrow X$, where
$I\subseteq \mathbb R$ is an interval.

\item A map $\psi: Y\rightarrow X$ is said to be an $\epsilon$-{\bf coarse inverse} of a map $\phi: X\rightarrow Y$ if 
for all $x\in X$ and  $y\in Y$ one has $d_X(\psi\circ \phi(x),x)\leq \epsilon$  and
$d_X(\phi\circ \psi(y),y)\leq \epsilon$.

\end{enumerate}

The following lemma is straightforward. We include a proof for the
sake of completeness.

\begin{lemma}\label{elem-lemma1}
For every $K_1,K_2\geq 1$ and $D\geq 0$ there are $K_{\ref{elem-lemma1}}=K_{\ref{elem-lemma1}}(K_1,K_2,D)$,
and $K^{'}_{\ref{elem-lemma1}}=K^{'}_{\ref{elem-lemma1}}(K_1,D)$ such that the following hold.

\begin{enumerate}
\item A $K_1$-coarsely Lipschitz map with a $K_2$-coarsely Lipschitz, $D$-coarse inverse is a 
$K_{\ref{elem-lemma1}}$-quasi-isometry.
\item Any $D$-coarsely surjective, $K_1$-quasi-isometry has a $K^{'}_{\ref{elem-lemma1}}$-quasi-isometric
coarse inverse.
\end{enumerate}

\end{lemma}

\begin{proof}
$1.$ Let $f:X\rightarrow Y$ be a $K_1$-coarsely Lipschitz map with a $K_2$-coarsely Lipschitz, $D$-coarse inverse
$g:Y\rightarrow X$. Let $x,y, x^{'}, y^{'}\in X$ be such that $g(f(x))=x^{'}$, $g(f(y))=y^{'}$. Since $g$ is a
$D$-coarse inverse of $f$, we have $d(x,x^{'})\leq D$, $d(y,y^{'})\leq D$.
Now, $-d(x,x^{'})-d(y,y^{'})+d(x,y)\leq d(x^{'}, y^{'})\leq K_2 d(f(x),f(y)) +K_2$. Hence,
$-2D+d(x,y)\leq K_2 d(f(x),f(y)) +K_2$. Choosing 
$K_{\ref{elem-lemma1}}= max\{ K_1, 2D+K_2 \}$ completes the proof.

$2.$ Suppose $f:X\rightarrow Y$ is a 
$D$-coarsely surjective, $K_1$-quasi-isometry. We define a map $g:Y\rightarrow X$ as follows:
For all $v\in Y$, choose $x\in X$ such that $d(v, f(x))\leq D$. Define $g(v)=x$.
Let $v_1,v_2\in Y$ and let $g(v_i)=x_i$, $i=1,2$. Then $d(v_i, f(x_i))\leq D$, $i=1,2$.
It follows that $|d(f(x_1),f(x_2))-d(v_1,v_2)|\leq 2D$.
Again, since $f$ is a $K_1$-quasi-isometry, we have 
$-K_1 + \frac{1}{K_1} d(x_1,x_2)\leq d(f(x_1),f(x_2))\leq K_1 + K_1d(x_1,x_2)$.
We deduce from the previous two inequalities that 
$-(K_1+2D) + \frac{1}{K_1} d(x_1,x_2)\leq d(v_1,v_2)\leq (K_1+2D) + K_1d(x_1,x_2)$.
Hence finally, we have 
$$-\frac{(K_1+2D)}{K_1} + \frac{1}{K_1}d(v_1,v_2) \leq d(x_1,x_2) \leq K_1d(v_1,v_2) + (K_1+2D)K_1.$$
Thus $g$ is a $K^{'}_{\ref{elem-lemma1}}$-quasi-isometric embedding where $K^{'}_{\ref{elem-lemma1}}=K_1(K_1+2D)$.

It follows from the definition of $g$ that for all $v\in Y$, one has $d(f(g(v)),v)\leq D$. Let $x\in X$ and
$g(f(x))=x_1$. Hence $d(f(x),f(x_1))\leq D$. Since $f$ is a $K_1$-quasi-isometric embedding, it follows that
$d(g(f(x)),x)=d(x,x_1)\leq K_1(K_1+D)$. Thus $g$ is $K_1(K_1+D)$-coarsely surjective whence a  
$K^{'}_{\ref{elem-lemma1}}$-quasi-isometry. Also  $g$ is a $K_1(K_1+D)$-coarse inverse of $f$. 
\end{proof}

\subsubsection{Metric Bundles and Metric Graph Bundles}
In this subsection we define the primary objects of  study and obtain some basic properties.
\begin{defn}\label{defn-mbdle}
Suppose $(X,d)$ and $(B, d_B)$ are geodesic metric spaces; let $c\geq 1$ and let 
$f:{\mathbb R}^+ \rightarrow {\mathbb R}^+$ be a function.
We say that $X$ is an $(f,c)-$ {\bf metric bundle} over $B$ if there is a surjective $1$-Lipschitz
map $p:X\rightarrow B$ such that the following conditions hold:\\
1) For each point $z\in B$, $F_z:=p^{-1}(z)$ is a geodesic metric space
with respect to the path metric $d_z$ induced from $X$. The inclusion maps
$i: (F_z,d_z) \rightarrow X$ are uniformly metrically proper as measured by $f$. \\
$2)$  Suppose $z_1,z_2\in B$, $d_B(z_1,z_2)\leq 1$ and let $\gamma$ be
a geodesic in $B$ joining them. \\
$2(i)$ Then for any point $x\in F_z$, $z\in \gamma$, there is a path in $p^{-1}(\gamma)$
of length at most $c$ joining $x$ to both $F_{z_1}$ and $F_{z_2}$.
\end{defn}

\begin{rem}
Since the metric on each fiber $F_z,z\in B$ is the  path metric induced from $X$ we always  have
$f(t)\geq t$ for all $t\in \mathbb R^{+}$.
\end{rem}


\noindent {\bf Convention:} We shall use {\bf subscripts} for constants
to indicate the Lemma/ Proposition/Theorem/Corollary where they first appear. 

\begin{prop} \label{def} Let $X$ be an $(f,c)-$  metric bundle over $B$.
Then there exists $K_{\ref{def}}=K_{\ref{def}}(f,c) \geq 1$, such that the following holds. 

Suppose $z_1,z_2\in B$ with $d_B(z_1,z_2)\leq 1$ and let $\gamma$ be
a geodesic in $B$ joining them.
Let $\phi: F_{z_1}\rightarrow F_{z_2}$, be any map such that
$\forall x_1\in F_{z_1}$ there is a path of length at most $c$ in $p^{-1}(\gamma)$
joining $x_1$ to $\phi(x_1)$. Then $\phi$ is a $K_{\ref{def}}$-quasi-isometry.\end{prop}

\begin{proof} Let  $u, v \in F_{z_1}$ such that $d_{z_1} (u,v) \leq 1$. Then $d(\phi(u), \phi (v)) \leq 2c+1$ by the triangle
inequality and hence $d_{z_2}(\phi(u), \phi (v)) \leq f(2c+1)$
by condition 2(i) of the definition of metric
bundles.  It follows that the map $\phi$ is an $f(2c+1)$-coarsely
Lipschitz map.
A similar map $\overline{\phi} : F_{z_2}\rightarrow F_{z_1}$
may be defined, appealing again to condition $2(i)$ of the definition of metric bundles, interchanging the roles of $z_1, z_2$
such that $\overline{\phi}$ is also an $f(2c+1)$-coarsely Lipschitz map. 

Also, $\overline{\phi}$ is a coarse inverse of $\phi$:\\
 $d(\overline{\phi} \circ \phi (u),u) \leq d(\overline{\phi} \circ \phi (u), \phi (u)) 
+  d(\phi (u),u) \leq 2c$ and hence
$d_{z_1}(\overline{\phi} \circ \phi (u),u) \leq f(2c)$; similarly $d_{z_2}(\phi\circ \overline{\phi}(v),v) \leq f(2c)$ 
for all $u\in F_{z_1}$, $v\in F_{z_2}$.

Hence by Lemma \ref{elem-lemma1} (1), $\phi$ is a $K_{\ref{def}}$-quasi-isometry where $K_{\ref{def}}=K_{\ref{elem-lemma1}}(f(2c+1), f(2c+1),f(2c))$.
Note further that $\phi$ is $f(2c)$-coarsely surjective. 
\end{proof}

We will find it convenient to refer to an $(f,c)-$ metric bundle 
as an {\bf $(f,c,K)-$ metric bundle} (with  $K =  K_{\ref{def}}(f,c)$),
or simply a {\bf metric bundle} when the parameters are not important, and refer to the conclusion of the above proposition as {\em Condition 2(ii)} of Definition \ref{defn-mbdle}
of metric bundles.

\smallskip

For the rest of the paper by a {\bf graph} we will always mean a connected metric graph all of whose edges are of length 
$1$.
For a graph $X$,   $\mathcal{V} (X)$ will denote its vertex  set. By a {\bf path} in a graph we will always  mean an
edge path starting and ending at two vertices.

\begin{defn}\label{defn-mgbdl}
Suppose $X$ and $B$ are graphs. Let $f:\mathbb N \rightarrow \mathbb N$ be
a function.

We say that $X$ is an $f-$ {\bf metric graph bundle}
over $B$ if there exists a surjective simplicial map $p:X\rightarrow B$  such that:\\
$1.$ 
For each $b\in \mathcal{ V}(B)$, $F_b:=p^{-1}(b)$ is a connected subgraph of $X$ and the inclusion maps
$i:\mathcal V(F_b)\rightarrow X$ are 
uniformly metrically proper  (as measured by $f$) for the  path metric $d_b$ induced on $F_b$, i.e.
for all $b\in \mathcal V(B)$ and $x,y\in \mathcal V(F_b)$,  $d(i(x),i(y))\leq N$ implies that $d_b(x,y)\leq f(N)$. \\
$2.$ Suppose $b_1,b_2\in \mathcal{ V}(B)$ are adjacent vertices.\\
$2(i).$ Then each vertex $x_1$ of $F_{b_1}$ is connected by an edge with a vertex in $F_{b_2}$.
\end{defn}

\begin{rem}
Since the map $p$ is simplicial it follows that it is $1$-Lipschitz.
\end{rem}

Now, we have the following analog of Proposition \ref{def}.

\begin{prop}\label{def2}
Suppose $X$ is an $f$-metric graph bundle over $B$. Then there exists $K_{\ref{def2}}=K_{\ref{def2}}(f)\geq 1$ 
such that the following holds.

Suppose $b_1,b_2\in \mathcal{ V}(B)$ are adjacent vertices. Let $\phi:F_{b_1}\rightarrow F_{b_2}$ be any map 
such that each $x_1\in \mathcal V(F_{b_1})$ is connected to $\phi(x_1)\in \mathcal V(F_{b_2})$
by an edge, and any interior point on an edge of $F_{b_1}$ is sent to the image of one of the vertices on which the edge
is incident. Then  any such $\phi$ is a  $K_{\ref{def2}}$-quasi-isometry.  
\end{prop}

\begin{proof} First note that $ d_{b_1} (u,v) \leq 1$ implies that $ d_X (\phi (u), \phi (v)) \leq  4$
by the triangle inequality. Hence $ d_{b_2} (\phi (u), \phi (v)) \leq  f(4)$ since $X$ is an $f-$  metric graph bundle.
Thus $\phi$ is an $f(4)$-coarsely Lipschitz map. 

Let $\overline{\phi}:F_{b_2}\rightarrow F_{b_1}$ be an analogous map defined by interchanging the roles of $b_1$ and
$b_2$.
As in the proof of Proposition \ref{def} we see that $\overline{\phi}$ is an $f(3)$-coarsely surjective, 
$f(4)$-coarsely Lipschitz, $f(3)$-coarse inverse of $\phi$. 
Thus $\phi$ is a $K_{\ref{def2}}=K_{\ref{elem-lemma1}}(f(4),f(4),f(3))$-quasi-isometry (by Lemma \ref{elem-lemma1} (1)).

Note also that $\phi$ is an $f(3)$-coarsely surjective map.
\end{proof}

We will find it convenient to refer to an $f-$metric graph bundle
as an {\bf $(f,K)$-metric graph bundle} (with $K=K_{\ref{def2}}(f)$), or simply as a {\bf metric graph bundle} when $f, K$ are understood,
 and refer to the conclusion of the above proposition as {\em Condition 2(ii)} of Definition
\ref{defn-mgbdl}.

For both metric bundles and metric graph bundles
the spaces $(F_z,d_z)$, $z\in B$ or  $z\in \mathcal{V} (B)$, will be referred to as  {\bf horizontal spaces} or {\bf fibers}
and the distance between two points in $F_z$ will be referred to as their {\bf horizontal distance}.
(Here we have the mental picture that the bundle projection maps go from left to right, and identify  fibers to points.)
A geodesic in $F_z$ will be called a {\bf horizontal geodesic}. The spaces $X$ and $B$ will be referred to as the {\em total space} 
and the {\em base space} respectively. By a statement of the form `$X$ is a metric bundle (resp. metric graph bundle)' we will mean
that it is the total space of a metric bundle (resp. metric graph bundle).

A principal motivational example is the following.
\begin{example}\label{eg-mbdl}
{\rm Suppose we have an exact sequence of finitely generated groups 
$$1\rightarrow N\stackrel{i}{\rightarrow} G\stackrel{\pi}{\rightarrow} Q\rightarrow 1.$$
This naturally gives rise to a metric graph bundle as follows. Choose a finite symmetric generating set $S$ of $G$ such that $S$ 
contains a symmetric generating set $S_1$ of $N$. 
Let $X=\Gamma(G,S)$ be the Cayley graph of $G$ with respect to the generating set $S$.
Let $T=(\pi(S) \setminus \{1\})$ and $B:=\Gamma(Q,T)$ be the Cayley graph of the group $Q$ with respect to the generating set $T$.

Then  the map $\pi$ naturally induces a simplicial map  $\pi:X\rightarrow B$ between  Cayley graphs.
In fact, $\pi$ maps an edge connecting two vertices of $X$ to a vertex of $B$ iff the vertices
are both contained in the same coset of $N$ in $G$ and $\pi$ maps any edge connecting two distinct cosets of $N$
isometrically onto an edge of $B$. Define $f:\mathbb N\rightarrow \mathbb N$ as follows: $f(n)=$ number of vertices
of $\Gamma(N, N\cap S)$ contained in the $n$-ball of $X$ about the identity element $1_G$ of $G\subset X$.
Note that $\Gamma(N, N\cap S)$ is the inverse image of the identity  element of $Q\subset B$
under $\pi$. Since the inverse images of the vertices of $B$ under $\pi$ are  translates
of the Cayley graph $\Gamma(N, N\cap S)$ under  left multiplication by elements of $G$,  condition $1$ of Definition $\ref{defn-mgbdl}$ is satisfied. 

Condition $2(i)$ may be verified as follows: Let 
$\pi (g_iN)= v_i\in Q$, $i=1,2$. Suppose $v_1,v_2$ are adjacent vertices of $B$.
Then there exist $n_1, n_2\in N$ such that $g_1n_1$ and $g_2n_2$ are connected
by an edge in $X$. Thus $s=(g_1n_1)^{-1}g_2n_2\in S$. Hence for any element $n\in N$, $g_1n$ is connected 
to $g_1n.s=(g_1.n.n^{-1}_1g^{-1}_1)g_2n_2$ and $g_1n.s$ is contained in $g_2N$ since $N$ is a normal subgroup of
$G$. 
Thus we have a metric graph bundle structure on $X$ over $B$.}
\end{example}

Another simple example to keep in mind is the following.
\begin{example}\label{eg-h2}
{\rm Let $X={\mathbb{H}}^2$ and  $B={\mathbb{R}}$. Identify $B$ with a bi-infinite geodesic $\gamma \subset X$ with endpoints $a, b$ on the ideal
boundary. Through $x \in \gamma$, let $F_x$ be the unique horocycle based at $a$. Define $p : X \rightarrow B$ by $p (F_x) = x$. 
This gives rise to a metric bundle structure on $X$ over $B$. Note that
each $F_x$, equipped with the induced path-metric, is   abstractly isometric to  ${\mathbb{R}}$.}
\end{example}

A more interesting set of examples is furnished by Proposition \ref{eg} towards the end of the paper.


\begin{defn} Let $p: X \rightarrow B$ be a metric bundle (resp. metric graph bundle)
and $k\geq 1$.  Then $X_1\subseteq X$ is said to be a
{\bf $k-$q(uasi)-i(sometric) section of $B$}, 
if there is a $k$-quasi-isometric embedding $s:B\rightarrow X$ (resp. $s:\mathcal{V} (B)\rightarrow \mathcal{V} (X)$)
such that $p\circ s=Id$ (resp. $p\circ s = Id$ on $\mathcal V(B)$) and $X_1=Im(s)$. \\
If $X_1$ is a $k$-qi section and $x\in X_1$, then we say that $X_1$ is 
a {\em $k$-qi section through $x$}. Also, $X_1\subset X$ is said to be a qi section if it is a
$k$-qi section for some $k\geq 1$.
\end{defn}

\begin{defn}
{\rm
Let $\gamma:I\rightarrow B$ be a geodesic, where $I\subseteq \mathbb R$
is an interval. By a {\bf $k-$qi lift} of $\gamma$ in $X$, we mean a  
$k$-quasi isometric embedding $\tilde{\gamma}:I\rightarrow X $ such that
$p \circ \tilde{\gamma}=\gamma$ (with the pro viso that for a metric graph bundle, $I$ is of the form $[0,n]$
for some $n \in \mathbb{N}$, and the equality $p \circ \tilde{\gamma}=\gamma$ holds only at the integer points).\\
 Suppose $X_1\subseteq X$ is a $k-$qi-section  and
$\gamma:I\rightarrow B$ is a geodesic. By the $k-$qi lift of $\gamma$ in
$X_1$ we mean a $k-$qi lift of $\gamma$ whose image is contained
in $X_1$. }
\end{defn}

\begin{defn}\label{defn-flare}
Suppose $p:X\rightarrow B$ is a metric bundle or a metric graph bundle.
We say that it satisfies a {\bf flaring condition} if for all   $k \geq 1$, there exist
  $\lambda_k>1$ and  $n_k,M_k\in \mathbb N$ such that
the following holds:\\
Let $\gamma:[-n_k,n_k]\rightarrow B$ be a geodesic and let
$\tilde{\gamma_1}$ and $\tilde{\gamma_2}$ be two
$k$-qi lifts of $\gamma$ in $X$.
If $d_{\gamma(0)}(\tilde{\gamma_1}(0),\tilde{\gamma_2}(0))\geq M_k$,
then we have
\[\mbox{
{\small $\lambda_k.d_{\gamma(0)}(\tilde{\gamma_1}(0),\tilde{\gamma_2}(0))\leq \mbox{max}\{d_{\gamma(n_k)}(\tilde{\gamma_1}(n_k),\tilde{\gamma_2}(n_k)),d_{\gamma(-n_k)}(\tilde{\gamma_1}(-n_k),\tilde{\gamma_2}(-n_k))\}$}}.
\]
\end{defn}

\begin{lemma}\label{condition3}
Given a function $f:\mathbb N \rightarrow \mathbb N$
there is a function $g:\mathbb R^{+}\rightarrow \mathbb R^{+}$ such that the following  holds:

Suppose $X$ is an $(f,K)$-metric graph bundle over $B$ and $b_1,b_2\in \mathcal{V} (B)$ with
$d(b_1,b_2)= 1$. Let $C \geq 0$ and let $\phi: F_{b_1} \rightarrow F_{b_2}$ be any map such that
$\forall x_1\in F_{b_1}$,  $\phi(x_1)\in \mathcal V(F_{b_2})$ and $d(x_1,\phi(x_1))\leq C$. 
Then $\phi$ is a $f(2[C]+1)$-Lipschitz map when
restricted to $\mathcal V(F_{b_1})$; also $\phi$ is a $g(C)$-quasi-isometry (here $[C]$ is the integer part of $C$).
\end{lemma}

\begin{proof} Suppose $z_1,z_2\in \mathcal V(F_{b_1})$ are adjacent vertices. 
Then $d(\phi(z_1),\phi(z_2))\leq d(z_1,\phi(z_1))+d(z_2,\phi(z_2)) + d(z_1,z_2)\leq 2[C]+1$.
since $d(z_j,\phi(z_j))$, $j=1,2$ are integers by the definition of $\phi$. 
Thus $d_{b_2}(\phi(z_1),\phi(z_2))\leq f(2[C]+1)$.
The first conclusion follows.

Let $\phi_0:F_{b_1}\rightarrow F_{b_2}$ be a map such that each $x\in \mathcal V(F_{b_1})$ is connected to 
$\phi_0(x)\in \mathcal V(F_{b_2})$ by an edge, and any interior point on an edge of $F_{b_1}$ is sent to the image of 
one of the vertices on which the edge is incident. We note that $d(x,\phi_0(x))\leq 2$ for all $x\in F_{b_1}$.
Also, condition $2(ii)$ says that $\phi_0$ is a $K$-quasi-isometry.
Now, $d(\phi_0(x),\phi(x))\leq d(\phi_0(x),x)+d(x,\phi(x))$ and so $d(\phi_0(x),\phi(x))\leq [C]+2$, for all $x\in F_{b_1}$. Hence
$d_{b_2}(\phi_0(x),\phi(x))\leq f([C]+2)$, for all $x\in F_{b_1}$. 
We know that any map which is at a distance at most $R$ from a $K$-quasi-isometry is a $(K+2R)$-quasi-isometry.
Choosing $g(C)$ to be $K+2f([C]+2)$ concludes the proof. \end{proof}

{\bf Bounded flaring condition for metric graph bundles}
\begin{cor}\label{bdd-flaring}
For all $k\in \mathbb R$, $k\geq 1$ there is a function $\mu_k:\mathbb N\rightarrow [1, \infty )$
such that the following holds:

Suppose $X$ is an $(f,K)$-metric graph bundle with base space $B$. 
Let $\gamma\subset B$ be a geodesic joining $b_1,b_2\in \mathcal{V} (B)$,
and let $\tilde{\gamma_1}$, $\tilde{\gamma_2}$ be two $k$-qi 
lifts of $\gamma$ in $X$ which join the pairs of points $(x_1,x_2)$ and $(y_1,y_2)$ respectively, so that
$p(x_i)=p(y_i)=b_i$, $i=1,2$. For all $N\in \mathbb N$, if $d_B(b_1,b_2)\leq N$ then 
\[
d_{b_1}(x_1,y_1)\leq \mu_k(N) \mbox{max}\{d_{b_2}(x_2,y_2),1\}.
\]
\end{cor}

\begin{proof} Let $b_1=v_0,v_1,\cdots, v_n=b_1$ be the sequence of consecutive vertices on the geodesic $\gamma$.
We must have $n\leq N$.
Define for all $i=0,1,\cdots n-1$, $\phi_i:F_{v_i}\rightarrow F_{v_{i+1}}$ by appealing to condition $2(i)$
of the definition of metric graph bundles such that $\phi_i(\tilde{\gamma_j}(i)) = \tilde{\gamma_j}(i+1)$, $j=1,2$.
By the first conclusion of Lemma \ref{condition3} each  $\phi_i$ is $f(2[2k]+1)$-Lipschitz
when restricted to $\mathcal V(F_{v_i})$.

Choosing $\mu_k(N)= f(2[2k]+1)^N$  concludes the proof. \end{proof}

  Lemma \ref{condition3} has an obvious analog for any $(f,c)$-metric bundle.
The same  applies to  Corollary \ref{bdd-flaring} as well.
Since the proofs are very similar we omit them.

\begin{lemma}
Given a function $f:\mathbb R^{+} \rightarrow \mathbb R^{+}$ and $c\geq 0$
there is a function $g:\mathbb R^{+}\rightarrow \mathbb R^{+}$ such that the following  holds:

Suppose $X$ is an $(f,c,K)$-metric bundle over $B$ and $b_1,b_2\in B$ with
$d_B(b_1,b_2)\leq 1$. Let $C \geq 0$ and let $\phi: F_{b_1} \rightarrow F_{b_2}$ be any map such that
$\forall x_1\in F_{b_1}$,  $d(x_1,\phi(x_1))\leq C$. Then $\phi$ is a $g(C)$-quasi-isometry.
\end{lemma}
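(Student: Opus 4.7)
The plan is to reduce to Proposition \ref{def2} by comparing $\phi$ with a standard ``edge-map'' $\phi_0 : F_{b_1} \rightarrow F_{b_2}$ of the kind considered there, and then showing that a map at bounded fiber-distance from a quasi-isometry is again a quasi-isometry.

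First I would fix a map $\phi_0 : F_{b_1} \rightarrow F_{b_2}$ as in Proposition \ref{def2}, namely, for each $x_1 \in \mathcal{V}(F_{b_1})$ choose an edge of $X$ connecting $x_1$ to a vertex of $F_{b_2}$ and let $\phi_0(x_1)$ be the other endpoint; by hypothesis 2(i) of Definition \ref{defn-mgbdl} such an edge exists, and extend to edges of $F_{b_1}$ by sending an edge to the image of one of its endpoints. Proposition \ref{def2} then guarantees that $\phi_0$ is a $K$-quasi-isometry with $K = f(4)$.

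Next I would compare $\phi$ with $\phi_0$. By construction $d_X(x_1, \phi_0(x_1)) \leq 1$ for every $x_1 \in F_{b_1}$, and by hypothesis $d_X(x_1, \phi(x_1)) \leq C$, so the triangle inequality gives $d_X(\phi_0(x_1), \phi(x_1)) \leq C+1$. Since $\phi_0(x_1)$ and $\phi(x_1)$ both lie in $F_{b_2}$, the uniform metric properness of the inclusion $F_{b_2} \hookrightarrow X$ (condition 1 of Definition \ref{defn-mgbdl}) yields
\[
d_{b_2}(\phi_0(x_1), \phi(x_1)) \leq f(C+1).
\]
Thus $\phi$ is within fiber-distance $f(C+1)$ of the $K$-quasi-isometry $\phi_0$ at every point.

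Finally, for any $u,v \in F_{b_1}$ I would combine this estimate with the quasi-isometry inequalities for $\phi_0$ via the triangle inequality in $F_{b_2}$:
\[
\tfrac{1}{K} d_{b_1}(u,v) - K - 2f(C+1) \leq d_{b_2}(\phi(u), \phi(v)) \leq K\, d_{b_1}(u,v) + K + 2f(C+1),
\]
so $\phi$ is a $(K, K + 2f(C+1))$-quasi-isometric embedding. Coarse surjectivity is inherited from $\phi_0$: if $D$ is a coarse surjectivity constant for $\phi_0$, then every $y \in F_{b_2}$ is within fiber-distance $D + f(C+1)$ of some $\phi(x)$. Setting $g(C) := \max\{K, K + 2f(C+1), D + f(C+1)\}$ (or any larger uniform bound depending only on $f,K,C$) gives the required $g(C)$-quasi-isometry. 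The argument is essentially bookkeeping; the only subtle point is remembering to convert the $X$-distance bound $C+1$ into a fiber-distance bound via $f$, which is exactly where uniform metric properness of the fibers is used.
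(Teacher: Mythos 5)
Your proof is correct and follows exactly the same route as the paper's: compare $\phi$ to the standard edge-map $\phi_0$ of Proposition \ref{def2}, convert the $X$-distance bound between $\phi$ and $\phi_0$ into a fiber-distance bound via uniform properness, and conclude since a map at bounded fiber-distance from a $K$-quasi-isometry is again a quasi-isometry. One small slip: you assert $d_X(x_1,\phi_0(x_1)) \leq 1$ for all $x_1 \in F_{b_1}$, but this holds only at vertices; for $x_1$ in the interior of an edge of $F_{b_1}$, $\phi_0$ sends $x_1$ to the image of one endpoint, so $d_X(x_1,\phi_0(x_1)) \leq 2$, and the fiber-distance bound should be $f(C+2)$ (as in the paper) rather than $f(C+1)$.
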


{\bf Bounded flaring condition for metric bundles}
\begin{cor}\label{bdd-flaring-mbdl}	
For all $k\in \mathbb R^{+}, k\geq 1$ there is a function $\mu_k:{\mathbb R}^+\rightarrow [1, \infty )$
such that the following holds:

Suppose $X$ is an $(f,c,K)$-metric bundle with base space $B$. 
Let $\gamma\subset B$ be a geodesic joining $b_1,b_2\in B$,
and let $\tilde{\gamma_1}$, $\tilde{\gamma_2}$ be two $k$-qi 
lifts of $\gamma$ in $X$ which join the pairs of points $(x_1,x_2)$ and $(y_1,y_2)$ respectively, so that
$p(x_i)=p(y_i)=b_i$, $i=1,2$. For all $N\in \mathbb R^{+}$, if $d_B(b_1,b_2)\leq N$ then 
\[
d_{b_1}(x_1,y_1)\leq \mu_k(N) \mbox{max}\{d_{b_2}(x_2,y_2),1\}.
\]
\end{cor}

In the rest of the paper, we will summarize the conclusion 
of Corollaries \ref{bdd-flaring} 
and \ref{bdd-flaring-mbdl} by saying that a metric bundle or a metric graph bundle satisfies a {\bf bounded flaring condition}.

We end this subsection by showing that a metric bundle naturally gives rise to a metric graph bundle, 
such that the respective base and total spaces are  quasi-isometric. But first, 
we recall the general fact that geodesic metric spaces
are quasi-isometric to connected graphs (see \cite{gromov-ai} p.7 or \cite{bridson-haefliger} p. 152).

\begin{lemma}\label{coarse} 
$1.$ Let $Y$ be a geodesic metric space and let $V\subset Y$ be a subset such that for some $D>0$ and all $y\in Y$ there 
exists $z\in V$ such that $d(y,z)\leq D$. Let $E\geq 2D+1$.
Let $Z$ be a graph such that \\
a) the vertex set $\mathcal{V} (Z) = V$\\
b) the edge set $\mathcal{E} (Z)$ is given by $\{ y,z \} \in \mathcal{E} (Z)$   iff $y\neq z$ and $d(y,z)\leq E$.\\
Define $\psi_Z:Z\rightarrow Y$ as follows: $\psi_Z (u) = u$ for
 $u\in V$. For  an edge $e$ of $Z$ choose some $u \in V$ such that $e$ is incident on $u$ and map the interior of $e$ to $u$
under $\psi_Z$.\\
Then for all $u,v\in V$ we have $-1+d_Z(u,v)\leq d_Y(u,v)\leq E.d_Z(u,v)$.
In particular, $\psi_Z$ is a max$\{5,4E\}$-quasi-isometry. \\
$2.$ Suppose $Z_1$ is a connected subgraph of a graph $Z$ such that the vertex sets of $Z_1, Z$ are the same and the
following holds: Let $E_1 > 1$ and suppose any edge of $Z$ which is not in $Z_1$
connects two vertices of $Z_1$ which are at a distance of at most $E_1$ in $Z_1$. 
Then for all $u,v\in Z_1$ we have $d_Z(u,v)\leq d_{Z_1}(u,v)\leq E_1d_Z(u,v)$. In particular the inclusion
$Z_1\hookrightarrow Z$ is an $E_1$-quasi-isometry. 
\end{lemma}

Now, suppose $p:X^{'}\rightarrow B^{'}$ is an $(f,c,K)$-metric  bundle. Let $d$ denote the metric
on $X^{'}$ and let $d_{B^{'}}$ be the metric on $ B^{'}$. Let $V\subset B^{'}$ be a maximal  subset such that
 $u,v\in V, u\neq v$ implies $d_{B^{'}}(u,v)\geq 1$. Then for all $b\in B^{'}$ there exists $u\in V$
such that $d_{B^{'}}(b,u)\leq 1$. Using the recipe of 
Lemma \ref{coarse} (1) construct \\
a) a graph $B$ with vertex set $V$ such that 
$u\neq v\in V$ are connected by an edge iff $d_{B^{'}}(u,v)\leq 3$, \\
b) and a quasi-isometry $\psi_B:B\rightarrow B^{'}$.


Next, for all $u\in V$ let $X^{'}_u$ be a maximal subset of the horizontal space $F_u$
such that for $x,y\in X^{'}_u$, $d_u(x,y)\geq 1$. 
\begin{lemma}\label{coarse2}
$1.$ For all $x\in X^{'}$ there exists $u\in V$ and a path of length at most $c+1$ connecting $x$ to a point of $X^{'}_u$.\\
$2.$ If $u,v\in V$ are connected by an edge in $B$ then each point of $X^{'}_u$ is connected to a point of $X^{'}_v$
by a path in $X^{'}$ of length at most $3c+1$. 
\end{lemma}

\begin{proof}
Both  statements follow from  condition $2(i)$ of the definition of metric bundles. 
\end{proof}

Now construct a graph $X^{''}$ with vertex set
$\mathcal{V} (X^{''})  = \cup_{u\in V}X^{'}_u$ and edge set $\mathcal{E} (X^{''}) =\{ \{x, y \} : x\neq y \in \mathcal{V} (X^{''}),
d(x,y)\leq 6c+3 \}$.

Let $X\subset X^{''}$ be the subgraph of  $X^{''}$  such that $\mathcal V (X)=\mathcal V(X^{''})$ and
any edge $(x, y) \in \mathcal{E} (X^{''})$ also belongs to $ \mathcal{E} (X)$ iff\\
a) either $x,y \in X^{'}_u$ for some $u\in V$\\
b) or $x \in X^{'}_u$ and $y \in X^{'}_v$ with $d_B(u,v)=1$. 

Let $\psi_X: X\rightarrow X^{'}$ be a map as in Lemma \ref{coarse} (1)
defined by setting $\psi_X (x)=x$ for $x \in \cup_{u\in V}X^{'}_u$. 
Then $p\circ \psi_X=\psi_B\circ \pi$ on $\cup_{u\in V}X^{'}_u$.
Let $\psi_X$ again denote an extension of this map over edges of $X$ by sending the interior of any edge to a vertex on which 
it is incident consistently ensuring that $p\circ \psi_X=\psi_B\circ \pi$.

For all $u\in V$ let us denote by $H_u$ the graph
with vertex set $X^{'}_u$ and  
$\mathcal E(H_u):=\{ e\in \mathcal E(X): e\, \, \mbox{connects two elements of}\, X^{'}_u\}$.

\begin{lemma} \label{coarse3}
There is a constant $C$ such that the maps
$H_u\rightarrow F_u$ obtained by restricting $\psi_X$ are $C$-quasi-isometries.
\end{lemma}
\begin{proof}
First of all, $H_u$ is a connected graph by Lemma \ref{coarse} (1).
Next, for all $u\in V$, let $\bar{H}_u$ be the graph with vertex set $X^{'}_u$  and
edge set 
$\mathcal E(\bar{H}_u):=\{e\in \mathcal E(H_u), e \,\, \mbox{connects}\, x,y\in X^{'}_u: 
d_u(x,y)\leq f(6c+3)\}$. Then $H_u$ is a subgraph of $\bar{H}_u$.

Let us consider an extension of the map $H_u\rightarrow F_u$ to a map $\bar{H}_u\rightarrow F_u$
satisfying the properties of Lemma \ref{coarse} (1). Such a map is, therefore, a quasi-isometry. By 
Lemma \ref{coarse} (2) the inclusion map $H_u\hookrightarrow \bar{H}_u$ is also a quasi-isometry.
Since the map $H_u\rightarrow F_u$ is the composition of  quasi-isometries 
$H_u\hookrightarrow \bar{H}_u$ and  $\bar{H}_u\rightarrow F_u$, the lemma follows.
\end{proof}

\begin{lemma} \label{coarse4}
$\psi_X:X\rightarrow X^{'}$ is a quasi-isometry.
\end{lemma}

\begin{proof}
Let $\psi_{X^{''}}:X^{''}\rightarrow X^{'}$ be an extension of the map $\psi_X:X\rightarrow X^{'}$
with the property of Lemma \ref{coarse} (1). By Lemma \ref{coarse} (1) the map 
$\psi_{X^{''}}:X^{''}\rightarrow X^{'}$ is a $2(6c+3)$-quasi-isometry.

Next we show that the inclusion $X\hookrightarrow X^{''}$
is a quasi-isometry. For this suppose $x,y\in \mathcal V(X)$ are connected by an edge in $X^{''}$.
Suppose $x\in X^{'}_u, y\in X^{'}_v$, $u,v\in V$. Then $d_{B^{'}}(u,v)\leq d(x,y)\leq 6c+3$.
Thus $u,v$ can be joined by a path of length at most $6c+4$, by Lemma \ref{coarse} (1).
Thus $x$ can be joined to a point $z\in X^{'}_v$ by an edge path in $X$ of length at most
$6c+4$. It follows that $d(x,z)\leq (3c+1)(6c+4)$. Thus $d(y,z)\leq 1+(3c+1)(6c+4)=D$, say.
Hence $d_v(y,z)\leq f(D)$. Using the previous lemma we have $d_{H_v}(y,z)\leq C(C+f(D))$.
Since $H_v$ is a subgraph of $X$, we have $d_X(y,z)\leq C(C+f(D))$. Thus 
$d_X(x,y)\leq d_X(x,z)+d_X(y,z)\leq (6c+4)+C(C+f(D))$.  Lemma \ref{coarse} (2) now shows that the inclusion $X\hookrightarrow X^{''}$
is a quasi-isometry..

Since $\psi_X:X\rightarrow X^{'}$ is the composition of the quasi-isometries 
$\psi_{X^{''}}:X^{''}\rightarrow X^{'}$ and  $X\hookrightarrow X^{''}$, the lemma follows.
\end{proof}

Define $\pi:X\rightarrow B$
by sending edges connecting any two vertices of $X^{'}_u$ (for some $u\in V$) to $u$. 
Any other edge in $X$ must join vertices $x\in X^{'}_u$ and $y \in X^{'}_v$ for some $X^{'}_u, X^{'}_v$ with $d_B(u,v)=1$.
 On any such edge $[x,y]$, $\pi$ is defined to be 
an isometry onto the edge $[u,v]$. Now we have the following.

\begin{lemma}\label{mbdl-mgbdl} 
The map $\pi :X \rightarrow B$ gives a metric graph bundle.
\end{lemma}  

\begin{proof} 

By definition $\pi$ is a surjective, simplicial map. We check the conditions of the definition of metric graph
bundles. 

Condition $2(i)$ follows from Lemma \ref{coarse2} (2) and the definition of the graph $X$.

Let us check condition 1 now.
Note that for all $u\in \mathcal V(B)$, $\pi^{-1}(u)$ is the graph $H_u$. 
By Lemma \ref{coarse3},  $\pi^{-1}(u)$ is a connected subgraph of $X$, $C$-quasi-isometric to $F_u$.
Let $x,y\in \mathcal V(\pi^{-1}(u))$. 
Suppose $d_X (x,y) \leq N$, $N\in \mathbb N$. Then $d(x,y)  \leq N(6c+3)$. 
Since $p:X^{'}\rightarrow B^{'}$ is an $(f,c,K)$-metric  bundle
it follows that $d_u(x,y) \leq f(N(6c+3))$. Hence $d_{H_u}(x,y) \leq C.f(N(6c+3))+C$.
Defining $g(N) = [C.f(N(6c+3))+C]$, we see that
condition 1 of the definition of a metric graph bundle is satisfied. 
 \end{proof}

\smallskip

\noindent {\bf Note:} In the rest of the paper we shall assume that the maps $\psi_X,\psi_B$
are $K_1$-quasi-isometries. We shall refer to $\pi :X \rightarrow B$ above as an {\it approximating metric graph bundle}
of the metric bundle $p:X^{'}\rightarrow B^{'}$.


\subsection{Hyperbolic metric spaces}
We assume that the reader is familiar with the basic definitions and facts about  hyperbolic metric spaces \cite{gromov-hypgps}, \cite{GhH},
\cite{Shortetal}. In this subsection we collect together some of these to fix notions and for later use.

If $X$ is
a geodesic metric space and $x,y\in X$ then $[x,y]$ will denote a
geodesic segment joining $x$ to $y$. For $x,y,z\in X$ we shall denote by
$\bigtriangleup xyz$ a geodesic triangle with vertices $x,y,z$.
For $D\geq 0$ and $A\subset X$,  $N_D(A):=\{x\in X: \, d(x,a)\leq D \, \mbox{ for some} \, a\in A\}$ will be called
the $D$-neighborhood of $A$ in $X$.

\begin{defn}
Suppose $\Delta x_1x_2x_3\subset X$ is a geodesic triangle,
and let  $\delta\geq 0$, $K\geq 0$.
\begin{enumerate}
\item For all $i\neq j\neq k\neq i$, let $c_k \in [x_i,x_j]$ be such that
$d(x_i,c_j)=d(x_i,c_k)$. The points $c_i$ will be called the {\bf internal points} of $\Delta x_1x_2x_3$.
Note that, for all $i\neq j\neq k\neq i$, $d(x_i,c_j)=\frac{1}{2}\{d(x_i,x_j)+d(x_i,x_k)-d(x_j,x_k)\}$.
\item The diameter of the set $\{c_1,c_2,c_3\}$ will be referred to as the {\bf insize} of the
triangle $\Delta x_1x_2x_3$.
\item We say that the triangle $\Delta x_1x_2x_3$ is $\delta$-{\bf slim} if any side of the triangle is contained in the
$\delta$-neighborhood of the union of the other two sides.
\item We say that the triangle $\Delta x_1x_2x_3$ is $\delta$-{\bf thin} if for all $i\neq j\neq k\neq i$ and
$p\in [x_i,c_j]\subset [x_i,x_k]$, $q\in [x_i,c_k]\subset [x_i,x_j]$ with $d(p,x_i)=d(q,x_i)$ one has $d(p,q)\leq \delta$.
\item A point $x\in X$ is said to be a $K$-{\bf center} of $\bigtriangleup x_1x_2x_3$ if $x$ is contained in the $K$-neighborhood
of each of the sides of $\bigtriangleup x_1x_2x_3$.
\end{enumerate}
\end{defn}

\begin{defn} {\bf Gromov inner product:} Let $X$ be any metric space and let $x,y,z\in X$.
Then the Gromov inner product of $y,z$ with respect to $x$, denoted $(y.z)_x$, is defined to
be the number $\frac{1}{2}\{d(x,y)+d(x,z)-d(y,z)\}$.
\end{defn}

\begin{defn} Let $\delta\geq 0$ and $X$ be a geodesic metric space. We say that $X$ is a $\delta$-hyperbolic
metric space if all  geodesic triangles in $X$ are $\delta$-slim.
\end{defn}

\begin{lemma}\label{hyp-defn} (See Proposition $2.1$,\cite{Shortetal})
Suppose $X$ is a $\delta$-hyperbolic metric space. Then the following hold:
\begin{enumerate}
\item All the triangles in $X$ have insize at most $4\delta$.
\item All the triangles in $X$ are $6\delta$-thin.
\end{enumerate}
\end{lemma}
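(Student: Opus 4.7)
The plan is to deduce both bounds from $\delta$-slimness using the classical ``tripod'' picture, in which a geodesic triangle of a hyperbolic space collapses onto a $Y$-shaped tree with all errors bounded by a constant multiple of $\delta$.

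For the insize bound, I fix $\Delta x_1 x_2 x_3$ with internal points $c_1, c_2, c_3$. Focus on the side $[x_1, x_2]$ and define the closed subsets
\[
A = \{y \in [x_1, x_2] : d(y, [x_1, x_3]) \leq \delta\}, \qquad B = \{y \in [x_1, x_2] : d(y, [x_2, x_3]) \leq \delta\}.
\]
By $\delta$-slimness, $A \cup B = [x_1, x_2]$, and since $x_1 \in A$, $x_2 \in B$, connectedness of the interval forces $A \cap B \neq \emptyset$. A point $m_{12}$ in the intersection has companions $y_1 \in [x_1, x_3]$ and $y_2 \in [x_2, x_3]$ each within $\delta$ of it. From $d(y_1, y_2) \leq 2\delta$, the identities expressing distances along the geodesic sides, and the approximations $d(x_\ell, m_{12}) = d(x_\ell, y_\ell) \pm \delta$, one extracts $|d(x_1, m_{12}) - (x_2 \cdot x_3)_{x_1}| \leq 2\delta$, so $d(c_3, m_{12}) \leq 2\delta$. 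Repeating on the other two sides gives transition points close to $c_1$ and $c_2$; two transition points on adjacent sides share a common $\delta$-companion on the third side, and a brief triangle-inequality argument then pins $d(c_i, c_j) \leq 4\delta$.

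For the thin bound, given $p \in [x_i, c_j]$ and $q \in [x_i, c_k]$ with $d(x_i, p) = d(x_i, q) = t$, I would invoke $\delta$-slimness to produce $p' \in [x_i, x_j] \cup [x_j, x_k]$ with $d(p, p') \leq \delta$. If $p' \in [x_i, x_j]$, then $|d(x_i, p') - t| \leq \delta$ forces $d(p', q) \leq \delta$ along the common geodesic $[x_i, x_j]$, hence $d(p, q) \leq 2\delta$. In the remaining case $p' \in [x_j, x_k]$, I would use the inequality $d(x_k, p) \geq d(x_k, c_j) = d(x_k, c_i)$ to locate $p'$ near $c_i$ on $[x_j, x_k]$, and then part (1) gives $d(c_i, c_j) \leq 4\delta$, forcing $p$ to sit within $O(\delta)$ of $c_j$. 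A symmetric analysis pushes $q$ near $c_k$, and combining these with $d(c_j, c_k) \leq 4\delta$ yields the bound $d(p, q) \leq 6\delta$.

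The main obstacle is bookkeeping rather than ideas: the tripod picture makes both statements intuitive, but extracting the precise constants $4\delta$ and $6\delta$ (instead of, say, $5\delta$ and $10\delta$) requires careful tracking of the accumulating error terms. The trickiest step is the ``far side'' case of the thin bound --- eliminating it except when $p, q$ are already close to the internal points is where the insize estimate from (1) is genuinely needed.
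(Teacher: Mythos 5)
The paper does not prove this lemma --- it simply cites Proposition 2.1 of the reference --- so there is no in-paper argument to compare against; I am evaluating your sketch on its own.

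The assembly step in your part (2) does not close. You place $p$ within $2\delta$ of $c_i$ and then invoke part (1) to ``push $p$ within $O(\delta)$ of $c_j$,'' but what this actually yields is $d(p,c_j) \leq d(p,c_i) + d(c_i,c_j) \leq 2\delta + 4\delta = 6\delta$, not $O(\delta)$; the chain $d(p,q) \leq d(p,c_j) + d(c_j,c_k) + d(c_k,q)$ then lands at roughly $16\delta$, far above the target. Routing through both $c_j$ and $c_k$ is the wrong move --- stay with the single internal point $c_i$. In the ``far'' case for $p$ (companion $p' \in [x_j,x_k]$), you already argue $d(p,c_i) \leq 2\delta$, though note that pinning $p'$ to $c_i$ requires \emph{both} $d(x_k,p) \geq d(x_k,c_j) = d(x_k,c_i)$ and the symmetric $d(x_j,p) \geq d(x_j,c_k) = d(x_j,c_i)$, since the two lower bounds together with $d(x_j,p') + d(x_k,p') = d(x_j,x_k)$ force $d(p',c_i) \leq \delta$; your sketch mentions only the $x_k$ side. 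Now run the same dichotomy on $q$: if $q$'s companion also lands on $[x_j,x_k]$, then $d(q,c_i) \leq 2\delta$ and $d(p,q) \leq 4\delta$; if instead $q'\in[x_i,x_k]$, then comparing distances from $x_i$ puts $q'$ within $\delta$ of $p$ on $[x_i,x_k]$, giving $d(p,q)\leq 2\delta$. All cases give at most $4\delta \leq 6\delta$.

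Your part (1) is workable, but as written the constant does not come out to $4\delta$: chaining $c_2 \to y_1 \to m_{12} \to c_3$ with your stated bounds gives about $6\delta$. To hit $4\delta$ one can set $u = d(x_1,y_1) - (x_2\cdot x_3)_{x_1}$ and $v = d(x_2,y_2) - (x_1\cdot x_3)_{x_2}$ and check that the two relations $|d(x_1,x_3)-a-d(x_2,x_3)+b|\leq 2\delta$ and $|a+b-d(x_1,x_2)|\leq 2\delta$ give simultaneously $|u+v|\leq 2\delta$ and $|u-v|\leq 2\delta$, hence $|u|+|v|\leq 2\delta$; then $d(c_1,c_2) \leq |v| + d(y_1,y_2) + |u| \leq 4\delta$, and repeating on the other two sides finishes. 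A cleaner alternative that avoids the refinement entirely: apply $\delta$-slimness directly to the internal point $c_\ell$ rather than to a transition point; its $\delta$-companion on an adjacent side automatically lands within $\delta$ of another internal point via the identity $d(x_m,c_\ell)=d(x_m,c_n)$, so each $c_\ell$ is $2\delta$-close to some other $c_n$, and a pigeonhole plus triangle inequality gives insize $\leq 4\delta$ immediately.
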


\begin{lemma}\label{stab-qg} \cite{GhH}
{\bf Stability of quasigeodesics:} For all $\delta\geq 0$ and $k\geq 1$ there
is a constant $D_{\ref{stab-qg}}=D_{\ref{stab-qg}}(\delta, k)$ such that
the following holds:

Suppose $Y$ is a $\delta$-hyperbolic metric space. Then
the Hausdorff distance between a geodesic and a $k$-quasi-geodesic 
joining the same pair of end points is less than or equal to $D_{\ref{stab-qg}}$.
\end{lemma}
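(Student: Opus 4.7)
The plan is to follow the classical Morse-type argument, which splits naturally into two main ingredients: a tameness reduction to continuous paths and an exponential divergence estimate, combined through a feedback-and-contradiction argument.

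First I would reduce to the case of continuous quasi-geodesics. Given a $k$-quasi-geodesic $\gamma:[a,b]\to Y$, I replace it by the path $\gamma'$ obtained by concatenating geodesic segments $[\gamma(n),\gamma(n+1)]$ over integers $n\in[a,b]$, together with short segments at the two endpoints. A direct check shows that $\gamma'$ is a $(k',\epsilon')$-quasi-geodesic with $k',\epsilon'$ depending only on $k$, and that the Hausdorff distance between $\gamma$ and $\gamma'$ is at most a constant depending only on $k$.

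Next I would establish the \emph{exponential divergence estimate}: for any continuous rectifiable path $\sigma$ from $x$ to $y$ in a $\delta$-hyperbolic space, every point of $[x,y]$ lies within $\delta\log_2\ell(\sigma)+C_0(\delta)$ of $\sigma$. The proof is by dyadic bisection: if $z$ is a midpoint of $\sigma$, then $\delta$-slimness of $\Delta xyz$ forces each $p\in[x,y]$ to lie within $\delta$ of one of $[x,z]$ or $[z,y]$, and one recurses on a subpath of half the length. After $\lceil\log_2\ell(\sigma)\rceil$ bisections the remaining subpath has length $O(1)$, and the cumulative error telescopes to a quantity linear in $\delta\log_2\ell(\sigma)$.

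Combining these, the inclusion $[p,q]\subset N_D(\gamma')$ is essentially immediate from the divergence estimate applied to $\gamma'$ (with $\ell(\gamma')\le k'd(p,q)+\epsilon'$, then refined by working on short subsegments so that the bound depends only on $\delta,k$). For the reverse inclusion $\gamma'\subset N_D([p,q])$, I would argue by contradiction: let $[s,t]$ be a maximal subinterval on which $\gamma'$ leaves the $R$-neighborhood of $[p,q]$. Nearest-point projections of $\gamma'(s),\gamma'(t)$ onto $[p,q]$, combined with the triangle inequality, bound $d(\gamma'(s),\gamma'(t))$ linearly in $R$ and in the distance $L$ between projections. The quasi-geodesic property then bounds $t-s$, hence $\ell(\gamma'|_{[s,t]})$; feeding this length into the divergence estimate applied to the subpath and to the geodesic $[\gamma'(s),\gamma'(t)]$ shows that points on $\gamma'|_{[s,t]}$ lie within $\delta\log_2(\cdots)+O(\delta)$ of $[p,q]$, contradicting the assumption once $R$ is chosen beyond this threshold.

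The main obstacle is selecting $R=R(\delta,k)$ so that the self-referential estimate yields a genuine contradiction rather than a tautology: one needs an $R$ satisfying an inequality of the form $R>\delta\log_2(C(k)R)+C_1$, which is possible since the left side eventually outgrows the logarithmic right side. Once such a uniform $R$ is fixed, pulling back from $\gamma'$ to the original $\gamma$ via the tameness step produces the desired constant $D_{\ref{stab-qg}}(\delta,k)$.
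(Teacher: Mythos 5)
The paper does not prove this lemma; it cites it as a standard result from \cite{GhH} (Ghys--de la Harpe). Your outline is a correct reconstruction of the classical Morse Lemma argument found in that reference (and in Bridson--Haefliger III.H.1.7): taming to a continuous quasigeodesic, the logarithmic divergence estimate via dyadic bisection, and the feedback-and-contradiction argument where the quasi-geodesic bound on excursion length feeds back into the divergence estimate, with the crucial point that $R \mapsto R$ eventually dominates $R \mapsto \delta\log_2(C(k)R) + C_1$, so a uniform threshold exists. No gaps.
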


\begin{defn} {\bf Local quasi-geodesics:}
Let $X$ be a metric space and $K\geq 1,\epsilon\geq 0,L>0$ be constants.
A map $f:I\rightarrow X$, where $I\subset \mathbb R$ is an interval, is said to
be a $(K,\epsilon,L)-$ local quasi-geodesic if for all $s,t\in I$ with $|s-t|\leq L$,
one has $-\epsilon+(1/K)|s-t|\leq d(f(s),f(t))\leq \epsilon +K |s-t|$.
\end{defn}

For the following important lemma we refer to Theorem $1.4$, Chapter $3$,\cite{CDP};
or Theorem $21$, Chapter $5$, \cite{GhH}.

\begin{lemma} {\bf Local quasi-geodesic vs global quasi-geodesic:} \label{local-global-qg}
For all $\delta\geq 0 , ~\epsilon\geq 0$ and $K\geq 1$ there are constants
$L=L_{\ref{local-global-qg}}(\delta,K,\epsilon)$, $\lambda=\lambda_{\ref{local-global-qg}}(\delta,K,\epsilon)$
such that the following holds:
 
Suppose $X$ is a $\delta$-hyperbolic metric space. Then
any $(K,\epsilon, L)$-local quasi-geodesic in $X$ is a $\lambda$-quasi-geodesic.
\end{lemma}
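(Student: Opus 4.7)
The plan is to reduce the $(K,\epsilon,L)$-local quasi-geodesic case to the case of $L$-local geodesics (the $K=1$, $\epsilon=0$ case), and then prove that an $L$-local geodesic with $L$ sufficiently large (depending on $\delta$) is a global quasi-geodesic. This is the standard approach to this lemma in the Gromov-hyperbolic literature (cf.\ \cite{CDP}, \cite{GhH}).

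First I would carry out the reduction. Given an $(K,\epsilon,L)$-local quasi-geodesic $f : I \to X$, I subdivide $I$ into consecutive subintervals $J_i = [t_i, t_{i+1}]$ of length comparable to $L$, on each of which $f$ is a genuine $(K,\epsilon)$-quasi-isometric embedding. Replacing $f|_{J_i}$ by a geodesic $[f(t_i), f(t_{i+1})]$ produces a piecewise geodesic $\tilde f$ lying at uniformly bounded Hausdorff distance from $f$, with the break-point-to-break-point distances comparable to $L$. Up to reparameterization, $\tilde f$ becomes an $L'$-local geodesic for some $L' = L'(\delta, K, \epsilon, L)$. Since being a global quasi-geodesic is stable under bounded perturbation and bi-Lipschitz reparameterization, it suffices to prove the statement for arc-length parameterized $L'$-local geodesics with $L'$ large.

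The heart of the proof is to show the existence of constants $L_0(\delta)$ and $D(\delta)$ such that any $L$-local geodesic $c : [0,T] \to X$ (parameterized by arc length) with $L \geq L_0(\delta)$ lies in the $D(\delta)$-neighborhood of any geodesic $[c(0), c(T)]$. Let $t_0$ realize the maximum distance $D$ from a point of $c$ to $[c(0), c(T)]$. The subsegment $c|_{[t_0 - L/2,\, t_0 + L/2]}$ is a genuine geodesic of length $L$ by hypothesis; joining its endpoints to their nearest-point projections on $[c(0), c(T)]$ yields a geodesic quadrilateral, which splits into two triangles to which the slim and thin triangle axioms of Lemma \ref{hyp-defn} apply. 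For $L$ large compared to $\delta$, the midpoint $c(t_0)$ cannot be $\delta$-close to either of the two short ``projection'' sides of the quadrilateral without contradicting the maximality of $D$, so it must be close to $[c(0), c(T)]$ itself, giving a bound on $D$ in terms of $\delta$ alone. Once this is established, composing $c$ with nearest-point projection onto $[c(0), c(T)]$ produces a map whose progress is linear in arc length: each $L$-block of $c$ is a genuine geodesic staying within $D(\delta)$ of $[c(0), c(T)]$, so hyperbolicity forces its endpoints' projections to be separated by at least $L - O(\delta)$ along $[c(0), c(T)]$. Summing over consecutive $L$-blocks yields $d(c(s), c(t)) \geq (|t-s|/\lambda) - \lambda$, with $\lambda$ depending only on $\delta$ and the reduction parameters; the upper bound is immediate from arc-length parameterization.

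The main obstacle is obtaining a bound on $D$ that is independent of $L$. A naive application of the triangle inequality to the quadrilateral produces a bound growing with $L$, which would be useless since we need to take $L$ large. The hyperbolicity input must be invoked locally, at scales comparable to $\delta$ around $c(t_0)$, so that slimness enters through the neighborhood of $c(t_0)$ and not through the full length of the subsegment; this is precisely where the thinness enhancement of Lemma \ref{hyp-defn} is used. Once the uniform $D(\delta)$ is arranged, the constants $L_{\ref{local-global-qg}}(\delta,K,\epsilon)$ and $\lambda_{\ref{local-global-qg}}(\delta,K,\epsilon)$ are obtained by tracking the dependencies through the reduction step.
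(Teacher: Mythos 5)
The paper does not give a proof of this lemma; it cites CDP (Chapter 3, Theorem 1.4) and GhH (Chapter 5, Theorem 21), where the local-to-global principle for local quasi-geodesics is established via tree-approximation techniques. Your sketch instead follows the maximal-excursion-and-quadrilateral route familiar from Bridson--Haefliger III.H.1.13, which is a legitimate alternative.

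However, the reduction in your first step has a genuine gap. Replacing $f|_{J_i}$ by the geodesic $[f(t_i), f(t_{i+1})]$ does yield a piecewise geodesic $\tilde f$ at uniformly bounded Hausdorff distance from $f$ (by Lemma \ref{stab-qg}), but $\tilde f$ is \emph{not} an $L'$-local geodesic under any reparameterization. At each breakpoint the Gromov product $(f(t_{i-1}) \cdot f(t_{i+1}))_{f(t_i)}$ is strictly positive whenever $K>1$ or $\epsilon>0$; stability of quasi-geodesics bounds it by some $D_1 = D_1(\delta,K,\epsilon)$, but it need not vanish, and a concatenation of geodesics with positive Gromov product at a corner cannot be an isometric embedding on any interval straddling that corner. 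What you actually produce is a piecewise-geodesic chain whose segments (of length comparable to $L/K$) are long relative to the corner defect $D_1$. Such chains are indeed quasi-geodesics once the segments are long enough, but that is a separate broken-geodesic lemma which your sketch does not supply, and it is precisely the place where the dependence on $K$ and $\epsilon$ must enter. The cleaner fix is to bypass the reduction and run your maximal-excursion argument directly on the local quasi-geodesic $f$: replace the assertion that $f|_{[t_0-L/2,\,t_0+L/2]}$ is an exact geodesic by the statement (from Lemma \ref{stab-qg}) that it lies within bounded Hausdorff distance of a geodesic joining its endpoints, at which point the quadrilateral and projection arguments go through with constants enlarged by the stability constant.
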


\begin{lemma}\label{barycen} For all $\delta\geq 0$, $\epsilon \geq 0$ and $k\geq 1$,
there is a constant $D_{\ref{barycen}}=D_{\ref{barycen}}(\delta,k,\epsilon)$ such that
the following hold:

$(1)$ Suppose $Y$ is a $\delta$-hyperbolic metric space. Then every geodesic triangle in $Y$
has a $4\delta$-center.

$(2)$ Suppose both $Y$ and $Y^{'}$ are $\delta$-hyperbolic metric spaces and $\phi :Y\map Y^{'}$ is a
$(k,\epsilon)$-quasi-isometric embedding. If $y$ is a $4\delta$-center of $\bigtriangleup y_1y_2y_3\subseteq Y$ and
$y^{'}\in Y^{'}$ is a $4\delta$-center of $\bigtriangleup \phi(y_1)\phi(y_2)\phi(y_3)\subseteq Y^{'}$ then
  $d(y^{'},\phi(y))\leq D_{\ref{barycen}}$, where $d$ is the metric on $Y^{'}$.
\end{lemma}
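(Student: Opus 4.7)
The plan is to combine the internal-point and insize structure of $\delta$-hyperbolic triangles (Lemma \ref{hyp-defn}) with stability of quasi-geodesics (Lemma \ref{stab-qg}) to handle both parts. For part (1), I would simply take $y := c_1 \in [y_2,y_3]$, one of the internal points of $\bigtriangleup y_1y_2y_3$ (as defined earlier via the Gromov product formula). The insize bound of Lemma \ref{hyp-defn}(1) gives $d(y,c_2), d(y,c_3) \leq 4\delta$, where $c_2 \in [y_1,y_3]$ and $c_3 \in [y_1,y_2]$, so $y$ lies in the $4\delta$-neighborhood of every side; since the internal points are determined by vertex distances, the particular choice of geodesic side is immaterial up to the same $4\delta$ slack.

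For part (2), the strategy is first to show that $\phi(y)$ is a \emph{coarse} barycenter of $\bigtriangleup \phi(y_1)\phi(y_2)\phi(y_3)$, and then to invoke the fact that coarse barycenters in a $\delta$-hyperbolic space are uniformly close to true barycenters. For the first claim, each composition $\phi\circ[y_i,y_j]$ is a $(k,\epsilon)$-quasi-geodesic in $Y'$, hence after the standard reparameterization of $(k,\epsilon)$-quasi-geodesics as $k'$-quasi-geodesics, Lemma \ref{stab-qg} yields a constant $D_1 = D_1(\delta,k,\epsilon)$ with Hausdorff distance $d_H\bigl(\phi([y_i,y_j]),\,[\phi(y_i),\phi(y_j)]\bigr) \leq D_1$. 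Combined with the bound from part (1) and the quasi-isometric-embedding inequality, this places $\phi(y)$ in the $D_2$-neighborhood of each side of $\bigtriangleup \phi(y_1)\phi(y_2)\phi(y_3)$, for some $D_2 = D_2(\delta,k,\epsilon)$.

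Finally, I would conclude by showing the coarse uniqueness of barycenters: the set of points lying in the $R$-neighborhood of all three sides of a geodesic triangle in a $\delta$-hyperbolic space has diameter bounded by a constant $\rho(R,\delta)$. The standard proof is that the nearest-point projection of any such point $p$ onto a given side lies within $O(R+\delta)$ of the internal point on that side --- for if the projection were further, the $\delta$-slimness of the triangle would exhibit a strictly shorter route from $p$ to a different side, contradicting the projection being nearest and the uniform bound on $d(p,\cdot)$ to the other two sides. Since the three internal points themselves are pairwise $4\delta$-close by Lemma \ref{hyp-defn}(1), every $R$-coarse barycenter is $O(R+\delta)$-close to (any of) the internal points, and hence any two $R$-coarse barycenters are mutually within $O(R+\delta)$. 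Applying this to $\phi(y)$ (with $R = D_2$) and $y'$ (with $R = 4\delta$) yields the required uniform bound $D_{\ref{barycen}} = D_{\ref{barycen}}(\delta,k,\epsilon)$. The main obstacle is this last coarse-uniqueness step; modulo this standard hyperbolic-geometry fact, the proof reduces to a direct application of quasi-geodesic stability to the image triangle.
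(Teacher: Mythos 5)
Your proposal is correct and follows essentially the same route as the paper: part (1) is the insize observation verbatim, and for part (2) you, like the paper, show $\phi(y)$ is a coarse barycenter of the image triangle via stability of quasi-geodesics and then invoke coarse uniqueness of barycenters. The one place where you diverge, and where your writeup is loosest, is the coarse-uniqueness sub-lemma: you sketch a nearest-point-projection/thinness argument, whereas the paper gives a short direct computation showing that if $p_1,p_2,p_3$ lie on the three sides with pairwise distances $\leq 2D$ then $d(p_i,c_i)\leq 3D$ — remarkably, this part uses only the triangle inequality (by writing each side length $d(A_j,A_k)$ as $d(A_j,p_i)+d(p_i,A_k)$ and telescoping against the Gromov-product formula for the internal point), with $\delta$-hyperbolicity entering only to know the internal points themselves are $4\delta$-close. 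If you want to make your projection argument airtight you would need to be more careful than "slimness exhibits a shorter route," but the paper's algebraic identity is the cleaner way to nail down the constant.
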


\begin{proof} By conclusion (1) of Lemma $\ref{hyp-defn}$ the internal points of $\bigtriangleup y_1y_2y_3$ are $4\delta$-centers
of $\bigtriangleup y_1y_2y_3$. This proves part $(1)$ of the lemma.

For $(2)$, first we make the following  observation:
Let $\{c^{'}_i \}$ be the internal points of $\bigtriangleup \phi(y_1)\phi(y_2)\phi(y_3)$. Suppose
$z\in Y^{'}$ is contained in a $D$-neighborhood of each of the sides of $\bigtriangleup \phi(y_1)\phi(y_2)\phi(y_3)$,
for some $D\geq 0$.
Let $p_i\in [\phi(y_j),\phi(y_k)]$, $i\neq j\neq k\neq i$, be such that $d(p_i,z)\leq D$, $1\leq i,j,k\leq 3$;
then $d(p_i,p_j)\leq 2D$. 

Claim: $d(c^{'}_i,p_i)\leq 3D$, $i=1,2,3$.

Since the proofs are quite similar, we do the computation for $i=3$ for concreteness.
Set $A_i=\phi(y_i), i=1,2,3$. Then\\

\noindent
$2d(p_3,c^{'}_3)  \\ =2|d(A_1,c^{'}_3)-d(A_1,p_3)|\\ =2|(A_2,A_3)_{A_1}-d(A_1,p_3)|\\ =|d(A_1,A_3)+d(A_1,A_2)-d(A_2,A_3)-2d(A_1,p_3)| $\\
$=|\{d(A_1,p_2)+d(A_3,p_2)\}+\{d(A_1,p_3)+d(A_2,p_3)\} $

\hspace{20mm}   $~~~~~~ \, -\{d(A_3,p_1)+d(A_2,p_1)\}-2d(A_1,p_3)|$

\noindent 
$=|\{d(A_1,p_2)-d(A_1,p_3)\}+\{d(A_3,p_2)-d(A_3,p_1)\}+\{d(A_2,p_3)-d(A_2,p_1)\}|$ \\
$\leq |d(A_1,p_2)-d(A_1,p_3)|+|d(A_3,p_2)-d(A_3,p_1)|+|d(A_2,p_3)-d(A_2,p_1)|$ \\
$\leq d(p_2,p_3)+d(p_2,p_1)+d(p_3,p_1)$\\
$\leq 6D$ \\

This proves the claim and thus $d(z,c^{'}_i)\leq 4D$ for all $i$, $1\leq i\leq 3$.

Since $\phi$ is a $(k,\epsilon)$-quasi-isometric embedding, it follows that $\phi(y)$ is contained in the $(4k\delta +\epsilon)-$ neighborhood
of the image under $\phi$ of each of the sides $[y_i,y_j]$, $i\neq j$. Also, the image of $[y_i,y_j]$, for all $i\neq j$, is
a $(k,\epsilon)$-quasi-geodesic, and hence a $(k+\epsilon)$-quasi-geodesic, joining $\phi(y_i)$, $\phi(y_j)$.  
By Lemma \ref{stab-qg}, $\phi(y)$ is contained in a $\{(4k\delta+\epsilon) + D_{\ref{stab-qg}}(\delta, k+\epsilon)\}-$
neighborhood of each of the sides of $\bigtriangleup \phi(y_1)\phi(y_2)\phi(y_3)$. 
Taking $D_{\ref{barycen}}(\delta,k,\epsilon):=4.\{(4k\delta+\epsilon) + D_{\ref{stab-qg}}(\delta, k+\epsilon)\}$, we are through.  \end{proof}

\begin{defn}
Let $X$ be a geodesic metric space and let $A\subseteq X$. For $K\geq 0$,
we say that $A$ is $K$-quasiconvex in $X$ if any geodesic with end points
in $A$ is contained in the $K$-neighborhood of $A$. A subset $A\subset X$
is said to be quasi-convex if it is $K$-quasi-convex for some $K$.
\end{defn}

\begin{lemma}\label{subqc-elem}
 Let $X$ be a geodesic metric space. 
\begin{enumerate}
\item
Let $p,q,r\in X$. Suppose $q$ is a nearest point
projection of $p$ on a geodesic $[q,r]$ joining $q,r$. Then the arc length parametrization
of the union $[p,q]\cup [q,r]$ is a $(3,0)$-quasi-geodesic in $X$.\\
\item Suppose $U\subset X$ is a $K$-quasi-convex set and $p\not \in U$. Suppose $q\in U$ is a 
nearest point projection of $p$ on $U$. Let $r\in U$. Then the arc length parametrization of
the union $[p,q]\cup [q,r]$ is $(3+2K)$-quasi-geodesic in $X$. 
\end{enumerate}

\end{lemma}

\begin{proof}
$1.$ Suppose $p_1\in [p,q]$, $r_1\in [q,r]$. Then $q$ is a nearest point projection of $p_1$
on $[q,r_1]$. Thus $d(p_1,q)\leq d(p_1,r_1)$. Using the triangle inequality,
$d(q,r_1)\leq d(p_1,r_1)+d(p_1,q)\leq 2d(p_1,r_1)$. Hence  $d(p_1,q)+d(q,r_1)\leq 3d(p_1,r_1)$.

$2.$ Let $p_1\in [p,q]$, $r_1\in [q,r]$. There exists $s\in U$ such that
$d(r_1,s)\leq K$, since $U\subset X$ is $K$-quasi-convex. Now, as before,  $q$ is a
nearest point projection of $p_1$ on $U$. Hence
$d(p_1,q)\leq d(p_1,s)\leq d(p_1,r_1)+K$ and so
$d(q,r_1)\leq d(p_1,q)+d(p_1,r_1)\leq 2d(p_1,r_1)+K$. Thus
$d(p_1,q)+d(q,r_1)\leq 3.d(p_1,r_1)+K$.
\end{proof}

\begin{lemma}\label{subqc}
For each $\delta\geq 0$ and $K\geq 0$ there is a constant $D_{\ref{subqc}}$=$D_{\ref{subqc}}(\delta,K)$
such that the following holds: \\
Suppose $X$ is a $\delta$-hyperbolic metric space and $V\subseteq U$
are $K$-quasi-convex subsets of $X$. Let $x\in X$ and let 
$x_1,x_2$ be nearest point projections of $x$ on $U$ and $V$ respectively.
If $x_3$ is a nearest point projection of $x_1$ on $V$,
then $d(x_2,x_3)\leq D_{\ref{subqc}}$.
\end{lemma}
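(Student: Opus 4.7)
The plan is to treat the two statements separately. The first (the quasi-geodesic conclusion) is purely a nearest-projection fact and needs no hyperbolicity, whereas the second uses the first together with Lemma~\ref{stab-qg}.

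For the first part, I will parameterize $\gamma:=[x,x_1]\cup[x_1,x_1']$ by arc length. The only nontrivial case to check is when the two parameters lie on opposite pieces, so write $a=d(\gamma(s),x_1)$ with $\gamma(s)\in[x,x_1]$ and $b=d(\gamma(t),x_1)$ with $\gamma(t)\in[x_1,x_1']$, making the arc-length separation $a+b$. The key observation is that $x_1$ is still a nearest-point projection on $U$ for \emph{every} point of the initial geodesic $[x,x_1]$: for $u\in U$, the triangle inequality $d(x,u)\le d(x,\gamma(s))+d(\gamma(s),u)$ together with $d(x,u)\ge d(x,x_1)$ forces $d(\gamma(s),u)\ge a$. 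Combining this with the fact that $\gamma(t)$ lies within $K$ of $U$ (by $K$-quasiconvexity applied to the chord $[x_1,x_1']$) gives $d(\gamma(s),\gamma(t))\ge a-K$. A second lower bound $d(\gamma(s),\gamma(t))\ge|a-b|$ is immediate from triangle inequality through $x_1$. Adding the two bounds yields $a+b\le 3\,d(\gamma(s),\gamma(t))+2K$, and since the trivial length bound supplies the upper estimate, $\gamma$ is a $(3+2K)$-quasi-geodesic.

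For the second part I will apply the first part twice and then invoke stability. Taking $U$ together with the further point $x_3\in V\subseteq U$ gives one $(3+2K)$-quasi-geodesic $\gamma_U:=[x,x_1]\cup[x_1,x_3]$ from $x$ to $x_3$, and taking $V$ together with the further point $x_3\in V$ gives a second $(3+2K)$-quasi-geodesic $\gamma_V:=[x,x_2]\cup[x_2,x_3]$ with the same endpoints. By Lemma~\ref{stab-qg} each lies within $D_1:=D_{\ref{stab-qg}}(\delta,3+2K)$ of any geodesic $[x,x_3]$, hence $\gamma_U,\gamma_V$ are within Hausdorff distance $2D_1$. Pick $y\in\gamma_U$ with $d(x_2,y)\le 2D_1$ and case-split on which piece of $\gamma_U$ contains $y$. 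If $y\in[x,x_1]$, writing $a=d(y,x_1)$ and reusing the projection argument of the first part (with $y$ in place of $\gamma(s)$) gives $a\le d(y,x_2)\le 2D_1$; then $d(x_1,x_2)\le 4D_1$, and since $x_3$ is nearest to $x_1$ in $V$, $d(x_1,x_3)\le d(x_1,x_2)\le 4D_1$, whence $d(x_2,x_3)\le 8D_1$. If instead $y\in[x_1,x_3]$, set $t=d(y,x_3)$; any $v\in V$ with $d(y,v)\le 2D_1$ satisfies $d(x_1,x_3)\le d(x_1,v)\le d(x_1,y)+2D_1=d(x_1,x_3)-t+2D_1$, forcing $t\le 2D_1$ and hence $d(x_2,x_3)\le 4D_1$. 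Setting $D_{\ref{subqc}}(\delta,K):=8\,D_{\ref{stab-qg}}(\delta,3+2K)$ completes the proof.

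I do not expect a serious obstacle: the only subtle point is the repeated use of the small observation that $x_1$ (respectively $x_3$) remains a nearest-point projection from other relevant points, which is what drives both the quasi-geodesic estimate in the first part and Case A in the second. Once that is in hand, stability of quasi-geodesics does the rest.
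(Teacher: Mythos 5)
Your first part matches the paper's argument (the paper simply asserts that $[x,x_1]\cup[x_1,x_1']$ is a $(3,2K)$-quasi-geodesic and you supply the standard verification). Your second part is correct but routes the argument a little differently. The paper compares $[x,x_1]\cup[x_1,x_2]$ with a geodesic $[x,x_2]$ to find $x_4\in[x,x_2]$ near $x_1$, separately compares $[x_1,x_3]\cup[x_3,x_2]$ with $[x_1,x_2]$ to find $x_3'\in[x_1,x_2]$ near $x_3$, and then uses $\delta$-slimness of the triangle $\triangle x_1x_2x_4$ to push $x_3'$ onto $[x_2,x_4]\subset[x,x_2]$, where the projection observation for $x_2$ applies; this gives the constant $4D_{\ref{stab-qg}}(\delta,3+2K)+2\delta$. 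You instead choose the two auxiliary quasi-geodesics $\gamma_U$ and $\gamma_V$ so that they share both endpoints $x$ and $x_3$, which lets you compare them directly by Hausdorff distance $\le 2D_{\ref{stab-qg}}$ without needing the intermediate slimness step; the price is a two-case analysis on where the point $y\in\gamma_U$ close to $x_2$ falls, and a slightly larger constant $8D_{\ref{stab-qg}}$. Both hinge on exactly the same two tools — the first part applied to a second pair of points, and stability of quasi-geodesics — so the difference is a cosmetic rearrangement rather than a change of method; but your choice of common-endpoint quasi-geodesics is a clean variant that eliminates the need to invoke triangle slimness.
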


 \begin{proof}
By Lemma \ref{subqc-elem}(2), $[x,x_1]\cup [x_1,x_2]$ is a $(3+2k)-$quasi-geodesic.
Hence by Lemma $\ref{stab-qg}$, there is a point $x_4\in [x,x_2]$ with 
$d(x_1,x_4)\leq D_{\ref{stab-qg}}(\delta,3+2K)=D$, say. Similarly,
$[x_1,x_3]\cup [x_3,x_2]$ is a $(3+2K)$-quasi-geodesic and thus
there is a point $x^{'}_3\in [x_1,x_2]$ such that $d(x_3,x^{'}_3)\leq D$.
Using the $\delta$-slimness of $\bigtriangleup x_1x_2x_4$, there exists
$x^{''}_3\in [x_2,x_4]$ such that $d(x^{'}_3,x^{''}_3)\leq D+\delta$.
Hence $d(x_3,x^{''}_3)\leq 2D +\delta$. 
Since $x_2$ is a nearest point projection of $x^{''}_3$ on $V$, we have $d(x_2,x^{''}_3)\leq 2D+\delta$.
Thus $d(x_2,x_3)\leq d(x_2,x^{''}_3)+d(x^{''}_3,x_3)\leq 4D+2\delta$.
Setting $D_{\ref{subqc}}=4D+2\delta$ completes the proof of the lemma. \end{proof}

\begin{defn}
Suppose $Y$ is a metric space and $U,V\subset Y$. We say that  $U,V$ are $\epsilon$-{\em separated}
if ${\rm inf}\{d(y_1,y_2):y_1\in U,y_2\in V\}\geq \epsilon$. A collection of subsets $\{U_{\alpha}\}$ of $Y$ is said
to be  uniformly separated if there exists an $\epsilon>0$ such that any pair of distinct elements of the
collection $\{U_{\alpha}\}$ is $\epsilon$-separated.
\end{defn}

\begin{defn} Suppose $Y$ is a $\delta$-hyperbolic metric space
and $U_1,U_2$ are two quasi-convex subsets. Let  $D>0$. 
We say that $U_1,U_2$ are mutually $D$-cobounded, or simply  $D$-cobounded, if any nearest point
projection of $U_1$ to $U_2$ has diameter at most $D$ and vice versa.
\end{defn}

\begin{lemma}\label{cobdd-cor}
Given $\delta\geq 0$ and $K\geq 0$ there are constants $R=R_{\ref{cobdd-cor}}(\delta, K)$ and $D=D_{\ref{cobdd-cor}}(\delta, K)$ such 
that the following  holds:\\
Suppose $X$ is a $\delta$-hyperbolic metric space and $U,V\subset X$ are two $K$-quasiconvex and $R$-separated subsets.
Then $U,V$ are $D$-cobounded.
\end{lemma}

\begin{proof}
Let $V_1 (\subset V)$ be  the set of all  nearest point projections from  points of $U$ to $V$. We want to show that $V_1$ is a set
of uniformly bounded diameter for large enough $R$.

Suppose that $x_1,x_2\in U$ and let $y_1,y_2\in V$ be respectively
their nearest point projections. Then by  Lemma $\ref{subqc-elem}$(2),
$[x_1,y_1]\cup [y_1,y_2]$ and $[x_2,y_2]\cup [y_2,y_1]$ are $K_1-$ quasi-geodesics for $K_1=(3+2K)$. 
If $d(y_1,y_2)\geq D_1:=L_{\ref{local-global-qg}}(\delta,K_1,K_1)$ then
the curve $[x_1,y_1]\cup [y_1,y_2]\cup [y_2,x_2]$ is a $\lambda=\lambda_{\ref{local-global-qg}}(\delta,K_1,K_1)$-quasi-geodesic
by Lemma \ref{local-global-qg}.
Hence every point of this curve is within distance $D_{\ref{stab-qg}}(\delta,\lambda)+K$ from a point in $U$.
Choosing $R= D_{\ref{stab-qg}}(\delta,\lambda)+K+1$ proves the Lemma. 
\end{proof}

\begin{lemma}\label{cobdd-lemma}
Given $\delta\geq 0$ and $K\geq 0$ there are constants $R=R_{\ref{cobdd-lemma}}(\delta, K)$ and $D=D_{\ref{cobdd-lemma}}(\delta, K)$ such 
that the following  holds:\\
Suppose $X$ is a $\delta$-hyperbolic metric space and $U,V\subset X$ are two $K$-quasiconvex and $R$-separated subsets.
Then there are points $x_0\in U$, $y_0\in V$ such that $[x_0,y_0]\subset N_D([x,y])$, for all $x\in U$ and $y\in V$.
\end{lemma}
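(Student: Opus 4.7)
The plan is to pick $x_0\in U$ and $y_0\in V$ that nearly realize $d(U,V)$, and to exhibit a uniform constant $R$ so large that for \emph{every} $x\in U$ and $y\in V$ the piecewise-geodesic path
\[
\gamma_{x,y}\;:=\;[x,x_0]\cup[x_0,y_0]\cup[y_0,y]
\]
is a uniform quasi-geodesic in $X$. Stability of quasi-geodesics (Lemma \ref{stab-qg}) will then force $\gamma_{x,y}$, and in particular $[x_0,y_0]\subset\gamma_{x,y}$, to lie in a uniform neighbourhood of any geodesic $[x,y]$.

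First I would choose $x_0\in U$, $y_0\in V$ with $d(x_0,y_0)\leq\inf\{d(u,v):u\in U,v\in V\}+1$. A standard hyperbolic projection estimate (derivable from the first assertion of Lemma \ref{subqc}) shows that $x_0$ lies within a constant $C_1=C_1(\delta,K)$ of any genuine nearest point projection $u^\ast$ of $y_0$ onto $U$; symmetrically, $y_0$ is at distance $\leq C_1$ from a nearest projection of $x_0$ onto $V$. In this sense $x_0$ and $y_0$ are \emph{coarse} nearest projections of each other.

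Next, for any $x\in U$, applying Lemma \ref{subqc} to the pair $(y_0,U)$ with auxiliary point $x$ gives that $[y_0,u^\ast]\cup[u^\ast,x]$ is a $(3+2K)$-quasi-geodesic. Perturbing $u^\ast$ to $x_0$ (at cost at most $C_1$) and using the triangle inequality yields that $[y_0,x_0]\cup[x_0,x]$ is a $(3+2K,\epsilon_1)$-quasi-geodesic with $\epsilon_1=\epsilon_1(\delta,K)$; a symmetric statement holds for $[x_0,y_0]\cup[y_0,y]$. Set $L:=L_{\ref{local-global-qg}}(\delta,3+2K,\epsilon_1)$ and define $R:=L+1$. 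If $U$ and $V$ are $R$-separated, then $d(x_0,y_0)\geq R>L$, so any sub-arc of $\gamma_{x,y}$ of length at most $L$ contains at most one of the two corners $x_0,y_0$, and hence is a sub-arc of one of the two $(3+2K,\epsilon_1)$-quasi-geodesic pieces above. Consequently $\gamma_{x,y}$ is a $(3+2K,\epsilon_1,L)$-local quasi-geodesic, and Lemma \ref{local-global-qg} promotes it to a global $\lambda$-quasi-geodesic with $\lambda=\lambda_{\ref{local-global-qg}}(\delta,3+2K,\epsilon_1)$.

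Finally, Lemma \ref{stab-qg} with $D:=D_{\ref{stab-qg}}(\delta,\lambda)$ gives $[x_0,y_0]\subset\gamma_{x,y}\subset N_D([x,y])$, which is the desired conclusion. The main obstacle is ensuring in Step 3 that each length-$L$ window meets at most one corner of $\gamma_{x,y}$; this is precisely what forces $R$ to be chosen comparable to the local-global scale $L_{\ref{local-global-qg}}(\delta,3+2K,\epsilon_1)$. Everything else reduces to standard quasi-convex projection manipulations together with the local-to-global principle for quasi-geodesics in hyperbolic spaces.
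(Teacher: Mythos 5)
Your proof is correct, and it follows a genuinely different path from the one in the paper.

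The paper's argument first establishes a coboundedness statement: the set $V_1$ of nearest point projections of points of $U$ onto $V$ has uniformly bounded diameter. This is shown by considering the four-point path $[x_1,y_1]\cup[y_1,y_2]\cup[y_2,x_2]$ (bouncing off $V$ at $y_1,y_2$), applying the local-to-global principle to conclude it is a global quasi-geodesic when $d(y_1,y_2)$ is large, and then observing that this quasi-geodesic would have to run within $D_{\ref{stab-qg}}+K$ of $U$, contradicting $R$-separation. With $V_1$ bounded, the paper picks $y_0\in V_1$, $x_0$ a nearest projection of $y_0$ onto $U$, and chains together three fellow-traveling estimates to push $[x_0,y_0]$ into a neighborhood of $[x,y]$. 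Your argument instead picks $x_0,y_0$ approximately realizing $d(U,V)$ and makes a single application of the local-to-global principle directly to the path $[x,x_0]\cup[x_0,y_0]\cup[y_0,y]$, using the separation hypothesis precisely to ensure the two corners of this path are too far apart to fit inside one local window. Both proofs rest on the same two tools (Lemma~\ref{subqc} for the $(3+2K)$-quasi-geodesicity of concatenations through a nearest projection, and Lemma~\ref{local-global-qg}), but your route is more economical, trading the intermediate coboundedness claim for one stability estimate at the end; the paper's route yields that coboundedness statement en passant, which is consonant with the role $D$-coboundedness plays later in the paper. One small point: the ``perturbing $u^\ast$ to $x_0$'' step is best made precise by re-running the projection estimate of Lemma~\ref{subqc} with $x_0$ in place of the true projection $u^\ast$, using that $x_0$ is a nearest point of $U$ from $y_0$ up to additive error $1$ (and hence from every $p\in[y_0,x_0]$ as well); this yields that $[y_0,x_0]\cup[x_0,x]$ is a $(3,2K+2)$-quasi-geodesic, which suffices. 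Simply perturbing an endpoint of a quasi-geodesic and invoking the triangle inequality does not literally preserve quasi-geodesicity, though the intended conclusion is correct.
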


\begin{proof}
First  consider the set $V_1 (\subset V)$ of all  nearest point projections from  points of $U$ onto $V$. By Lemma
\ref{cobdd-cor}, there exists $R (=R_{\ref{cobdd-cor}})$ such that the diameter of $V_1$ is less than $D=D_{\ref{cobdd-cor}}$
whenever $U,V$ are  $R$-separated.

Choose any point $y_0\in V_1$ and let $x_0$ be a nearest point projection of $y_0$ onto $U$. Let
$x\in U$, $y\in V$ be any pair of points and let $y_1$ be a nearest point projection of $x$ onto $V$. Since $y_1\in V_1$,
it follows that $d(y_0,y_1)\leq D_1$.  By Lemma $\ref{subqc-elem}$ (2)
$[y_0,x_0]\cup [x_0,x]$ is a  $(3+2K)$-quasi-geodesic. Since $X$ is a $\delta$-hyperbolic metric space, 
 the Hausdorff distance between this quasi-geodesic and the geodesic
$[x_0,x]$ is at most $D_{\ref{stab-qg}}(\delta, 3+2K)$ by Lemma \ref{stab-qg}. Similarly the Hausdorff distance 
 between $[x,y_1]\cup [y_1,y]$ and 
$[x,y]$ is  at most $D_{\ref{stab-qg}}(\delta, 3+2K)$. Lastly, since $d(y_0,y_1)\leq D_1$, it follows that 
the Hausdorff distance  between  $[y_0,x]$ and $[y_1,x]$ is at most $\delta+ D_1$. The Lemma follows by choosing 
$D= 2D_{\ref{stab-qg}}(\delta, 3+2K)+D_1+\delta$. \end{proof}

The following is a direct consequence of the
proofs of Lemmas \ref{cobdd-cor} and \ref{cobdd-lemma} (cf. Lemma 3.3 of \cite{mitra-trees}).

\begin{cor}\label{cobdd-cor2}  Given $\delta\geq 0$ and $D, K\geq 0$ there exists $C=C_{\ref{cobdd-cor2}}(\delta, D, K)$ such 
that the following holds.\\
Suppose $X$ is a $\delta$-hyperbolic metric space and $U,V\subset X$ are two $K$-quasiconvex and $D$-cobounded subsets.
Choose $a \in U, b\in V$ such that
$d(a,b) = d(U, V)$, and $[c,a] \subset U$, $[b,d] \subset V$ are $K$-quasigeodesics, then $[c,a] \cup [a,b] \cup [b,d]$
is a $C$-quasigeodesic.
\end{cor}

The following Lemma  \cite{mitra-trees} says that quasi-isometries and nearest point
projections `almost commute'. We include a proof for completeness.

\begin{lemma} (Lemma 3.5 of \cite{mitra-trees}) \label{qi-comm-proj}
For all $\delta\geq 0$ and $k\geq 1$ there is a constant $D_{\ref{qi-comm-proj}}=D_{\ref{qi-comm-proj}}(\delta,k)$
such that the following  holds:\\
Suppose $\phi:X\rightarrow Y$ is a $k$-quasi isometric embedding of
$\delta$-hyperbolic metric spaces. Let $x,y,z\in X$ and let $\gamma$
be a geodesic in $X$ joining $x,y$. Let $u$ be a nearest point projection
of $z$ onto $\gamma$ and suppose $v$ is a nearest point projection of $\phi(z)$
onto a geodesic joining $\phi(x)$ and $\phi(y)$, 
then $d(v,\phi(u))\leq D_{\ref{qi-comm-proj}}$.
\end{lemma}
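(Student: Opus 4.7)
The plan is to identify both $u$ and $v$ with internal points, hence barycenters, of the corresponding geodesic triangles in $X$ and in $Y$, and then to transport between the two sides of $\phi$ using Lemma \ref{barycen}(2).

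Concretely, let $c\in\gamma$ denote the internal point of $\Delta xyz$ opposite $z$, and let $c'\in[\phi(x),\phi(y)]$ denote the internal point of $\Delta\phi(x)\phi(y)\phi(z)$ opposite $\phi(z)$. I would first invoke the standard hyperbolic-geometry fact---derivable from Lemma \ref{hyp-defn}(2), that triangles are $6\delta$-thin---that in a $\delta$-hyperbolic space the nearest point projection of a vertex onto the opposite side of a geodesic triangle is within some $C=C(\delta)$ of the internal point on that side. This gives $d(u,c)\leq C$ and $d(v,c')\leq C$.

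Next, by Lemma \ref{hyp-defn}(1) the insize of every geodesic triangle is at most $4\delta$, and it follows directly that internal points are barycenters in the sense of Lemma \ref{barycen}(1): each internal point lies on one side and is within $4\delta$ of the other two. Since $\phi$ is a $(k,k)$-quasi-isometric embedding of $\delta$-hyperbolic spaces, Lemma \ref{barycen}(2) yields $d(\phi(c),c')\leq D_{\ref{barycen}}(\delta,k,k)$. Combined with the distortion estimate $d(\phi(u),\phi(c))\leq k\,d(u,c)+k\leq kC+k$ coming from the quasi-isometric embedding property, the triangle inequality produces
\[
d(v,\phi(u))\;\leq\; d(v,c')+d(c',\phi(c))+d(\phi(c),\phi(u))\;\leq\; C+D_{\ref{barycen}}(\delta,k,k)+kC+k,
\]
a function of $\delta$ and $k$ alone, which we take as $D_{\ref{qi-comm-proj}}(\delta,k)$.

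The main obstacle is justifying the claim that a nearest point projection onto a side of a geodesic triangle is within $O(\delta)$ of the internal point on that side, since it is not recorded explicitly in the excerpt. One derives it from the $6\delta$-thinness in Lemma \ref{hyp-defn}(2): parametrize $[x,c]\subseteq[x,y]$ and the initial subsegment $[x,c_y]\subseteq[x,z]$ by arclength from $x$; the corresponding points are then $6\delta$-fellow-travelers, which forces $d(z,c)\leq d(z,[x,y])+O(\delta)$. The minimality $d(z,u)=d(z,[x,y])$ together with the Gromov-product identity $d(x,c)=(y\cdot z)_x$ and the standard estimate $|d(x,u)-(y\cdot z)_x|\leq O(\delta)$ then pin $u$ to within $O(\delta)$ of $c$ on $\gamma$.
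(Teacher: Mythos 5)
Your proof is correct, and since the paper itself only cites Lemma 3.5 of Mitra's \cite{mitra-trees} without reproducing a proof, the comparison is with Mitra's original argument. Your route — identify $u$ and $v$ (up to $O(\delta)$) with the internal points $c$ and $c'$ of $\Delta xyz$ and $\Delta\phi(x)\phi(y)\phi(z)$, view these as barycenters via Lemma \ref{barycen}(1), and transport across $\phi$ using Lemma \ref{barycen}(2) — is essentially the standard argument and matches the spirit of Mitra's proof, which also rests on the coarse characterization of nearest-point projection to a side of a triangle via the internal point/Gromov product.

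One small point worth tightening: in your final paragraph you invoke "the standard estimate $|d(x,u)-(y\cdot z)_x|\leq O(\delta)$" to pin $u$ near $c$, but that estimate \emph{is} the claim you are trying to justify. The missing step is short: from thinness (or insize $\leq 4\delta$), the point $w\in[x,c]$ with $d(x,w)=t$ satisfies $d(z,w)\geq d(x,z)-t-6\delta$, while $d(z,c)\leq d(x,z)-(y\cdot z)_x+4\delta$; so any $w$ with $t<(y\cdot z)_x-10\delta$ is strictly farther from $z$ than $c$, forcing $d(x,u)\geq (y\cdot z)_x-10\delta$. Arguing symmetrically from $y$ gives $d(u,c)\leq 10\delta$. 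With that filled in, the remainder of the proof — $d(\phi(c),c')\leq D_{\ref{barycen}}(\delta,k,k)$ by Lemma \ref{barycen}(2), $d(\phi(u),\phi(c))\leq k\,d(u,c)+k$, and the triangle inequality — is complete and yields a constant $D_{\ref{qi-comm-proj}}(\delta,k)$ as required.
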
 

\begin{proof}
Let $\{c_i\}$ and $\{c^{'}_i\}$ be respectively the internal points of $\bigtriangleup xyz\subset X$
and $\bigtriangleup \phi(x)\phi(y)\phi(z)$. By Lemma $\ref{subqc-elem}$ (1) the unions
$[x,u]\cup [u,z]$ and $[y,u]\cup [u,z]$ are both $(3,0)$-quasi-geodesics in $X$. It follows that they are $3$-quasi-geodesics.
Hence $u$ is contained in the $D_{\ref{stab-qg}}(\delta, 3)$-neighborhood of both $[x,z]$ and $[y,z]$.
Similarly, $v$ is contained in the $D_{\ref{stab-qg}}(\delta, 3)$-neighborhood of both $[\phi(x),\phi(z)]$ and 
$[\phi(y),\phi(z)]$. Therefore, using the proof of the claim in the proof of the Lemma \ref{barycen}(2), we have
$d_X(u,c_i)\leq 4.D_{\ref{stab-qg}}(\delta,3)$ and $d_Y(v,c^{'}_i)\leq 4.D_{\ref{stab-qg}}(\delta,3)$, for
$i=1,2,3$. 

Now for each $i$, $1\leq i\leq 3$ we have the following:\\
Since $\phi$ is a $k$-quasi-isometric embedding,  we have 
$d_Y(\phi(c_i), c^{'}_i)\leq D_{\ref{barycen}}(\delta, k,k)$ by Lemma $\ref{barycen}$(2). Thus, 
$d_Y(\phi(c_i), v)\leq d_Y(\phi(c_i),c^{'}_i)+ d_Y(c^{'}_i, v)\leq D_{\ref{stab-qg}}(\delta, 3)+D_{\ref{barycen}}(\delta, k,k)$. Again, using the fact that $\phi$
is a $k$-quasi-isometric embedding we have $d(\phi(c_i), \phi(u))\leq k.d_X(c_i,u)+k \leq k.D_{\ref{stab-qg}}(\delta, 3)+k$.
Thus 
$d_Y(\phi(u),v)\leq d_Y(\phi(u),\phi(c_i)) + d_Y(\phi(c_i), v)\leq 
k+ (k+1).D_{\ref{stab-qg}}(\delta, 3)+ D_{\ref{barycen}}(\delta, k,k)$.
Choosing $D_{\ref{qi-comm-proj}}= k+(k+1).D_{\ref{stab-qg}}(\delta, 3)+D_{\ref{barycen}}(\delta, k,k)$ completes the proof.
\end{proof}

To prove our main theorem, the following characterization of hyperbolicity
turns out to be very useful.

\begin{lemma} (Proposition 3.5 of \cite{hamenst-teich})\label{ori-hyp-lemma}
Suppose X is a geodesic metric space and
there is a collection of rectifiable curves 
$\{ c(x,y):x,y\in X\}$, one for each pair
of distinct points $x,y\in X$, and constants $D_1,D_2\geq 1$ such that for all $x,y,z\in X$ the
following hold: 
\begin{enumerate}
\item  If $d(x,y)\leq D_1$ then the length of the curve
$c(x,y)$ is less than or equal to $D_2$.
\item  If $x^{'},y^{'}\in c(x,y)$ then the Hausdorff
distance between $c(x^{'},y^{'})$ and the segment of $c(x,y)$
between $x^{'}$ and $y^{'}$ is bounded by $D_2$.
\item  The triangle formed by the curves joining any three points in $X$ is $D_2$-slim: $c(x,y) \subseteq N_{D_2}(c(x,z)\bigcup c(y,z))$.
\end{enumerate}

Then $X$ is $\delta_{\ref{ori-hyp-lemma}}=\delta_{\ref{ori-hyp-lemma}}(D_1,D_2)$-hyperbolic
and each of the curves $c(x,y)$ is a 
$K_{\ref{ori-hyp-lemma}}=K_{\ref{ori-hyp-lemma}}(D_1,D_2)$-quasi-geodesic in $X$.
\end{lemma}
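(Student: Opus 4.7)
The plan is to prove Hamenstadt's characterization of hyperbolicity in two stages: first establish that the curves $c(x,y)$ are uniform quasi-geodesics, then use this together with condition $(3)$ to transfer slimness from $c$-triangles to honest geodesic triangles.

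For the first stage, the aim is a bound $\ell(c(x,y)) \leq K\,d(x,y) + K$ with $K=K(D_1,D_2)$. I would introduce
\[
h(R) := \sup\{\ell(c(u,v)) : u,v \in X,\ d(u,v) \leq R\},
\]
observe that $h(D_1) \leq D_2$ by $(1)$, and aim for a recursive estimate of the form $h(2R) \leq 2 h(R) + C$ for $C=C(D_1,D_2)$. Given $u,v$ with $d(u,v) \leq 2R$, let $m$ be the midpoint of a geodesic $[u,v]$, so $d(u,m),d(m,v)\leq R$. Condition $(3)$ places $c(u,v)$ in the $D_2$-neighborhood of $c(u,m)\cup c(m,v)$. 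To upgrade this Hausdorff bound to a length bound, I would use $(2)$ to split $c(u,v)$ into two consecutive sub-arcs, each shadowing (in Hausdorff distance) one of $c(u,m)$ or $c(m,v)$, and compare lengths. Iterating the recursive inequality from the base case $h(D_1) \leq D_2$ yields $h(R) \leq K R + K$, so each $c(x,y)$ is a $K$-quasi-geodesic.

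For the second stage, I would show that geodesics in $X$ lie uniformly close to the corresponding $c$-curves. Given a geodesic $[x,y]$ and $p \in [x,y]$, condition $(3)$ applied to the $c$-triangle on $\{x,p,y\}$ covers $c(x,y)$ by the $D_2$-neighborhoods of $c(x,p)$ and $c(p,y)$. Since the endpoints $x$ and $y$ of $c(x,y)$ lie in $c(x,p)$ and $c(p,y)$ respectively, tracking $c(x,y)$ from $x$ to $y$ produces a transition point $w \in c(x,y)$ lying within $D_2$ of both $c(x,p)$ and $c(p,y)$. Using the quasi-geodesic property just established together with $(2)$ (to show the only coarse intersection of $c(x,p)$ and $c(p,y)$ occurs near $p$), I would bound $d(w,p)$ by some $R=R(D_1,D_2)$; hence $[x,y] \subseteq N_R(c(x,y))$, and a symmetric argument shows $c(x,y) \subseteq N_R([x,y])$. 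Combining this uniform Hausdorff proximity with the $D_2$-slimness of $c$-triangles yields uniform slimness of geodesic triangles, proving $\delta$-hyperbolicity with $\delta=\delta(D_1,D_2)$.

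The main obstacle is the first stage. Conditions $(2)$ and $(3)$ provide only Hausdorff bounds, whereas a \emph{length} bound on $c(x,y)$ is required to establish the quasi-geodesic property. The delicate step is extracting a comparison of the form $\ell(c(u,v)) \leq \ell(c(u,m)) + \ell(c(m,v)) + O(1)$ from the Hausdorff-closeness given by condition $(3)$; this requires constructing an explicit partition of $c(u,v)$ into shadowing sub-arcs via condition $(2)$, with careful bookkeeping so that the shadowing does not introduce overlaps that inflate the total length past the desired additive constant. Once this length comparison is in hand, both the quasi-geodesic bound on $c(x,y)$ and the hyperbolicity of $X$ follow by the routine iteration and by standard stability arguments in the slim-triangle geometry.
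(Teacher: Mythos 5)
The central gap is in your first stage: the recursive inequality $h(2R) \leq 2h(R)+C$ cannot be extracted from conditions $(2)$ and $(3)$ by the method you sketch. Both conditions furnish only \emph{Hausdorff-distance} control, and Hausdorff proximity between two rectifiable curves places no constraint whatsoever on the ratio of their lengths (a curve may oscillate arbitrarily within a $D_2$-neighborhood of a short curve). Splitting $c(u,v)$ at a transition point $w$ and invoking $(2)$ to compare the two sub-arcs with $c(u,w)$ and $c(w,v)$ still only yields Hausdorff bounds; you would then need a further comparison $\ell(c(u,w))+\ell(c(w,v))\lesssim \ell(c(u,m))+\ell(c(m,v))$, which again is a length estimate that Hausdorff proximity does not provide. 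So the ``delicate step'' you flag is not merely delicate --- with the tools you invoke it is out of reach, and the whole first stage does not get off the ground. Note also that your second stage imports hyperbolicity implicitly: the assertion that the only coarse overlap of $c(x,p)$ and $c(p,y)$ is near $p$ is a thin-bigon statement, i.e.\ it presupposes exactly the hyperbolicity you are trying to prove, so the argument is circular as written.

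The correct order is the reverse of yours, and it avoids length estimates entirely in the hyperbolicity step. This is the ``guessing geodesics'' mechanism of Bowditch (and Masur--Minsky): using only conditions $(1)$ and $(3)$ and a dyadic subdivision of $[x,y]$, one shows that every point of $c(x,y)$ lies within $O(D_2\log d(x,y))$ of $[x,y]$ (after $n\approx\log_2(d(x,y)/D_1)$ subdivisions the terminal curves have $\leq D_2$ diameter by $(1)$), and a further bootstrap, which is the real content of that proof, removes the logarithm to give a uniform Hausdorff comparison between $c(x,y)$ and geodesics. Slimness of geodesic triangles, and hence a $\delta$ depending only on $D_1,D_2$, then drops out of condition $(3)$. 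Only after hyperbolicity is established does one return to the quasi-geodesic claim, and this is where condition $(2)$ genuinely enters (together with $(1)$ and the now-available stability of quasi-geodesics) to control backtracking along $c(x,y)$; it is not needed for, and cannot be used to shortcut, the hyperbolicity part. I encourage you to rebuild the argument around this subdivision lemma rather than a length recursion.
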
 

This lemma has the following straightforward corollary, which is a discrete
version of Lemma \ref{ori-hyp-lemma} (see Lemma \ref{coarse} for instance). A {\it discrete path} $c(x,y)$ will refer to
 a finite sequence of points. The {\it length} of a discrete path is the
sum of the distances between all  pairs of successive points in the discrete path. In this context, a triangle will refer to
the union of three discrete paths of the form $c(x,y)$, $c(y,z)$, $c(z,x)$.

\begin{cor}\label{hyp-lemma} Given $D, C_1,C_2 >0$ and $\Phi:\mathbb R^{+}\rightarrow \mathbb R^{+}$, there exist
$\delta_{\ref{hyp-lemma}} =\delta_{\ref{hyp-lemma}}(D,C_1,C_2,\Phi) \geq 0$ and $K_{\ref{hyp-lemma}}=K_{\ref{hyp-lemma}}(D,C_1,C_2,\Phi) \geq 1$
such that the following hold:\\
 Let $X$ be a geodesic metric space and let $X_1\subset X$ be a discrete set such that $X=N_D(X_1)$. \\
Further suppose that for all $x\neq y\in X_1$, there is a discrete path  $c(x,y)$ in $X_1$ connecting $x,y$
such that:
\begin{enumerate}
\item Distance between successive points of $c(x,y)$ is at most $C_1$.
\item If $d(x,y)\leq N $ then the number of points on the discrete path $c(x,y)$ 
      is at most $f(N)$.
\item If $x_1\neq y_1$ are two points of the discrete path $c(x,y)$, then the Hausdorff 
distance between the discrete path $c(x_1,y_1)$ and the discrete
subpath of $c(x,y)$ connecting $x_1,y_1$ is at most $C_2$.
\item For any three points $x,y,z\in X_1$, the triangle formed by the paths $c(x,y),c(y,z)$ and $c(x,z)$ is $C_2$-slim.
\end{enumerate}
 Then $X$ is $\delta_{\ref{hyp-lemma}}$-hyperbolic and the discrete paths are 
$K_{\ref{hyp-lemma}}$-quasi-geodesics in $X$.
\end{cor} 

\begin{proof} Let $\phi:X\rightarrow X_1$ be a map such that for all $x\in X$,  $d(x,\phi (x))\leq D$.

Given $x,y\in X$ define a curve $\beta(x,y)$
joining $x,y$ as follows: Let $\phi(x)=v_1,v_2,\ldots, v_n=\phi(y)$ be the set of successive points on $c(\phi(x),\phi(y))$. 
Join $x$ to $\phi(x)$, 
 $v_i$ to $v_{i+1}$, for $1\leq i \leq n-1$, and  $\phi(y)$ to $y$ by geodesics in $X$ to obtain $\beta(x,y)$. 

We check that the  curves
$\{\beta(x,y)\}$ satisfy the conditions of Lemma $\ref{ori-hyp-lemma}$: 
\begin{enumerate}
\item That the paths $\beta(x,y)$ are rectifiable follows from conditions 1 and 2.
\item We verify that condition 1 of Lemma \ref{ori-hyp-lemma} is satisfied with $D_1=1$.
Let $x,y\in X$ such that $d(x,y)\leq 1$. Then $d(\phi(x),\phi(y))\leq d(\phi(x),x)+d(x,y)+d(y,\phi(y))\leq 2D+1$.
Hence there are at most $\Phi(2D+1)$ points on the discrete path $c(\phi(x),\phi(y))$. 
Let $\phi(x)=v_1,v_2,\ldots, v_n=\phi(y)$ be the set of successive points on $c(\phi(x),\phi(y))$.
Then $n\leq \Phi(2D+1)$ and hence the length of the path 
$\beta(x,y)=d(x,\phi(x))+d(y,\phi(y))+\sum_{i=1}^{n}d(v_i,v_{i+1})\leq 2D+\Phi(2D+1)C_1$. Thus we may choose
$D_2\geq 2D+\Phi(2D+1)C_1$.
\item Conditions 2 and 3 of Lemma \ref{ori-hyp-lemma} follow from  conditions $3,4$.
In fact,  choosing $D_2\geq C_2+2C_1$ is enough for this. 
\end{enumerate}
Hence, choosing 
$D_2=max \{2D+\Phi(2D+1)C_1, C_2+2C_1 \}$ completes the proof. 
 \end{proof}

\subsection{Trees of hyperbolic and relatively hyperbolic metric spaces}\label{thrhms}
We refer  to \cite{farb-relhyp} for a detailed account of
 relative hyperbolicity.  We also refer to \cite{mahan-reeves}  for  the definitions and 
results of this subsection.

Suppose $(X,d)$ is a path metric space and let
$\HH = \{ H_\alpha\}$ be a collection of path-connected, uniformly separated subsets of $X$.
Then Farb \cite{farb-relhyp} defines the {\bf electric space} (or coned-off space) $\EE {(X, \HH )} $
corresponding to the
pair $(X,\HH )$ as a metric space which consists of $X$ and a
collection of vertices  $v_\alpha$ (one for each $H_\alpha \in \HH$)
such that each point of $H_\alpha$ is joined to (equivalently, coned off at)
$v_\alpha$ by an edge of length $\frac{1}{2}$. The sets $H_\alpha$ shall be
referred to as  {\it horosphere-like sets} and the vertices $v_{\alpha}$ as cone-points. Geodesics (resp. $P$-quasigeodesics)
in  $\EE {(X, \HH )} $ will be called {\it electric geodesics (resp. electric $P$-quasigeodesics)}.

When the collection $\HH = \{ H_\alpha\}$ is {\it not necessarily separated}, a slightly modified description is given in 
\cite{mahan-split} and \cite{mahan-reeves},  
where we attach a metric product $H_\alpha \times [0,1]$ to $X$, identifying $H_\alpha \times \{ 0 \}$ 
with $H_\alpha \subset X$ for each $H_\alpha \in \HH$,  and equip each
$H_\alpha \times \{ 1 \}$ with the zero metric. We shall call this construction {\it electrocution}. 

Let $i: X \rightarrow \EE {(X, \HH )}$ denote the natural inclusion of spaces. Then
for a path $\gamma \subset X$, the path $i ( \gamma )$ lies in $\EE {(X, \HH )}$. Replacing maximal subsegments
$[a,b]$ of $i (\gamma )$  lying in a particular $H_\alpha$ by a path that goes from
$a$ to $v_\alpha$ and then from $v_\alpha$ to $b$, and repeating this for every $H_\alpha$ that 
$i ( \gamma )$ meets we obtain a new path $\hat{\gamma}$. If $\hat{\gamma}$ is an electric geodesic (resp. electric  $P$-quasigeodesic), 
$\gamma$ is called a {\em relative geodesic} (resp.
{\em relative $P$-quasigeodesic}). A geodesic, or quasigeodesic, or more generally a path  $\gamma$ is said to be
without backtracking if for any  horosphere-like set $H_\alpha$, $\gamma$
does not return to $H_\alpha$ after leaving it.

\begin{defn} \cite{farb-relhyp} \label{brp} {\rm
Relative  $P$-quasigeodesics in  
$(X,\HH )$ are said to satisfy {\bf bounded region penetration} if, for any two  relative
    $P$-quasigeodesics without backtracking
$\beta$, $\gamma$, 
   joining $x, y \in X$,
 there exists $B = B(P) \geq 0$ such that \\
{\bf Similar Intersection Patterns 1:}  if
  precisely one of $\{ \beta , \gamma \}$ meets 
 a  horosphere-like set $H_\alpha$, 
then the length (measured in the intrinsic path-metric
  on  $H_\alpha$) from the first (entry) point
  to the last 
  (exit) point (of the relevant path) is at most $B$. \\
 {\bf Similar Intersection Patterns 2:}  if
 both $\{ \beta , \gamma \}$ meet some  $H_\alpha $
 then the length (measured in the intrinsic path-metric
  on  $H_\alpha$) from the entry point of
 $\beta$ to that of $\gamma$ is at most $B$; similarly for exit points.} \end{defn}

$X$ is {\it strongly hyperbolic relative to the collection $\HH$} if \\
a) $\EE {(X, \HH )}$ is hyperbolic, and\\
b) Relative quasigeodesics satisfy the bounded region penetration property.

The next  notion is based on Bestvina-Feighn's seminal work  \cite{BF}. The notions we use here are the adaptations used in 
\cite{mahan-reeves}.
\begin{defn} \label{tree} A geodesic
 metric space $(X,d)$ equipped with a map
$P: X \rightarrow T$ to a  simplicial tree $T$ is said to be a  tree of geodesic
 metric spaces satisfying
the q(uasi) i(sometrically) embedded condition if  there exist $ \epsilon \geq 0$ and $K \geq 1$ satisfying the 
following: \\
1) For all vertices $v\in{T}$, 
$X_v = P^{-1}(v) \subset X$ with the induced path metric $d_{X_v}$ is 
 a geodesic metric space. Further, the
inclusions ${i_v}:{X_v}\rightarrow{X}$ 
are uniformly proper. \\
2) Let $e$ be an edge of $T$ with initial and final vertices $v_1$ and
$v_2$ respectively.
Let $X_e$ be the pre-image under $P$ of the mid-point of  $e$.  
 There exist continuous maps ${f_e}:{X_e}{\times}[0,1]\rightarrow{X}$, such that
$f_e{|}_{{X_e}{\times}(0,1)}$ is an isometry onto the pre-image of the
interior of $e$ equipped with the path metric. Further, $f_e$ is fiber-preserving,
i.e. projection to the second co-ordinate in ${X_e}{\times}[0,1]$ corresponds via $f_e$
to projection to the tree $P: X \rightarrow T$.\\
3) ${f_e}|_{{X_e}{\times}\{{0}\}}$ and 
${f_e}|_{{X_e}{\times}\{{1}\}}$ are $(K,{\epsilon})$-quasi-isometric
embeddings into $X_{v_1}$ and $X_{v_2}$ respectively.
${f_e}|_{{X_e}{\times}\{{0}\}}$ and 
${f_e}|_{{X_e}{\times}\{{1}\}}$ will occasionally be referred to as
$f_{e,v_1}$ and $f_{e,v_2}$ respectively. 
\end{defn}

$X_v, X_e$ are referred to as vertex and edge spaces respectively.
A tree of spaces as in Definition \ref{tree} above is said to be a
{\it tree of hyperbolic metric spaces}, if there exists $\delta \geq 0$
such that $X_v, X_e$ are all $\delta$-hyperbolic for all vertices $v$ and edges $e$ of $T$.

\begin{defn} \label{rht} A tree $P: X \rightarrow T$ of geodesic
 metric spaces is said to be a  tree of relatively hyperbolic metric spaces
if in addition to the conditions of Definition $\ref{tree}$, we have the following:\\
4) Each vertex (or edge) space $X_v$ (or $X_e$) is strongly hyperbolic relative to a collection $\HH_v$ (or $\HH_e$)\\
5)  the maps $f_{e,v_i}$ above ($i = 1, 2$) are {\bf
  strictly type-preserving}, i.e. $f_{e,v_i}^{-1}(H_{v_i,\alpha})$, $i =
  1, 2$ (for
  any $H_{v_i,\alpha}\in \HH_{v_i}$)
 is
  either empty or some $H_{e,\beta}\in \HH_{e}$. Also, for all 
$H_{e,\beta}\in \HH_{e}$,  and any end-point $v$ of $e$, there exists
$H_{v,\alpha}$, such that $f_{e,v} ( H_{e,\beta}) \subset H_{v,\alpha}$.\\
The sets $H_{v,\alpha}$ and
$H_{e,\alpha}$ will be referred to as
{\bf horosphere-like vertex sets} and {\bf horosphere-like  edge sets} respectively.\\
6) There exists $\delta > 0$ such that each $\EE (X_v, \HH_v )$ is $\delta$-hyperbolic.\\
{\rm Given the tree of spaces with vertex spaces $X_v$ and edge spaces $X_e$, there exists a naturally associated  tree of spaces
with vertex spaces 
$\EE (X_v, {\HH}_v)$ and edge spaces 
$\EE (X_e, {\HH}_e)$, obtained by simply coning off the respective horosphere like sets.
Condition (5) above  ensures that we have natural inclusion maps of edge spaces
$\EE (X_e, {\HH}_e)\times \{ i \}$ ($i=0,1$) into adjacent vertex spaces $\EE (X_v, {\HH}_v)$. These  maps are referred to as {\it induced maps}. 
The 
resulting tree of coned-off spaces will be called the {\bf induced
tree of coned-off spaces} and will be denoted as $\hhat{X}$. }\\
7) The induced maps  of the coned-off edge spaces into the
  coned-off vertex spaces $\hhat{f_{e,v_i}} : \EE ({X_e}, \HH_e ) \rightarrow
  \EE ({X_{v_i}}, \HH_{v_i})$ ($i = 1, 2$) are uniform quasi-isometries. This is called the
 {\bf qi-preserving electrocution condition}.
 \end{defn}

$d_v$ and $d_e$ will denote path metrics on $X_v$ and $X_e$ respectively.
$i_v$, $i_e$ will denote inclusion of $X_v$, $X_e$ respectively into
$X$.

Note that
the first clause of Condition (5) above ensures that for any vertex $v_i$ and edge $e$ incident on $v_i$, and for any horosphere like set
 $H_{v_i,\alpha}$ in $X_{v_i}$, at most one horosphere like set $H_{e,\beta}$ of $X_e$ is mapped by $f_{e,v_i}$ into $H_{v_i,\alpha}$.
Also, the second clause of Condition (5) above ensures that for any such horosphere like set $H_{e,\beta}$ of $X_e$, $f_{e,v_i}$  maps  $H_{e,\beta}$
into some horosphere like set
 $H_{v_i,\alpha}$ in $X_{v_i}$.

\begin{defn} 
The {\bf cone locus} of the
induced tree (T) of coned-off spaces, $\hhat{X}$, is
the graph whose vertex set $\VV$ consists
of horosphere like vertex sets and  edge set $\EE$
consists of  horosphere like edge sets such that
an edge $H_{e,\beta}\in \HH_{e} \subset \EE$
is incident on a vertex $H_{v,\alpha}\in \HH_{v} \subset \VV$ iff $f_{e,v} ( H_{e,\beta}) \subset H_{v,\alpha}$.\\
 A connected component of the cone-locus 
will be called a {\bf maximal cone-subtree}. The collection
of maximal cone-subtrees will be denoted by $\TT$ and elements
of $\TT$ will be denoted as $T_\alpha$. \\
 For each maximal cone-subtree $T_\alpha$, we define
the associated {\bf maximal cone-subtree of horosphere-like spaces} $C_\alpha$
to be the tree of metric spaces whose vertex and edge spaces
are the horosphere like vertex and edge sets $H_{v,\alpha}$, $H_{e,\alpha}$,
$v\in \VV(T_\alpha)$, $e\in \EE(T_{\alpha})$, along with the restrictions
of the maps $f_e$ to $H_{e,\alpha}\times \{0,1\}$.
The collection of $C_\alpha$'s will be denoted as $\C$. 
\end{defn}

The next definition is based on \cite{BF} again.
\begin{defn} 
A disk $f : [-m,m]{\times}{I} \rightarrow 
X$ is a {\bf hallway} of length $2m$ if it satisfies:\\
1) $f^{-1} ({\cup}{X_e} : e \in {\rm Edge} (T)) = \{-m,  \cdots , m \}{\times}
I$, where $ {\rm Edge} (T)$ denotes the collection of mid-points of the edge-set of $T$.\\
2) $f$ maps $i{\times}I$ to a geodesic in  $X_e$ for some edge
space.\\
3) $f$ is transverse, relative to condition (1), to $\cup_e X_e$, i.e. for all $i \in \{-m,  \cdots , m \}$,
$f|_{{B(i, \frac{1}{4})} \times \{t\}}$ is an isometric embedding for all $t \in I$. Here $B(i, \frac{1}{4})$ denotes
the $ \frac{1}{4}$ neighborhood of $i$ in $[-m,m]$.
\end{defn}

Condition (3) above is the adaptation in our context of Condition (2) of \cite{BF} p.87 and simply says that a hallway transversely cuts across
the collection of edge spaces.

\begin{defn} \label{hway}
 A hallway $f : [-m,m]{\times}{I} \rightarrow X$ is {\bf $\rho$-thin} if 
$d({f(i,t)},{f({i+1},t)}) \leq \rho$ for all $i\in \{-m,  \cdots , m \}$ and $ t \in I$.\\
 A hallway is {\bf $\lambda$-hyperbolic} if 
\[\lambda l(f(\{ 0 \} \times I)) \leq \mbox{ max} \{ l(f(\{ -m \} \times I)),
l(f(\{ m \} \times I))\}\]
where $l(\sigma )$ denotes the length of the path $\sigma$.\\
The {\bf girth} of the hallway is the quantity $l(f(\{ 0 \} \times I))$.\\
 A hallway is {\bf essential} if the edge path in $T$ 
resulting from projecting $f( [-m,m]{\times}{I})$ onto $T$ does not backtrack,
and is therefore a geodesic segment in the tree $T$.\\
 {\bf Hallways flare condition:}
The tree of spaces, $X$, is said to satisfy the  hallways flare
condition if there are numbers $\lambda > 1$ and $m \geq 1$ such that
for all $\rho$ there is a constant $H=H(\rho )$ such that  any
$\rho$-thin essential hallway of length $2m$ and girth at least $H$ is
$\lambda$-hyperbolic.
\end{defn}

\begin{defn} \label{cbh}
 An essential  hallway of length $2m$ is {\bf cone-bounded} if\\
a) $f(i \times {\partial I}) = f(i \times \{0, 1\})$ lies in the cone-locus for
$i = \{ -m, \cdots , m\}$.\\
b) $f(i \times \{0\})$ and $f(i \times \{1\})$ lie in different components of the cone-locus.\\
 The tree of spaces, $X$, is said to satisfy the 
{\bf cone-bounded hallways strictly flaring condition} 
 if for all $\rho > 0$, there exists $ \lambda > 1$ and $m \geq 1$ such that
any cone-bounded $\rho-$thin essential hallway of length $2m$  is $\lambda$-hyperbolic.
\end{defn}

Note that the last condition requires all cone-bounded $\rho-$thin essential hallways  to flare 
(not just those of girth at least  $H$ as in Definition \ref{hway}).
The following theorem is one of the main results of \cite{mahan-reeves}.
\begin{theorem}\label{weakcombin} \cite{mahan-reeves}
Let $X$ be a tree ($T$) of strongly relatively hyperbolic spaces
satisfying 
\begin{enumerate}
\item the qi-embedded condition.
\item  the strictly type-preserving
condition.
\item the qi-preserving electrocution condition.
\item the induced tree of coned-off spaces
satisfies the  hallways flare condition.
\item the 
cone-bounded hallways strictly flaring condition.
\end{enumerate}
Then $X$  is  (strongly) hyperbolic relative to the family $\C$ of maximal
  cone-subtrees of horosphere-like spaces.
\end{theorem}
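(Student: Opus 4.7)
The plan is to split the argument into two stages. First, deduce hyperbolicity of the induced tree of coned-off spaces $\hhat{X}$ via the classical Bestvina--Feighn combination theorem for trees of hyperbolic spaces. Second, upgrade this to strong relative hyperbolicity of $X$ with respect to $\C$ by identifying $\EE(X,\C)$ with $\hhat{X}$ up to quasi-isometry, and verifying Farb's bounded region penetration property using the cone-bounded hallways strictly flaring hypothesis.

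For the first stage, regard $\hhat{X}$ itself as a tree (over the same $T$) of geodesic metric spaces whose vertex and edge spaces are the electric spaces $\EE(X_v,\HH_v)$ and $\EE(X_e,\HH_e)$. These are uniformly $\delta$-hyperbolic by item (6) of Definition \ref{rht}. The attaching maps $\hhat{f_{e,v_i}}$ are uniform quasi-isometric embeddings by the qi-preserving electrocution condition (hypothesis (3)), so the qi-embedded condition is satisfied for $\hhat{X}$. The hallways flare condition (hypothesis (4)) is exactly the input needed by Bestvina--Feighn. Hence $\hhat{X}$ is hyperbolic.

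For the second stage, observe that $\EE(X,\C)$ and $\hhat{X}$ are quasi-isometric. Indeed, for any $C_\alpha \in \C$, all cone points in $\hhat{X}$ corresponding to the horosphere-like vertex and edge spaces of $C_\alpha$ lie at uniformly bounded pairwise electric distance: each horosphere-like edge set $H_{e,\beta}$ is mapped into a horosphere-like vertex set at each endpoint of $e$ under the strictly type-preserving maps (Definition \ref{rht}(5)), which, after electrocution, places the associated cone points within electric distance $1$. Collapsing each $C_\alpha$ onto a single cone point, which is what produces $\EE(X,\C)$ from $\hhat{X}$, therefore distorts distances only boundedly. So $\EE(X,\C)$ is hyperbolic.

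The main obstacle is the bounded region penetration property. Let $\beta,\gamma$ be relative $P$-quasigeodesics without backtracking joining $x,y \in X$, and suppose they both penetrate some $C_\alpha$. Their electric images in $\EE(X,\C)$ are uniform quasigeodesics with common endpoints in the hyperbolic space $\EE(X,\C)$, so they fellow-travel, and by the quasi-isometry with $\hhat{X}$ they enter and exit $C_\alpha$ at uniformly close cone points in $\hhat{X}$. What remains is to bound their entry-point separation and exit-point separation measured in the intrinsic path metric of $C_\alpha$. If such a separation were arbitrarily large, one could, by aligning the two subpaths transverse to $T$ via the ordinary hallways flare (hypothesis (4)) and extracting a cone-bounded configuration whose vertical sides lie in distinct components of the cone-locus, construct an essential cone-bounded $\rho$-thin hallway of arbitrarily large length that fails to $\lambda$-flare. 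This contradicts hypothesis (5). A careful extraction along these lines produces the bounded penetration constant $B(P)$, completing the verification of strong relative hyperbolicity of $X$ with respect to $\C$.
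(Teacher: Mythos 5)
This theorem is not proved in the present paper: it is imported verbatim from Mj--Reeves, with the note after the statement pointing to Proposition~4.4 of \cite{mahan-reeves}. So there is no internal proof to compare your argument against; your sketch should be judged on its own merits as a reconstruction of the Mj--Reeves argument.

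Stage one of your sketch is reasonable in outline. But stage two contains a genuine gap: the spaces $\hhat{X}$ and $\EE(X,\C)$ are \emph{not} quasi-isometric, and the passage from one to the other cannot be dismissed as a bounded distortion. In $\hhat{X}$ each horosphere-like set $H_{v,\alpha}$ or $H_{e,\beta}$ is coned off individually; adjacent cone points in a single maximal cone-subtree $T_\gamma$ are indeed at bounded distance, but the full collection of cone points belonging to one $C_\gamma$ has diameter at least comparable to the diameter of $T_\gamma$ (the projection $\hhat{X}\to T$ is $1$-Lipschitz), and $T_\gamma$ can be arbitrarily long --- even infinite. In $\EE(X,\C)$, by contrast, the whole of $C_\gamma$ is collapsed to within distance $\tfrac12$ of a single cone vertex. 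Thus the natural map $\hhat{X}\to\EE(X,\C)$ crushes arbitrarily large sets to bounded ones and is not a quasi-isometry. Deducing hyperbolicity of $\EE(X,\C)$ from hyperbolicity of $\hhat{X}$ therefore requires an argument, not just an identification; the standard route is to show that the subsets of $\hhat{X}$ corresponding to the $C_\gamma$ (equivalently the $T_\gamma$) are \emph{uniformly quasiconvex} and \emph{mutually cobounded} in $\hhat{X}$, and then invoke a coning-off/partial-electrocution lemma (such as Lemma~\ref{pel}) to preserve hyperbolicity. This is where hypothesis~(5), the cone-bounded hallways strictly flaring condition, does most of its work: it is not used only in your stage-three bounded-region-penetration argument, it is needed already to establish the quasiconvexity and coboundedness of the cone-subtrees inside $\hhat{X}$. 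Without this, both the hyperbolicity of $\EE(X,\C)$ and the fellow-travelling input to your bounded-penetration argument are unsupported.

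A smaller remark: your application of Bestvina--Feighn in stage one is plausible, but the classical combination theorem is stated for trees of \emph{proper} hyperbolic spaces; for the generality of Definition~\ref{rht} one needs the versions that drop properness (e.g.\ as in Mitra or Mj--Reeves). This is a citation-hygiene issue rather than a mathematical gap, but it should be flagged in any complete write-up.
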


\noindent {\bf Note:} In \cite{mahan-reeves} the definition of cone-bounded hallways does not include Condition (b) of Definition \ref{cbh}.
However the proof there (cf. Proposition 4.4 of \cite{mahan-reeves}) is  enough to give Theorem \ref{weakcombin} under the
(weaker) condition that only those cone-bounded hallways (in the terminology of \cite{mahan-reeves})
that additionally satisfy restriction (b) strictly flare.

\begin{defn}{\bf Partial Electrocution:}
Let $(X, \HH , \GG , \LL )$ be an ordered quadruple, where 
\begin{enumerate}
\item $X$ is a geodesic metric space, 
\item  $\HH = \{ H_\alpha \}$ is a collection of uniformly separated subsets of $X$, 
\item $\LL = \{ L_\alpha \}$ is a collection of $\delta-$ hyperbolic metric spaces for some $\delta \geq 0$, 
\item $\GG = \{ g_\alpha : H_\alpha \rightarrow L_\alpha \} $ are maps.
\end{enumerate}

Further suppose that there exist $ K\geq 1$ 
such that the
following hold:

\begin{enumerate}
\item $X$ is strongly hyperbolic relative to the collection $\HH$ of subsets
$H_\alpha$. 
\item Each $g_\alpha$  is $K-$ coarsely Lipschitz, i.e.
$d_{L_\alpha} (g_\alpha (x), g_\alpha (y)) \leq Kd_{H_\alpha}(x,y)
+ K $ for all $x, y \in H_\alpha$. 
\end{enumerate}
 The {\bf partially electrocuted space} or
{\em partially coned off space} corresponding to $(X, \HH , \GG , \LL)$ 
is the quotient metric space $(\hat{X},d_{pel})$ obtained from $X$ by attaching the metric
mapping cylinders for the maps
 $g_{\alpha} : H_\alpha \rightarrow L_\alpha$, where $d_{pel}$ denotes the resulting partially electrocuted metric. (The metric
mapping cylinder for a map $g:A \rightarrow B$ is the quotient metric space obtained as a quotient space
of the disjoint union  of the metric product  $A \times [0,1]$
and $B$, by identifying $(a,1) \in A \times \{ 1\} $ with $g(a) \in B$.)
\end{defn}

\begin{lemma}\label{pel}\cite{mahan-reeves} (see also Lemmas 1.20. 1.21 of \cite{mahan-pal})
$(\hat{X},d_{pel})$ is a hyperbolic metric space and the sets $L_\alpha$
are uniformly quasiconvex in $\hat{X}$.
\end{lemma}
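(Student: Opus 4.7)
The plan is to apply Lemma~\ref{ori-hyp-lemma} to $\hat{X}$ with a carefully chosen family of preferred paths, and to read off uniform quasiconvexity of the $L_\alpha$ directly from the construction. First I would observe that the further electrocution of $\hat{X}$ at the family $\{L_\alpha\}$ is naturally quasi-isometric to $\mathcal{E}(X,\mathcal{H})$: coning off each $L_\alpha$ also collapses the attached mapping cylinder $H_\alpha\times[0,1]$ to bounded diameter, so the two coned-off spaces coincide up to bounded error. Since $(X,\mathcal{H})$ is strongly relatively hyperbolic, $\mathcal{E}(X,\mathcal{H})$ is $\delta_0$-hyperbolic for some $\delta_0$, and relative quasi-geodesics without backtracking in $(X,\mathcal{H})$ satisfy the bounded region penetration property of Definition~\ref{brp}.

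Given distinct points $x,y\in\hat{X}$, I would first move them a bounded distance along the mapping cylinder coordinate to points $x',y'\in X\cup\bigcup_\alpha L_\alpha$. I would then choose a relative geodesic $\hat\beta$ in $(X,\mathcal{H})$ joining $x',y'$ (treating points in some $L_\alpha$ via their entry into the corresponding cone), and lift it to a path $c(x,y)\subset\hat{X}$ by the following rule: between successive horosphere penetrations, follow the $X$-portion of $\hat\beta$; at each penetration of $H_\alpha$ with entry point $a$ and exit point $b$, replace the $H_\alpha$-subpath by going up the cylinder to $g_\alpha(a)$, along a geodesic in $L_\alpha$ from $g_\alpha(a)$ to $g_\alpha(b)$, and back down to $b$. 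If $x$ or $y$ lies in some $L_\alpha$, the initial or final penetration is truncated accordingly.

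I would then verify the three hypotheses of Lemma~\ref{ori-hyp-lemma}. Condition $(1)$ (short paths for short distances) is immediate from the construction and the uniform Lipschitz property of the $g_\alpha$. Condition $(2)$ (stability under taking subsegments) uses bounded region penetration in $(X,\mathcal{H})$: two relative geodesics between the same endpoints enter and exit each $H_\alpha$ at uniformly close pairs of points, and stability of geodesics in the $\delta$-hyperbolic $L_\alpha$ (Lemma~\ref{stab-qg}) controls the corresponding $L_\alpha$-portions. Condition $(3)$, triangle slimness, is the main obstacle: one combines $\delta_0$-slimness of the electric triangle in $\mathcal{E}(X,\mathcal{H})$ with bounded region penetration to show that when two sides both traverse the same $H_\alpha$ their $L_\alpha$-legs begin and end at uniformly close points, and hence are uniformly Hausdorff-close inside the hyperbolic $L_\alpha$; while a single-sided penetration has $L_\alpha$-length uniformly bounded by Similar Intersection Patterns 1.

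Uniform quasiconvexity of each $L_\alpha$ in $\hat{X}$ then falls out at once: for $u,v\in L_\alpha$, the preferred path $c(u,v)$ may be taken to be (a slight perturbation of) an $L_\alpha$-geodesic, and Lemma~\ref{ori-hyp-lemma} simultaneously guarantees that all such preferred paths are uniform quasi-geodesics in $\hat{X}$, so $L_\alpha$ is uniformly quasiconvex. The main technical work, as indicated, is condition $(3)$, which requires careful bookkeeping of $H_\alpha$-traversals along the three sides of a triangle using bounded region penetration together with hyperbolicity of each $L_\alpha$.
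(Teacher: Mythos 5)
The paper cites \cite{mahan-reeves} (and Lemmas 1.20--1.21 of \cite{mahan-pal}) for this lemma rather than giving a proof; your electro-ambient construction --- take a relative geodesic in $(X,\mathcal H)$, replace each $H_\alpha$-excursion by a descent of the mapping cylinder, a geodesic in $L_\alpha$, and an ascent back, and then verify Hamenstadt's criterion (Lemma~\ref{ori-hyp-lemma}) --- is in the spirit of the cited references and is a correct strategy.

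One step needs more than a gesture. Definition~\ref{brp} formulates bounded region penetration only for \emph{bigons} (two relative quasigeodesics with the same endpoints), whereas your verification of condition (3) implicitly uses a \emph{triangle} version: if $c(x,y)$ penetrates $H_\alpha$ deeply, then one of $c(y,z)$, $c(z,x)$ also penetrates $H_\alpha$, with entry and exit points uniformly close to those of $c(x,y)$. This does hold under strong relative hyperbolicity (for instance, split the relative geodesic triangle at an electric centre and apply SIP~1 and SIP~2 to the two bigons so produced), but it is not a formal consequence of Definition~\ref{brp} as stated and should be isolated as a separate lemma. Without it, $\delta_0$-slimness of the coned-off triangle controls only electric distances, which say nothing about the $d_{pel}$-distance across a single $L_\alpha$ (that distance can be arbitrarily large). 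Once that point is supplied the rest of your sketch goes through, and the quasiconvexity of the $L_\alpha$ follows, as you say, from stability of quasigeodesics in the now-hyperbolic $\hat X$ applied to a preferred path between two points of $L_\alpha$, which your construction makes an $L_\alpha$-geodesic up to bounded error (with a small patch for points of $L_\alpha$ that lie far from $g_\alpha(H_\alpha)$, since $g_\alpha$ is not assumed coarsely surjective).
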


We end this subsection with a proposition,  a special case of which is due to Hamenstadt
\cite{hamenst-word}, where the tree is taken to be $\mathbb R$ with vertex set $\mathbb Z$. We  give  a different proof
below as  our proof applies in a more general context. 

\begin{prop} \label{hyp-tree} Given  $K \geq 1$ and  $\delta, D > 0$, there exist $\delta^{'}, k^{'} \geq 0$ such that
the following holds.\\
Suppose $Y$ is a tree of $\delta$-hyperbolic metric spaces satisfying the
$K$-qi embedded condition such that the images of the edge spaces
in the vertex spaces are   mutually $D$-cobounded.
Then $Y$ is a $\delta^{'}$-hyperbolic metric space and
all the vertex spaces and edge spaces are $k^{'}$-quasiconvex
in $Y$.
\end{prop}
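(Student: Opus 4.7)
The plan is to invoke Theorem \ref{weakcombin} by viewing $Y$ as a tree of strongly relatively hyperbolic spaces with empty horosphere collections $\HH_v = \emptyset = \HH_e$. Under this choice, strong relative hyperbolicity of every vertex and edge space reduces to genuine hyperbolicity; the strictly type-preserving condition and the qi-preserving electrocution condition of Definition \ref{rht} are vacuous; and the cone-bounded hallways strictly flaring condition of Definition \ref{cbh} is also vacuous. The cone locus is empty and the family $\C$ of maximal cone-subtrees of horosphere-like spaces is empty, so the conclusion of Theorem \ref{weakcombin} delivers genuine hyperbolicity of $Y$. The one non-trivial hypothesis to verify is the hallways flare condition of Definition \ref{hway}, which I claim holds \emph{vacuously for large girth} as a consequence of mutual $D$-coboundedness of the edge space images.

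To verify this, fix a $\rho$-thin essential hallway $f : [-m,m] \times I \rightarrow Y$ projecting to a tree geodesic $v_{-m}, v_{-m+1}, \ldots, v_m$ in $T$ with intermediate edges $e_i = [v_{i-1}, v_i]$. At an interior vertex $v_i$, the geodesics $f(i,\cdot)$ in $X_{e_i}$ and $f(i+1,\cdot)$ in $X_{e_{i+1}}$ map under the edge-to-vertex qi embeddings $f_{e_i,v_i}$ and $f_{e_{i+1},v_i}$ to $(K,\epsilon)$-quasi-geodesics in $X_{v_i}$ that are pointwise within $\rho + 2K$ of one another (the extra $2K$ absorbing the error in identifying the two copies of a point of $X_{e_i}$ inside $X_{v_{i-1}}$ and $X_{v_i}$). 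Their images lie in the $K$-quasi-convex subsets $A = f_{e_i,v_i}(X_{e_i})$ and $B = f_{e_{i+1},v_i}(X_{e_{i+1}})$, which by hypothesis are mutually $D$-cobounded: the nearest point projection of $A$ onto $B$ has diameter at most $D$. By the $K$-quasi-convexity of $B$ and $\delta$-hyperbolicity of $X_{v_i}$, each $f(i+1,t)$ lies within $\rho + K\delta$ of the projection of $f(i,t)$ to $B$; but as $t$ varies, all such projections remain within a $D$-neighborhood of each other. Hence the $X_{v_i}$-diameter of the image of $f(i+1,\cdot)$ is bounded by some $C = C(\rho, D, K, \delta, \epsilon)$, and by the qi embedding the length of the geodesic $f(i+1,\cdot)$ in $X_{e_{i+1}}$ is bounded by $KC + K\epsilon$. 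Choosing $H > KC + K\epsilon$, no $\rho$-thin hallway of girth $\geq H$ can exist at any level, so the hallway flare condition is vacuously satisfied.

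Theorem \ref{weakcombin} then yields hyperbolicity of $Y$ with $\delta'$ depending only on $K, \delta, \epsilon, D$. For quasiconvexity of vertex and edge spaces, I would argue directly using canonical broken paths: for $x, y \in Y$, construct a path by concatenating geodesics in successive vertex spaces along the tree geodesic from $P(x)$ to $P(y)$, with consecutive segments joined at pairs of points realizing the cobounded nearest point projections between adjacent edge space images. An iterated application of Lemma 3.3 of \cite{mitra-trees} (the cobounded concatenation principle cited just before the statement) shows that any such broken path is a uniform quasi-geodesic in $Y$. When $x, y \in X_v$ the tree portion is trivial and the canonical path is a single $X_v$-geodesic; stability of quasi-geodesics (Lemma \ref{stab-qg}) in the now-hyperbolic $Y$ forces $Y$-geodesics between $x, y$ to stay in a uniform neighborhood of $X_v$, giving quasi-convexity of $X_v$. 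Quasiconvexity of edge spaces follows since each $X_e$ is $K$-qi embedded in an adjacent vertex space. The main obstacle is the quantitative bookkeeping in paragraph two: carefully tracking how the coboundedness constant $D$ interacts with the qi embedding constants $(K,\epsilon)$ and the $\delta$-slim triangle estimates to produce the clean uniform bound on girth which in turn bootstraps to flaring.
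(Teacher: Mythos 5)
Your approach is correct but follows a genuinely different route from the paper's. The paper does \emph{not} take empty horosphere collections. Instead it declares the horosphere-like vertex sets to be the images $f_{e,v}(X_e)$ and the horosphere-like edge sets to be the entire edge spaces $X_e$, invokes Bowditch to conclude that each vertex space is strongly hyperbolic relative to these images of edge spaces, and then works in the induced tree of coned-off spaces: there the edge spaces become single points, so any essential hallway of length $\geq 2$ has coned-off girth at most one, and the hallways flare and cone-bounded hallways conditions are trivially/vacuously satisfied with $H = 2$. The output of Theorem \ref{weakcombin} in the paper's reading is therefore that $Y$ is strongly hyperbolic \emph{relative to} the edge spaces, and a second step (partial electrocution, Lemma \ref{pel}, with the $g_\alpha$ taken to be identity maps) is needed to recover hyperbolicity of $Y$ itself and quasiconvexity of the edge spaces. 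Your reading, with $\HH_v = \HH_e = \emptyset$, gives $\mathcal{C} = \emptyset$ directly and thus needs no de-electrocution step, but it transfers the burden onto condition (4): you must show the hallways flare condition for the \emph{uncoded} tree of spaces, which you do by using mutual coboundedness to bound the girth of any $\rho$-thin hallway of length $\geq 2$. This is a more elementary argument (essentially directly verifying the Bestvina--Feighn flaring hypothesis) and avoids both Bowditch's theorem and Lemma \ref{pel}; the paper's route trivializes the girth bound at the cost of a round trip through relative hyperbolicity.

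Two points of caution. First, the quantity ``$\rho + 2K$'' you use for the pointwise separation of $f_{e_i,v_i}\circ f(i,\cdot)$ and $f_{e_{i+1},v_i}\circ f(i+1,\cdot)$ inside $X_{v_i}$ is not the right bound: $\rho$-thinness controls the distance $d(f(i,t), f(i+1,t))$ in the ambient space $X$, and converting this to $X_{v_i}$-distance requires the uniform properness function of the inclusion $X_{v_i}\hookrightarrow X$ from Definition \ref{tree}, not the qi-embedding constant. The corrected bound is of the form $N(\rho + 1)$ for the properness function $N$. This does not damage the argument, since the hallways flare condition permits $H(\rho)$ to depend on $\rho$ through any function. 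Second, your quasiconvexity step is a sketch; the iterated application of the cobounded concatenation principle (Lemma 3.3 of \cite{mitra-trees}) to a chain of segments crossing several vertex spaces is exactly the kind of book-keeping carried out in \cite{mitra-trees} and \cite{mitra-ct}, and is plausible, but is genuinely less clean than the paper's one-line appeal to Lemma \ref{pel}, which gives hyperbolicity and quasiconvexity of the edge spaces in a single stroke. (It is worth noting the paper's own proof does not explicitly address quasiconvexity of the \emph{vertex} spaces either, so your observation that vertex-space quasiconvexity reduces to edge-space quasiconvexity plus the $K$-qi-embedded condition is a useful supplement to either argument.)
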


\begin{proof} First of all we note that by \cite{bowditch-relhyp} (Section 7, esp. Proposition 7.4,
Lemma 7.5, Proposition 7.12; see also Lemma 3.4 of \cite{brahma-ibdd})
the vertex spaces are strongly hyperbolic relative to  the images of edge spaces. Hence $Y$ can be thought of as a tree
of relatively hyperbolic metric spaces whose horosphere like edge sets and
vertex sets are respectively the whole of the edge spaces and the 
images of the edge spaces in the vertex spaces respectively. 
Hence  conditions (1)-(2) of Theorem $\ref{weakcombin}$
are satisfied in this case. 

Edge spaces after electrocution become points. Vertex spaces after electrocution become hyperbolic metric spaces. Inclusion of points into 
spaces being trivially qi-embeddings, condition (3) of Theorem $\ref{weakcombin}$
is satisfied.

Next any essential hallway of length greater than two in $\hat Y$, the induced tree of coned off spaces,
 must have   girth at most one. This is because
 all edge spaces have diameter one after electrocution.
Hence Condition (4) of Theorem $\ref{weakcombin}$
is trivially satisfied by choosing the threshold value  $H$ of the girth to be  2.

Finally, since cone-bounded hallways must have length two or more, it follows that in the present situation
cone-bounded hallways do not exist due to Condition (b) in Definition \ref{cbh}
 and the fact that entire edge spaces are part of the cone locus.
 Hence Condition (5) of Theorem $\ref{weakcombin}$
is vacuously satisfied.

Finally, the family $\mathcal C$ of maximal cone-subtrees of horosphere-like spaces
are precisely the edge spaces.

Hence $Y$ is strongly  hyperbolic
relative to the edge spaces. 

The edge spaces are uniformly hyperbolic with respect to the induced
length metric from $Y$. Hence, by Lemma $\ref{pel}$, we see that when the
maps $g_{\alpha}$ are taken to be identity maps of the edge spaces, the
partially electrocuted space is hyperbolic. This space is clearly
quasi-isometric to $Y$. Hence the result. \end{proof}

As an application of this proposition we have the following  corollary which can be thought of as a `discrete' or `graph' 
 version of Proposition
\ref{hyp-tree}.

\begin{cor}\label{hamenstadt}  Given $\delta,D,D_1,K \geq 1$, there exists $D_{\ref{hamenstadt}}$ such that the following holds.\\
Suppose $X$ is a connected graph and $X_i$, $0 \leq i\leq n$, are connected subgraphs with $X = \cup_i X_i$
such that the following conditions hold. \\
$(1)$ All the spaces $X_i$ are $\delta$-hyperbolic with respect to the path metric  induced from $X$.\\
$(2)$ $X_i\cap X_j\neq \emptyset$ iff $|i-j|\leq 1$.\\
$(3)$ For all $i$, $X_i\cap X_{i+1}$ contains a connected subgraph $Y_i$
and is contained in the $D$-neighborhood of $Y_i$ in (the path-metric on) $X_i$ as well as $X_{i+1}$.\\
$(4)$ The inclusions $Y_i\hookrightarrow X_i$, $Y_i\hookrightarrow X_{i+1}$ are $K$-quasi-isometric embeddings. Also the inclusions
$Y_i \hookrightarrow X$, $1\leq i\leq n-1$ are uniformly metrically proper as measured by $g$, for some map 
$g: \mathbb{R}^{+} \rightarrow \mathbb{R}^{+}$.\\
$(5)$ $Y_i$ and $Y_{i+1}$ are $D_1$-cobounded in $X_{i+1}$.

Then the space $X$ is $D_{\ref{hamenstadt}}(= D_{\ref{hamenstadt}}(\delta,D,D_1,K))$-hyperbolic.
\end{cor}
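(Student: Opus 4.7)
The plan is to model $X$ by a tree of hyperbolic metric spaces and apply Proposition~\ref{hyp-tree}. I would let $T$ be the path graph with vertices $v_0,\dots,v_n$ and edge $e_i=[v_i,v_{i+1}]$ for $0\le i\le n-1$, declare $X_i$ (with its induced path metric) to be the vertex space at $v_i$ and $Y_i$ (with its induced path metric) to be the edge space at $e_i$, and glue $Y_i$ into $X_i$ and $X_{i+1}$ by the inclusions supplied by hypothesis $(4)$. Let $W$ denote the resulting tree of metric spaces.

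Next I would verify the hypotheses of Proposition~\ref{hyp-tree} for $W$. Vertex spaces are $\delta$-hyperbolic by $(1)$; edge-to-vertex inclusions are $K$-quasi-isometric embeddings by $(4)$; and mutual $D_1$-coboundedness of $Y_{i-1}$ and $Y_i$ inside $X_i$ is exactly $(5)$. The only point that requires comment is that each edge space $Y_i$ is itself uniformly hyperbolic, but this follows from Lemma~\ref{stab-qg}: $Y_i$ is a connected subgraph and hence a geodesic metric space, it $K$-quasi-isometrically embeds into the $\delta$-hyperbolic space $X_{i+1}$, and stability of quasi-geodesics forces geodesic triangles in $Y_i$ to be uniformly thin. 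Proposition~\ref{hyp-tree} then yields that $W$ is $\delta'(\delta,D_1,K)$-hyperbolic.

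It remains to show that $X$ is quasi-isometric to $W$. Define $\pi:W\to X$ to be the identity on each vertex-space slice and the first-factor projection on each edge cylinder $Y_i\times[0,1]$ (followed by inclusion of $Y_i$ into $X$); this is $1$-Lipschitz, since $d_X\le d_{X_i}$ on each $X_i$ and cylinder projections are $1$-Lipschitz. For a coarse inverse, I fix a section $\psi:\mathcal{V}(X)\to\mathcal{V}(W)$ by choosing, for each vertex $x\in X$, some index $i(x)$ with $x\in X_{i(x)}$ and letting $\psi(x)$ be the copy of $x$ in the $X_{i(x)}$-slice. To bound $d_W(\psi(x),\psi(y))$ from above I lift a geodesic edge path in $X$ of length $d=d_X(x,y)$: for each edge of $\gamma$ pick an index $i$ with that edge in $X_i$; by $(2)$, the indices at consecutive edges differ by at most one; by $(3)$, each index transition at a vertex $v$ can be realized in $W$ by routing through a point of $Y_i$ whose $d_{X_i}$- and $d_{X_{i+1}}$-distance from $v$ are each at most $D$, at a cost of at most $2D+1$. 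With at most $d$ such transitions one obtains $d_W(\psi(x),\psi(y))\le(2D+2)\,d+O(1)$. Together with $\pi\circ\psi=\mathrm{id}$ and the fact that $\psi(\mathcal{V}(X))$ is coarsely dense in $W$, this shows $X$ is quasi-isometric to $W$, whence $X$ inherits hyperbolicity with constant $D_{\ref{hamenstadt}}$ depending only on $\delta,D,D_1,K$.

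The main technical hurdle is the path-lifting step: one must control the total cost of the index transitions and show that it grows only linearly in $d_X(x,y)$, which is precisely what the $D$-neighborhood condition $(3)$ allows.
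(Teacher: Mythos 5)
Your proposal follows essentially the same route as the paper: model $X$ (up to quasi-isometry) as a tree of hyperbolic metric spaces over a path graph with vertex spaces $X_i$ and edge spaces $Y_i$, then invoke Proposition~\ref{hyp-tree}. The main difference is in execution. The paper first fattens $X$ to a graph $X'$ by adding an edge between every pair of vertices at $X$-distance at most $D$, so that the intersections $Y'_i := X'_i\cap X'_{i+1}$ themselves serve as edge spaces and the quasi-isometry between $X'$ and the tree of spaces becomes essentially tautological; you instead build the tree of spaces $W$ from the original $X_i,Y_i$ and prove the quasi-isometry $X\simeq W$ by an explicit path-lifting argument. That is a perfectly reasonable alternative, and it also makes explicit the (correct) observation that $Y_i$ is automatically uniformly hyperbolic, being a geodesic space that $K$-qi-embeds into a $\delta$-hyperbolic one.

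One imprecision worth flagging in your path-lifting step: for $v\in X_i\cap X_{i+1}$, hypothesis (3) gives a point $y_1\in Y_i$ with $d_{X_i}(v,y_1)\le D$ and a possibly \emph{different} point $y_2\in Y_i$ with $d_{X_{i+1}}(v,y_2)\le D$; it does not directly give a single point of $Y_i$ that is $D$-close to $v$ in both intrinsic metrics, which is what your claimed transition cost $2D+1$ uses. Bounding $d_{X_{i+1}}(v,y_1)$ (equivalently, merging $y_1$ and $y_2$) requires some additional control relating $d_X$ on $Y_i$ to $d_{Y_i}$ --- e.g.\ uniform properness of the $Y_i$ in $X$ --- since (4) alone only relates $d_{Y_i}$ to $d_{X_i}$ and $d_{X_{i+1}}$ and one runs in circles. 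In fairness, the paper's own proof relies on a comparable implicit assumption when it appeals to Observation~\ref{coarse} to conclude $X_i\simeq X'_i$ and $Y_i\simeq Y'_i$; this properness does hold in every application of the corollary in the paper. So the gap is shared, minor, and patchable, and the substance of your argument matches the paper's.
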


\begin{proof} First construct a new graph $X^{'}$ with the same vertex set as $X$ and edge-set
$\mathcal{E}(X^{'}) = \mathcal{E}(X) \bigcup
\{ \{u,v\} : u\neq v \in \mathcal{V}(X_i) \, \mbox{for  some} \, i; d_X(u,v)\leq D \}$. Note that $X$ is a subgraph of $X^{'}$.
By Lemma \ref{coarse} (2), $X$ is quasi-isometric to $X^{'}$.

Let us denote by $X^{'}_i$ the subgraph of $X^{'}$ with the same vertex set as $X_i$ (i.e. $\mathcal{V}(X^{'}_i) = \mathcal{V}(X_i)$)
and with edge-set 
$\mathcal{E}(X^{'}_i) = \{ \{u,v\} : u\neq v \in  \mathcal{V}(X_i); d_X(u,v)\leq D \}$. Then $X^{'} = \cup_i X^{'}_i$.
Let $Y^{'}_i:=X^{'}_i\cap X^{'}_{i+1}$. Note that $Y^{'}_i$ is a connected graph by Condition (3).

We show now that  $Y_i$ is quasi-isometric to  $Y^{'}_i$. First, since the inclusion
$Y_i \hookrightarrow X$ is uniformly proper, it follows that the inclusion $Y_i \hookrightarrow X^{'}$ is also uniformly proper (since 
$X$ is quasi-isometric to $X^{'}$). Hence the inclusion $Y_i \hookrightarrow Y^{'}_i $ is also uniformly proper. But the vertex set of 
$Y^{'}_i $ is contained in a $D$-neighborhood of $Y_i$ in $X$. Hence every vertex of $Y^{'}_i $ is connected by an edge to a vertex of $Y_i$
by construction of $X^{'}$. It follows that the inclusion $Y_i \hookrightarrow Y^{'}_i $ is a 
uniform (independent of $i$) quasi-isometry.

Next we claim that the inclusion of  $X_i$ into  $X^{'}_i$ is a uniform
 (independent of $i$) quasi-isometry. Note that $X_i$ and  $X^{'}_i$ have the same vertex set. Also the inclusion
$X_i \hookrightarrow X^{'}_i $ is 1-Lipschitz. Hence it suffices to
show that when two vertices in $X_i$ are at a distance of at most $D$ in $X$
then they are not too far away in the (path) metric on $X_i$. Let $\gamma$ be a geodesic in $X$ joining two points
of $X_i$ that are  at a distance of at most $D$ from each other. If $\gamma$ contains a (maximal) subsegment $\gamma_0 = [a_0,b_0]$
lying outside $X_i$ then $a_0, b_0$ must be distinct 
vertices of $Y^{'}_i$ or $Y^{'}_{i+1}$. Without loss of generality, suppose 
$a_0, b_0\in \mathcal{V}(Y^{'}_i)$. Hence there exist vertices $a, b\in \mathcal{V}(Y_i)$ such that $d_X(a, a_0) \leq D$ and 
$d_X(b, b_0) \leq D$. It follows that $d_{Y_i}(a,b) \leq g(3D)$ and hence $d_{X_i}(a,b) \leq g(3D)$. The claim follows.

Hence there exist $\delta^{'}, K^{'}, D^{'}_1$ such that $X^{'}_i$ is $\delta^{'}$ hyperbolic;
the inclusion maps $Y^{'}_i\hookrightarrow X^{'}_i$, $Y^{'}_i\hookrightarrow X^{'}_{i+1}$
are $K^{'}$-qi embeddings; and  $Y^{'}_i$ and $Y^{'}_{i+1}$ are $D^{'}_1$-cobounded in $X^{'}_{i+1}$ for all $i$.

Now we construct a tree of metric spaces $X_T$ quasi-isometric to $X^{'}$ (and hence to $X$)
where the underlying tree $T$ is the interval $[0,n]$ with vertices the integer points $\{ 0, \cdots , n \}$
and edge set $\{ [i, i+1]: i = 0, \cdots , n-1 \}$. 
For each $Y^{'}_i$ construct $Y^{'}_i \times [0,1]$. $X_T$ is constructed as an identification space from $\cup_i (Y^{'}_i \times [0,1])
\bigcup \cup_i X^{'}_i$ as follows.
For all $i=0 \cdots n-1$ and $x\in \mathcal V(Y^{'}_i)$, identify ${x \times \{ 0 \}}$ with $x \in X^{'}_i$ and
${x \times \{ 1 \}}$ with $x \in X^{'}_{i+1}$. Extend the identification 
linearly over edges of $Y^{'}_i$  for all $i$ to obtain the required tree of metric spaces $X_T$.
We observe that $X^{'}$  (and hence $X $) is quasi-isometric to the tree of metric spaces $X_T$, which in turn
 satisfies all the conditions of Proposition \ref{hyp-tree}. The Corollary follows. \end{proof}

A remark is in order here. Note that in the hypothesis we have not required that each $X_i$ contains all the edges of $X$ between any two of its
vertices. But it is always true that $X_{i-1} \cup X_i \cup X_{i+1}$  contains all the edges of $X$ between any two vertices of $X_i$
since $X = \cup_i X_i$. However, once
we pass to $X^{'}$ this is  no longer an issue because in the construction of $X^{'}_i$ from $X_i$ these edges get introduced in any case (as $D\geq 1$).
Hence each $X^{'}_i$ contains all the edges of $X^{'}$ between any two of its
vertices.


\section{QI Sections}

\subsection{Existence of qi sections}
The main result (Proposition $\ref{existence-qi-section}$) of this subsection is that qi sections
 exist for a large class of examples of metric graph bundles $p:X\rightarrow B$. 
This is {\em the} crucial ingredient in the proof of our main theorem \ref{combthm}. The basic idea of the
proof of Proposition $\ref{existence-qi-section}$ runs as follows: \\
We assume that the horizontal spaces
$F_b$, $b\in \mathcal{ V}(B)$ in our metric graph bundle are uniformly hyperbolic and the barycenter maps $\phi_b: \partial^3 F_b \rightarrow F_b$,
sending a triple of distinct points on the boundary $\partial F_b$ to the barycenter of an ideal 
triangle with the three points as vertices, are uniformly  coarsely surjective.
For simplicity, suppose we have $x\in F_v$, $v\in B$ and there is a triple $\xi=(\xi_1,\xi_2,\xi_3)$ such that $ \phi_v(\xi)=x$. 
Fix one such triple.  `Flow' this triple to the boundaries of all other horizontal spaces $F_w$ by maps induced by
quasi-isometries $f_{vw}:F_v\rightarrow F_w$. 
These maps are coarsely unique and are naturally associated to any given metric graph bundle. 
Let $\partial(f_{vw})$ denote the boundary value of $f_{vw}$. 
Consider the barycenters of the ideal  triangles formed by  the flowed triples 
$(\partial(f_{vw})\xi_1,\partial(f_{vw})\xi_2, \partial(f_{vw})\xi_3)$. The collection of all these barycenters (as $w$ ranges over 
$\mathcal V(B)$)
is then a section through $x$.
The proof that this is indeed a qi section hinges on the fact that for any three points $u,v,w\in \mathcal V(B)$, the quasi-isometries $f_{uv}$ and
$f_{wv}\circ f_{uw}$ are at a bounded distance {\it (depending on $u, v, w$)} from each other, and hence the
induced  boundary maps satisfy the equality
$\partial(f_{uv})= \partial (f_{wv}) \circ \partial (f_{uw})$. 

As an application of the proof of this result we recover an important lemma due to Mosher (see Theorem \ref{qi-mosher}).
It should be noted that though a basic 
ingredient for both Mosher's proof and ours  is the notion of a `barycenter',  we
do not have a group action on the boundaries of  fiber spaces in our context. Mosher extracts his qi-section
from an action of the whole group on the boundary of the normal subgroup. 

\begin{defn}{\bf Sequential Boundary}(See Chapter $4$,\cite{Shortetal})
Let $X$ be a $\delta$-hyperbolic metric space.
A sequence of points $\{x_n\}$ in $X$ is said
to converge to infinity, written $x_n\rightarrow \infty$, if for some (and hence 
any) point $p\in X$, $lim_{m,n\rightarrow \infty}(x_m.x_n)_p=\infty$.

Define an equivalence relation on the set of 
all sequences in $X$ converging to infinity, by setting
$\{x_n\}\sim \{y_n\}$ iff $lim_{n\rightarrow \infty}(x_n.y_n)_p=\infty$.
The set of all equivalence classes $\{[\{x_n\}]:x_n\rightarrow \infty\}$ will be denoted by 
$\partial X$ and will be referred to as the sequential boundary of $X$ or simply the {\em boundary of} $X$ . 
\end{defn}

Suppose $\{x_n\}$ is a sequence of points in $X$ and $x_n\rightarrow \infty$.
We shall write $x_n\rightarrow \xi\in \partial X$ to mean that $\xi=[\{x_n\}]$. The boundary
$\partial X$ comes equipped with a natural `visual' topology \cite{GhH}.

Suppose $f:X\rightarrow Y$ is a $(k,\epsilon)$-quasi-isometric embedding of hyperbolic metric spaces and $\xi=[\{x_n\}]\in \partial X$.
Then $f(x_n)\rightarrow \infty$. Setting $\partial(f)(\xi):=[\{f(x_n)\}]$ gives a well
defined map $\partial(f):\partial X\rightarrow \partial Y$. The next lemma collects together standard properties of such maps.

\begin{lemma}\label{bdry-elementary}
1) If $I_X:X\rightarrow X$ is the identity map then  $\partial (I_X)$
is the identity map on the sequential boundary of $X$.\\
2) If $f:X\rightarrow Y$ and $g:Y\rightarrow Z$ are two $(k,\epsilon)$-quasi-isometric
embeddings then $\partial(g\circ f)=\partial(g)\circ \partial(f)$.\\
3) If $f,g:X\rightarrow Y$ are two $(k,\epsilon)$-quasi-isometric embeddings
such that one has $sup_{x\in X}d(f(x),g(x))<\infty$ then $\partial(f)=\partial(g)$.\\
4) If $f:X\rightarrow Y$ is a quasi-isometry then $\partial(f):\partial X\rightarrow \partial Y$ is a homeomorphism.
\end{lemma}

The next lemma is a  consequence of the stability of quasi-geodesics (Lemma $\ref{stab-qg}$)
in hyperbolic metric spaces.
\begin{lemma}\label{prev}
Let $X$ be a $\delta$-hyperbolic metric space and let $\gamma:[0,\infty)\rightarrow X$
be a $(K,\epsilon)$-quasi geodesic ray. Let $\{t_n\}$ be any sequence of non-negative
real numbers tending to $\infty$; then $\gamma(t_n)\rightarrow \infty$ and the point 
of $\partial X$ represented by $\{\gamma(t_n)\}$ is independent of the sequence $\{t_n\}$.
\end{lemma}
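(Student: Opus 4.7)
The plan is to use stability of quasi-geodesics (Lemma \ref{stab-qg}) to compare the quasi-geodesic ray $\gamma$ with honest geodesics, and then estimate Gromov products. Throughout, set $p = \gamma(0)$ and let $D = D_{\ref{stab-qg}}(\delta, K')$ for a suitable constant $K'$ that captures the $(K,\epsilon)$-quasi-geodesic property as a quasi-geodesic constant.

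\textbf{Step 1 (convergence to infinity).} First I would show that for $0 \leq s \leq t$ we have
\[
(\gamma(s) \cdot \gamma(t))_{p} \geq \tfrac{s}{K} - \epsilon - 2D.
\]
The restriction $\gamma|_{[0,t]}$ is a $(K,\epsilon)$-quasi-geodesic joining $p$ to $\gamma(t)$, so by Lemma \ref{stab-qg} there exists a point $y$ on a geodesic $[p,\gamma(t)]$ with $d(\gamma(s),y) \leq D$. Since $y \in [p,\gamma(t)]$, one has $(y \cdot \gamma(t))_p = d(p,y)$, and the elementary inequality $|(a\cdot c)_p - (b\cdot c)_p| \leq d(a,b)$ yields $(\gamma(s)\cdot\gamma(t))_p \geq d(p,y) - D$. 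Combining with $d(p,y)\geq d(p,\gamma(s)) - D \geq s/K - \epsilon - D$ gives the bound. Applying this to $s = \min(t_m,t_n)$ and $t = \max(t_m,t_n)$ shows $(\gamma(t_m)\cdot\gamma(t_n))_{p} \to \infty$ as $m,n \to \infty$, so $\gamma(t_n) \to \infty$.

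\textbf{Step 2 (independence from the sequence).} Given two sequences $\{t_n\}$, $\{s_n\}$ with $t_n, s_n \to \infty$, I need to show $(\gamma(t_n) \cdot \gamma(s_n))_{p} \to \infty$. The same argument applies verbatim: setting $u_n = \min(t_n, s_n)$ and $v_n = \max(t_n, s_n)$, the estimate from Step 1 gives $(\gamma(u_n)\cdot\gamma(v_n))_{p} \geq u_n/K - \epsilon - 2D$, which tends to infinity since $u_n \to \infty$. Therefore $\{\gamma(t_n)\}$ and $\{\gamma(s_n)\}$ define the same equivalence class in $\partial X$.

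\textbf{Main obstacle.} There are no deep difficulties — the substance is entirely encoded in Lemma \ref{stab-qg}. The only care required is bookkeeping around which quasi-geodesic constants to feed into $D_{\ref{stab-qg}}$ (treating a $(K,\epsilon)$-quasi-geodesic as, e.g., a $(K+\epsilon)$-quasi-geodesic to match the single-parameter convention of Lemma \ref{stab-qg}), and making sure that the inequality $|(a\cdot c)_p - (b\cdot c)_p| \leq d(a,b)$ is invoked correctly. Once stability is in hand, both assertions reduce to a single Gromov-product estimate that is linear in the smaller parameter.
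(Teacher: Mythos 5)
Your argument is correct and follows precisely the route the paper indicates: the authors omit the proof, stating only that the lemma is ``a straightforward consequence of the stability of quasi-geodesics (Lemma \ref{stab-qg}),'' and your Steps 1 and 2 are exactly the straightforward consequence they had in mind. The single Gromov-product estimate $(\gamma(s)\cdot\gamma(t))_p \geq s/K - \epsilon - 2D$ obtained by shadowing $\gamma(s)$ onto a geodesic $[p,\gamma(t)]$ and applying $|(a\cdot c)_p - (b\cdot c)_p| \leq d(a,b)$ cleanly delivers both assertions.
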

The point of $\partial X$ determined by a quasi-geodesic ray $\gamma$
will be denoted by $\gamma(\infty)$. The next lemma constructs quasigeodesic rays joining points in $X$ to points in $\partial X$
as well as bi-infinite quasigeodesics joining pairs of points in $\partial X$. While this is standard for proper $X$ \cite{GhH},  ready
references for arbitrary (non-proper) $X$ are a bit difficult to come by, and we include a proof for completeness.

\begin{lemma}\label{qi-geod-line}
For any $\delta \geq 0$ there is a constant $K=K_{\ref{qi-geod-line}}(\delta)$ such that
the following holds:

Suppose $X$ is a $\delta$-hyperbolic metric space. 
\begin{enumerate}
\item Given any point $\xi\in \partial X$ and $p\in X$ there is a
$K$-quasi-geodesic ray $\gamma:[0,\infty)\rightarrow X$ of $X$ with
$\gamma(0)=p$ and $\gamma(\infty)=\xi$.

\item Given two points $\xi_1\neq \xi_2\in \partial X$ there is a $K$-quasi-geodesic
line $\alpha:\mathbb R \rightarrow X$ with $\alpha(-\infty)=\xi_1$
and $\alpha(\infty)=\xi_2$. 
\end{enumerate}
\end{lemma}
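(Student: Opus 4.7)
The main issue is that $X$ is only assumed to be $\delta$-hyperbolic and geodesic, not proper, so the usual Arzelà--Ascoli extraction of a limit of geodesic segments is unavailable. The plan is therefore to build the quasi-geodesics coherently from a representing sequence, using the Gromov product characterization of convergence to the boundary and the $\delta$-slimness of triangles as the sole source of compactness.

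For part (1), I would write $\xi = [\{x_n\}]$ and pass to a subsequence so that $d(p,x_n)$ is non-decreasing with $d(p,x_n) \geq n$ and $(x_m \cdot x_n)_p \geq \min(m,n)$ for all $m,n$ (this is possible because the Gromov products tend to $\infty$). For each $n$, fix a geodesic $\sigma_n : [0, d(p,x_n)] \to X$ from $p$ to $x_n$. Applying $\delta$-slimness to the triangle $\triangle p\, x_m\, x_n$ together with Lemma \ref{hyp-defn}, for $0 \le t \le \min(m,n) - 4\delta$ one has $d(\sigma_m(t), \sigma_n(t)) \le C\delta$ for a universal constant $C$. I would then define $\gamma : [0,\infty) \to X$ by choosing, for each integer $k \ge 0$, an index $n_k \ge k+1$ and setting $\gamma(k) = \sigma_{n_k}(k)$, then filling in between $\gamma(k)$ and $\gamma(k+1)$ by a geodesic of length at most $1 + C\delta$. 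The upper bound $d(\gamma(s),\gamma(t)) \le (1 + C\delta)|s-t|+(1+C\delta)$ is built in; the lower bound $d(\gamma(s),\gamma(t)) \ge |s-t| - C'\delta$ follows by comparing $\gamma(s), \gamma(t)$ with points at parameter distance $|s-t|$ along a single $\sigma_n$ (chosen so $n$ dominates both $s$ and $t$), using the coherence estimate above. Finally, $\gamma(k) \to \xi$ because, taking $n \ge n_k$ large, $(\gamma(k) \cdot x_n)_p$ is comparable to $\min(d(p,\gamma(k)), d(p,x_n))$ up to $O(\delta)$, and hence tends to $\infty$.

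For part (2), I would first apply part (1) to obtain $K$-quasi-geodesic rays $\beta_i : [0,\infty) \to X$ from some basepoint $p$ with $\beta_i(\infty) = \xi_i$, $i=1,2$. Since $\xi_1 \neq \xi_2$, the quantity $M := \liminf_{s,t \to \infty} (\beta_1(s)\cdot \beta_2(t))_p$ is finite; pick $s_0, t_0$ realising $(\beta_1(s_0)\cdot \beta_2(t_0))_p \le M+1$, set $q_i = \beta_i(s_0)$ and $q_2 = \beta_2(t_0)$, and define
\[
\alpha(u) \;=\; \begin{cases} \beta_1(s_0 - u), & u \le 0,\\ \text{a geodesic from } q_1 \text{ to } q_2, & 0 \le u \le d(q_1,q_2),\\ \beta_2(t_0 + u - d(q_1,q_2)), & u \ge d(q_1,q_2), \end{cases}
\]
parametrised by arc length. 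To check that $\alpha$ is a $K_{\ref{qi-geod-line}}$-quasi-geodesic, the local upper Lipschitz bound is immediate; for the lower bound, the key point is that for $s \ge s_0$ and $t \ge t_0$ the Gromov products $(\beta_1(s)\cdot \beta_2(t))_p$ remain bounded by $M + O(\delta)$ (by monotonicity along the two rays, the choice of $s_0,t_0$, and the standard $\delta$-hyperbolic inequality $(x\cdot z)_p \ge \min((x\cdot y)_p, (y\cdot z)_p) - \delta$). Translating this Gromov-product bound back to a distance estimate yields $d(\beta_1(s), \beta_2(t)) \ge (s-s_0) + (t-t_0) + d(q_1,q_2) - O(\delta)$, which is exactly the quasi-geodesic inequality across the splice.

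The main obstacle is verifying the \emph{lower} quasi-geodesic bound in both parts without properness. In part (1) this is essentially a consequence of choosing the subsequence so the segments $\sigma_n$ uniformly fellow-travel up to parameter $n - O(\delta)$; in part (2) it reduces to showing that once two quasi-geodesic rays have begun to ``diverge'' (which $\xi_1 \neq \xi_2$ guarantees through the finiteness of $M$), their distance continues to grow linearly, which is the standard but crucial consequence of $\delta$-hyperbolicity used in place of Arzelà--Ascoli compactness.
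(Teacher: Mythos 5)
Your proposal follows the same overall architecture as the paper: part (1) builds a quasi-geodesic ray from a boundary-representing sequence, and part (2) splices two rays from a common basepoint at their divergence locus. The technical implementation differs in two places, and is slightly less airtight in one of them.

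For part (1), you establish the global quasi-geodesic inequality directly: after passing to a subsequence with $(x_m\cdot x_n)_p\ge\min(m,n)$, you show the geodesics $\sigma_n=[p,x_n]$ fellow-travel up to parameter roughly $\min(m,n)$ and read off coherence. The paper instead inductively selects points $p_l$ on geodesics to well-chosen $x_{n_l}$ with rapidly growing spacing $(l+1)N$, verifies the concatenation is a $(1,42\delta)$-\emph{local} quasigeodesic, and invokes the local-to-global principle (Lemma~\ref{local-global-qg}). Your route avoids local-to-global, which is a real simplification; the one thing to tighten is the choice of $n_k$, since the lower bound needs $\min(n_k,n_{k'})$ to exceed $\min(k,k')$ by an $O(\delta)$ margin, which the stated $n_k\ge k+1$ does not quite guarantee.

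For part (2), the paper locates the splice via $N_1=\sup_{s,t}(\gamma_1(s)\cdot\gamma_2(t))_p$, which is shown finite exactly as you show $M<\infty$; using the $\sup$ makes the upper bound $(\beta_1(s)\cdot\beta_2(t))_p\le N_1$ for \emph{all} $s,t$ automatic. You use $\liminf$ instead, and the step where you claim $(\beta_1(s)\cdot\beta_2(t))_p\le M+O(\delta)$ for all $s\ge s_0$, $t\ge t_0$ is not an immediate consequence of the $\delta$-inequality; the needed ``coarse monotonicity'' of the Gromov product along two quasi-geodesic rays is true only up to an $O(\delta,K)$ additive error and is itself a small lemma (or a consequence of the standard $\limsup\le\liminf+2\delta$ comparison for boundary Gromov products, applied together with stability). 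Replacing $\liminf$ by $\sup$, which you have already shown finite, removes this wrinkle entirely. Beyond that, your splicing estimate coincides with the paper's; the paper simply phrases it as three explicit quasi-geodesic inequalities checked via internal points and stability of quasi-geodesics rather than through Gromov-product algebra.
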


\noindent {\bf Terminology:}
Any quasi-geodesic ray as in $(1)$ of the above lemma will be referred to as a quasi-geodesic ray
joining the points $p$ and $\xi$. Similarly any quasi-geodesic as in $(2)$ of the above lemma
will be referred to as a quasi-geodesic line joining the points $\xi_1$ and $\xi_2$.

\smallskip

\noindent {\em Proof of Lemma $\ref{qi-geod-line}$:} $(1).$ We shall inductively construct a suitable sequence of points
$\{p_n\}$ such that $p_n\rightarrow \xi$ and finally show that the union $\cup[p_n,p_{n+1}]$, of the geodesic segments $[p_n,p_{n+1}]$,
is a uniform quasi-geodesic. 
Suppose $x_n\rightarrow \xi$, $x_n\in X$, for all $n$. Fix $N\geq 1$ and let $p_0=p$.
Since $x_n\rightarrow \infty$ we can find a positive integer $n_1\in \mathbb N$ such
that $(x_i.x_j)_{p_0}\geq N$ for all $i,j\geq n_1$. Let $[p_0,x_{n_1}]$ be a geodesic joining 
$p_0$ and $x_{n_1}$. Choose $p_1\in [p_0,x_{n_1}]$ such that $d(p_0,p_1)=N$. Now suppose $p_l$ has
been constructed. To construct $p_{l+1}$, let $n_{l+1}\geq max\{ n_k:1\leq k\leq l\}$ be an integer
such that $(x_i.x_j)_{p_l}\geq (l+1)N$ for all $i,j\geq n_{l+1}$. Choose $p_{l+1}\in[p_l,x_{n_{l+1}}]$
such that $d(p_l,p_{l+1})=(l+1)N$. Now, let $\alpha_N$ be the arc length parametrization of the
concatenation of the geodesic segments $[p_i,p_{i+1}]$, $i\in {\mathbb Z}^+$.

\noindent {\bf Claim:} For $N> max\{ 7\delta+1, \frac{1}{3} L_{\ref{local-global-qg}}(\delta,1,42\delta) \}$, $\alpha_N$ is a 
$\lambda_{\ref{local-global-qg}}(\delta,1,42\delta)$-quasi-geodesic.

\begin{center}
\includegraphics[height=30mm]{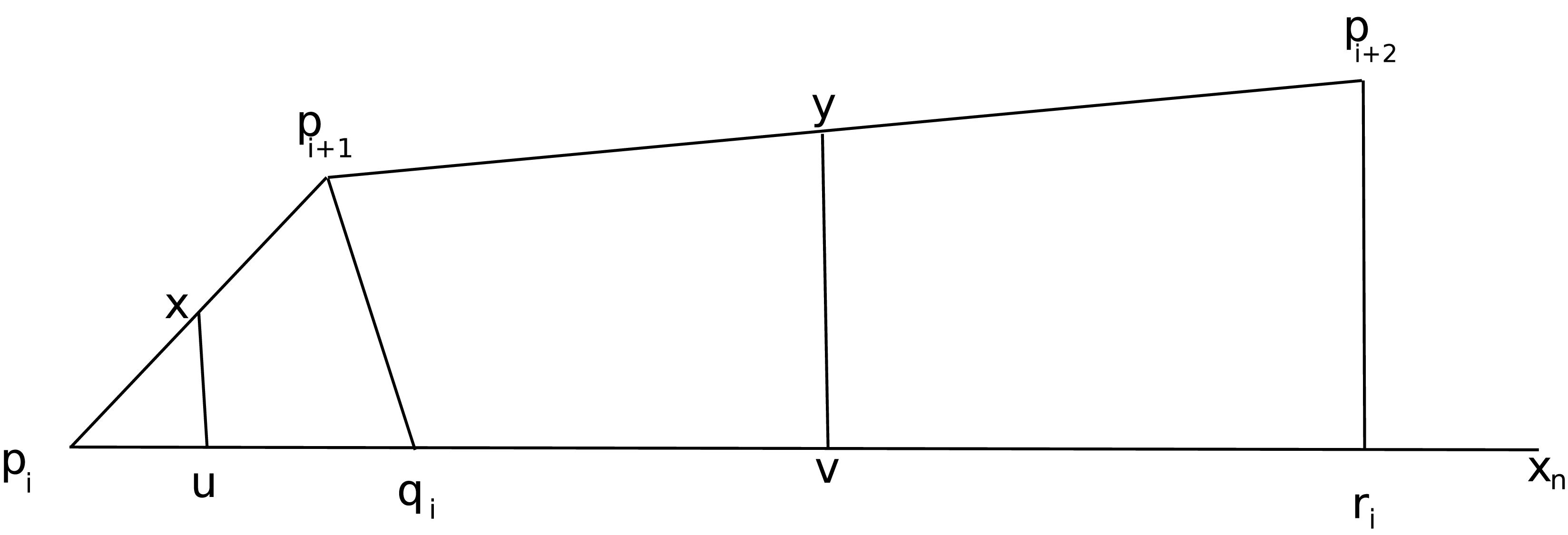}
\end{center}

First we show that $[p_i,p_{i+1}]\cup [p_{i+1},p_{i+2}]$ is a uniform quasi-geodesic for each $i$.
Let $n>n_{i+2}$. Join $p_i$ with $x_n$. Since $(x_n.x_{n_{i+1}})_{p_i}\geq (i+1)N$ and triangles in $X$
are $6\delta$-thin by Lemma $\ref{hyp-defn}(2)$, we can find a point $q_i\in [p_i,x_n]$ such that $d(p_i,q_i)=(i+1)N$ and
$d(p_{i+1},q_i)\leq 6\delta$. Similarly there is a point $q_{i+1}\in[p_{i+1},x_n]$ such that
$d(p_{i+1},q_{i+1})=(i+2)N$ and $ d(p_{i+2},q_{i+1})\leq 6\delta$. 

Consider the triangle
$\Delta p_ip_{i+1}x_n$. The point $q_{i+1}\in [p_{i+1},x_n]$ is contained in a $\delta$-neighborhood
of $[p_i,p_{i+1}]\cup [p_{i},x_n]$. Hence there exists $r_i\in [p_i,x_n]\cup [p_i,p_{i+1}]$
such that $d(r_i,q_{i+1})\leq \delta$. Since  $d(q_{i+1}, p_{i+1}) = (i+2)N$,
 it follows from the triangle inequality that $d(r_i,q_i) \geq d(q_{i+1}, p_{i+1})-d(r_i,q_{i+1}) - d(p_{i+1},q_i)
 \geq (i+2)N- \delta - 6\delta$. Again,
since $N> 7\delta+1$, it follows that $d(r_i,q_i) > (i+1) N +1$ and hence
 $r_i\not \in [p_i,q_i](\subset [p_i,x_n])$.

Next, we note that $r_i\not \in [p_i,p_{i+1}]$. Else suppose $r_i\in [p_i,p_{i+1}]$. Then 
$(i+1)N=d(p_i,p_{i+1})\geq d(r_i,p_{i+1})\geq d(p_{i+1},q_{i+1})-d(r_i,q_{i+1})\geq (i+2)N-\delta$. This is a contradiction since $N>7\delta +1$.
Thus $r_i\in [q_i,x_n]\subset [p_i,x_n]$. Also note that $d(p_{i+2},r_i)\leq d(p_{i+2},q_{i+1})+d(q_{i+1},r_i)\leq 7\delta$.

We now show that $[p_i,p_{i+1}]\cup [p_{i+1},p_{i+2}]$ is a $(1,42\delta)$-quasi-geodesic
of length at least $3N$ for $N>7\delta +1$. 
Suppose $x\in [p_i, p_{i+1}]$ and $y\in [p_{i+1},p_{i+2}]$. 
It is enough to show that $|d(x,p_{i+1})+d(p_{i+1},y)-d(x,y)|\leq 42 \delta$.

For the $\delta$-slim triangle $\bigtriangleup p_i q_i p_{i+1}$, there exists $u\in [p_i, q_i]$
such that $d(x,u)\leq 7\delta$. Similarly for
$\bigtriangleup p_{i+1}q_i r_i$ and $\bigtriangleup p_{i+1} p_{i+2} r_i$, there exists $v\in [q_i,r_i]$
such that $d(y,v)\leq  8\delta$ (the precise constant is obtained by a routine computation).

We have the following inequalities: \\
$|d(x,p_{i+1})-d(u,q_i)|\leq d(x,u) + d(p_{i+1}, q_i) \leq 6\delta + 7\delta= 13\delta$, \\
$ |d(p_{i+1},y)-d(q_i,v)|\leq d(p_{i+1}, q_i) + d(y,v) \leq 7 \delta + 8 \delta =15\delta$, \\
and $|d(u,v)-d(x,y)|\leq d(x,u) + d(y,v) \leq 6\delta + 8 \delta = 14\delta$.\\
Hence $|d(x,p_{i+1})+d(p_{i+1},y)-d(x,y)|\leq |\{d(x,p_{i+1})-d(u,q_i)\}+\{d(p_{i+1},y)-d(q_i,v)\}+\{d(u,v)-d(x,y)\}|\leq 42\delta$
and we are done.

The claim follows from Lemma $\ref{local-global-qg}$.

\medskip

Next we show that $\gamma(\infty)=\xi$. For this, by Lemma \ref{prev}, we just need to check that $\{p_n\} \sim \{x_n\}$.
Again, to show this, it is enough to check that $\{p_k\} \sim \{x_{n_{k-1}}\}$, i.e. 
$lim_{k\rightarrow \infty}(p_k.x_{n_{k-1}})_p=\infty$.
By the above proof we know that $ (\cup_{i=1}^{k-1}[p_{i-1},p_i])\bigcup[p_{k-1},x_{n_{k-1}}]$ is a
uniform quasi-geodesic. Thus, by stability of quasi-geodesics (Lemma $\ref{stab-qg}$), we can find a constant $D$ depending
only on $\delta$ such that there is a point $u\in [p,x_{n_{k-1}}]$ with $d(p_{k-1},u)\leq D$;
similarly there is a point $v\in [p,p_k]$ such that $d(p_{k-1},v)\leq D$. Therefore, we have $d(u,v)\leq 2D$ and 
$(p_k . x_{n_{k-1}})_p\geq (u.v)_p\geq d(p,u)-d(u,v)\geq d(p,p_{k-1})-d(u,p_{k-1})-d(u,v)\geq d(p,p_{k-1})-3D$. 
As $lim _{k\rightarrow \infty}d(p,p_k)= \infty$, we have $lim_{k\rightarrow \infty} (p_k .x_{n_{k-1}})_p=\infty$.
Therefore, the proof is complete by taking $K_{\ref{qi-geod-line}}(\delta)\geq \lambda_{\ref{local-global-qg}}(\delta,1,42\delta)$.

\smallskip

$(2)$ Let $\lambda:= \lambda_{\ref{local-global-qg}}(\delta,1,42\delta)$, and $D_1:=D_{\ref{stab-qg}}(\delta,\lambda)$. Now,
using the proof of $(1)$, we can construct two $\lambda$-quasi-geodesic rays
$\gamma_1,\gamma_2$, parametrized by arc length, joining a point $p\in X$ to $\xi_1$ and $\xi_2$ respectively. 
Clearly, $sup\{(x.y)_p: x\in \gamma_1,\,y\in \gamma_2\}< \infty$, else there exist $x_n\in \gamma_1$, $y_n\in \gamma_2$,
$n\in \mathbb N$, such that
$(x_n.y_n)_p \rightarrow \infty$. Since $x_n\rightarrow \gamma_1(\infty)=\xi_1$ and $y_n\rightarrow \gamma_2(\infty)=\xi_2$
by Lemma   \ref{prev}, this contradicts the fact that $\xi_1\neq \xi_2$. 

Let $N_1=sup\{(x.y)_p:x\in \gamma_1,y\in \gamma_2\}$. Let $x_i\in \gamma_i$, $i=1,2$,
be such that $(x_1.x_2)_p \geq N_1-1$. 
Let $u_i\in [p,x_i]$, $i=1,2$ be internal points of $\Delta px_1x_2$.
By Lemma $\ref{stab-qg}$ we can find $p_i\in \stackrel{\frown}{px_i}$ such that 
$d(p_i,u_i)\leq D_1$, $i=1,2$.
Now, let $\gamma^{'}_i \subset \gamma_i$ be the quasi-geodesic subray starting from $p_i$, for $i=1,2$.
We intend to show that the arc length parametrization of the concatenation of
$\gamma^{'}_1$, $\gamma^{'}_2$ and a geodesic segment $[p_1,p_2]$ joining $p_1, p_2$ is a uniform quasi-geodesic (see figure below).

\begin{center}
\includegraphics[height=50mm]{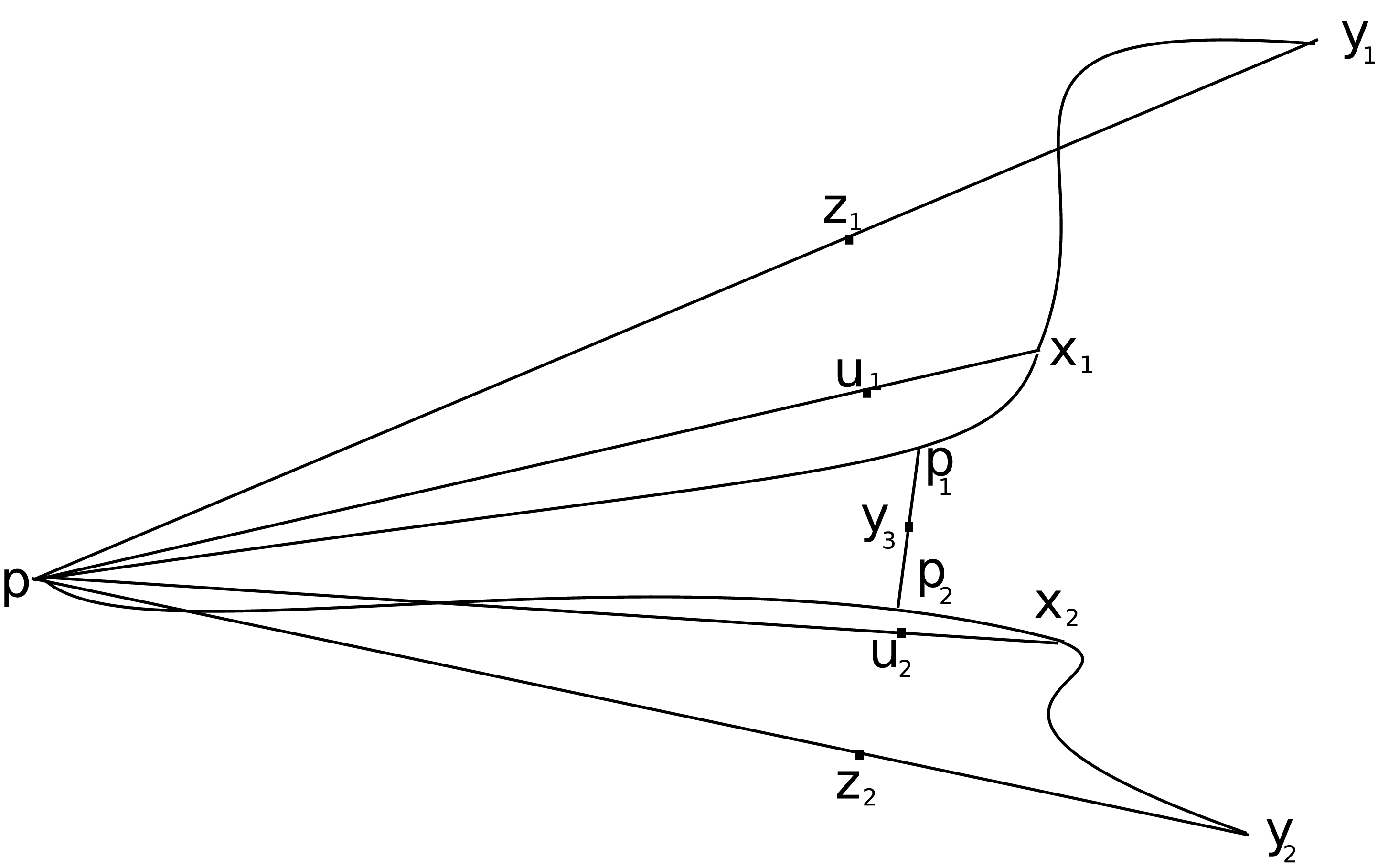}
\end{center}

Suppose $y_i\in \gamma^{'}_i$, $i=1,2$ and $y_3\in [p_1,p_2]$. 
It suffices to find $K\geq 1$ and $\epsilon \geq 0$ independent of $y_1,y_2,y_3$
such that the following conditions are satisfied.\\
Condition $(1)$ $ l(\stackrel{\frown}{p_1 y_1})+ l( \stackrel{\frown}{p_2 y_2}) +d(p_1,p_2)\leq Kd(y_1,y_2)+\epsilon$,\\
Condition $(2)$ $l(\stackrel{\frown}{p_1 y_1})+ d(p_1,y_3)\leq Kd(y_1,y_3)+\epsilon$, and\\
Condition  $(3)$ $l(\stackrel{\frown}{p_2 y_2})+ d(y_2,y_3)\leq Kd(y_2,y_3)+\epsilon$,\\
where $\stackrel{\frown}{p_i y_i}$ is the subsegement of $\gamma_i$ between $p_i$ and $y_i$ for $i=1,2$; also for a
rectifiable curve segment $\alpha$, $l(\alpha)$ denotes the length of the curve $\alpha$.
Since the proofs of Conditions $(2)$ and $(3)$ are similar we shall give proofs of Conditions  $(1)$ and $(2)$ below.

First of all, we note that $d(u_1,u_2)\leq 4\delta$ by Lemma $\ref{hyp-defn}$ and hence $d(p_1,p_2)\leq d(p_1,u_1)+d(p_2,u_2)+d(u_1,u_2)\leq 2D_1+4\delta=D_2$, say.
By Lemma $\ref{stab-qg}$ we can find $z_i\in [p,y_i]$ such that $d(p_i,z_i)\leq D_1$, $i=1,2$.

We shall first show  that the difference between $(y_1.y_2)_p$ and $(p_1.p_2)_p$ is small.
Note that $(y_1.y_2)_p\geq (z_1.z_2)_p\geq (p_1.p_2)_p -\{d(p_1,z_1)+d(p_2,z_2)\}\geq (p_1.p_2)_p-2D_1$.
Also, $|(x_1.x_2)_p - (u_1.u_2)_p|=|d(p,u_1)-(u_1.u_2)_p|=d(u_1,u_2)/2\leq 2\delta$ and $|(p_1.p_2)_p -(u_1.u_2)_p|\leq d(p_1,u_1)+d(p_2,u_2)\leq 2D_1$.
Thus $|(x_1.x_2)_p -(p_1.p_2)_p| \leq 2(D_1+\delta)$ and hence
$(y_1.y_2)_p\geq (p_1.p_2)_p -2D_1\geq (x_1.x_2)_p-(4D_1+2\delta)$.

Since $(y_1.y_2)_p\leq N_1$ and $(x_1.x_2)_p\geq N_1-1$ we have $$|(y_1.y_2)_p -(p_1.p_2)_p| \leq (1+2\delta+4D_1).$$
Next, suppose that $c_i\in [p,y_i]$, $i=1,2$ and $c\in [y_1,y_2]$ are the internal points of $\Delta py_1y_2$. 
We shall show that $d(p_i,c_i)$, $i=1,2$
are small.

Suppose $q_i\in [p,p_i]$, $i=1,2$ are internal points of $\bigtriangleup pp_1p_2$. Then  $d(p_i,q_i)\leq d(p_1,p_2)\leq D_2$.
We can choose $r_i\in [p,y_i]$ such that $d(r_i,q_i)\leq 2D_1$, by Lemma $\ref{stab-qg}$ applied to the subsegment of the quasi-geodesic $\gamma_i$
between $p$, $p_i$ and $p$, $y_i$.  
Hence $d(c_i,r_i)= |d(p,c_i)-d(p,r_i)|\leq |d(p,c_i)-d(p,q_i)|+|d(p,q_i)-d(p,r_i)|\leq (1+2\delta+4D_1)+d(q_i,r_i)$.
Hence $d(c_i,r_i)\leq (1+2\delta +6D_1)$. This gives $d(c_i,p_i)\leq d(c_i,r_i)+d(r_i,q_i)+d(q_i,p_i)\leq (1+2\delta+8D_1 +D_2)$. 
Since $d(c,c_i)\leq 4\delta$ we have $$d(c,p_i)\leq d(c,c_i)+d(c_i,p_i)\leq (1+6\delta+8D_1 +D_2).$$

Thus for any point $y_3\in [p_1,p_2]$ we have $d(c,y_3)\leq d(p_1,p_2)+d(p_1,c)\leq (1+6\delta +8D_1+2D_2)=D_3$, say.

{\em Proof of Condition} $1:$ Now, 
\[
\begin{array}{l}
\sum_{i=1}^2 l(\stackrel{\frown}{p_i y_i}) +d(p_1,p_2)\\
\leq \sum_{i=1}^2 \{\lambda d(p_i,y_i) +\lambda\} + D_2, \,\, \mbox{since} \,\, \gamma_i \,\, \mbox{are}\,\, \lambda-\mbox{quasi-geodesics.} \\
\leq \sum_{i=1}^2 \lambda \{d(y_i,c) +d(c,p_i)\} + 2\lambda + D_2\\
\leq \lambda d(y_1,y_2) + 2\lambda +D_2 +2\lambda D_3.

\end{array}
\]

{\em Proof of Condition} $2:$
\[
\begin{array}{l}
l(\stackrel{\frown}{p_1 y_1})+d(p_1,y_3)\\
\leq \{\lambda d(p_1,y_1) +\lambda\}+ d(p_1,y_3)\\
\leq \lambda \{d(y_1,y_3)+d(y_3,p_1)\} +\lambda +d(p_1,y_3)\\
\leq \lambda d(y_1,y_3)+ \lambda +(\lambda +1)d(p_1,p_2)\\
\leq \lambda d(y_1,y_3)+ (\lambda+(\lambda +1)D_2).

\end{array}
\]

As mentioned before, the proof of Condition $3$ is exactly like the proof of Condition $2$. $\Box$

\smallskip

Two quasi-geodesic rays $r_i: [0, \infty) \rightarrow X$, $i=1,2$,
in a hyperbolic metric space $(X,d)$
are said to be {\it asymptotic} if there exists $C_0$ such that $d(r_1(t), r_2(t)) \leq C_0$ for all $t \in [0, \infty)$.
Using stability of quasi-geodesics (Lemma $\ref{stab-qg}$) the proofs of the following lemma and corollary are standard
(see Lemma $1.15$, Chapter $III.H$, \cite{bridson-haefliger}). 

\begin{lemma}{\bf Asymptotic rays are uniformly close:}\label{asymp-ray}
For all $\delta \geq 0$ and $k\geq 1$ there is a constant $D_{\ref{asymp-ray}}=D_{\ref{asymp-ray}}(\delta,k)$ such that
the following holds:\\
Suppose $X$ is a $\delta$ hyperbolic metric space and $\gamma_1,\gamma_2:[0,\infty)\rightarrow X$
are two asymptotic $k$-quasi-geodesic rays. Then there exists $T\geq 0$ such that $\gamma_1(t)\in N_{D_{\ref{asymp-ray}}}(Im(\gamma_2))$ and
$\gamma_2(t)\in N_{D_{\ref{asymp-ray}}}(Im (\gamma_1))$, for all $t\geq T$.
\end{lemma}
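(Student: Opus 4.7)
The plan is to exploit that asymptotic quasi-geodesic rays represent the same boundary point, hence have Gromov products tending to infinity, and then use the $6\delta$-thinness of geodesic triangles to transfer a chosen point on one ray across a large triangle through $p := \gamma_1(0)$ onto the other ray. Stability of quasi-geodesics (Lemma~\ref{stab-qg}) promotes closeness to the geodesic sides into closeness to the actual rays. In fact I expect to establish the containments for \emph{every} $t \geq 0$, so the threshold $T$ in the statement is essentially cosmetic.

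Abbreviate $D_0 := D_{\ref{stab-qg}}(\delta, k)$ and set $D := 2D_0 + 6\delta$. By the definition of asymptoticity together with Lemma~\ref{prev}, the sequences $\{\gamma_1(n)\}$ and $\{\gamma_2(n)\}$ are equivalent, so $\lim_{n \to \infty}(\gamma_1(n).\gamma_2(n))_p = \infty$. Given any $t \geq 0$, choose $n \geq t$ large enough that
\[
(\gamma_1(n).\gamma_2(n))_p \;>\; d(p,\gamma_1(t)) + D_0,
\]
and fix a geodesic triangle $\Delta\, p\,\gamma_1(n)\,\gamma_2(n)$. Since $\gamma_1|_{[0,n]}$ is a $k$-quasi-geodesic from $p$ to $\gamma_1(n)$, Lemma~\ref{stab-qg} produces $q_1 \in [p,\gamma_1(n)]$ with $d(q_1,\gamma_1(t)) \leq D_0$. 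In particular $d(p,q_1) \leq d(p,\gamma_1(t)) + D_0 < (\gamma_1(n).\gamma_2(n))_p$, so $q_1$ still lies in the initial fellow-travelling regime of the triangle; by the $6\delta$-thinness guaranteed by Lemma~\ref{hyp-defn}(2), the point $q_2 \in [p,\gamma_2(n)]$ at distance $d(p,q_1)$ from $p$ satisfies $d(q_1,q_2) \leq 6\delta$. Applying Lemma~\ref{stab-qg} once more to $\gamma_2|_{[0,n]}$ furnishes $t' \in [0,n]$ with $d(q_2,\gamma_2(t')) \leq D_0$, so
\[
d(\gamma_1(t),\gamma_2(t')) \;\leq\; D_0 + 6\delta + D_0 \;=\; D.
\]
Thus $\gamma_1(t) \in N_D(Im(\gamma_2))$ for every $t \geq 0$, and the symmetric argument based at $\gamma_2(0)$ yields $\gamma_2(t) \in N_D(Im(\gamma_1))$ for every $t \geq 0$, so $T = 0$ works.

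There is no genuinely hard step; the only point requiring care is the bookkeeping in the choice of slack. To invoke $6\delta$-thinness we must have $d(p,q_1)$ sit strictly below the internal-point distance $(\gamma_1(n).\gamma_2(n))_p$, which is exactly why the extra additive $D_0$ is built into the choice of $n$. This is harmless precisely because the Gromov product along asymptotic rays is unbounded, so $n$ can always be enlarged to absorb this additive correction.
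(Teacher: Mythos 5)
The second application of Lemma~\ref{stab-qg} is invalid as stated. You obtain $q_2$ on the geodesic segment $[p,\gamma_2(n)]$, where $p=\gamma_1(0)$, and then appeal to stability of the quasi-geodesic $\gamma_2|_{[0,n]}$. But $\gamma_2|_{[0,n]}$ joins $\gamma_2(0)$ to $\gamma_2(n)$, and nothing in the hypothesis of Lemma~\ref{asymp-ray} says $\gamma_2(0)=p$. When $\gamma_1(0)\neq \gamma_2(0)$ the two curves do not share endpoints, so Lemma~\ref{stab-qg} does not place $q_2$ within $D_0$ of $\mathrm{Im}(\gamma_2)$. This is not bookkeeping slack: the conclusion you draw --- that $T=0$ always works --- is in fact false. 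Take two vertical geodesic rays in $\mathbb{H}^2$ with the same endpoint at infinity but with $d(\gamma_1(0),\gamma_2(0))$ huge; then $\gamma_1(0)$ is at distance roughly $d(\gamma_1(0),\gamma_2(0))$ from $\mathrm{Im}(\gamma_2)$, which cannot be bounded by any constant $D(\delta,k)$. The threshold $T$ in the statement is therefore genuinely necessary and must be allowed to depend on the pair $(\gamma_1,\gamma_2)$ (specifically on $d(\gamma_1(0),\gamma_2(0))$), not only on $\delta,k$.

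The repair is the standard one: introduce the third geodesic $[\gamma_2(0),\gamma_2(n)]$ and use $\delta$-slimness of the triangle $\Delta\, p\,\gamma_2(0)\,\gamma_2(n)$ to push $q_2$ from $[p,\gamma_2(n)]$ over to $[p,\gamma_2(0)]\cup[\gamma_2(0),\gamma_2(n)]$. Since every point of $[p,\gamma_2(0)]$ lies within distance $d(p,\gamma_2(0))$ of $p$, once
\[
d(p,q_2)\;=\;d(p,q_1)\;\geq\;\tfrac{t}{k}-k-D_0\;>\;d(\gamma_1(0),\gamma_2(0))+\delta ,
\]
the point $q_2$ must instead be $\delta$-close to $[\gamma_2(0),\gamma_2(n)]$, and \emph{now} Lemma~\ref{stab-qg} applies to $\gamma_2|_{[0,n]}$ (which does share endpoints with $[\gamma_2(0),\gamma_2(n)]$). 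This produces the conclusion with $D:=2D_0+7\delta$ for all $t\geq T$ with $T:=k\bigl(d(\gamma_1(0),\gamma_2(0))+\delta+k+D_0\bigr)$. The rest of your argument --- equivalence of the sequences via Lemma~\ref{prev}, the unbounded Gromov product, and the thin-triangle transfer on $[0,(\gamma_1(n).\gamma_2(n))_p]$ --- is sound. (Note the paper itself does not give a proof here, only a pointer to Bridson--Haefliger III.H Lemma~1.15, which runs along the same corrected lines.)
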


\begin{cor}\label{stab-of-lines}
For all $\delta \geq 0$ and $K\geq 1$ there is a constant 
$D_{\ref{stab-of-lines}}=D_{\ref{stab-of-lines}}(\delta,K)$ such that
the following holds:

Suppose $X$ is a $\delta$-hyperbolic metric space and let $\gamma_1,\gamma_2$ be two
$K$-quasi-geodesic lines in $X$ joining the same pair of points $\xi_1,\xi_2\in \partial X$.
Then the Hausdorff distance between $\gamma_1$ and $\gamma_2$ is at most 
$D_{\ref{stab-of-lines}}$.
\end{cor}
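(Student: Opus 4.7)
The plan is to reduce Corollary~\ref{stab-of-lines} to Lemma~\ref{asymp-ray}, which handles the asymptotic ``tails,'' and then to close off the bounded ``middle'' region by a slim-quadrilateral argument. Parametrize $\gamma_1,\gamma_2:\mathbb R\to X$ as $K$-quasi-geodesics with $\gamma_i(-\infty)=\xi_1$ and $\gamma_i(+\infty)=\xi_2$. Applying Lemma~\ref{asymp-ray} to the forward rays $\gamma_i|_{[0,\infty)}$ (asymptotic to $\xi_2$) and, after precomposing with $t\mapsto -t$, to the backward rays (asymptotic to $\xi_1$) yields constants $T_-<T_+$ and $D=D(\delta,K)$ such that for every $t\geq T_+$ or $t\leq T_-$ one has $\gamma_1(t)\in N_D(\mathrm{Im}\,\gamma_2)$ and $\gamma_2(t)\in N_D(\mathrm{Im}\,\gamma_1)$. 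This already controls the two tails.

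For the middle, choose $a_2,b_2\in\mathrm{Im}\,\gamma_2$ with $d(\gamma_1(T_+),a_2)\leq D$ and $d(\gamma_1(T_-),b_2)\leq D$, and form the geodesic quadrilateral with vertices $\gamma_1(T_-),\gamma_1(T_+),a_2,b_2$. Triangulating by a diagonal and applying $\delta$-slimness to each triangle shows this quadrilateral is $2\delta$-slim. For arbitrary $t\in[T_-,T_+]$, set $p=\gamma_1(t)$; since $\gamma_1|_{[T_-,T_+]}$ is a $K$-quasi-geodesic joining the endpoints of the side $[\gamma_1(T_-),\gamma_1(T_+)]$, Lemma~\ref{stab-qg} produces a point $p'$ on that side with $d(p,p')\leq D_{\ref{stab-qg}}(\delta,K)$. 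By $2\delta$-slimness of the quadrilateral, $p'$ lies within $2\delta$ of one of the other three sides. The two short sides have length at most $D$ and have an endpoint on $\mathrm{Im}\,\gamma_2$, while the opposite side $[a_2,b_2]$ lies in a $D_{\ref{stab-qg}}(\delta,K)$-neighbourhood of the subsegment of $\gamma_2$ between $b_2$ and $a_2$, again by Lemma~\ref{stab-qg}. In every case one obtains $d(p,\mathrm{Im}\,\gamma_2)\leq 2D_{\ref{stab-qg}}(\delta,K)+2\delta+D$.

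Interchanging the roles of $\gamma_1$ and $\gamma_2$ gives the symmetric bound, so the conclusion follows with $D_{\ref{stab-of-lines}}:=2D_{\ref{stab-qg}}(\delta,K)+2\delta+D(\delta,K)$. The step requiring real care is the middle one: the two short sides of the quadrilateral must have length uniformly bounded by $D$ no matter how large $T_+-T_-$ becomes, and this is exactly what Lemma~\ref{asymp-ray} gives us. Once that is in hand, the slim-quadrilateral inclusion yields a bound independent of the particular lines $\gamma_1,\gamma_2$, which is what the corollary asserts.
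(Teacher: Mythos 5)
Your proof is correct and follows the same standard route that the paper invokes: the authors do not spell out an argument for Corollary~\ref{stab-of-lines} but simply cite stability of quasi-geodesics (Lemma~\ref{stab-qg}) together with the Bridson--Haefliger asymptotic-rays argument, exactly the two ingredients you combine (Lemma~\ref{asymp-ray} for the two tails, then a slim-quadrilateral argument to close off the bounded middle). The only detail worth flagging is that the thresholds $T_\pm$ produced by Lemma~\ref{asymp-ray} depend on the particular pair $\gamma_1,\gamma_2$, so the quadrilateral step is genuinely needed to make the final constant depend only on $(\delta,K)$; you correctly identify and handle this.
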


 Lemma $\ref{stab-qg}$ and  Lemma $\ref{asymp-ray}$ combined with the proof
of Lemma $\ref{barycen}$(2), immediately imply the following result.
\begin{lemma}\label{barycent}
For all $\delta\geq 0$, $D^{'}\geq 0$ and $k\geq 1$ there are constants
$D=D_{\ref{barycent}}(\delta,k)$ and $L=L_{\ref{barycent}}(\delta,k,D^{'})$ 
such that we have the following:

Suppose $X$ is a $\delta$-hyperbolic metric space. Then
\begin{enumerate}
\item Let $\Delta \xi_1\xi_2\xi_3$ be a $k$-quasi-geodesic {\rm ideal triangle} in 
$X$, i.e. a union of three $k$-quasi-geodesic lines in $X$ joining the pairs of
points $(\xi_i,\xi_j)$, $i \neq j; $ $1\leq i,j\leq 3$. Let us denote the quasi-geodesic lines joining $\xi_i,\xi_j$ by
$[\xi_i,\xi_j]$.
Then there is a point $x\in X$ such that $x\in N_D([\xi_i,\xi_j])$ for
all $i\neq j$.

\item If $x,x^{'}\in X$ are two points each of which is contained within a $D^{'}$-
neighborhood of each of the sides of an ideal $k$-quasi-geodesic triangle
$\Delta \xi_1\xi_2\xi_3$ then $d(x,x^{'})\leq L$.
\end{enumerate}
\end{lemma}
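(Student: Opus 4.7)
The approach is to reduce to the geodesic triangle case of Lemma \ref{barycen} by finite approximation: we build a sequence of finite geodesic triangles $\Delta y_1^n y_2^n y_3^n$ in $X$ whose vertices run off along quasi-geodesic rays to $\xi_1, \xi_2, \xi_3$, analyse their internal points via the proof of Lemma \ref{barycen}, and transfer the conclusions back to the ideal triangle by combining stability of quasi-geodesics (Lemma \ref{stab-qg}) with stability of quasi-geodesic lines between boundary points (Corollary \ref{stab-of-lines}).

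For part (1), fix a basepoint $p \in X$ and use Lemma \ref{qi-geod-line}(1) to construct $K_{\ref{qi-geod-line}}(\delta)$-quasi-geodesic rays $\alpha_i : [0,\infty) \to X$ from $p$ to $\xi_i$ for $i = 1, 2, 3$. By the argument in the proof of Lemma \ref{qi-geod-line}(2) (applied to each pair $\{i,j\}$), after possibly shifting $p$ we may arrange that every bi-infinite concatenation $\overline{\alpha_i} * \alpha_j$ is a $K_1(\delta,k)$-quasi-geodesic line from $\xi_i$ to $\xi_j$. Set $y_i^n := \alpha_i(n)$ for large $n$, and consider the geodesic triangle $\Delta y_1^n y_2^n y_3^n$. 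Since $p$ lies on the sub-concatenation $\overline{\alpha_i}|_{[0,n]} * \alpha_j|_{[0,n]}$, Lemma \ref{stab-qg} places $p$ within $D_{\ref{stab-qg}}(\delta,K_1)$ of every side $[y_i^n, y_j^n]$. The internal-point computation in the proof of Lemma \ref{barycen} (``a point within $D$ of all three sides is within $4D$ of the internal points'') shows that the internal points $c_k$ of $\Delta y_1^n y_2^n y_3^n$ lie within $4 D_{\ref{stab-qg}}$ of $p$. Finally, by Corollary \ref{stab-of-lines} the given side $[\xi_i,\xi_j]$ lies within $D_{\ref{stab-of-lines}}(\delta, \max\{k, K_1\})$ Hausdorff distance of $\overline{\alpha_i} * \alpha_j$, so each $c_k$ is within $D := 4\delta + D_{\ref{stab-qg}} + D_{\ref{stab-of-lines}}$ of every $[\xi_i,\xi_j]$. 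Take $x := c_1$.

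For part (2), assume $x, x'$ lie within $D'$ of each $[\xi_i,\xi_j]$. Choose $n$ large enough (depending on $\max\{d(p,x), d(p,x')\}$) so that the nearest-point projections of $x, x'$ onto each $[\xi_i,\xi_j]$ correspond, via Corollary \ref{stab-of-lines} and Lemma \ref{stab-qg}, to points lying in the finite segment $[y_i^n, y_j^n]$. Then both $x$ and $x'$ lie within $D'' := D' + D_{\ref{stab-of-lines}} + D_{\ref{stab-qg}}$ of every side of the geodesic triangle $\Delta y_1^n y_2^n y_3^n$. Applying the computation from the proof of Lemma \ref{barycen}(2) once to $x$ and once to $x'$, we find that both lie within $4 D''$ of the internal points, which in turn lie within $4\delta$ of one another by Lemma \ref{hyp-defn}(1). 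Hence $d(x,x') \leq 8 D'' + 4\delta$, yielding the required $L_{\ref{barycent}}(\delta, k, D')$. The resulting bounds depend only on $\delta, k$ (for $D$) and on $\delta, k, D'$ (for $L$), as the statement requires.

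The main technical obstacle is ensuring that all three concatenations $\overline{\alpha_i} * \alpha_j$ can be simultaneously arranged to be $K_1(\delta, k)$-quasi-geodesic lines from a common basepoint. The proof of Lemma \ref{qi-geod-line}(2) handles each pair by an individual shift of basepoint, so some care is needed to make the three shifts compatible. The cleanest way to avoid circularity is to start with $p$ on the side $[\xi_1,\xi_2]$ itself, let $\alpha_1, \alpha_2$ be the two sub-rays of $[\xi_1,\xi_2]$ at $p$ (automatically giving $\overline{\alpha_1} * \alpha_2 = [\xi_1,\xi_2]$), and construct $\alpha_3$ via Lemma \ref{qi-geod-line}(1); one then checks via the Gromov-product argument from the proof of Lemma \ref{qi-geod-line}(2) that $\overline{\alpha_1} * \alpha_3$ and $\overline{\alpha_2} * \alpha_3$ are uniform quasi-geodesic lines after at most a bounded shift along $\alpha_3$.
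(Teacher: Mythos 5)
Your overall strategy — approximate the ideal triangle by finite geodesic triangles, run the internal-point computation from the proof of Lemma \ref{barycen}, then transfer back using stability of quasi-geodesics and stability/asymptoticity of quasi-geodesic lines — is exactly what the paper's one-line proof indicates, and part (2) goes through cleanly. The concern is with the resolution of the obstacle you flag for part (1). Two issues. First, the phrase ``bounded shift along $\alpha_3$'' is misleading: the shift required to make $\overline{\alpha_2}\ast\alpha_3$ a uniform quasi-geodesic is on the order of $\sup\{(x.y)_p : x\in\alpha_2, y\in\alpha_3\}$, which is not a priori bounded in terms of $\delta$ and $k$ (only the resulting quasi-geodesic constant is bounded). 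Second, and more substantively, after shifting $p$ you must re-derive $\alpha_1,\alpha_2$ from the new basepoint, at which point they are no longer subrays of $[\xi_1,\xi_2]$, and the claim that the shift simultaneously controls the Gromov products $(\xi_1.\xi_3)_{p'}$ \emph{and} $(\xi_2.\xi_3)_{p'}$ needs its own argument (in $\mathbb{H}^2$, shifting too far past the incenter makes the other product blow up, so one must stop at the right moment; this is an extra lemma, not a ``one then checks'').

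The common-basepoint problem can be sidestepped entirely, and this is presumably what the paper's reference to Lemma \ref{asymp-ray} is for: instead of building three rays from a single $p$, choose the approximating vertices \emph{on the given sides themselves}. Fix arc-length parametrizations; pick $y_1^n$ on $[\xi_1,\xi_2]$ tending to $\xi_1$ and $y_2^n$ on $[\xi_1,\xi_2]$ tending to $\xi_2$, pick $z_1^n$ on $[\xi_1,\xi_3]$ tending to $\xi_1$ and $z_3^n$ tending to $\xi_3$, and $w_2^n, w_3^n$ on $[\xi_2,\xi_3]$ likewise. Since $[\xi_1,\xi_2]$ and $[\xi_1,\xi_3]$ are both asymptotic to $\xi_1$, Lemma \ref{asymp-ray} lets you choose $n$ so that $d(y_1^n, z_1^n)\le D_{\ref{asymp-ray}}$, and similarly for the other two vertices. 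Form the geodesic triangle on $y_1^n, y_2^n, z_3^n$ (say); by Lemma \ref{stab-qg}, each geodesic side is within $D_{\ref{stab-qg}}(\delta,k)$ plus a $D_{\ref{asymp-ray}}$-correction of the corresponding quasi-geodesic side $[\xi_i,\xi_j]$, with no common-basepoint issue and no shifting. The internal points of this finite triangle then give the barycenter, uniformly in $n$, and the rest of your argument applies verbatim. With this replacement, your proof is correct and matches the intended one.
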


If a point $x\in X$ is contained in the $D^{'}$-neighborhood of each of the sides of 
an ideal quasi-geodesic triangle $\Delta \xi_1\xi_2\xi_3$, then $x$ will be called a {\em $D^{'}$-barycenter}
of $\bigtriangleup \xi_1 \xi_2\xi_3$. 
A $D_{\ref{barycent}}$-barycenter will be simply referred to as a {\em barycenter.}

Now, Lemma  $\ref{barycent}$ along with the proof of  Lemma $\ref{barycen}(2)$ gives
 the following.
\begin{lemma}
Given $\delta\geq 0$, $D^{'}\geq 0$, $K_1\geq 1$ and $K_2\geq 0$ there exists $D=D(\delta,K_1,K_2,D^{'})$
such the following holds:

Suppose $f:X\rightarrow Y$ is a $K_1$-quasi isometric embedding of  $\delta$-hyperbolic
metric spaces. Let $\bigtriangleup \xi_1\xi_2\xi_3 \subset X$ and
$\bigtriangleup \partial(f)(\xi_1)\partial(f)(\xi_2)\partial(f)(\xi_3)\subset Y$ be $K_2$-quasi-geodesic ideal triangles. 
If $x\in X$ is a $D^{'}$-barycenter of $\Delta\xi_1\xi_2\xi_3$, then $f(x)\in Y$ is a
$D$-barycenter of  $\Delta \partial(f)(\xi_1)\partial(f)(\xi_2)\partial(f)(\xi_3)$.
\end{lemma}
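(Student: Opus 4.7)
The plan is to transport the barycenter condition from $X$ to $Y$ by combining the observation that $f$ sends quasi-geodesic lines to quasi-geodesic lines with the stability statement from Corollary \ref{stab-of-lines}. The only genuinely new ingredient beyond that is keeping track of how the parameters degrade under these operations.

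\medskip

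First, I would note that each side $[\xi_i,\xi_j]$ of $\bigtriangleup \xi_1 \xi_2 \xi_3$ is a $K_2$-quasi-geodesic line in $X$. Since $f$ is a $K_1$-quasi-isometric embedding, the image $f([\xi_i,\xi_j])$ is a $K_3$-quasi-geodesic line in $Y$, for some $K_3 = K_3(K_1,K_2)$. By construction this image line has ideal endpoints $\partial(f)(\xi_i)$ and $\partial(f)(\xi_j)$ in $\partial Y$ (using Lemma \ref{bdry-elementary} together with Lemma \ref{prev}, applied to sequences along the quasi-geodesic). Consequently $f([\xi_i,\xi_j])$ and the side $[\partial(f)(\xi_i),\partial(f)(\xi_j)]$ of the target triangle are both $\max(K_2,K_3)$-quasi-geodesic lines in $Y$ joining the same pair of boundary points.

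\medskip

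Next I would invoke Corollary \ref{stab-of-lines}: the Hausdorff distance between $f([\xi_i,\xi_j])$ and $[\partial(f)(\xi_i),\partial(f)(\xi_j)]$ is at most $D_1 := D_{\ref{stab-of-lines}}(\delta,\max(K_2,K_3))$. Meanwhile, from the hypothesis that $x$ is a $D'$-barycenter of $\bigtriangleup \xi_1\xi_2\xi_3$, for every $i\neq j$ there is a point $y_{ij}\in [\xi_i,\xi_j]$ with $d_X(x,y_{ij})\leq D'$, so $d_Y(f(x),f(y_{ij}))\leq K_1 D' + K_1$. Thus $f(x)$ lies within $(K_1 D'+K_1)$ of $f([\xi_i,\xi_j])$, and then within $(K_1 D'+K_1)+D_1$ of the actual side $[\partial(f)(\xi_i),\partial(f)(\xi_j)]$ by the triangle inequality.

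\medskip

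Setting $D := K_1 D' + K_1 + D_1$ therefore exhibits $f(x)$ as a $D$-barycenter of the ideal quasi-geodesic triangle $\bigtriangleup \partial(f)(\xi_1)\partial(f)(\xi_2)\partial(f)(\xi_3)$ in $Y$, completing the proof. The only step that requires any care is the first one: verifying that $\partial(f)$ does send the endpoints of $[\xi_i,\xi_j]$ to the endpoints of $f([\xi_i,\xi_j])$, but this is immediate from the definition of $\partial(f)$ applied to a sequence of points marching out along the quasi-geodesic, in conjunction with Lemma \ref{prev}. No other step poses any obstacle, as all constants are tracked explicitly through Corollary \ref{stab-of-lines} and the quasi-isometry inequality for $f$.
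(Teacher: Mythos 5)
Your proof is correct and follows exactly the approach the paper intends: the paper gives no proof of this lemma but says it follows from Lemma \ref{barycent} together with the argument in the proof of Lemma \ref{barycen}(2), which is precisely what you reconstruct — push the barycenter condition forward through $f$, note that $f$ maps quasi-geodesic sides to quasi-geodesics with the same ideal endpoints, and then replace the finite-segment stability (Lemma \ref{stab-qg}) used in Lemma \ref{barycen}(2) by the bi-infinite version (Corollary \ref{stab-of-lines}). The constant-tracking is accurate.
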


\noindent {\bf The barycenter map}\\
Suppose $X$ is a $\delta$-hyperbolic metric space such that
$\partial X$ has more than two points. Let us denote the set of all distinct triples
of points in $\partial X$ by $\partial^3 X$.
Now, given $\xi=(\xi_1,\xi_2,\xi_3)\in \partial^3 X$
 we can, by Lemma $\ref{qi-geod-line}$, construct a $K_{\ref{qi-geod-line}}(\delta)$-quasi-geodesic ideal triangle, 
say $\Delta_1$, with vertices $\xi_i$, $i=1,2,3$. Then, by Lemma $\ref{barycent}(2)$ there is a coarsely well
defined barycenter of $\Delta_1$. Suppose  $b_{\xi}$ is a barycenter of $\Delta_1$. {\em Henceforth, we shall refer
to it simply as a barycenter of the triple $(\xi_1,\xi_2,\xi_3)$.}
For a different set of choices of the $K_{\ref{qi-geod-line}}(\delta)$-quasi-geodesic lines joining the pairs
$(\xi_i,\xi_j)$, suppose we obtain a new ideal triangle $\Delta_2$, and suppose $b^{'}_{\xi}$ is a barycenter
of $(\xi_1,\xi_2,\xi_3)$ defined with respect to $\Delta_2$.
Then by the stability of  quasi-geodesic lines (Corollary $\ref{stab-of-lines}$), $b^{'}_{\xi}$ is
a $D_1:=(D_{\ref{barycent}}(\delta)+D_{\ref{stab-of-lines}}(\delta,K_{\ref{qi-geod-line}}(\delta))$-barycenter of
the triangle $\Delta_1$.
Hence, by Lemma $\ref{barycent}(2)$,  $d(b_{\xi},b^{'}_{\xi})\leq L_{\ref{barycent}}(\delta, K_{\ref{qi-geod-line}}(\delta),D_1)$
and we have:

\begin{lemma}\label{bary-map}
For every $\delta\geq 0$ there is a constant $D_{\ref{bary-map}}=D_{\ref{bary-map}}(\delta)$ such that
we have the following:

Suppose $X$ is a $\delta$-hyperbolic metric space and $\xi=(\xi_1,\xi_2,\xi_3)\in \partial^3 X$.
If $b_{\xi}$ and $b^{'}_{\xi}$ are two barycenters of $\xi$, then $d(b_{\xi},b^{'}_{\xi})\leq D_{\ref{bary-map}}$.
\end{lemma}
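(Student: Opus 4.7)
The plan is to observe that any two barycenters $b_\xi, b'_\xi$ of the same ideal triple are both controlled by the same auxiliary quasigeodesic ideal triangle, via the stability lemmas proved above. First, I would fix one reference $K_{\ref{qi-geod-line}}(\delta)$-quasi-geodesic ideal triangle $\Delta_1$ with ideal vertices $\xi_1, \xi_2, \xi_3$, whose existence is guaranteed by Lemma \ref{qi-geod-line}(2). By definition, $b_\xi$ is a $D_{\ref{barycent}}(\delta)$-barycenter of some such ideal triangle; without loss of generality I take it to be a $D_{\ref{barycent}}(\delta)$-barycenter of $\Delta_1$ itself. The other barycenter $b'_\xi$ is defined with respect to some (possibly different) $K_{\ref{qi-geod-line}}(\delta)$-quasi-geodesic ideal triangle $\Delta_2$ with the same ideal vertices.

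The key step is to compare $\Delta_1$ and $\Delta_2$ side by side. Since for each $i \ne j$, the $(i,j)$-sides of $\Delta_1$ and $\Delta_2$ are two $K_{\ref{qi-geod-line}}(\delta)$-quasi-geodesic lines joining the same pair $(\xi_i, \xi_j) \in \partial X$, Corollary \ref{stab-of-lines} (stability of quasi-geodesic lines) gives that their Hausdorff distance is at most $D_{\ref{stab-of-lines}}(\delta, K_{\ref{qi-geod-line}}(\delta))$. Consequently, since $b'_\xi$ lies in the $D_{\ref{barycent}}(\delta)$-neighborhood of each side of $\Delta_2$, it lies in the $D_1$-neighborhood of each side of $\Delta_1$, where
\[
D_1 := D_{\ref{barycent}}(\delta) + D_{\ref{stab-of-lines}}(\delta, K_{\ref{qi-geod-line}}(\delta)).
\]
That is, $b'_\xi$ is a $D_1$-barycenter of the triangle $\Delta_1$.

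Now both $b_\xi$ and $b'_\xi$ are $D_1$-barycenters of the same $K_{\ref{qi-geod-line}}(\delta)$-quasi-geodesic ideal triangle $\Delta_1$, so Lemma \ref{barycent}(2) applies directly and yields
\[
d(b_\xi, b'_\xi) \le L_{\ref{barycent}}\bigl(\delta,\, K_{\ref{qi-geod-line}}(\delta),\, D_1\bigr).
\]
Setting $D_{\ref{bary-map}}(\delta)$ to equal this constant finishes the proof. There is essentially no hard step here; the only thing to get right is the bookkeeping of quasi-geodesicity constants, namely the fact that all ideal triangles involved are $K_{\ref{qi-geod-line}}(\delta)$-quasi-geodesic so that Lemmas \ref{stab-of-lines} and \ref{barycent} can both be invoked with constants depending only on $\delta$.
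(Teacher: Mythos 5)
Your proof is correct and follows essentially the same route as the paper: fix a reference ideal triangle $\Delta_1$, use Corollary \ref{stab-of-lines} to show any other barycenter $b'_\xi$ is a $D_1$-barycenter of $\Delta_1$ with $D_1 = D_{\ref{barycent}}(\delta) + D_{\ref{stab-of-lines}}(\delta, K_{\ref{qi-geod-line}}(\delta))$, then apply Lemma \ref{barycent}(2). The constants match the paper's exactly.
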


We shall say that a map $f: U \rightarrow (V,d_V)$ satisfying properties $\PP_1 , \cdots , \PP_k$ is {\it coarsely unique}
if there exists $C>0$ such that
for any other map $g: U \rightarrow (V,d_V)$ satisfying properties $\PP_1 , \cdots , \PP_k$, and any $u \in U$,
$d_V(f(u), g(u)) \leq C$.

Thus, from Lemma \ref{bary-map} we have a coarsely unique map $\phi:\partial^3 X\rightarrow X$, $\xi\mapsto b_{\xi}$
mapping a triple of points to a barycenter.
Any such map will be referred to as {\em the barycenter map}. Now we are ready to state the main proposition of this subsection. 

\begin{prop}{\bf Existence of qi sections for metric graph bundles:}\label{existence-qi-section} For all $\delta^{'},N\geq 0$ and 
proper $f:{\mathbb{N}} \rightarrow {\mathbb{N}} $ there exists $K_0=K_0(f,\delta^{'},N)$ such that the following holds.\\
Suppose $p : X \rightarrow B$ is an $(f,K)$-metric graph bundle with the following properties:
\begin{enumerate}
\item Each of the fibers $F_b$ , $b \in \mathcal{ V}(B)$ is a $\delta^{'}$-hyperbolic metric space with respect to the
path metric $d_b$ induced from $X$.
\item The barycenter maps $\phi_b : \partial^3 F_b \rightarrow F_b$ are uniformly coarsely surjective, i.e.
$F_b$ is contained in the $N$-neighborhood of the image of $\phi_b$ for all $b \in \mathcal{V} (B)$. 
\end{enumerate}
Then there is a $K_0$-qi section through each point of $\mathcal V(X)$.
\end{prop}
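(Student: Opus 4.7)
Following the sketch in the opening paragraph of this subsection, the plan is to build a $K_0$-qi section through a given $x\in\mathcal{V}(F_v)$ by first producing a triple $\xi=(\xi_1,\xi_2,\xi_3)\in\partial^3 F_v$ whose barycenter lies within $d_v$-distance $N$ of $x$ (available by hypothesis (2) of the proposition), then ``flowing'' $\xi$ to every other fiber via the coarsely canonical fiber-to-fiber quasi-isometries provided by the bundle structure, and finally declaring $s(w)$ to be a barycenter of the flowed triple in $F_w$.

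\textbf{Canonical edge quasi-isometries and path-independence of boundary maps.} For each edge $\{b_1,b_2\}\in\mathcal{E}(B)$, condition $2(i)$ of Definition \ref{defn-mgbdl} together with Proposition \ref{def2} yields a $K$-quasi-isometry $\phi_{b_1 b_2}:F_{b_1}\to F_{b_2}$, obtained by sending each vertex of $F_{b_1}$ to any adjacent vertex of $F_{b_2}$; any two such choices differ pointwise by at most $f(2)$ in $d_{b_2}$ by metric properness of the fiber inclusion. For an edge-path $\gamma=v_0 v_1\cdots v_n$ in $B$ from $v=v_0$ to $w=v_n$, form the composition $\phi_\gamma=\phi_{v_{n-1}v_n}\circ\cdots\circ\phi_{v_0 v_1}:F_v\to F_w$, a quasi-isometry with constants depending on $f,K,n$. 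For any two edge-paths $\gamma,\gamma'$ from $v$ to $w$ and any $y\in F_v$, the values $\phi_\gamma(y),\phi_{\gamma'}(y)$ are at $X$-distance at most the sum of the lengths of $\gamma,\gamma'$, hence at finite $d_w$-distance by condition (1) of Definition \ref{defn-mgbdl}. By Lemma \ref{bdry-elementary}(3) their induced boundary maps agree, so a well-defined homeomorphism $\partial\phi_{vw}:\partial F_v\to\partial F_w$ is obtained, and Lemma \ref{bdry-elementary}(2) yields the cocycle identity $\partial\phi_{vw_2}=\partial\phi_{w_1w_2}\circ\partial\phi_{vw_1}$.

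\textbf{Construction of the section and the qi estimate.} Set $s(v)=x$; for $w\neq v$, let $s(w)\in\mathcal{V}(F_w)$ be any vertex within distance $N$ of a barycenter of $\partial\phi_{vw}(\xi)\in\partial^3 F_w$, coarsely well-defined by Lemma \ref{bary-map} and hypothesis (2). Since $p$ is $1$-Lipschitz, $d_X(s(w_1),s(w_2))\ge d_B(w_1,w_2)$ for free, so only the Lipschitz direction of the qi inequality requires work. For adjacent $w_1,w_2\in\mathcal{V}(B)$, the unlabeled lemma just before Lemma \ref{bary-map} (applied to the $K$-quasi-isometry $\phi_{w_1w_2}$) shows that $\phi_{w_1w_2}(s(w_1))$ is a uniform barycenter in $F_{w_2}$ of $\partial\phi_{w_1w_2}(\partial\phi_{vw_1}(\xi))=\partial\phi_{vw_2}(\xi)$, hence at bounded $d_{w_2}$-distance from $s(w_2)$ by Lemma \ref{bary-map}; together with the edge inequality $d_X(s(w_1),\phi_{w_1w_2}(s(w_1)))\le 1$ this gives a uniform constant $C=C(f,K,\delta',N)$ with $d_X(s(w_1),s(w_2))\le C$ along every edge of $B$. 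Concatenating over an edge-geodesic in $B$ produces the desired $K_0$-qi section. The main obstacle is the path-independence of $\partial\phi_{vw}$: although the additive drift between $\phi_\gamma$ and $\phi_{\gamma'}$ may grow with path length, any finite bound suffices to ensure agreement of their boundary extensions, and it is precisely this feature that allows a single boundary-level recipe to pin down a coherent section across the entire, possibly non-proper, base.
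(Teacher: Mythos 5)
Your proposal is correct and follows essentially the same route as the paper: compose edge quasi-isometries along a geodesic to get fiber-to-fiber maps, show the induced boundary maps are path-independent via Lemma \ref{bdry-elementary}(3) (using that the pointwise drift between two paths has a bound uniform in the fiber point), transport the chosen triple to each fiber, take a barycenter, and verify the edge estimate via the quasi-isometry-barycenter compatibility lemma. Your handling of the case $x\notin \mathrm{Im}(\phi_v)$—building in the $N$-slack from the start rather than constructing the section through a nearby $x_1$ and then patching at $v$—is a minor cosmetic variant of the paper's final paragraph, not a substantive difference.
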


Note that the constant $K$ in  Proposition \ref{existence-qi-section} above is given by
 $K=f(4)$ by Proposition \ref{def2} and hence we may write $K_0=K_0(f,K,\delta^{'},N)$ by making the implicit dependence on $K$
explicit. We also assume without loss of generality that
for all $b\in \mathcal{ V} (B)$, the image
of  $\phi_b$ is contained in $\mathcal{ V}(F_b)$.

\begin{proof} Let us fix a set $\{\phi_b\}_{b\in \mathcal{ V}(B)}$ of barycenter maps and let
$v\in \mathcal{ V}(B)$, $x\in \mathcal V(F_v)$.
First, suppose that $x$ is contained in the image of the barycenter map $\phi_v$. We will construct a qi section through $x$. 
Choose a point $\xi_v = (\xi_1 ,\xi_2 , \xi_3 )\in \partial^3 F_v$ such that
$\phi_v(\xi_v) = x$. Denote $\xi_v = \xi$ and so $\phi_v(\xi) =x$. 

Let $w,z\in \mathcal{ V}(B)$, $w\neq z$. Choose a geodesic $\gamma$ joining $w,z$ and 
let $w=w_0,w_1,\ldots, w_{n-1},w_n=z$ be the consecutive vertices on $\gamma$.
By condition $(2)(ii)$ of the definition of metric graph bundles (Proposition \ref{def2}), 
for all $i$, $0\leq i\leq n-1$,
there is a $K$-quasi-isometry $f_{w_i w_{i+1}}:F_{w_i}\rightarrow F_{w_{i+1}}$ which sends any vertex
$y_i\in \mathcal V(F_{w_i})$ to a vertex
$y_{i+1}\in \mathcal V(F_{w_{i+1}})$ where $y_i$ and $y_{i+1}$ are connected by an edge.
By composition of $n$ such maps we get a map $f_{wz} : F_w\rightarrow F_z$, which sends each point $y\in  F_w$ to
a point $y^{'}\in \mathcal{ V}(F_z)$ such that $d(y,y^{'})\leq  d_B(w,z)+1 = n+1$.
Let $f_{ww}:F_w\rightarrow F_w$ denote the identity map on $F_w$, for all $w\in \mathcal{ V}(B)$.
Now we make the following observations:

$1.$ Since the inclusion maps $F_w\hookrightarrow X$ are uniformly metrically proper, by the definition of metric graph bundles,
the map $f_{wz}$ is coarsely uniquely determined.
In fact, if $d(w,z)= n$, $n\in \mathbb N$, we have for any other map $f^{'}_{wz}$ defined in the
same way, $d(f_{wz}(y),f^{'}_{wz}(y))\leq 2(n+1)$, so that $d_z(f_{wz}(y),f^{'}_{wz}(y))\leq f(2n+2)$, for all $y\in F_w$.

$2.$ 
Since each $f_{wz}$ is obtained as a composition of $K$-quasi-isometries it is a quasi-isometry.
Now, since the spaces $F_w,w\in \mathcal{ V}(B)$, are $\delta^{'}$-hyperbolic
and since the map $f_{wz}$ is coarsely uniquely determined, we have a well defined map
$\partial (f_{wz}):\partial F_w\rightarrow \partial F_z$, by Lemma $\ref{bdry-elementary}$, and hence an induced map
$\partial^3 f_{wz}: \partial^3 F_w \rightarrow \partial^3 F_z$, $\forall w,z\in \mathcal{ V}(B)$. 

Consider the map $s = s_{\xi,x} : \mathcal{ V}(B)\rightarrow X$ given by $ s(v)=x
=\phi_v(\xi)$,
and $s(w)=\phi_w((\partial^3 f_{vw} (\xi)))$, for all $w\in \mathcal{ V}(B), w\neq v$. 
We show below that $s$ (or $s( \mathcal{ V}(B))$) is the required qi section
through $x$. 

$3.$ Writing $\xi_w = \partial^3 f_{vw} (\xi)$ for all $w\neq v$, note that for any $w, z \in \mathcal{ V}(B)$, $\partial^3 f_{wz} (\xi_w ) = \xi_z$ .
This follows from the fact that by the definitions of the maps $f_{vz}$, $f_{wz}$, $f_{vw}$ we have, for all $y\in F_v$,
$d(f_{vz}(y),f_{wz}\circ f_{vw}(y)) \leq d_B(v,z)+d_B(w,z)+d_B(v,w)+3$, and thus 
$d_z(f_{vz}(x),f_{wz}\circ f_{vw}(x)) \leq f(d_B(v,z)+d_B(w,z)+d_B(v,w)+3)$.
The claim follows from Lemma $\ref{bdry-elementary}(3)$.

$4.$ Lastly, we show that there exists $C\geq 1$ such that for any pair  $w, z$ of adjacent vertices of $B$, $d(s(w), s(z)) \leq C$.
 By Condition 2 (ii) (Proposition \ref{def2}) $f_{wz}$ is a $K$-quasi-isometry.
Let $\xi_w = (\beta_1 , \beta_2 , \beta_3 )$ and $\partial^3f_{wz}(\xi_w)=\xi_z=(\eta_1,\eta_2,\eta_3)$.
Choose $K_{\ref{qi-geod-line}}(\delta^{'})$-quasigeodesic ideal triangles $\Delta_w$ and $\Delta_z$ respectively in $F_w$ and
$F_z$, with vertices $\xi_i$'s and $\eta_i$'s; by definition of the map $s$, $s(w)$ and $s(z)$ are
$D_{\ref{barycent}}(\delta^{'})$-barycenters of these triangles. Now, the map $f_{wz}$ takes the ideal triangle
$\Delta_w$ to an ideal $K_1$-quasigeodesic triangle with vertices $\eta_i$'s, where $K_1=K_{\ref{qi-geod-line}}(\delta^{'}).K+ K$,
and $f_{wz}(s(w))$ is a $D_1:=\{D_{\ref{barycent}}(\delta^{'}).K+K\}$-barycenter of the new triangle. 
Thus, by Lemma $\ref{stab-of-lines}$, $f_{wz}(s(w))$ is a $D_2$-barycenter of the triangle $\Delta_z$, where
$D_2=D_{\ref{stab-of-lines}}(\delta^{'},K_1)+D_1$. Hence, by Lemma $\ref{barycent}$, we have
$d(s(z),f_{wz}(s(w)))\leq L_{\ref{barycent}}(\delta^{'},K_{\ref{qi-geod-line}}(\delta^{'}),D_2)$. Since
$d(s(w),f_{wz}(s(w)))=1$ we have $d(s(w),s(z))\leq C:=1+L_{\ref{barycent}}(\delta^{'},K_{\ref{qi-geod-line}}(\delta^{'}),D_2)$.

 For any $w, z \in \mathcal{ V}(B)$, 
$d(w, z)\leq d(s(w), s(z))$ by the definition of a metric graph bundle. Also from  Step $(4)$ above,
we have $d(s(w), s(z)) \leq C.d(w, z)$. Hence  $s$ is a $C$-qi section.

If $x\in \mathcal V(F_v)$ is not in the image of $\phi_v$, we can choose $x_1\in \mathcal V(F_v)$ 
such that $d(x,x_1)\leq N$ and $x_1\in Im(\phi_v)$. Now construct as above a $C$-qi section $s=s_{\xi,x_1}$,
and define a new section $s^{'}$ by setting $s^{'}(b)=s(b)$ for all $b\in \mathcal{ V}(B), b\neq v$ and $s^{'}(v)=x$. This 
is an $(N+C)$-qi section passing through $x$. Thus we can take $K_0=N+C$ to finish the proof of the proposition. \end{proof}

Applying this proposition to Example $\ref{eg-mbdl}$,
we have a different proof of the following result of Mosher \cite{mosher-hypextns}.

\begin{theorem} \label{qi-mosher} {\bf (Mosher \cite{mosher-hypextns})}
Let us consider the short exact sequence of finitely generated groups 
\medskip
\begin{center}
$1\rightarrow A\rightarrow G\rightarrow Q\rightarrow 1$.
\end{center}
such that $A$ is non-elementary word hyperbolic. Then there exists a q(uasi)-i(sometric) section $\sigma : Q \rightarrow G$.
Hence, if $G$ is hyperbolic, then so is $Q$.\end{theorem}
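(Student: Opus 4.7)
The plan is to realize the exact sequence as the metric graph bundle of Example \ref{eg-mbdl}, verify the hypotheses of Proposition \ref{existence-qi-section} to produce the qi section, and then obtain hyperbolicity of $Q$ from that of $G$ by a standard quasi-geodesic triangle argument.

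Setup: choose a finite generating set $S$ of $G$ containing a generating set $S_1$ of $A$, set $T = \pi(S)\setminus\{1\}$, and consider the metric graph bundle $p:X=\Gamma(G,S)\to B=\Gamma(Q,T)$ from Example \ref{eg-mbdl}. Each fiber $F_b$ is a right coset $gA$, and the bijection $ga\mapsto a$ is a graph isomorphism onto $\Gamma(A,S_1)$, so the induced path metric on $F_b$ coincides with the word metric on $A$ with respect to $S_1$. Since $A$ is word hyperbolic, there is a uniform $\delta'$ such that each $F_b$ is $\delta'$-hyperbolic, verifying hypothesis (1) of Proposition \ref{existence-qi-section}.

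The main (and really only) point to check is hypothesis (2): the uniform coarse surjectivity of the barycenter maps $\phi_b:\partial^3 F_b\to F_b$. Since all fibers are abstractly isometric to $\Gamma(A,S_1)$, it suffices to prove that $\phi:\partial^3 \Gamma(A,S_1)\to\Gamma(A,S_1)$ is coarsely surjective, and this is the step that genuinely uses the non-elementarity hypothesis: it guarantees $|\partial A|=\infty$, so $\partial^3 A\neq\emptyset$, and provides a rich enough isometric action of $A$ on its Cayley graph to sweep out the whole group. Concretely, fix any $\xi\in\partial^3 A$ and set $x_0=\phi(\xi)\in A$. Left multiplication by $A$ acts on $\Gamma(A,S_1)$ by isometries and induces an action on $\partial^3 A$; since isometries carry $K_{\ref{qi-geod-line}}$-quasi-geodesic ideal triangles to $K_{\ref{qi-geod-line}}$-quasi-geodesic ideal triangles, $a\cdot x_0$ is a barycenter of $a\cdot\xi$, and by the coarse uniqueness of barycenters (Lemma \ref{bary-map}) it lies within a uniform distance $D=D_{\ref{bary-map}}(\delta')$ of $\phi(a\cdot\xi)$ for every $a\in A$. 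As $a$ ranges over $A$, the points $a\cdot x_0$ cover the vertex set of $\Gamma(A,S_1)$, so every vertex lies within $D$ of $\phi(\partial^3 A)$.

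Applying Proposition \ref{existence-qi-section} then yields a $K_0$-qi section through each vertex of $X$; restricting to $\mathcal{V}(B)$ gives the required qi section $\sigma:Q\to G$. For the final sentence, assume $G$ is hyperbolic, and consider a geodesic triangle $\Delta q_1q_2q_3$ in the Cayley graph of $Q$. Its image under $\sigma$ consists of three $k$-quasi-geodesics in $G$; by Lemma \ref{stab-qg} these track within a bounded distance of the corresponding geodesic triangle in $G$, which is $\delta$-slim. Hence the $\sigma$-image triangle is uniformly slim in $G$, and pulling this thinness back through the lower qi-embedding inequality $d_Q(q,q')\leq k\cdot d_G(\sigma q,\sigma q')+k$ shows $\Delta q_1q_2q_3$ is uniformly thin in $Q$. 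Thus $Q$ is hyperbolic.
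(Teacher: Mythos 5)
Your proof is correct and follows exactly the route the paper intends, namely applying Proposition \ref{existence-qi-section} to the metric graph bundle of Example \ref{eg-mbdl}. The paper leaves this verification implicit; you have correctly identified that the only nontrivial point is uniform coarse surjectivity of the barycenter maps, and your argument via the left regular action of $A$ (which sends barycenters to barycenters, combined with the coarse uniqueness of Lemma \ref{bary-map}) is precisely the right way to see this and makes clear where non-elementarity is used.
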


Let $p:X^{'}\rightarrow B^{'}$ be an $(f,c,K)$-metric bundle and let $\pi:X\rightarrow B$ be an approximating metric graph bundle 
as in Lemma $\ref{mbdl-mgbdl}$.  
As in  Lemma $\ref{mbdl-mgbdl}$ we suppose that 
the maps $\psi_X,\psi_B$ are $K_1-$ quasi-isometries. Let $\psi_{X^\prime}$ (resp. $\psi_{B^\prime}$)
be a quasi-isometric coarse inverse of the map $ \psi_X$ (resp. $\psi_B$) constructed
as in the proof of Lemma \ref{elem-lemma1} (2). We assume that these maps are   inverses of  $\psi_X$, $\psi_B$
 when restricted to the vertex sets $\mathcal V(X)$ and $\mathcal V(B)$ respectively. Moreover, we assume that
$\psi_{X^\prime}$, $\psi_{B^\prime}$ are $K_1$-quasi-isometries.
 Also we assume that the restrictions of $\psi_X$ and
$\psi_{X^{'}}$ to horizontal spaces ({\em cf.} Lemma \ref{coarse3}) are $K_1$-quasi-isometries.

\begin{prop}{\bf Existence of qi section for metric bundles:}\label{mgsec-to-mbsec}
 Let $p:X^{'}\rightarrow B^{'}$, $\pi:X\rightarrow B$ and
$\psi_B,\psi_X,\psi_{X^{'}},\psi_{B^{'}}, K_1$ be as above.
Let $V\subset B^{'}$ be the collection of points of $B^{'}$ that form the vertex set of $B$.
 Suppose we have a $k$-qi section $s:V\rightarrow X$. Then we have a 
$k^{'}=K_{\ref{mgsec-to-mbsec}}(f,c,K,K_1,k)$-qi section $s^{'}:B^{'}\rightarrow X^{'}$ such that $\psi_X\circ s=s^{'}\circ \psi_B$. 

Hence any metric bundle satisfying
the properties \\
1) horizontal spaces
are uniformly hyperbolic, and\\
 2) the
barycenter maps of these spaces are uniformly
coarsely surjective\\
admits a uniform qi section through each
point.
\end{prop}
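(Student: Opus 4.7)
\emph{Proof plan for Proposition \ref{mgsec-to-mbsec}.} The plan is to first construct a preliminary map $\tilde{s}:B^{'}\rightarrow X^{'}$ by composition, and then perturb it fiberwise to obtain an honest section. Concretely, I would define $\tilde{s}=\psi_X\circ s\circ \psi_{B^{'}}$. Because $\psi_{B^{'}}$ is a $K_1$-quasi-isometry, $s$ is a $k$-quasi-isometric embedding, and $\psi_X$ is again a $K_1$-quasi-isometry, the composition $\tilde{s}$ is a quasi-isometric embedding of $B^{'}$ into $X^{'}$ with constants depending only on $(k,K_1)$. The catch is that $\tilde{s}(b)$ does not in general land in $F_b$: we only have $p(\tilde{s}(b))=\psi_B\circ \pi\circ s\circ \psi_{B^{'}}(b)=\psi_B\circ \psi_{B^{'}}(b)$, which agrees with $b$ only up to bounded error in $B^{'}$.

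The second step is to correct this error. For each $b\in B^{'}$, let $u=\psi_{B^{'}}(b)\in V$, so that $d_{B^{'}}(b,u)\leq D$ for a universal constant $D$ coming from Observation \ref{coarse}. Applying condition $2(i)$ of the definition of an $(f,c)$-metric bundle iteratively along a geodesic from $u$ to $b$, one can find a point $s^{'}(b)\in F_b$ with $d_{X^{'}}(\tilde{s}(b),s^{'}(b))\leq cD$. Define $s^{'}(b)$ to be such a point, choosing $s^{'}(u):=s(u)\in F_u$ for $u\in V$ (no perturbation is needed there, since $\psi_{B^{'}}$ and $\psi_X$ are the identity on vertex sets by construction). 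By design $p\circ s^{'}=I_{B^{'}}$ and $s^{'}\circ \psi_B=\psi_X\circ s$ on $V$, giving the required compatibility.

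To check that $s^{'}$ is a quasi-isometric embedding, I would use that $s^{'}$ differs from the quasi-isometric embedding $\tilde{s}$ by a map moving every point at most a bounded distance $cD$; by the standard fact (used already in the proof of Lemma \ref{condition3}) that a bounded perturbation of a quasi-isometric embedding is again a quasi-isometric embedding, $s^{'}$ is a $k^{'}$-qi section for some $k^{'}=K_{\ref{mgsec-to-mbsec}}(f,c,K,K_1,k)$. I expect the main (mild) obstacle to be keeping track of which bound controls which step, i.e.\ bookkeeping that the additive and multiplicative constants only depend on $(f,c,K,K_1,k)$; this is straightforward from the above since every inequality used is purely of metric-bundle type.

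Finally, for the second assertion of the proposition, given a metric bundle $p:X^{'}\rightarrow B^{'}$ with uniformly hyperbolic horizontal spaces and uniformly coarsely surjective barycenter maps, I would pass to an approximating metric graph bundle $\pi:X\rightarrow B$ as in Lemma \ref{mbdl-mgbdl}. By Remark \ref{easy}, both hypotheses transfer to $\pi$, so Proposition \ref{existence-qi-section} produces a $K_0$-qi section $s$ of $\pi$ through any chosen vertex. Applying the construction above gives a qi section $s^{'}$ of $p$ whose image lies close to that of $s$. To arrange $s^{'}$ to pass through a prescribed point $x\in X^{'}$, first pick $u\in V$ close to $p(x)$ in $B^{'}$, apply Proposition \ref{existence-qi-section} to obtain a qi section of $\pi$ through some vertex $y\in \mathcal V(F_u)$ within bounded distance of a vertex near $x$, lift this section via the first part of the proposition, and finally modify its value at $p(x)$ to equal $x$ exactly; this last modification is a bounded perturbation and so only affects the qi constant by a bounded amount.
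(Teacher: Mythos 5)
Your proposal is correct and follows essentially the same route as the paper: on the vertex set $V$ you keep the qi section $s$ unchanged (exploiting that $\psi_X$, $\psi_B$ act as the identity on vertices), extend to all of $B'$ by flowing along fibers via condition $2(i)$ of the metric-bundle definition, and deduce qi-embeddedness from the bounded-perturbation principle; the second assertion is likewise obtained by combining Remark \ref{easy}, Proposition \ref{existence-qi-section}, and the first part, exactly as the paper indicates. One cosmetic simplification: since $V$ is maximal $1$-separated, $d_{B'}(b,\psi_{B'}(b))<1$ for every $b$, so a single application of condition $2(i)$ suffices (no iteration along a geodesic is needed and the perturbation bound is just $c$ rather than $cD$).
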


\begin{proof}
The proof of the first part of the proposition 
is clear once we describe what the map $s^{'}$ is. For $u\in V$ define $s^{'}(u)=\psi_{B^{'}}\circ s(u)$.
Suppose $u\in B\setminus V$. Let $v\in V$, so that $d(u,v)\leq 1$. Choose $x\in F_v$  such that $x$ can be joined to $s^{'}(u)$ by
a curve in $X^{'}$ of length at most $c$ and define $s^{'}(v)=x$.

Next note  that if the fibers of a metric bundle are (uniformly)
hyperbolic, then so are the vertex spaces of an approximating
metric graph bundle. This is because the fibers of  an approximating
metric graph bundle are uniformly quasi-isometric to the fibers of the metric bundle.
Next (for the same reason) observe that if the barycenter maps of the metric bundle are uniformly coarsely surjective, then so are  the barycenter maps of 
an approximating
metric graph bundle.
The last part of the
proposition now follows from Proposition \ref{existence-qi-section}
and the first part of the proposition.
\end{proof}


\subsection{Ladders}

We use the term {\em ladder} below due to a similar ladder construction in \cite{mitra-trees}.
The term {\em girth} is taken from \cite{BF}.

\begin{defn}\label{defn-ladder} Suppose $X$ is a metric bundle (resp. a metric graph bundle) over $B$.
Suppose $X_1$ and $X_2$ are two $c_1$-qi sections of the metric bundle $X$.
For each $b\in B$ (resp. $b\in \mathcal{ V}(B)$), join the points $X_1\cap F_b$, 
$X_2\cap F_b$ by a geodesic in $F_b$. We denote the union of these 
geodesics by $C(X_1,X_2)$, and call it a {\bf ladder} formed by the sections 
$X_1$ and $X_2$.
\end{defn}

\begin{rem}\label{defn-ladder-rem}
If (as in the case of interest) the horizontal spaces are $\delta^{'}$-hyperbolic, for some 
$\delta^{'} \geq 0$, the Hausdorff distance between any pair of ladders determined 
by two given sections $X_1,X_2$ is uniformly bounded.  In such a situation,
  $C(X_1,X_2)$ will refer to any one of them, and abusing notation we refer to  $C(X_1,X_2)$ 
as {\em the} ladder determined by $X_1,X_2$. 
\end{rem}

For four qi sections $X_i$, $i=1,2,3,4$ we write  
$C(X_3,X_4)\subset C(X_1,X_2)$ to mean  $C(X_3,X_4) \cap F_b \subset C(X_1,X_2) \cap F_b $ for all $b \in B$ (or $\mathcal{V}(B)$).

\begin{defn}\label{defn-girth}
Suppose $X_1$ and $X_2$ are two $c_1$-qi sections of a metric bundle (resp. metric graph bundle) $X$ over $B$.
We define $d_h(X_1,X_2)={\rm inf}\{d_b(F_b\cap X_1,F_b\cap X_2):b\in B\} $ (resp.
${\rm inf}\{d_b(F_b\cap X_1,F_b\cap X_2):b\in \mathcal{ V}(B)\} $)
and call it the {\bf girth} of any ladder $C(X_1,X_2)$, determined by $X_1,X_2$.
\end{defn}

\begin{defn}{\bf Neck of Ladders:}
Suppose $X$ is a metric bundle (resp. metric graph bundle) over $B$ and
let $X_1,X_2$ be two qi sections. Let $C(X_1,X_2)$ be a ladder determined by $X_1,X_2$ and let $A\geq 0$.
We define $U_A(X_1,X_2)$ to be
the set $\{b\in B:\,d_b(X_1\cap F_b,X_2\cap F_b)\leq A\}$ (resp. $\{b\in \mathcal{V} (B):\,d_b(X_1\cap F_b,X_2\cap F_b)\leq A\}$) and call it the $A$-neck of the ladder $C(X_1,X_2)$.
\end{defn}

A first aim of this subsection is to show that under suitable restrictions on a metric
bundle or a metric graph bundle necks of ladders are quasi-convex
subsets of the base space. The next lemma  leads to one of the main tools (Lemma $\ref{qc-level-set2}$)
for proving the combination theorem \ref{combthm}. This lemma originally appears in \cite{hamenst-word} in the context of
metric fibrations. The proof that we give here is almost the same as that of \cite{hamenst-word},
nevertheless we include it for the sake of completeness. For convenience of exposition we suppress the dependence of the constants
(defined in the following lemma)
on the parameters $f,c,K$.

\begin{lemma}\label{qc-level-set} Let $X$ be an $(f,c,K)$-metric bundle over $B$
satisfying $(M_k,\lambda_k,n_k)$-flaring for all $k\geq 1$ (cf. Definition $\ref{defn-flare}$), and let $\mu_k$ be 
the bounded flaring function (cf. Corollary $\ref{bdd-flaring-mbdl}$). Then
for all $c_1\geq 1$ and $R>1$ there are constants $D_{\ref{qc-level-set}}=D_{\ref{qc-level-set}}(c_1,R)$
and $K_{\ref{qc-level-set}}= K_{\ref{qc-level-set}}(c_1)$ such that the following holds:\\
Suppose $X_1,X_2$ are two $c_1$-qi sections of $B$ in $X$ and let $A\geq max \{ M_{c_1}, d_h(X_1, X_2) \}$.
\begin{enumerate}
\item Let $\gamma:[t_0,t_1]\rightarrow B$ be a geodesic such that\\
a)  $d_{\gamma(t_0)}(X_1\cap F_{\gamma(t_0)},X_2\cap F_{\gamma(t_0)})=AR$.\\
b) $\gamma(t_1)\in U_A:=U_A(X_1,X_2)$ but for all $t\in [t_0,t_1)$, $\gamma(t)\not \in U_A$.\\
Then the length of $\gamma$ is at most $D_{\ref{qc-level-set}}(c_1,R)$.

\item $U_A$ is $K_{\ref{qc-level-set}}$-quasi-convex in $B$.

\item If $d_h(X_1,X_2) \geq M_{c_1}$ then the diameter of the set $U_A$ is at most 
$D^{'}_{\ref{qc-level-set}}=D^{'}_{\ref{qc-level-set}}(c_1,A)$.
\end{enumerate}
\end{lemma}

For  convenience of exposition we will write 
$\lambda$ for $\lambda_{c_1}$, $n$ for $n_{c_1}$ and $\mu$ for $\mu_{c_1}$ in the proof below.
Also $l(\alpha)$ will denote the length of a curve $\alpha$.

\begin{proof} Since $A\geq  d_h(X_1, X_2) $, $U_A \neq \emptyset$.

\smallskip

\noindent $(1)$ Let $\phi:[t_0,t_1]\rightarrow \mathbb R$ be the function
$t\mapsto d_{\gamma(t)}(X_1\cap F_{\gamma(t)},X_2\cap F_{\gamma(t)})$
and $t_1-t_0=n.L+\epsilon$ where $L\in {\mathbb Z}^+$
and $0\leq \epsilon <n$. Suppose  $L\geq 3$. 
Consider the sequence $\phi(t_0+ni)$, $i=1,\cdots,L$.
Since $\phi(t_0+n.i)\geq M_{c_1}$, for all $i\in [1,L-1]$,
\[
\lambda\phi(t_0+ni)\leq \mbox{max}\{\phi(t_0+n(i-1)),\phi(t_0+n(i+1))\} 
\]

\noindent  by the 
flaring condition.

Hence if $\phi(t_0+n)>\phi(t_0)$ then $\phi(t_0+n(i+1))\geq \lambda\phi(t_0+ni)$
for all $i\in [1,L-1]$. Then, 
$\phi(t_0+nL))\geq \lambda^{L-1}\phi(t_0)$.
Using bounded flaring (Corollary $\ref{bdd-flaring-mbdl}$) we have,
$\phi(t_0+nL)\leq \mu(n)\max\{\phi(t_1),1\}$. Putting all these together
and using the fact that $\phi(t_1)\leq A$ and $\phi(t_0)>A$, we have 
$L-1< log(\mu(n))/ log\lambda$.

Hence, $L\geq 3$ and $\phi(t_0+n)> \phi(t_0)$ implies 
\[l(\gamma)< n(L+1)\leq 2n+ n.log\mu(n)/log\lambda . \]
Suppose $\phi(t_0)\geq \phi(t_0+n)$ and let $k\leq L$ be the largest
integer such that $\phi(t_0)\geq \phi(t_0+n)\geq \cdots \geq \phi(t_0+k.n)$.
If $k\geq 2$, applying the flaring condition we get
$\phi(t_0+(i-1)n)\geq \lambda\phi(t_0+in)$ for all $i\in [1,k-1]$. 
Then $\phi(t_0)\geq \lambda^{k-1}\phi(t_0+(k-1)n)> \lambda^{k-1}A$.
Therefore $k< 1+  \{log\phi(t_0)-logA\}/log \lambda = 1+ logR/log\lambda$.
Also, by the first part of the proof, 
$l(\gamma|_{[t_0+k.n,t_1]})< 
\max \{3n, 2n+n ~log\mu(n)/log \lambda\}$.
Hence, 
\[ 
\begin{array}{rcl}
l(\gamma)& \leq & n+ n\{logR/log \lambda + max\{3n,2n+ n~log(\mu(n))/log\lambda \} \}.
\end{array}
\] 
Taking $D_{\ref{qc-level-set}}=D_{\ref{qc-level-set}}(c_1,R)$
as the right hand side of the above inequality, part (1) of the lemma is proved.

$(2)$ Suppose $\gamma:[t_0,t_1]\rightarrow B$ is a geodesic joining two points
of $U_A$, such that for all
$t\in (t_0,t_1)$, $\gamma(t)\not\in U_A$. Without loss of generality, we may
assume that $t_1-t_0>n$. Let $t_2=t_0+n$. Then by  bounded
flaring, we have $\phi(t_2)\leq \mu(n)\phi(t_0)\leq \mu(n).A$.
Again by the first part of the lemma
$l(\gamma|_{[t_2,t_1]})\leq D_{\ref{qc-level-set}}(c_1,\phi(t_2)/A)$.
Since, the function $D_{\ref{qc-level-set}}$ is increasing in the second variable,
given that the first variable is fixed, we have $l(\gamma) \leq n+D_{\ref{qc-level-set}}(c_1, \mu(n))$.
Hence, taking 
$K_{\ref{qc-level-set}}(c_1)=n+D_{\ref{qc-level-set}}(c_1, \mu(n))$
we are through. 

$(3)$ Suppose $b_1,b_2\in U_A$, $d_B(b_1,b_2)/2=L.n+\epsilon$, $0\geq \epsilon <n$.
Let $\gamma:[-(L.n+\epsilon), (L.n+\epsilon)]\rightarrow B$ be a geodesic joining $b_1,b_2$, so that
$\gamma(0)$ is the midpoint of the geodesic $\gamma$. The bounded flaring condition gives
 $d_{\gamma(t)}(F_{\gamma(t)}\cap X_1, F_{\gamma(t)}\cap X_2)\leq A.\mu(n)$ for $t= -L.n, L.n$.

As in the proof of the first part of the lemma,
$d_{\gamma(t)}(F_{\gamma(t)}\cap X_1, F_{\gamma(t)}\cap X_2)\geq \lambda^L. d_{\gamma(0)}(F_{\gamma(0)}\cap X_1, F_{\gamma(0)}\cap X_2)$ either for $t=L.n$ or for $t=-L.n$. Since 
$d_{\gamma(0)}(F_{\gamma(0)}\cap X_1, F_{\gamma(0)}\cap X_2)\geq M_{c_1}$, it follows that
$\lambda^L. M_{c_1}\leq A.\mu(n)$. Hence, $L\leq log(A.\mu(n)/M_{c_1})/log\lambda$. 
\end{proof}

This lemma has the following analog for metric graph bundles. We omit the proof
since it is an exact replica of the proof of the previous lemma (see also Remark \ref{qcrmk} below). We just need to point out
that in the proof of the first part of the lemma the function $\phi$ should have 
$[t_0,t_1]\cap \mathbb Z$ as domain and for the latter parts it is useful to recall
that in a graph, points on a geodesic refer to the vertices on the geodesic.
Also, as in Lemma  \ref{qc-level-set} above, we suppress the dependence of the constants
on the parameters $f,K$.

\begin{lemma}\label{qc-level-set-new}
Let $X$ be an $(f,K)$-metric graph bundle over $B$
satisfying $(M_k,\lambda_k,n_k)$-flaring for all $k\geq 1$ (cf. Definition $\ref{defn-flare}$), and let $\mu_k$ be 
the bounded flaring function (cf. Corollary $\ref{bdd-flaring}$). Then
for all $c_1\geq 1$ and $R>1$ there are constants $D_{\ref{qc-level-set-new}}=D_{\ref{qc-level-set-new}}(c_1,R)$
and $K_{\ref{qc-level-set-new}}= K_{\ref{qc-level-set-new}}(c_1)$ such that the following holds:\\
Suppose $X_1,X_2$ are two $c_1$-qi sections of $B$ in $X$ and let $A\geq max \{ M_{c_1}, d_h(X_1, X_2) \}$.
\begin{enumerate}
\item Let $\gamma:[t_0,t_1]\rightarrow B$ be a geodesic, $t_0,t_1\in \mathbb Z$, such that\\
a)  $d_{\gamma(t_0)}(X_1\cap F_{\gamma(t_0)},X_2\cap F_{\gamma(t_0)})=AR$.\\
b) $\gamma(t_1)\in U_A:=U_A(X_1,X_2)$ but for all $t\in [t_0,t_1)\cap \mathbb Z$, $\gamma(t)\not \in U_A$.\\
Then the length of $\gamma$ is at most $D_{\ref{qc-level-set-new}}(c_1,R)$.

\item $U_A$ is $K_{\ref{qc-level-set-new}}$-quasi-convex in $B$.

\item If $d_h(X_1,X_2) \geq M_{c_1}$ then the diameter of the set $U_A$ is at most 
$D^{'}_{\ref{qc-level-set-new}}=D^{'}_{\ref{qc-level-set-new}}(c_1,A)$.
\end{enumerate}
\end{lemma}

\begin{rmk}\label{qcrmk} {\rm  
 We note in particular that in Lemma \ref{qc-level-set} (1),
all that we need in order to make an analogous statement for a metric graph bundle is that $\phi(t)\geq M_{c_1}$, for all $t$.}
\end{rmk}

\begin{rmk}\label{qcrmk0} {\rm
It is not a priori clear that if a metric bundle satisfies a flaring condition, then an approximating metric graph 
bundle does so too (though this does follow {\it a posteriori} from Theorem \ref{combthm} and Proposition 
\ref{necflaring}). One reason is that the flaring condition is defined for any two qi lifts of a {\it geodesic} 
segment in the base. However, geodesics in the base space of the approximating metric graph bundle need not come from a geodesic in the base space of the  metric bundle.

 Lemma \ref{qc-level-set2} below  addresses this issue and proves that conclusions
similar to those of   Lemma \ref{qc-level-set} above remain true for the approximating metric graph bundle. This is the main reason
for giving explicitly a proof  of Lemma \ref{qc-level-set} here in the context of metric bundles rather than for
metric graph bundles (Lemma \ref{qc-level-set-new}).} 
\end{rmk}

Let $p:X^{'}\rightarrow B^{'}$ be an $(f,c,K)$-metric bundle 
satisfying $(M_k,\lambda_k,n_k)$-flaring for all $k\geq 1$  and let $\pi:X\rightarrow B$ be an approximating metric graph bundle 
as in Lemma $\ref{mbdl-mgbdl}$. 
 As in  Lemma $\ref{mbdl-mgbdl}$ we suppose that 
the maps $\psi_X,\psi_B$ are $K_1-$ quasi-isometries. Let $\psi_{X^\prime}$ (resp. $\psi_{B^\prime}$)
be a quasi-isometric coarse inverse of the map $ \psi_X$ (resp. $\psi_B$) constructed
as in the proof of Lemma \ref{elem-lemma1} (2). We assume that these maps are   inverses of  $\psi_X$, $\psi_B$
 when restricted to the vertex sets $\mathcal V(X)$ and $\mathcal V(B)$ respectively. Moreover, we assume that
$\psi_{X^\prime}$, $\psi_{B^\prime}$ are $K_1$-quasi-isometries.
 Also we assume that the restrictions of $\psi_X$ and
$\psi_{X^{'}}$ to horizontal spaces ({\em cf.} Lemma \ref{coarse3}) are $K_1$-quasi-isometries.

Let $B$ be $\delta_0$-hyperbolic. Suppose further that for every $k$-qi section of the approximating metric graph bundle,
we obtain a $k^{'}$-qi section of the original bundle.  
For convenience of exposition we suppress the dependence of the constants
(defined in the following lemma)
on the parameters $f,c,K,\delta_0$ etc.

\begin{lemma} \label{qc-level-set-app}
With notation as above,
let $X_1$, $X_2$ be two $k$-qi sections of the approximating metric graph bundle and $A_0\geq 0$.
 Suppose $d_h( X_1, X_2)\leq A_0$ and let $A_1=K_1.max\{A_0+K_1+1, M_{k^{'}}+K_1\}$. Then the following hold.\\
$(1)$ For $A\geq A_1$, $U_A(X_1,X_2)$ is $K_{\ref{qc-level-set-app}}= K_{\ref{qc-level-set-app}}(k,A)$-quasi-convex 
in $B$.\\
$(2)$ Suppose  $d_u(F_u\cap X_1,F_u\cap X_2)=C\geq A$ for some $u\in \mathcal{ V}(B)$.
Then $d_B(u,U_A(X_1,X_2))\leq D_{\ref{qc-level-set-app}}(k,C)$.\\
$(3)$ Suppose $d_h( X_1, X_2)\geq K_1(M_{k^{'}}+K_1)$. Then the diameter of the set $U_A(X_1,X_2)$  is at most $D^{'}_{\ref{qc-level-set-app}}(k,A)$.
\end{lemma}

\begin{proof} 
For the proof of this lemma we introduce the notation $d^{'}_u$ for the path metric on $X^{'}_u$ induced from $X^{'}$.
Also let $A_2=A_1/K_1 -1$.

\noindent $(1)$ By Proposition $\ref{mgsec-to-mbsec}$ we have 
two $k^{'}$-qi sections $X^{'}_1, X^{'}_2$  of the metric bundle $X^{'}$ (corresponding to  $X_1, X_2$ respectively) where
$ k^{'}=K_{\ref{mgsec-to-mbsec}}(f,c,K,K_1,k)$.
By choice of the constant $A_1$, we know that $U:=U_{A_2}(X^{'}_1,X^{'}_2)$ is a nonempty 
$K^{'}:=K_{\ref{qc-level-set}}(k^{'})-$quasiconvex
subset of $B^{'}$. Hence $\psi_{B^{'}}(U)\subset B$ is $D:=\{K_1.K^{'}+K_1+D_{\ref{stab-qg}}(\lambda, K_1)\}$-quasiconvex.
Also note that $\psi_{B^{'}}(U)\subset U_A(X_1,X_2)$.

Now suppose $u\in (U_A(X_1,X_2)\setminus \psi_{B^{'}}(U))$ is a point of $\mathcal V(B)$. 
Then $d^{'}_u(X^{'}_1\cap F_u, X^{'}_2\cap F_u)\leq K_1A+K_1$. It follows that
 either $u\in U$ 
 or  $d_{B^{'}}(u,U)\leq D_1=D_{ \ref{qc-level-set}}(k^{'},(A.K_1+K_1)/A_2)$ (by Lemma $\ref{qc-level-set}$(1)).
In any case, $d_B(u,\psi_{B^{'}}(U))\leq D_2=K_1D_1 +K_1$. Hence, 
$\psi_{B^{'}}(U)\subset U_A(X_1,X_2)\subset N_{D_2}(\psi_{B^{'}}(U))$. Since $\psi_{B^{'}}(U)$ is a $D$-quasi-convex set and
since $B$ is $\lambda$-hyperbolic, it follows that $U_A(X_1,X_2)$ is $K_{\ref{qc-level-set-app}}(=(2\lambda+D+D_2))-$quasi-convex.

$(2)$ If $u\in U$ then we set $D_{\ref{qc-level-set-app}}(k,C)=0$. Otherwise $d^{'}_u(X^{'}_1\cap F_u, X^{'}_2\cap F_u)\leq C.K_1 +K_1$
 since the restriction of the map $\psi_X$ to the horizontal space $F_u$
is  a $K_1$-quasi-isometry.
Hence by Lemma $\ref{qc-level-set}(1)$ we have $d_{B^{'}}(u,U)\leq D_3=D_{\ref{qc-level-set}}(k^{'},(C.K_1+K_1)/A_2)$.
Using the fact that $\psi_{B^{'}}$ is a $K_1$-quasi-isometry, we have $d_B(u,\psi_{B^{'}}(U))\leq K_1.D_3+K_1$.
Hence $d(u,U_A(X_1,X_2))\leq K_1D_3+K_1$. Set $D_{\ref{qc-level-set-app}}(k,C)=K_1+ K_1.D_{\ref{qc-level-set}}(k^{'},A_3)$
where $A_3=max\{1, (C.K_1+K_1)/A_2\}$.

$(3)$  By the given condition, for all $z\in U$,
$d^{'}_z(X^{'}_1\cap F_z, X^{'}_2\cap F_z)\geq M_{k^{'}}$ and so 
 we can apply the flaring condition. Now let $b_1,b_2\in U$ and let $b\in [b_1,b_2]$; then 
$d^{'}_b(X^{'}_1\cap F_b, X^{'}_2\cap F_b)\leq \mu_{k^{'}}(K_{\ref{qc-level-set}}(k^{'}))=D_4$, say, by Corollary $\ref{bdd-flaring-mbdl}$. 

Finally, as noted in Remark \ref{qcrmk},  what we really used in the proof of Lemma $\ref{qc-level-set}(1)$
is the fact 
that the value of the function $\phi$ is always greater than or equal to $M_{k^{'}}$. Thus in the same way we have
$d_{B^{'}}(b_1,b_2)\leq D_{\ref{qc-level-set}}(k^{'},D_4/A_2)$. Taking 
$D^{'}_{\ref{qc-level-set-app}}(k,A)= K_1 +K_1.D_{\ref{qc-level-set}}(k^{'},D_4/A_2)$
 completes the proof of the lemma. 
\end{proof}

We unify the content of the last two lemmas in the following lemma in the form that shall use later.

\begin{lemma}\label{qc-level-set2}
Given a function $f:\mathbb N\rightarrow \mathbb N$, $c_1\geq 1$ and $A_0 \geq 0$, there exist
$A^{'}_{\ref{qc-level-set2}}=A^{'}_{\ref{qc-level-set2}}(f,c_1,A_0)\geq A_0, A^{''}_{\ref{qc-level-set2}}=A^{''}_{\ref{qc-level-set2}}(f,c_1)$
and three functions $K_{\ref{qc-level-set2}}, D_{\ref{qc-level-set2}}:[A^{'},\infty)\rightarrow \mathbb R^{+}$,
$D^{'}_{\ref{qc-level-set2}}:[A^{''},\infty)\rightarrow \mathbb R^{+}$ such that the following hold:

Suppose $X$ is an $(f,K)$-metric graph bundle over $B$ such that\\
$1.$ either it satisfies a flaring condition\\
$2.$ or it is an approximating metric graph bundle of a metric bundle that satisfies a flaring condition.

Suppose $B$ is a hyperbolic metric space. Let $C(X_1,X_2)$ be a ladder formed by two $c_1$-qi sections $X_1,X_2$.
Let $d_h(X_1,X_2)\leq A_0$.
\begin{enumerate}
\item If $A\geq A^{'}_{\ref{qc-level-set2}}$ then $U_A(X_1,X_2)$ is $K_{\ref{qc-level-set2}}(A)$-quasi-convex.
Suppose  $d_u(F_u\cap X_1,F_u\cap X_2)=C\geq A$ for some $u\in \mathcal{ V}(B)$.
Then $d(u,U_A(X_1,X_2))\leq D_{\ref{qc-level-set2}}(C)$.
\item If $d_h(X_1,X_2) \geq A^{''}_{\ref{qc-level-set2}}$ then the diameter of the set $U_A(X_1,X_2)$ is at most $D^{'}_{\ref{qc-level-set2}}(A)$.
\end{enumerate}

\end{lemma}

The dependence of the functions $K_{\ref{qc-level-set2}}, D_{\ref{qc-level-set2}}, 
D^{'}_{\ref{qc-level-set2}}$  on $c_1$, (which is implicit here) will be made explicit
in the next section. Also we shall suppress the dependence of $A^{'}_{\ref{qc-level-set2}}, A^{''}_{\ref{qc-level-set2}}$ on $f$.

\section{Construction of Hyperbolic Ladders}

In this section we prove the main technical result leading to the combination theorem \ref{combthm}. A brief sketch follows:
For a metric bundle,  we first replace it with its approximating metric graph bundle. Then we work exclusively with
metric graph bundles.
In section $\ref{big-ladder}$ we prove that, under suitable hypotheses, ladders in a metric graph bundle are hyperbolic metric spaces when the 
metric graph bundle satisfies the properties of Lemma \ref{qc-level-set2}. To achieve this we first prove this result in
section $\ref{small-ladder}$ when the ladder is of small girth.
Then, to prove hyperbolicity in the general case, a ladder is decomposed into small-girth ladders
using qi sections. This gives a finite sequence of  hyperbolic metric spaces 
and we check that the conditions of Corollary $\ref{hamenstadt}$ are satisfied.

{\bf Notation and conventions:} 
We fix the following notation and conventions to be used till the end of section $4$.
For us $p:X \rightarrow B$ will be either an $f-$ metric graph bundle satisfying a flaring condition, or an
approximating ($f-$) metric graph bundle obtained from a metric bundle satisfying a
flaring condition.  \\
The symbols $g$, $\mu_k$ will have the same connotation as in Lemma $\ref{condition3}$ and Corollary $\ref{bdd-flaring}$
respectively.
We shall assume that $B$ is $\delta$-hyperbolic and each of the horizontal spaces
$F_b$ is $\delta^{'}$-hyperbolic for all vertices $b$ of $B$. We assume that the barycenter maps
$\partial^3F_b\rightarrow F_b$ are (uniformly) coarsely  surjective. Thus by  Proposition 
$\ref{existence-qi-section}$ we know that
the metric graph bundle admits a uniform ($K_0$, say)qi section through any point of $X$. 
 Lastly, often the dependence on these functions and constants will not be
explicitly stated if it is clear from context. 
By points in a graph we shall always mean vertices, unless otherwise specified.

\begin{lemma}\label{qi-section} For all $c_1 \geq 1$, there exists $C_{\ref{qi-section}}(=C_{\ref{qi-section}}(c_1))$ such that
the following holds.\\
Suppose $X_1$ and $X_2$ are two $c_1$-qi sections. 
Then through each point $x\in C(X_1,X_2)$ there exists a 
$C_{\ref{qi-section}}$-qi section
contained in $C(X_1,X_2)$.
\end{lemma}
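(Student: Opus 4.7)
The plan is to build the required section by projecting a pre-existing qi section onto the ladder. By Proposition \ref{existence-qi-section}, fix a $K_0$-qi section $X_x$ of $B$ passing through $x$, and for each $b' \in \mathcal{V}(B)$ let $y_{b'} := X_x \cap F_{b'}$; in particular $y_b = x$. Writing $\gamma_{b'} := C(X_1,X_2) \cap F_{b'}$ for the horizontal geodesic of the ladder over $b'$, I define $s(b')$ to be (the nearest vertex to) a nearest point projection, measured in $d_{b'}$, of $y_{b'}$ onto $\gamma_{b'}$. Since $x \in \gamma_b$ and $y_b = x$, we have $s(b)=x$, and by construction $s(\mathcal{V}(B)) \subseteq C(X_1,X_2)$. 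What remains is to show that $s$ is a uniform qi section.

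The lower bound $d(s(b_1'),s(b_2')) \geq d_B(b_1',b_2')$ is immediate from the $1$-Lipschitz property of $p$, so the entire task reduces to bounding $d(s(b_1'),s(b_2'))$ by a universal constant whenever $b_1',b_2' \in \mathcal{V}(B)$ are adjacent; the general statement then follows by summing along a geodesic in $B$. For adjacent $b_1',b_2'$, let $\phi: F_{b_1'}\to F_{b_2'}$ be a $K$-quasi-isometry as in Proposition \ref{def2}, chosen so that $d(u,\phi(u)) \leq 1$ for every vertex $u$. Then $d(s(b_1'),\phi(s(b_1')))\leq 1$, and the problem reduces to bounding $d_{b_2'}(\phi(s(b_1')),s(b_2'))$.

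I will obtain this bound in three coupled steps, controlled throughout by the uniform properness of fiber inclusions (Lemma \ref{condition3}). First, since $y_{b_1'},y_{b_2'}$ both lie on the $K_0$-qi section $X_x$ and $\phi$ moves vertices by $X$-distance $\leq 1$, the points $\phi(y_{b_1'})$ and $y_{b_2'}$ are at uniformly bounded $d_{b_2'}$-distance. Second, the endpoints $\phi(X_i(b_1'))$ of the $K$-quasigeodesic $\phi(\gamma_{b_1'})$ in $F_{b_2'}$ are uniformly $d_{b_2'}$-close to $X_i(b_2')$ (because $X_1,X_2$ are $c_1$-qi sections and $\phi$ moves vertices by $X$-distance $\leq 1$), hence by Lemma \ref{stab-qg} the image $\phi(\gamma_{b_1'})$ lies within uniformly bounded Hausdorff distance of $\gamma_{b_2'}$. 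Third, Lemma \ref{qi-comm-proj} applied to the $K$-quasi-isometric embedding $\phi$ and the geodesic $\gamma_{b_1'}$ gives that $\phi(s(b_1'))$ is within bounded $d_{b_2'}$-distance of a nearest point projection of $\phi(y_{b_1'})$ onto a geodesic joining $\phi(X_1(b_1'))$ to $\phi(X_2(b_1'))$; combining this with the first two steps and a standard coarse-Lipschitz property of projections onto uniformly quasiconvex sets (a consequence of Lemma \ref{subqc}) transfers the projection over to $\gamma_{b_2'}$ with bounded error, making it uniformly close to $s(b_2')$.

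Putting these estimates together yields $d_{b_2'}(\phi(s(b_1')),s(b_2')) \leq C_0$ for some $C_0$ depending only on $c_1$ and the bundle parameters, so taking $C_{\ref{qi-section}} := C_0+1$ completes the proof. The main technical obstacle will be the bookkeeping in the third step: Lemma \ref{qi-comm-proj} naturally delivers a projection onto the geodesic joining $\phi(X_1(b_1'))$ and $\phi(X_2(b_1'))$ rather than onto $\gamma_{b_2'}$ itself, and the Hausdorff-closeness from the second step must be coupled with the coarse Lipschitzness of nearest point projection onto a quasiconvex set in order to pass to $\gamma_{b_2'}$ without loss of control over the constants.
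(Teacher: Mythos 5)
Your proposal is correct and follows essentially the same route as the paper: take a $K_0$-qi section through $x$, project it fiberwise onto the horizontal geodesics of the ladder, and verify bounded displacement over adjacent vertices via the fiber quasi-isometry $\phi$ (Lemma \ref{condition3}) and the almost-commutation of quasi-isometries with nearest-point projections (Lemma \ref{qi-comm-proj}). The paper streamlines your third step by choosing $\phi$ so that it carries the relevant distinguished points (endpoints of $\gamma_{b_1'}$ and the point of the ambient section) exactly to their counterparts in $F_{b_2'}$ — which is permissible since such a $\phi$ still moves points by at most $c'=2\max\{K_0,c_1\}$ — thereby making your Hausdorff/coarse-Lipschitz transfer unnecessary.
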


\begin{proof} We already know that there is a $K_0$-qi section, say 
$Y_1$, through $x$ in $X$. Now define a new section $Y_2$ as follows:
 let 
$Y_2\cap F_b$ be  a nearest point projection, in the intrinsic metric on the horizontal space $F_b$,
of $Y_1\cap F_b$ onto the
horizontal geodesic $C(X_1,X_2)\cap F_b$. This defines a set theoretic
section. We need to check that this is indeed a qi section. For this
it is enough to check that $\forall \,b_1, b_2\in \mathcal{ V}(B)$, with
$d(b_1,b_2)= 1$, the distance between $F_{b_1}\cap Y_2$ and
$F_{b_2}\cap Y_2$ is uniformly bounded. This in turn follows immediately from  Lemma
$\ref{condition3}$, and  Lemma $\ref{qi-comm-proj}$ by choosing
$C_{\ref{qi-section}}:=c^{\prime}+ D_{\ref{qi-comm-proj}}(\delta^{\prime},g(c^{\prime}))$,
where $c^{\prime}=2 ~~max\{K_0,c_1\}$. \end{proof}

The proof of the previous lemma parallels a construction of  \cite{mitra-ct}. In our setting this
can be stated as follows:
Let $X_1, X_2$ be two $c_1-$qi-sections of a metric graph bundle $p: X \rightarrow B$, where 
each fiber (but not necessarily
the base $B$) is uniformly $\delta$-hyperbolic. Let $C(X_1, X_2)$ be the associated ladder.
By construction, $\lambda_b:= C(X_1, X_2) \cap F_b$ is a geodesic in the metric space
$(F_b, d_b)$. Define  $\pi_b:F_b \rightarrow \lambda_b$ as the nearest point projection of $F_b$ onto
the geodesic $\lambda_b$
 in the metric $d_b$.  Let $\Pi_{X_1,X_2}: X \rightarrow C(X_1, X_2)$ be given by 
$\Pi_{X_1,X_2} (x) = \pi_b (x), ~~~~\forall x \in F_b.$ Extend $\Pi_{X_1,X_2}$ to all other edges in the usual way by sending
the interior of an edge
to the image of one of its end-points.
The main technical theorem of \cite{mitra-ct} states

\begin{theorem} \cite{mitra-ct} For $X, B, p$ as above, and  $c_1 \geq 1$,
there exists $C \geq 1$ such that for two $c_1-$qi sections $X_1,X_2$, 
and $\forall x, y \in X$, $$d( \Pi_{X_1,X_2}(x), \Pi_{X_1,X_2}(y)) \leq Cd(x,y) + C.$$ 
Equivalently, $\Pi_{X_1,X_2}$ is a coarse Lipschitz retract
of $X$ onto $C(X_1,X_2)$.\label{retract} \end{theorem}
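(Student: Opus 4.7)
The plan is to reduce the question to the case $d(x,y) = 1$ by the triangle inequality, since $X$ is a graph and $\Pi_{X_1,X_2}$ need only be shown to move adjacent vertices a uniformly bounded distance. Split the $d(x,y)=1$ case into two subcases: (A) $x,y \in F_b$ lie in the same fiber, and (B) $x \in F_{b_1}$, $y \in F_{b_2}$ with $d_B(b_1,b_2)=1$.

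For Case (A), since $d(x,y)=1$ and the inclusion $(F_b,d_b)\hookrightarrow X$ is metrically proper with gauge $f$, we have $d_b(x,y) \leq f(1)$. The fiber-geodesic $\lambda_b = C(X_1,X_2)\cap F_b$ is $0$-quasiconvex in the $\delta'$-hyperbolic space $(F_b,d_b)$, so nearest-point projection onto $\lambda_b$ is coarsely $1$-Lipschitz in $d_b$ (a standard consequence of slim triangles). This yields $d_b(\pi_b(x),\pi_b(y)) \leq f(1)+C_A(\delta')$, hence $d(\pi_b(x),\pi_b(y))$ is bounded by the same constant since $d \leq d_b$ on fibers.

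For Case (B), I would fix the $K$-quasi-isometry $\phi:F_{b_1}\to F_{b_2}$ given by Proposition \ref{def2}, with $d(z,\phi(z))\leq 1$ for $z\in\mathcal V(F_{b_1})$. So $d(\phi(x),y)\leq 2$, whence $d_{b_2}(\phi(x),y)\leq f(2)$. The image $\phi(\lambda_{b_1})$ is a $K$-quasigeodesic in $F_{b_2}$ joining $\phi(u_1),\phi(u_2)$, where $u_i = X_i\cap F_{b_1}$. Because $X_1,X_2$ are $c_1$-qi sections, $d(u_i, X_i\cap F_{b_2})\leq 2c_1$ in $X$, so by metric properness $d_{b_2}(\phi(u_i), X_i\cap F_{b_2}) \leq f(2c_1+1)$. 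Stability of quasigeodesics (Lemma \ref{stab-qg}) then gives a constant $D_B$ (depending on $\delta', K, c_1, f$) such that $\phi(\lambda_{b_1})$ and $\lambda_{b_2}$ have Hausdorff distance at most $D_B$ in $(F_{b_2},d_{b_2})$.

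Now apply Lemma \ref{qi-comm-proj} to the quasi-isometry $\phi$, the point $x$, and the geodesic $\lambda_{b_1}$: if $\sigma\subset F_{b_2}$ is a geodesic joining $\phi(u_1),\phi(u_2)$, the nearest-point projection of $\phi(x)$ onto $\sigma$ lies within $D_{\ref{qi-comm-proj}}$ of $\phi(\pi_{b_1}(x))$. Since $\sigma$ and $\lambda_{b_2}$ are uniformly Hausdorff-close (by stability applied to the endpoint displacement $f(2c_1+1)$), the two nearest-point projections of $\phi(x)$ onto $\sigma$ and onto $\lambda_{b_2}$ agree up to a bounded error depending only on $\delta'$. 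Combining, $d_{b_2}(\phi(\pi_{b_1}(x)),\, \pi_{b_2}(\phi(x)))$ is bounded. Finally, $d_{b_2}(\phi(x),y)\leq f(2)$ and coarse Lipschitzness of $\pi_{b_2}$ from Case (A) give $d_{b_2}(\pi_{b_2}(\phi(x)),\pi_{b_2}(y))$ bounded, while $d(\pi_{b_1}(x),\phi(\pi_{b_1}(x)))\leq 1$ by the definition of $\phi$. Chaining these estimates yields $d(\pi_{b_1}(x),\pi_{b_2}(y))\leq C$ with $C=C(f,K,c_1,\delta')$.

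The main obstacle is the book-keeping in Case (B), specifically showing that nearest-point projections onto the two uniformly Hausdorff-close quasiconvex sets $\phi(\lambda_{b_1})$ and $\lambda_{b_2}$ differ by a bounded amount (a standard but delicate consequence of $\delta'$-hyperbolicity combined with Lemma \ref{stab-qg} applied to the endpoints). Everything else, including the reduction to adjacent vertices and the Case (A) estimate, is essentially routine from the stated results; the genuine content lies in exploiting the qi-section hypothesis to control how the fiberwise geodesics vary with the base, which is exactly what makes $\Pi_{X_1,X_2}$ a coarse Lipschitz retract.
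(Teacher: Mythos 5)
Your proposal is correct and follows essentially the same approach as \cite{mitra-ct} and as the paper's own (brief) proof of the analogous Lemma \ref{pi-lip}: reduce to adjacent vertices, split into the same-fiber case (coarse Lipschitzness of nearest-point projection onto a quasiconvex set in a hyperbolic space) and the adjacent-fiber case (Lemma \ref{qi-comm-proj}, that quasi-isometries and nearest-point projections almost commute, plus stability of quasigeodesics to control the endpoint displacement of the fiber geodesics). The book-keeping you flag in Case (B) is exactly the content of the cited argument, and your estimates are sound.
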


{\bf Simplification of Notation:}
We fix the following conventions and notation to be followed in
the rest of this section. Fix $c_1\geq K_0$.
Let $c_{i+1}=C^i_{\ref{qi-section}}(c_1)$, $i=1,2,3$ where $C^i_{\ref{qi-section}}$ is the
$i$-th iterate of the function $C_{\ref{qi-section}}$.
Note that if $Y$ is a $k$-qi section, and $k\leq c_4$, then it is also
a $c_4$-qi section. We know that our metric graph bundle satisfies the bounded flaring condition.
 We shall denote the function $\mu_{c_4}$ (see Corollary $\ref{bdd-flaring}$)  simply by $\mu$.

For two qi-sections $X_1,X_2$ in $X$ and $D\geq 0$ we shall denote
by $C_D(X_1,X_2)$ the $D$-neighborhood of the ladder $C(X_1,X_2)$ in $X$. 

From the definition of a ladder, we see that a ladder in a metric graph bundle is {\it not} connected. 
However, the first part of the next lemma says that a large enough neighborhood of a ladder is connected.

\begin{lemma}\label{important-remark} Let $X_1,X_2$ be  two $c_1-$qi-sections  in $X$. Then \\
1) For any $D\geq 2c_1$, $C_D(X_1,X_2)$ is connected.\\
2) Let $\gamma$ be a geodesic in $B$ and let $\tilde{\gamma}$ be its lift in  $X_1$.
Then, for any $D\geq 2c_1$, $l(\tilde{\gamma})\leq 2c_1.l(\gamma)$ where $l(\tilde{\gamma})$ is the length computed
in the $D$-neighborhood of the qi section $X_1$.\\
3) Let $X_3$ be a $c_2$-qi section lying inside  $C(X_1,X_2)$.
Then, for any $D\geq 2c_2$,  $X_3$ is the image of a $2c_2$-Lipschitz 
map from $\mathcal{V} (B)$ into $C_D(X_1,X_2)$ equipped with the 
path metric  induced from $X$.
In particular, it is a $2c_2$-qi section in  $C_D(X_1,X_2)$.
\end{lemma}

\begin{proof}
Let $s:\mathcal V(B)\rightarrow X$ be a $c_1$-qi section and let $b_1,b_2\in \mathcal V(B)$ be adjacent
vertices in $B$. Then $d(s(b_1),s(b_2))\leq c_1.d_B(b_1,b_2)+c_1=2c_1$. Conclusion (1) follows. \\
2) follows from (1). \\
3) The first statement follows by taking $c_2$ in place of $c_1$ in (1).
 Since the projection map
of the metric graph bundle $X$ to its base space $B$ is $1$-Lipschitz by definition,
it follows that a $c_2$-qi section lying inside $C(X_1,X_2)$ is a $2c_2$-qi section inside $C_D(X_1,X_2)$, where the latter is
equipped with the 
path metric  induced from $X$.
\end{proof}

\subsection{Hyperbolicity of ladders: Special case}\label{small-ladder}

This subsection is devoted to proving the hyperbolicity of small girth
ladders.  
Let $X_1,X_2$ be two $c_1$-qi sections in $X$ and let $d_h(X_1,X_2)\leq A_0$, say.
Let $A:= A^{'}_{\ref{qc-level-set2}}(c_4,A_0)$.
We further assume,
with reference to Lemma $\ref{qc-level-set2}$, that for any two $k$-qi sections 
$X_3,X_4$, $k\leq c_4$, lying inside the ladder $C(X_1,X_2)$, the set $U_A(X_3,X_4)\subset B$ is 
$K$-quasi-convex. We shall write simply $U(X_3,X_4)$ instead of $U_A(X_3,X_4)$
in what follows. Dependence of
constants in the various lemmas and propositions below on the  constants associated with the bundle  will be implicit rather than explicit.

The rest of this subsection is   devoted to proving the following: 
\begin{prop}\label{main-lem}  For all $L \geq 2c_4$, and $c_1, A_0$ as above,
there exist $\delta_{\ref{main-lem}}(=\delta_{\ref{main-lem}}(c_1,A_0,L))
\geq 0$, $K_{\ref{main-lem}}(=K_{\ref{main-lem}}(c_1,A_0,L)) \geq 0$, $D_{\ref{main-lem}}(=D_{\ref{main-lem}}(c_1,A_0,L)) \geq 0$
 such that we have the following:\\
(1) $C_L(X_1,X_2)$ is $\delta_{\ref{main-lem}}$-hyperbolic with the path metric  induced from $X$, and 
$X_1$,$X_2$ are $K_{\ref{main-lem}}$-quasiconvex in $C_L(X_1,X_2)$. \\
(2) If $d_h(X_1,X_2)\geq A^{''}_{\ref{qc-level-set2}}(c_1)$, then $X_1$, $X_2$ are
$D_{\ref{main-lem}}$-cobounded in $C_L(X_1,X_2)$.
\end{prop}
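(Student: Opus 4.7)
My strategy is to verify the hypotheses of the discrete hyperbolicity criterion, Corollary~\ref{hyp-lemma}, on the vertex set of $\Lambda := N_L(C(X_1,X_2))$ equipped with the induced path metric from $X$. Part (2), the coboundedness statement, I will handle independently using Lemma~\ref{qc-level-set2}(3). I begin by passing to the approximating metric graph bundle (if we started from a metric bundle), which lets me work in the graph setting throughout. By Remark~\ref{important-remark} it suffices to build paths between vertices of $C(X_1,X_2)$ itself, since every vertex of $\Lambda$ lies within distance $L$ of such a vertex.

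\emph{Construction of the paths.} Through each vertex $x\in C(X_1,X_2)$, Lemma~\ref{qi-section} furnishes a $c_2$-qi section $X_x\subset C(X_1,X_2)$ passing through $x$. For $x,y\in C(X_1,X_2)$, fix a geodesic $\gamma_{xy}\subset B$ from $b_x=p(x)$ to $b_y=p(y)$, and let $c(x,y)$ be the concatenation of the lift $\widetilde{\gamma}_{xy}\subset X_x$ with a horizontal geodesic in $F_{b_y}$ from $X_x(b_y)$ to $y$. Conditions~(1) and (2) of Corollary~\ref{hyp-lemma} will follow immediately from the $c_2$-Lipschitz behavior of qi sections and the metric-properness function $f$ of the bundle. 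For condition~(3), the Hausdorff stability of sub-paths, I plan to combine the coarse uniqueness of qi sections through a given point (controlled by Lemma~\ref{condition3} and the bounded-flaring Corollary~\ref{bdd-flaring}) with Lemma~\ref{qi-comm-proj} to handle the interaction between the vertical leg and the horizontal leg.

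\emph{The main obstacle} is verifying condition~(4): slimness of triangles $c(x,y)\cup c(y,z)\cup c(z,x)$. My approach is to exploit the fact that $c(x,y)$ and $c(x,z)$ use the \emph{same} qi section $X_x$ over the common initial portion of their vertical legs; by $\delta$-slimness of $\triangle b_xb_yb_z$ in $B$ together with the $c_2$-qi property of $X_x$, these two legs remain at distance $\leq c_2\delta+c_2$ along all of $X_x$'s portion lying $\delta$-close to the opposite side $[b_y,b_z]$. To compare $X_x$ with $X_y$ (respectively $X_z$) along the remaining segments, I will apply Lemma~\ref{qc-level-set2}(1)--(2): since all three sections lie inside the fixed-girth ladder $C(X_1,X_2)$, any pair $(X_x,X_y)$ has a nonempty, uniformly quasi-convex set $U_A(X_x,X_y)\subset B$ of small-horizontal-distance base points, and the flaring bound controls how quickly horizontal distance can grow off this set. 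Once Corollary~\ref{hyp-lemma} applies, $\Lambda$ is $\delta_{\ref{main-lem}}$-hyperbolic and the paths $c(x,y)$ are uniform quasi-geodesics; the quasi-convexity of $X_i$ ($i=1,2$) then drops out for free, because for $x,y\in X_i$ one may choose $X_x=X_y=X_i$ and the resulting $c(x,y)$ is a sub-arc of $X_i$.

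\emph{Part (2).} Under the hypothesis $d_h(X_1,X_2)\geq K_1M_{c'_1}+K_1^2$, the third conclusion of Lemma~\ref{qc-level-set2} applies and bounds the diameter of $U_A(X_1,X_2)$ for any admissible $A$. Any pair of nearest-point projections $x\in X_1$, $y\in X_2$ realizing the distance between $X_1$ and $X_2$ in $\Lambda$ must have $p(x),p(y)$ lying within a bounded neighborhood of $U_A(X_1,X_2)$, because otherwise the flaring condition forces $d_b(X_1\cap F_b,X_2\cap F_b)$ to decrease at a nearby fiber, contradicting minimality. Hence all such projections lie in a set of diameter $D_{\ref{main-lem}}$ in $\Lambda$, yielding the required coboundedness.
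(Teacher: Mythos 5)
Your overall framework (verify Corollary~\ref{hyp-lemma} on vertices of a neighborhood of the ladder) matches the paper's, but your construction of the curve $c(x,y)$ differs from the paper's in a way that makes the approach fail. You place the single horizontal leg of $c(x,y)$ in the fiber $F_{p(y)}$, joining $X_x(p(y))$ to $y$. There is no control on the length of this horizontal segment: even if the girth $d_h(X_1,X_2)=A_0$ is small, the horizontal geodesic $C(X_1,X_2)\cap F_{p(y)}$ can be arbitrarily long, and flaring forces the two sections to diverge, so $d_{p(y)}(X_x(p(y)),y)$ can be arbitrarily large. The simplest failure: take $x,y$ in the same fiber $F_b$ with $d_b(x,y)=M$. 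Your $c(x,y)$ is then the horizontal geodesic of length $M$. But by flaring, $x$ and $y$ can be joined inside $N_L(C(X_1,X_2))$ by a path of length $O(\log M)$ (go along $X_x$ into the region where the sections are $A$-close, cross over, come back along $X_y$). So your $c(x,y)$ is not a uniform quasi-geodesic in $N_L(C(X_1,X_2))$, and condition~(4) of Corollary~\ref{hyp-lemma} (slimness of the curve triangles) fails for the triple $x,y,z$ with $z$ far out in the base: the midpoint of the $X_x$-leg of your $c(x,z)$ is far from both the purely-horizontal $c(x,y)$ and from the $X_y$-leg of $c(y,z)$, once the sections have diverged.

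The key idea you are missing, and which the paper exploits, is to first travel \emph{to the quasi-convex set $U_A(X_3,X_4)\subset B$} where the two sections are within horizontal distance $A$, and to place the unique horizontal leg of $c(x,y)$ over a point $b_{x,y}\in U_A(X_3,X_4)$ chosen as the nearest-point projection of $p(x)$. Then the horizontal leg has length $\leq A$ \emph{by construction}, independently of $x$ and $y$, which is what makes the Hamenstadt criterion go through. The flaring and bounded-flaring lemmas (Lemma~\ref{qc-level-set2}, Corollary~\ref{bdd-flaring}) are then used not to tame a long horizontal leg after the fact, but to show that the base-points $b_{x,y}$ obtained from different choices of sections are close, that $U_A$ is quasi-convex, and that horizontal distance between paired sections stays bounded along any geodesic with endpoints in $U_A$. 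Your subsidiary arguments for condition~(3) and for part~(2) are in the right spirit (and part~(2) via Lemma~\ref{qc-level-set2}(3) is indeed the paper's route, up to a small imprecision: coboundedness concerns \emph{all} nearest-point projections, and the paper handles this cleanly by noting that the quasi-geodesics $c(x,y)$ all pass through the bounded set $p^{-1}(U(X_1,X_2))\cap C(X_1,X_2)$), but without fixing the location of the horizontal jump, the construction does not produce uniform quasi-geodesics and the main argument collapses.
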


{\bf Idea of the proof}:
The proof of this proposition is rather long. 
Therefore, we shall break it up into several lemmas.
The idea  is as follows. We define a set of discrete paths $c(x,y)$, one
for each pair of points $x,y\in \mathcal V(X)\cap C(X_1,X_2)$ and check that they
satisfy the three properties of Corollary $\ref{hyp-lemma}$. 
Given $x,y\in \mathcal V(X)\cap C(X_1,X_2)$ first we construct two qi sections
through them. Then, $c(x,y)$ consists of three parts: two of them are in
the two sections containing $x,y$ and the other one is a horizontal 
geodesic of uniformly bounded length. Then any problem of length
computation is transferred to the sections. For instance
computing the Hausdorff distance between two paths or proving slimness of
triangles becomes easy when we apply this strategy to the parts of the paths that already
lie in a quasi-isometric section of the hyperbolic base space $B$.
 Lemma $\ref{qc-level-set2}$ and the bounded flaring condition are the main tools of
the proof.

We denote by $d^{'}$ the path metric on a neighborhood of a ladder induced from $X$.
Also $Hd^{'}$ will denote the Hausdorff distance between sets in a neighborhood of ladder and $Hd_B$ will 
denote Hausdorff distance between sets in $B$. 

{\bf Definition of  path family:} Let $x,y\in C(X_1,X_2)$ be two vertices.
By Lemma $\ref{qi-section}$ we can choose two
$c_2$-qi sections $X_3$ and $X_4$ through $x$ and $y$ respectively in $C(X_1,X_2)$.
Recall that $U(X_3,X_4)\subseteq \mathcal V(B)$ is a $K$-quasi-convex subset of $B$. 
Join $p(x)$ to $U(X_3,X_4)$ by a shortest geodesic $\gamma_{x,y}$ in $B$ ending at
 $b_{x,y} \in U(X_3,X_4)$. Let $\tilde{\gamma}_{x,y}$ be the lift of 
$\gamma_{x,y}$ in $X_3$, ending at $s_{x,y}$. 
Let $t_{x,y}$ be the lift of $b_{x,y}$ in $X_4$.
We note that $d_{b_{x,y}}(t_{x,y},s_{x,y})\leq A$. Now let $\beta_{x,y}$ be 
a geodesic in $B$ joining $p(y)$ and $b_{x,y}$, and let
$\tilde{\beta}_{x,y}$ be the lift of $\beta_{x,y}$ in $X_4$. We define
$c(x,y)$ to be the union of the three paths:
$\tilde{\gamma}_{x,y}$, $\tilde{\beta}_{x,y}$ and the sequence of consecutive vertices on the geodesic segment
$F_{b_{x,y}}\cap C(X_1,X_2)$ between $t_{x,y}$ and $s_{x,y}$.
We see that there is an asymmetry in the definition of $c(x,y)$
and a number of choices are involved.
However, for each unordered pair $\{x,y\}$ make the choices once and for all
and choose either $c(x,y)$ or $c(y,x)$ as the path joining the points
$x,y$. (See figure below.)

\medskip

\begin{center}

\includegraphics[height=4cm]{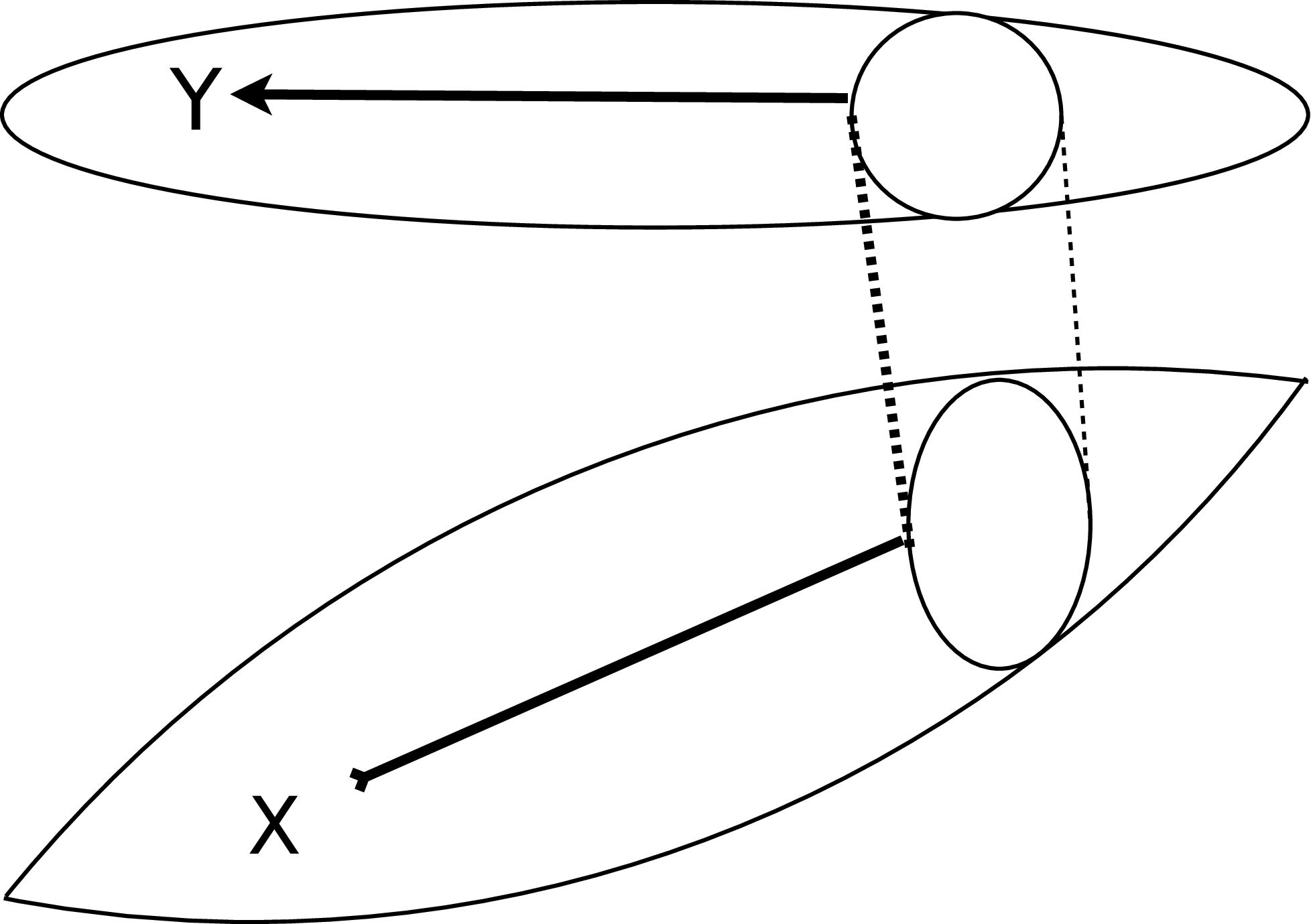}

\underline{{\it Path families: Special case} }
\end{center}

\begin{lemma}\label{property1}
Given $D_1\geq0$ there exist constants 
$D_{\ref{property1}}=D_{\ref{property1}}(c_1,A,D_1)$ and $D^{'}_{\ref{property1}}=D^{'}_{\ref{property1}}(c_1,A,D_1)$
such that the following holds:

Let $x,y\in \mathcal V(X)\cap C(X_1,X_2)$ with $d(x,y)\leq D_1 $. Then, $d^{'}(x,y)$- the distance between $x,y$ in 
the path metric on  $C_L(X_1,X_2)$, is bounded by  $D^{'}_{\ref{property1}}$. Moreover, the length of the path 
$c(x,y)$ is at most $D_{\ref{property1}}$. 
\end{lemma}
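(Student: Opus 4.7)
The plan is to construct an explicit short path from $x$ to $y$ inside the ladder in order to bound $d^{'}(x,y)$, and then to bound the length of the specific curve $c(x,y) \in \mathcal S$ piece by piece, invoking Lemma $\ref{qc-level-set2}$ to control the length of the base geodesic $\gamma_{x,y}$.

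First I would bound $d^{'}(x,y)$. Since $p$ is $1$-Lipschitz, $d_B(p(x),p(y)) \leq D_1$. Let $X_3,X_4$ be the $c_2$-qi sections through $x,y$ chosen in the definition of $c(x,y)$; by Lemma $\ref{qi-section}$ both lie inside $C(X_1,X_2)$, and by Remark $\ref{important-remark}(2)$ each sends pairs of adjacent vertices of $\mathcal V(B)$ to vertices at $X$-distance at most $2c_2$. Lifting a $B$-geodesic from $p(x)$ to $p(y)$ inside $X_3$ therefore yields a path of length at most $2c_2 D_1$ from $x$ to $x^{'} := X_3\cap F_{p(y)}$, entirely in $X_3\subset C(X_1,X_2)$. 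The triangle inequality gives $d(x^{'},y) \leq (2c_2+1)D_1$, so metric properness of fibers gives $d_{p(y)}(x^{'},y) \leq f((2c_2+1)D_1)$. Since $x^{'}$ and $y$ both lie on the horizontal geodesic $C(X_1,X_2)\cap F_{p(y)}$, the sub-arc between them along this geodesic has length at most $f((2c_2+1)D_1)$. Concatenation yields a path in (a neighborhood of) the ladder of length at most $D^{'}_{\ref{property1}} := 2c_2 D_1 + f((2c_2+1)D_1)$.

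Next I would bound $l(c(x,y))$ by bounding its three pieces $\tilde\gamma_{x,y}$, the horizontal segment from $s_{x,y}$ to $t_{x,y}$, and $\tilde\beta_{x,y}$ separately. The same triangle-inequality and metric-properness argument applied at $p(x)$ instead of $p(y)$ shows that $d_{p(x)}(X_3\cap F_{p(x)},X_4\cap F_{p(x)}) \leq f((2c_2+1)D_1)$. Set $C := \max\{A, f((2c_2+1)D_1)\}$. Noting that $X_3,X_4$ correspond via Proposition $\ref{mgsec-to-mbsec}$ to $c^{'}_2$-qi sections of the original bundle, Lemma $\ref{qc-level-set2}(2)$ gives $d_B(p(x),U(X_3,X_4)) \leq D_{\ref{qc-level-set2}}(c_2,C)$, so $l(\gamma_{x,y}) \leq D_{\ref{qc-level-set2}}(c_2,C)$ and hence $l(\tilde\gamma_{x,y}) \leq 2c_2 D_{\ref{qc-level-set2}}(c_2,C)$. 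By the triangle inequality in $B$, $d_B(p(y),b_{x,y}) \leq D_1 + D_{\ref{qc-level-set2}}(c_2,C)$, so $l(\tilde\beta_{x,y}) \leq 2c_2(D_1 + D_{\ref{qc-level-set2}}(c_2,C))$. Finally the horizontal segment has length at most $A$ since $b_{x,y}\in U_A(X_3,X_4)$. Summing the three contributions yields the advertised bound $D_{\ref{property1}}$, depending only on $c_1, A, D_1$ (and the fixed bundle parameters).

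The main obstacle is really bookkeeping rather than substance: the constant $A$ has been arranged at the start of Subsection $\ref{small-ladder}$ precisely so that for any $c_i$-qi sub-sections of $C(X_1,X_2)$ the set $U_A$ enjoys the quasiconvexity and distance-bound conclusions of Lemma $\ref{qc-level-set2}$. Once that lemma is invoked, everything else reduces to combining uniform properness of the fibers with the $2c_2$-Lipschitz control on qi sections lying in the ladder, together with the fact that $b_{x,y}$ lies in $U_A$ so the horizontal hop has length at most $A$.
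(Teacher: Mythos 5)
Your proof is correct and follows essentially the same route as the paper's: the paper lifts $p(x)$ into $X_4$ (the section through $y$) and you lift $p(y)$ into $X_3$ (the section through $x$), a purely cosmetic symmetry; thereafter both proofs use uniform properness of the fibers, Remark \ref{important-remark}(3) for the $2c_2$-Lipschitz control on lifts, and Lemma \ref{qc-level-set2}(2) to bound $d_B(p(x),b_{x,y})$, arriving at the same constants. Your explicit use of $\max\{A,\,f(\cdot)\}$ to cover the case where $p(x)$ already lies in $U_A(X_3,X_4)$ is a small bookkeeping refinement that the paper leaves implicit.
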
 

\begin{proof}
Let $\tilde{y}$ be the lift of $p(x)$ in $X_4$. Since $p$ is a $1$-Lipschitz map,
 $d_B(p(y),p(\tilde{y}))\leq D_1$. Hence, $d(y,\tilde{y})\leq c_2.D_1+c_2$, since $X_4$ is a $c_2$-qi section. Therefore
 $d(\tilde{y},x)\leq c_2.D_1+D_1+c_2$. 
Then, since inclusions of the fibers of the map $p$ are uniformly metrically proper embeddings as measured
by $f$, we have  $d_{p(x)}(\tilde{y},x)\leq f(c_2.D_1+D_1+c_2)$.

By  Lemma $\ref{important-remark}(2)$,  $d^{'}(y,\tilde{y})\leq 2c_2.D_1$.
Thus $d^{'}(x,y)\leq 2c_2.D_1 + f(c_2.D_1+D_1+c_2)$ and the first part of the lemma is proved, with
$D^{'}_{\ref{property1}}:=2c_2.D_1 + f(c_2.D_1+D_1+c_2)$.

Next by Lemma $\ref{qc-level-set2}(2)$, we have
\[d_B(p(x),b_{x,y})\leq D^{'}_1:= D_{\ref{qc-level-set2}}(c_2,max\{A, f(c_2D_1+D_1+c_2)\}).\]
Thus $d_B(p(y),b_{x,y})\leq D_1+D^{'}_1$.
From this and Lemma $\ref{important-remark}(2)$, the second part of the lemma follows, with
$D_{\ref{property1}}:=A+2c_2.D_1+4c_2.D^{'}_1$. \end{proof}

\begin{rmk} {\em Note that in the first part of Lemma \ref{property1}, we have} not {\em assumed that
$C(X_1,X_2)$ is  of small girth.} \end{rmk}

We next show that the path family is {\bf coarsely well-defined}, i.e. ambiguities in
the definition of the paths can be ignored. More precisely, 
the different choices of paths joining the same pair of points are at a uniformly bounded Hausdorff
distance from each other.

Suppose $X_3,X^{'}_3$ are two $k$-qi sections in $C(X_1,X_2)$
containing $x$; and $X_4$ is a $k$-qi section containing $y$, where 
$k\leq c_4$.
Consider the two paths $c(x,y)$ 
and $c^{'}(x,y)$  joining $x,y$  defined using $X_3,X_4$ and $X^{'}_3,X_4$
respectively (defined as before).
 
Let $V:=U(X_1,X_2)$, $W:=U(X_3,X_4)$ and $W^{'}:=U(X^{'}_3,X_4)$. 
Then $V\subset W$, $V\subset W^{'}$. Join $p(x)$ to $V$ by a shortest 
geodesic $\gamma$ in $B$ and let $\tilde{\gamma}$, $\tilde{\gamma}^{'}$ be the
lifts of $\gamma$ in $X_3$ and $X^{'}_3$ respectively. Similarly join
$p(x)$ to $W,W^{'}$ respectively by shortest geodesics $\gamma_{x,y}$ and 
$\gamma^{'}_{x,y}$ and let $\tilde{\gamma}_{x,y}$ and $\tilde{\gamma}^{'}_{x,y}$
be their lifts in $X_3$ and $X^{'}_3$ respectively. Let $s_{x,y}$, 
$s^{'}_{x,y}$
be the end points of $\tilde{\gamma}_{x,y}$, $\tilde{\gamma}^{'}_{x,y}$
respectively, and let $b_{x,y}$, $b^{'}_{x,y}$ be the end points of 
$\gamma_{x,y}$ and $\gamma^{'}_{x,y}$.

\begin{lemma}\label{ladder-step1} With notation (in particular $k, A$) as above, there exists $D_{\ref{ladder-step1}}(=D_{\ref{ladder-step1}}(k,A))$
such that $d_B(b_{x,y},b^{'}_{x,y})$ is bounded by
$D_{\ref{ladder-step1}}$.
\end{lemma}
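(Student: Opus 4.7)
Plan: My plan is to bound $d_B(b_{x,y},b'_{x,y})$ by exploiting that $X_3$ and $X'_3$ both pass through $x$, so they coincide at $p(x)$; by bounded flaring they cannot diverge too quickly as one moves away from $p(x)$, and this closeness of the sections translates into closeness of their nearest-point projections onto $W$ and $W'$.

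First I reduce the problem to bounding the difference of distances $|d_B(p(x),b_{x,y})-d_B(p(x),b'_{x,y})|$. Pick $v$ to be a nearest-point projection of $p(x)$ onto $V=U_A(X_1,X_2)$ (which is nonempty because $A>A_0=d_h(X_1,X_2)$). Since $V\subset W\cap W'$ and $W,W'$ are $K$-quasiconvex (Lemma \ref{qc-level-set2}(1)), Lemma \ref{subqc} makes both $[p(x),b_{x,y}]\cup[b_{x,y},v]$ and $[p(x),b'_{x,y}]\cup[b'_{x,y},v]$ into uniform quasi-geodesics from $p(x)$ to $v$. Stability of quasi-geodesics (Lemma \ref{stab-qg}) then places both $b_{x,y}$ and $b'_{x,y}$ in a uniformly bounded neighborhood of the geodesic $[p(x),v]$, so it suffices to control their distances from $p(x)$ along this geodesic.

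Second, I invoke bounded flaring (Corollary \ref{bdd-flaring}): because $d_{p(x)}(X_3\cap F_{p(x)},X'_3\cap F_{p(x)})=0$, for every $b\in\mathcal V(B)$
\[
d_b(X_3\cap F_b,X'_3\cap F_b)\le \mu_k(d_B(p(x),b)).
\]
Applying this at $b=b_{x,y}\in W$ and using the triangle inequality gives $d_{b_{x,y}}(X'_3,X_4)\le A+\mu_k(d_B(p(x),b_{x,y}))$, so by Lemma \ref{qc-level-set2}(2) there is $w\in W'$ within $D_{\ref{qc-level-set2}}(k,A+\mu_k(d_B(p(x),b_{x,y})))$ of $b_{x,y}$. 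Since $b'_{x,y}$ is the nearest point of $W'$ to $p(x)$, this yields
\[
d_B(p(x),b'_{x,y})\le d_B(p(x),b_{x,y})+D_{\ref{qc-level-set2}}(k,A+\mu_k(d_B(p(x),b_{x,y}))),
\]
and a symmetric inequality holds with the roles of $b_{x,y}$ and $b'_{x,y}$ interchanged.

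The main obstacle will be converting these lopsided estimates, whose right-hand sides involve the a priori unbounded quantity $d_B(p(x),b_{x,y})$, into a uniform bound depending only on $k$ and $A$. To handle this I will run the same flaring-based argument along the geodesic $[p(x),v]$: the set $U_A(X_3,X'_3)$ is $K$-quasiconvex by Lemma \ref{qc-level-set2}(1) (applicable because $A\ge K_1(M_{c'_4}+K_1)$, which is the correct threshold for the pair $(X_3,X'_3)$ of girth zero) and contains $p(x)$. By tracking the first point at which $[p(x),v]$ leaves a bounded neighborhood of $U_A(X_3,X'_3)$, together with the first points at which it enters $W$ and $W'$, I expect to show that these events occur within a uniformly bounded interval of the geodesic, and beyond this interval the flaring condition prevents $W$ and $W'$ from drifting apart. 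Combining this with the first two steps then yields the desired estimate $d_B(b_{x,y},b'_{x,y})\le D_{\ref{ladder-step1}}(k,A)$.
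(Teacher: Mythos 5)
Your plan begins along the same lines as the paper's proof --- you place $b_{x,y}$ and $b'_{x,y}$ near the geodesic $\gamma = [p(x),v]$ via Lemma \ref{subqc} and stability of quasigeodesics, and you correctly identify that a naive application of bounded flaring from $p(x)$ (where $X_3$ and $X'_3$ coincide) produces the uncontrolled quantity $\mu_k(d_B(p(x),b_{x,y}))$. However, the final step, where you propose to close this gap by ``tracking the first point at which $[p(x),v]$ leaves a bounded neighborhood of $U_A(X_3,X'_3)$,'' is incomplete and based on a mistaken expectation. You observe that $p(x)\in U_A(X_3,X'_3)$ but miss the decisive fact that $v\in U_A(X_3,X'_3)$ as well: since both $X_3$ and $X'_3$ lie inside the ladder $C(X_1,X_2)$, at every $b\in V=U_A(X_1,X_2)$ the points $X_3\cap F_b$ and $X'_3\cap F_b$ both lie on the horizontal segment $C(X_1,X_2)\cap F_b$, whose length is at most $A$, so $V\subseteq U_A(X_3,X'_3)$. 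Consequently the geodesic $[p(x),v]$ has \emph{both} endpoints in the $K$-quasiconvex set $U_A(X_3,X'_3)$, and it never leaves a $K$-neighborhood of it; there is no ``first departure'' to track.

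This is precisely the observation the paper exploits: with both endpoints of $\gamma$ in $U(X_3,X'_3)$, bounded flaring over distance $K$ gives $d_b(X_3\cap F_b,X'_3\cap F_b)\le A\mu(K)$ for \emph{every} $b\in\gamma$, uniformly in $b$. Transporting this to $b_{x,y}$ (at distance $\le D_3$ from $\gamma$) and combining with $d_{b_{x,y}}(X_3,X_4)\le A$ yields a uniform bound on $d_{b_{x,y}}(X'_3,X_4)$, after which Lemma \ref{qc-level-set2}(2) gives the desired bound. Your first two steps are sound and essentially match the paper; inserting the observation $V\subseteq U_A(X_3,X'_3)$ would convert your sketch of the third step into a complete argument. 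Without it, the proposal does not establish the uniform bound and the proof does not go through as written.
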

\begin{proof} Note that $V\subset W\cap W^{'}$, and that $V,W,W^{'}$ are all $K$-quasiconvex
subsets of $B$. Therefore, by Lemma $\ref{subqc-elem}$ (2), concatenating  $\gamma_{x,y}$ (resp. $\gamma^{'}_{x,y}$) 
with a geodesic joining $b_{x,y}$ (resp. $b^{'}_{x,y}$) to the terminal point of $\gamma$,
we obtain  $(3+2K)$-quasi-geodesics. These quasi-geodesics have the same end points as those of $\gamma$.
Since $B$ is a $\delta$-hyperbolic graph, by Lemma $\ref{stab-qg}$ we can find 
$b,b^{'}\in \gamma \cap \mathcal V(B)$, such that $d_B(b_{x,y},b)\leq D_3$, 
$d_B(b^{'}_{x,y},b^{'})\leq D_3$, where $D_3:=D_{\ref{stab-qg}}(\delta,3+2K)$.
If $b\in [p(x),b^{'}]\subset \gamma$ then 
$b_{x,y}\in N_{2.D_3+\delta}(\gamma^{'}_{x,y})$. Otherwise, $b^{'}\in [p(x),b]$,
so that $b^{'}_{x,y}\in N_{2.D_3+\delta}(\gamma_{x,y})$. Without loss of 
generality, let us assume that $b\in [p(x),b^{'}]$.

The end points of $\gamma$ are in $U(X_3,X^{'}_3)$ which is a $K$-quasi-convex
set in $B$. Hence by the bounded flaring condition (Corollary $\ref{bdd-flaring}$), we know that for
all points $b_2\in \gamma$, $ d_{b_2}(X_3\cap F_{b_2},X^{'}_3\cap F_{b_2})\leq A.\mu(K).$
In particular,  $ d_b(X_3\cap F_b,X^{'}_3\cap F_b)  \leq A.\mu(K)$. Similarly,
 $d_b(X_3\cap F_b,X_4\cap F_b)\leq A.\mu(D_3)$. Thus,
\[
\begin{array}{cl}
 d_b(X^{'}_3\cap F_b,X_4\cap F_b) & \leq d_{b}(X_3\cap F_b,X^{'}_3\cap F_b)+ d_b(X_3\cap F_b,X_4\cap F_b)\\
 & \leq A.\mu(K)+ A.\mu(D_3).
\end{array}
\]
We know that $[p(x),b^{'}] \subset N_{\delta + D_3}(\gamma^{'}_{x,y})$. 
Let $b^{'}_1\in \gamma^{'}_{x,y}\cap \mathcal V(B)$ be such that $d_B(b,b^{'}_1)\leq \delta + D_3$.
Then $d_{b^{'}_1}(X^{'}_3\cap F_{b^{'}_1},X_4\cap F_{b^{'}_1})\leq \mu(\delta +D_3).max\{d_b(X^{'}_3\cap F_{b},X_4\cap F_{b}),1\}$
and hence $d_{b^{'}_1}(X^{'}_3\cap F_{b^{'}_1},X_4\cap F_{b^{'}_1})\leq A.\mu(\delta +D_3)\{
\mu(D_3) +\mu(K)\}$.

Denoting the right hand side of the preceding inequality by $D^{'}$, we have,
by Lemma $\ref{qc-level-set2}(1)$, $d_B(b^{'}_1,b^{'}_{x,y})\leq D_{\ref{qc-level-set2}}(k,D^{'}).$
Since,
$d_B(b_{x,y},b^{'}_{x,y})\leq d_B(b_{x,y},b)+d_B(b,b^{'}_1)+d_B(b^{'}_1,b^{'}_{x,y})$,
therefore
\[ d_B(b_{x,y},b^{'}_{x,y})\leq D_3+ (\delta + D_3)  +D_{\ref{qc-level-set2}}(k,D^{'})=\delta+2D_3+D_{\ref{qc-level-set2}}(k,D^{'}). \]
Taking $D_{\ref{ladder-step1}}:=\delta+2D_3+D_{\ref{qc-level-set2}}(k,D^{'})$ completes the proof of the lemma. \end{proof}

\begin{lemma}\label{property2.1} With $k, A$ as above there exists $D_{\ref{property2.1}}(=D_{\ref{property2.1}}(k,A))$
such that
the Hausdorff distance between $c(x,y)$ and $c^{'}(x,y)$ is bounded by
 $D_{\ref{property2.1}}$. 
\end{lemma}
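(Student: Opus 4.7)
The plan is to bound the Hausdorff distance piece by piece, matching each of the three segments of $c(x,y)$ with the corresponding segment of $c^{'}(x,y)$: the $X_4$-lifts, the horizontal fiber segments, and the lifts in $X_3$ and $X_3^{'}$. First I would invoke Lemma \ref{ladder-step1} to obtain $d_B(b_{x,y},b^{'}_{x,y})\leq D_{\ref{ladder-step1}}$. Since $B$ is $\delta$-hyperbolic, the standard stability of geodesics sharing an endpoint (Lemma \ref{stab-qg}) yields that $\beta_{x,y}$ and $\beta^{'}_{x,y}$ (both issuing from $p(y)$) are Hausdorff close in $B$, and likewise $\gamma_{x,y}$ and $\gamma^{'}_{x,y}$ (both issuing from $p(x)$), with constants depending only on $\delta$ and $D_{\ref{ladder-step1}}$.

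The $X_4$-parts are the easy case: both $\tilde{\beta}_{x,y}$ and $\tilde{\beta}^{'}_{x,y}$ are lifts into the \emph{same} $k$-qi section $X_4$ of base geodesics that are Hausdorff close in $B$, so they are Hausdorff close in $X$, with a bound of the form $kD_1+k$. For the horizontal segments, each has length at most $A$ in its respective fiber, the endpoints $t_{x,y},t^{'}_{x,y}$ lie on $X_4$ over base points at distance at most $D_{\ref{ladder-step1}}$, hence are at distance at most $kD_{\ref{ladder-step1}}+k$ in $X$, and the other pair of endpoints $s_{x,y},s^{'}_{x,y}$ can be controlled by the flaring estimate described next; combined with Condition $2(i)$ of Definition \ref{defn-mgbdl} and Lemma \ref{condition3}, this bounds the Hausdorff distance between the two horizontal geodesics.

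The main work is the comparison of $\tilde{\gamma}_{x,y}$ (in $X_3$) with $\tilde{\gamma}^{'}_{x,y}$ (in $X_3^{'}$), since these are lifts to different sections. Here I would reuse the key observation from the proof of Lemma \ref{ladder-step1}: the endpoints of the geodesic $\gamma$ from $p(x)$ to $V=U(X_1,X_2)$ lie in $U(X_3,X_3^{'})$ (because both $X_3$ and $X_3^{'}$ pass through $x$, and both lie inside the ladder $C(X_1,X_2)$, so their fiber-distance is at most $A$ over any point of $V$). Quasi-convexity of $U(X_3,X_3^{'})$ together with bounded flaring (Corollary \ref{bdd-flaring}) then gives $d_b(X_3\cap F_b, X_3^{'}\cap F_b)\leq A\mu(K)$ for all $b\in\gamma$. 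Since $\gamma_{x,y}$ and $\gamma^{'}_{x,y}$ lie in a uniformly bounded $B$-neighborhood of $\gamma$ (by stability and Lemma \ref{subqc}), one more application of bounded flaring upgrades this to a uniform bound $d_{b}(X_3\cap F_b, X_3^{'}\cap F_b)\leq D^{''}$ for every $b\in \gamma_{x,y}\cup \gamma^{'}_{x,y}$.

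With this in hand, for any vertex of $\tilde{\gamma}_{x,y}$ lying over some $b\in\gamma_{x,y}$, the corresponding vertex of $X_3^{'}$ over $b$ is within $X$-distance at most $f(D^{''})+D^{''}$; and by the Hausdorff closeness of $\gamma_{x,y}$ and $\gamma^{'}_{x,y}$ in $B$ together with $X_3^{'}$ being a $k$-qi section, this vertex of $X_3^{'}$ is within a bounded $X$-distance of $\tilde{\gamma}^{'}_{x,y}$. Assembling these three piecewise bounds produces $D_{\ref{property2.1}}=D_{\ref{property2.1}}(k,A)$. The main obstacle is the last step: the two relevant geodesic rays from $p(x)$ lift to \emph{different} qi sections, and controlling their separation requires first locating the common geodesic $\gamma$ on which bounded flaring kicks in, and then propagating the flaring bound off $\gamma$ via stability of geodesics and a second application of bounded flaring; everything else is a matter of tracking constants through the qi sections and the metric graph bundle axioms.
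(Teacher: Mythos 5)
Your proposal is correct and follows essentially the same route as the paper's proof: invoke Lemma \ref{ladder-step1}, handle the $X_4$-parts and horizontal segments by lifting Hausdorff-close base geodesics into the common section $X_4$, and handle the $X_3$-vs-$X_3^{'}$ comparison by first establishing a uniform fiber-distance bound between $X_3$ and $X_3^{'}$ over $\gamma_{x,y}$ and then passing between the two sections. The only stylistic difference is that the paper makes the intermediate lift $\tilde{\gamma}^{''}_{x,y}$ (of $\gamma_{x,y}$ in $X_3^{'}$) explicit and splits the last comparison into two Hausdorff estimates, whereas you phrase the same two moves (vertical jump from $X_3$ to $X_3^{'}$ over $\gamma_{x,y}$, then lateral move within $X_3^{'}$ using Hausdorff-closeness of the base geodesics) without naming the auxiliary lift; and for propagating the fiber bound off $\gamma$ you appeal directly to $\gamma_{x,y}\subset N_{D_3}(\gamma)$ and one application of bounded flaring rather than routing through quasi-convexity of $U_{A_1}(X_3,X_3^{'})$ as the paper does — both give the same uniform bound. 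Your additional claim about a fiber bound over $\gamma^{'}_{x,y}$ is harmless but unnecessary, since the assembly only uses the bound over $\gamma_{x,y}$.
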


\begin{proof} 
{\bf Step $1$:} By Lemma \ref{ladder-step1} we have
$d_B(b_{x,y},b^{'}_{x,y})\leq D_{\ref{ladder-step1}}(k,A)$.
Hence, by  $\delta$-hyperbolicity of $B$, 
$Hd_B(\beta_{x,y},\beta^{'}_{x,y})\leq \delta +D_{\ref{ladder-step1}}(k,A)$.
Since $X_4$ is a $k$-qi section, we have, by Lemma $\ref{important-remark}(2)$,
\[Hd^{'}(\tilde{\beta}_{x,y},\tilde{\beta}^{'}_{x,y})\leq 2k.(\delta +D_{\ref{ladder-step1}}(k,A)).\]

{\bf Step $2$:}
Similarly,
\[
Hd^{'}([s_{x,y},t_{x,y}],[s^{'}_{x,y},t^{'}_{x,y}])\leq A+2k.D_{\ref{ladder-step1}}(k,A). \]
where $[s_{x,y},t_{x,y}],[s^{'}_{x,y},t^{'}_{x,y}]$ are the horizontal
geodesic segments of $c(x,y)$ and $c^{'}(x,y)$ respectively, each of length
at most $A$. 

{\bf Step $3$:}
Now we calculate the Hausdorff distance between 
$\tilde{\gamma}_{x,y}$ and $\tilde{\gamma}^{'}_{x,y}$. 
Let $\tilde{\gamma}^{''}_{x,y}$ be the lift of $\gamma_{x,y}$ in
$X^{'}_3$. Then, as in Step $1$, we have 
$Hd^{'}(\tilde{\gamma}^{'}_{x,y},\tilde{\gamma}^{''}_{x,y})\leq 2k.(\delta +D_{\ref{ladder-step1}}(k,A))$.
Since $\gamma$ joins two points of $U(X_3,X^{'}_3)$ which is $K$-quasi-convex in $B$, it follows that 
$d_{b_2}(X_3\cap F_{b_2},X^{'}_3\cap F_{b_2})\leq A.\mu(K)$ for all points $b_2\in \gamma$
 by the bounded flaring condition. 
Since there is a point $b\in \gamma$ such that
$d_B(b,b_{x,y})\leq D_3:=D_{\ref{stab-qg}}(\delta,3+2K)$, we have the following using the boundedness of the flaring condition again:
\[
\begin{array}{rcl}
d_{b_{x,y}}(X_3\cap F_{b_2},X^{'}_3\cap F_{b_2}) & \leq & \mu(D_3).max\{d_b(X_3\cap F_b,X^{'}_3\cap F_b),1\} \\
&\leq & A.\mu(D_3).\mu(K).
\end{array}
\]
Let $A_1=A.\mu(D_3).\mu(K)$; then $U_{A_1}(X_3,X^{'}_3)$ is $K^{'}:=K_{\ref{qc-level-set2}}(c_2,A_1)$-quasi-convex.
Note that $\gamma_{x,y}$ joins two points of $U_{A_1}(X_3,X^{'}_3)$.
Therefore, by Lemma $\ref{qc-level-set2}$ (1) and the bounded flaring condition, we have for all
$ b_1\in \gamma_{x,y}$, 
$d_{b_1}(X_3\cap F_{b_1},X^{'}_3\cap F_{b_1})\leq \mu(K^{'}).A_1$. 
Hence 
$Hd^{'}(\tilde{\gamma}_{x,y},\tilde{\gamma}^{''}_{x,y})\leq \mu(K^{'}).A_1$, and  therefore
\[
\begin{array}{rcl}
Hd^{'}(\tilde{\gamma}_{x,y},\tilde{\gamma}^{'}_{x,y}) &\leq & Hd^{'}(\tilde{\gamma}_{x,y},\tilde{\gamma}^{''}_{x,y})+Hd^{'}(\tilde{\gamma}^{'}_{x,y},\tilde{\gamma}^{''}_{x,y})\\
& \leq & \mu(K^{'}).A_1+  2k.(\delta +D_{\ref{ladder-step1}}(k,A)).
\end{array}
\]
Finally, since 
\[
\begin{array}{l}
Hd^{'}(c(x,y),c^{'}(x,y))\\ 
\leq \mbox{max}\{Hd^{'}(\tilde{\beta}_{x,y},\tilde{\beta}^{'}_{x,y}), Hd^{'}([s_{x,y},t_{x,y}],[s^{'}_{x,y},t^{'}_{x,y}]),Hd^{'}(\tilde{\gamma}_{x,y},\tilde{\gamma}^{'}_{x,y})\},
\end{array}
\] 
the lemma follows, taking $D_{\ref{property2.1}}:=\mu(K^{'}).A_1+2k.(\delta +D_{\ref{ladder-step1}}(k,A))$. \end{proof}

\begin{lemma}\label{property2.2} With notation  (in particular $k, A$)  as above, there exists 
 $D_{\ref{property2.2}}(=D_{\ref{property2.2}}(k,A))$  such that
if $c(x,y),c(y,x)$ are defined using two $k$-qi sections $X_3,X_4$ where $k\leq c_4$, $x\in X_3$ and $y\in X_4$,  then
$Hd(c(x,y),c(y,x))$ is bounded by  $D_{\ref{property2.2}}$.  
\end{lemma}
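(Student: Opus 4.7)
The plan is to show that every point $p$ of $c(x,y)$ admits a point $q$ of $c(y,x)$ within ladder-distance bounded purely in terms of $k$ and $A$, and symmetrically. The central geometric input comes from the $\delta$-hyperbolicity of $B$: since $b_{x,y}$ is a nearest-point projection of $p(x)$ onto the $K$-quasi-convex set $U:=U_A(X_3,X_4)$ and $b_{y,x}\in U$, Lemma~\ref{subqc} ensures that $\gamma_{x,y}\cup [b_{x,y},b_{y,x}]$ is a $(3+2K)$-quasi-geodesic from $p(x)$ to $b_{y,x}$. Since $\beta_{y,x}$ is a geodesic with the same endpoints, stability of quasi-geodesics (Lemma~\ref{stab-qg}) gives that their Hausdorff distance in $B$ is at most $D_0:=D_{\ref{stab-qg}}(\delta,3+2K)$. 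Swapping the roles of $x$ and $y$ yields the analogous bound between $\beta_{x,y}$ and $\gamma_{y,x}\cup [b_{y,x},b_{x,y}]$.

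The second essential ingredient is bounded flaring. Since $U$ is $K$-quasi-convex in $B$, both $[b_{x,y},b_{y,x}]$ and its $D_0$-neighborhood lie in $N_{K+D_0}(U)$. Combining this with Lemma~\ref{qc-level-set2} and the bounded flaring condition (Corollary~\ref{bdd-flaring}), the horizontal fiber distance $d_b(X_3\cap F_b,X_4\cap F_b)$ is uniformly bounded by $C_1:=\mu(K+D_0)\cdot A$ for every such $b$.

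With these tools the proof becomes a case analysis on which piece of $c(x,y)$ contains $p$. If $p\in \tilde{\gamma}_{x,y}$ lies above $b_1\in \gamma_{x,y}$, pick $b_2\in \beta_{y,x}$ with $d_B(b_1,b_2)\leq D_0$ (which exists by the first paragraph) and take $q$ to be the lift of $b_2$ in $\tilde{\beta}_{y,x}\subset c(y,x)$; both lifts lie in the common $c_2$-qi section $X_3$, so the distance from $p$ to $q$ is at most $2c_2 D_0$. If $p$ lies on the bridge $[s_{x,y},t_{x,y}]\subset F_{b_{x,y}}$, then $p$ is within $A$ of $s_{x,y}$ and the previous case applies. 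The delicate case is $p\in \tilde{\beta}_{x,y}$ lying above $b_1\in \beta_{x,y}$: the symmetric statement of the first paragraph yields $b_2$ at distance at most $D_0$ from $b_1$ lying on $\gamma_{y,x}\cup [b_{y,x},b_{x,y}]$. If $b_2\in \gamma_{y,x}$ we lift $b_2$ along $\tilde{\gamma}_{y,x}\subset c(y,x)$ inside $X_4$ as in the first case; if instead $b_2\in [b_{y,x},b_{x,y}]$, then $b_1\in N_{K+D_0}(U)$, and bounded flaring lets us jump from $p\in X_4\cap F_{b_1}$ to some $r\in X_3\cap F_{b_1}$ at cost at most $C_1$, after which we use that $\beta_{y,x}$ is $D_0$-close to $[b_{y,x},b_{x,y}]$ to match $r$ to a point of $\tilde{\beta}_{y,x}\subset c(y,x)$ as in the first case. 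A symmetric argument handles points of $c(y,x)$.

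The main obstacle is this final sub-case: the matching pieces of $c(x,y)$ and $c(y,x)$ may lie in different $c_2$-qi sections ($X_4$ and $X_3$, respectively), and it is precisely bounded flaring over the controlled neighborhood $N_{K+D_0}(U)$ that enables crossing between them with uniformly bounded cost.
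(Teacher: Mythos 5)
Your proof is correct and uses the same essential strategy as the paper: applying Lemma~\ref{subqc} to see that $\gamma_{x,y}\cup[b_{x,y},b_{y,x}]$ is a $(3+2K)$-quasigeodesic fellow-traveling $\beta_{y,x}$, and invoking bounded flaring over (a neighborhood of) $[b_{x,y},b_{y,x}]$ to control horizontal distances between $X_3$ and $X_4$. Your explicit case analysis merely unpacks what the paper compresses into its terse ``It follows that'', and the only technical deviation is that you cross from $X_4$ to $X_3$ at $b_1\in\beta_{x,y}$ rather than at the nearby point of $[b_{x,y},b_{y,x}]$, trading the paper's bound $\mu(K)A$ for the marginally weaker $\mu(K+D_0)A$.
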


\begin{proof}
Let $\alpha$ be a geodesic in $B$ joining $b_{x,y}$ and $b_{y,x}$.
Since $\alpha$ joins two points of $U(X_3,X_4)$ which is a $K$-quasiconvex subset of $B$,  
we have:\\
i)  $d_b(F_b\cap X_3, F_b\cap X_4)\leq \mu(K)A$ for all $ b\in\alpha$,
by  the bounded flaring
condition for metric graph bundles.\\
ii) $\gamma_{x,y}\cup \alpha$
is a $(3+2K)$-quasi-geodesic by Lemma $\ref{subqc-elem}$ (2).
Hence,
$Hd_B(\gamma_{x,y}\cup \alpha,[p(x),b_{y,x}])\leq D_{\ref{stab-qg}}(\delta,3+2K)$,
by Lemma $\ref{stab-qg}$. 
Similarly, $Hd_B(\gamma_{y,x}\cup \alpha,[p(y),b_{x,y}])\leq D_{\ref{stab-qg}}(\delta,3+2K)$.

Therefore, for all $z\in \tilde{\gamma}_{y,x}$, $p(z)$ is in the $D_{\ref{stab-qg}}(\delta, 3+2K)$-neighborhood
of $[p(y),b_{x,y}]$. Thus $z$ is contained in the $2k.D_{\ref{stab-qg}}(\delta, 3+2K)$-neighborhood of 
$\tilde{\beta}_{x,y}\subset c(x,y)$ using the fact that $X_4$ is a $k$-qi section and Lemma 
\ref{important-remark} (2).  

Again for all
$z\in F_{b_{y,x}}\cap \mathcal V(C(X_1,X_2))$, $d_{b_{y,x}}(z, s_{y,x})\leq A$. 
It follows that in this case $z$ is contained
in the $(A+2k.D_{\ref{stab-qg}}(\delta, 3+2K))$-neighborhood of $c(x,y)$. 

Now, suppose 
$z\in \tilde{\beta}_{y,x}$. Since $B$ is $\delta$-hyperbolic, $p(z)\in N_{\delta}(\gamma_{x,y}\cup \alpha)$.
If $p(z)\in N_{\delta}(\gamma_{x,y})$ then $z$ is contained in the $2k.\delta$-neighborhood of
$\tilde{\gamma}_{x,y}\subset c(x,y)$. Otherwise, $p(z)\in N_{\delta}(\alpha)$. Suppose $b_1\in \alpha$ such that
$d_B(p(z),b_1)\leq \delta$. As in the first paragraph of the proof we have 
$d_{b_1}(F_{b_1}\cap X_3, F_{b_1}\cap X_4)\leq \mu(K)A$. Using the fact that 
$Hd_B(\gamma_{x,y}\cup \alpha,[p(x),b_{y,x}])\leq D_{\ref{stab-qg}}(\delta,3+2K)$ 
we see that $z$ is contained in the $(\mu(K)A+ 2k(\delta+D_{\ref{stab-qg}}(\delta,3+2K)))$-neighborhood of
$\tilde{\beta}_{x,y}\subset c(x,y)$.


It follows that $c(y,x)$ is contained in the $(\mu(K)A+ 2k(\delta+D_{\ref{stab-qg}}(\delta,3+2K)))$-neighborhood of
$c(x,y)$. Similarly it follows that $c(x,y)$ is contained in the $(\mu(K)A+ 2k(\delta+D_{\ref{stab-qg}}(\delta,3+2K)))$-neighborhood of $c(y,x)$. Hence 
$Hd^{'}(c(x,y),c(y,x))\leq D_{\ref{property2.2}}:=\mu(K)A+ 2k(\delta+D_{\ref{stab-qg}}(\delta,3+2K)).  $
\end{proof}

\begin{cor}\label{curve-well-defined} With notation (in particular $k, A$) as above, there exists 
$D_{\ref{curve-well-defined}}(=D_{\ref{curve-well-defined}}(k,A))$ such that the following holds. \\
Let $x,y\in C(X_1,X_2)$. Then the Hausdorff distance
between any pair of paths joining $x,y$ defined in the same way as 
 $c(x,y)$ using $k$-qi sections passing through $x,y$, is at most
$D_{\ref{curve-well-defined}}$.
\end{cor}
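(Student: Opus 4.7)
The plan is to deduce the corollary by a short chain of triangle inequalities from Lemmas \ref{property2.1} and \ref{property2.2}, observing that these two lemmas together already allow us to change (a) the section through $x$, and (b) the roles of $x$ and $y$ — and combining them lets us change all of the data involved in the construction of $c(x,y)$ while only changing one ingredient at a time.

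More precisely, let $c_1$ and $c_2$ be two curves joining $x,y$ constructed as $c(x,y)$ using $k$-qi sections $(X_3,X_4)$ and $(X^{'}_3,X^{'}_4)$ through $(x,y)$ respectively. I would introduce three intermediate curves. First let $c_1'=c(x,y)$ defined using $(X^{'}_3,X_4)$; by Lemma \ref{property2.1} (applied to the two sections $X_3,X^{'}_3$ through $x$ and the single section $X_4$ through $y$), $Hd^{'}(c_1,c_1')\leq D_{\ref{property2.1}}(k,A)$. Next let $c_1''=c(y,x)$ defined using the same pair $(X^{'}_3,X_4)$; Lemma \ref{property2.2} gives $Hd^{'}(c_1',c_1'')\leq D_{\ref{property2.2}}(k,A)$. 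In this reversed configuration the section through the base point is $X_4$, so now apply Lemma \ref{property2.1} once more, replacing $X_4$ by $X^{'}_4$, to obtain a curve $c_2''=c(y,x)$ defined using $(X^{'}_4,X^{'}_3)$ with $Hd^{'}(c_1'',c_2'')\leq D_{\ref{property2.1}}(k,A)$. Finally swap the roles back via Lemma \ref{property2.2}: the curve $c(x,y)$ defined using $(X^{'}_3,X^{'}_4)$ is exactly $c_2$, and $Hd^{'}(c_2'',c_2)\leq D_{\ref{property2.2}}(k,A)$. Triangle inequality for Hausdorff distance then yields
\[
Hd^{'}(c_1,c_2)\leq 2D_{\ref{property2.1}}(k,A)+2D_{\ref{property2.2}}(k,A),
\]
so one may take $D_{\ref{curve-well-defined}}(k,A):=2D_{\ref{property2.1}}(k,A)+2D_{\ref{property2.2}}(k,A)$.

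The construction of $c(x,y)$ also involves auxiliary choices of the geodesics $\gamma_{x,y}$ in $B$ (from $p(x)$ to $U(X_3,X_4)$) and $\beta_{x,y}$ (from $p(y)$ to $b_{x,y}$), together with their qi lifts. These however are not a real obstacle: any two such geodesics in the $\delta$-hyperbolic space $B$ sharing endpoints are at Hausdorff distance $\leq\delta$, and by Remark \ref{important-remark}(3) their lifts in a given $k$-qi section are at uniformly bounded Hausdorff distance in $X$. Since the estimates in Lemmas \ref{property2.1} and \ref{property2.2} are independent of the particular geodesic choices, this ambiguity is absorbed into the constants and the argument above goes through unchanged. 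The only place one has to be slightly careful is that all four applications above require the sections involved to be $k$-qi with $k\leq c_4$, which is precisely the hypothesis of the corollary.
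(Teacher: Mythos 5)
Your proof is correct and takes essentially the same approach as the paper, which simply states the constant $D_{\ref{curve-well-defined}}:=2(D_{\ref{property2.1}}+D_{\ref{property2.2}})$ without spelling out the chain of intermediate curves; your four-step argument — change the section through $x$, swap to $c(y,x)$, change the section through $y$, swap back — is exactly the argument implicit in that choice of constant.
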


\begin{proof} Choose $D_{\ref{curve-well-defined}}:=2(D_{\ref{property2.1}}+ D_{\ref{property2.2}})$. \end{proof}

\begin{lemma}\label{triangle-thin}  With notation (in particular $k, A$) as above, there exists 
$D_{\ref{triangle-thin}}(=D_{\ref{triangle-thin}}(k,A))$ such that the following holds. \\
Suppose  $X_3,X_4,X_5$ are $k$-qi sections
in $C(X_1,X_2)$ such that $z\in X_5$, $y\in X_4 \subset C(X_1,X_5)\subset C(X_1,X_2)$ and 
$x\in X_3\subset C(X_1,X_4)\subset C(X_1,X_2)$.
Then the triangle formed by the paths $c(x,y),c(y,z),c(x,z)$, defined
using the pairs $X_3,X_4$; $X_4,X_5$ and $X_3,X_5$ respectively,
is $D_{\ref{triangle-thin}}-$slim.
\end{lemma}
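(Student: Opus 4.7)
The strategy is to project the ladder triangle to the base $B$, exploit its $\delta$-hyperbolicity together with the quasiconvexity of the $U(\cdot,\cdot)$ sets (Lemma \ref{qc-level-set2}), and lift the resulting slimness back to the ladder by means of the bounded flaring condition (Corollary \ref{bdd-flaring}) along the $c_2$-qi sections $X_3, X_4, X_5$ that carry the three curves.

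The first step is to exploit a key structural consequence of the nesting hypothesis $X_3 \subset C(X_1,X_4) \subset C(X_1,X_5)$ and $X_4 \subset C(X_1,X_5)$: in every fiber $F_b$, the four points $X_1 \cap F_b, X_3 \cap F_b, X_4 \cap F_b, X_5 \cap F_b$ occur in this order along the horizontal geodesic $C(X_1,X_2) \cap F_b$. Therefore $d_b(X_3,X_4) \le d_b(X_3,X_5)$ and $d_b(X_4,X_5) \le d_b(X_3,X_5)$, yielding the set-theoretic inclusions $U_A(X_3,X_5) \subseteq U_A(X_3,X_4) \cap U_A(X_4,X_5)$. Combined with the $K$-quasiconvexity of each $U(\cdot,\cdot)$ in $B$ and Lemma \ref{subqc}, this forces the three broken paths $\gamma_{x,y}\cup\beta_{x,y}$, $\gamma_{y,z}\cup\beta_{y,z}$, $\gamma_{x,z}\cup\beta_{x,z}$ to be uniform quasi-geodesics in $B$ with endpoints the respective pairs from $\{p(x),p(y),p(z)\}$; by Lemma \ref{stab-qg} and the $\delta$-hyperbolicity of $B$, they form a uniformly slim triangle in $B$.

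Given $w \in c(x,y)$, I would then argue slimness fiberwise in three cases. (a) If $w \in \tilde{\gamma}_{x,y} \subset X_3$, then $p(w) \in \gamma_{x,y}$, and by slimness of the projected triangle $p(w)$ lies within a bounded distance of either $\gamma_{x,z}$ (whose lift is again in $X_3 \subset c(x,z)$, so the counterpart is in the same $c_2$-qi section at a nearby base point, giving bounded distance in $X$) or of $\gamma_{y,z}\cup\beta_{y,z}$ (whose lift is in $X_4$ or $X_5 \subset c(y,z)$; here the sections differ and one uses bounded flaring started at $b_{x,y}$, where $X_3,X_4$ are within $A$, and at $b_{x,z}$, where $X_3,X_5$ are within $A$, to control the fiberwise distance between the relevant sections). (b) The case $w \in \tilde{\beta}_{x,y} \subset X_4$ is handled symmetrically, swapping the roles of $X_3, X_4$. (c) If $w$ lies on the horizontal segment $[s_{x,y},t_{x,y}]$, of length at most $A$, then $w$ is within $A$ of its endpoints $s_{x,y}\in X_3$ and $t_{x,y} \in X_4$, and case (a) or (b) applies.

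The main obstacle lies in the second subcase of (a): one must compare a point on $X_3$ with a point on $X_4$ or $X_5$ at a base point that need not belong to the relevant $U$-set. The resolution is to travel in $B$ from $p(w)$ to a nearby point of the quasiconvex $U$-set (at cost bounded by slimness and Lemma \ref{qc-level-set2}(2)), then propagate back via bounded flaring (Corollary \ref{bdd-flaring}), which multiplies the horizontal gap by $\mu(\cdot)$ over a bounded radius. Collecting the $\delta$-slim, quasiconvexity, and flaring constants produces the uniform slimness constant $D_{\ref{triangle-thin}}(k,A)$.
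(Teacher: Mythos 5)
Your overall framing—project to $B$, use slimness of the projected triangle, then lift—is a reasonable strategy, and the structural observation that the nesting $X_3\subset C(X_1,X_4)$, $X_4\subset C(X_1,X_5)$ forces the fiberwise ordering $X_1,X_3,X_4,X_5$ and hence $U(X_3,X_5)\subset U(X_3,X_4)\cap U(X_4,X_5)$ is correct and is exactly the inclusion the paper records at the start of its proof. You have also correctly located the hard point: when a point $w$ on, say, the $X_3$-part of $c(x,y)$ must be compared with a point of $c(y,z)$ or $c(x,z)$ carried by $X_4$ or $X_5$, one needs a bound on the fiberwise distance between the two sections at the base point $p(w)$.

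The gap is in the proposed resolution. You claim slimness of the projected triangle together with Lemma \ref{qc-level-set2}(2) places $p(w)$ at bounded distance from the relevant $U$-set, after which bounded flaring (Corollary \ref{bdd-flaring}) is applied ``over a bounded radius.'' But slimness of the triangle in $B$ says only that $p(w)$ lies near a point of another side of the projected triangle; it gives no control on $d_B(p(w),\,U(\cdot,\cdot))$. Indeed, $p(w)\in\gamma_{x,y}$ can lie arbitrarily far from $U(X_3,X_4)$ (take $p(w)$ near $p(x)$ when $p(x)$ is far from $U(X_3,X_4)$), and likewise for $\beta$-segments. Without a bounded travel distance, Corollary \ref{bdd-flaring} produces a factor $\mu(N)$ with $N$ unbounded, and the argument collapses. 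Nothing in the proposal supplies this missing bound, and it is genuinely the crux: the whole point of the paper's machinery is that flaring only controls horizontal distances across a \emph{bounded} length of base geodesic joining two points of a $U$-set.

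The paper's route avoids the need for any fiberwise propagation along an unbounded stretch. It first handles a ``degenerate'' case (its Case 1), where the two initial points lie in the same fiber within horizontal distance $A$, so that $p(x)$ itself lies in $U(X_3,X_4)$ and all the relevant base geodesics join points of a $K$-quasiconvex $U$-set, making Lemma \ref{qc-level-set2}(2) plus bounded flaring applicable everywhere along them. It then reduces the general case to this one by the key geometric observation (coming from $U(X_3,X_5)\subset U(X_3,X_4)$ and Lemma \ref{subqc}) that the nearest-point projection geodesics $\gamma_{x,y}$ and $\gamma_{x,z}$ from $p(x)$ to the nested $U$-sets fellow-travel; hence the $X_3$-portions of $c(x,y)$ and $c(x,z)$ match up automatically, and the remaining comparison takes place starting from the single fiber over $b_{x,y}$, which is Case 1 territory. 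Your blind proof attempt contains neither the fellow-traveling observation nor the Case-1 reduction, and as written would fail at precisely the step you flagged as the main obstacle.
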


 \begin{proof}
We have $U(X_3,X_5)\subset U(X_4,X_5)\cap U(X_3,X_4)$ and we know that all of these three sets are
$K$-quasi-convex in $B$.

{\bf Case $1$:} {\em Suppose $x,y$ are in the same horizontal space
and $d_{p(x)}(x,y)\leq A$}. \\ 
Then  $p(x)\in U(X_3,X_4)$.
Since $\gamma_{x,z}$ ends in $U(X_3,X_5)\subset U(X_3,X_4)$, it
joins two points of $U(X_3,X_4)$ which we know is $K$-quasi-convex.
Hence, by Corollary $\ref{bdd-flaring}$, we have
for all $b^{'}\in \gamma_{x,z}$,
$d_{b^{'}}(X_3\cap F_{b^{'}},X_4\cap F_{b^{'}})\leq A.\mu(K)$.

Now we show that $d_B(b_{x,z},b_{y,z})$ is small. Recall that 
$b_{x,z}\in U(X_3,X_5)\subset U(X_4,X_5)$ and $b_{y,z}\in U(X_4,X_5)$.
Thus $\gamma_{y,z}\cup [b_{y,z}, b_{x,z}]$ is a $(3+2K)$-quasi-geodesic in $B$,
by Lemma $\ref{subqc-elem}$ (2).
Hence, there is a point $b_2\in \gamma_{x,z}$,
such that $d_B(b_{y,z},b_2)\leq D_{\ref{stab-qg}}(\delta, 3+2K)$, by Lemma 
$\ref{stab-qg}$.
Since $d_{b_{y,z}}(F_{b_{y,z}}\cap X_4,F_{b_{y,z}}\cap X_5)\leq A$,
we have by  bounded flaring,
$d_{b_2}(F_{b_2}\cap X_4,F_{b_2}\cap X_5)\leq A.\mu(D_{\ref{stab-qg}}(\delta, 3+2K))$.
Therefore, $
d_{b_2}(F_{b_2}\cap X_3,F_{b_2}\cap X_5)=
d_{b_2}(F_{b_2}\cap X_3,F_{b_2}\cap X_4)+d_{b_2}(F_{b_2}\cap X_4,F_{b_2}\cap X_5)
\leq A.\{\mu(K)+\mu(D_{\ref{stab-qg}}(\delta,3+2K)\}$.
Now, by  Lemma $\ref{qc-level-set2}$ (1), we get
\[d_B(b_2,b_{x,z})\leq D_{\ref{qc-level-set2}}(k,A.\mu(K)+A.\mu(D_{\ref{stab-qg}}(\delta,3+2K)).
\]
Hence 
\[d_B(b_{x,z},b_{y,z})\leq D_{\ref{stab-qg}}(\delta,3+2K)+ D_{\ref{qc-level-set2}}(k,A.\mu(K)+A.\mu(D_{\ref{stab-qg}}(\delta,3))).
\]
It follows by arguments similar to that of Lemma \ref{property2.1} that
\[ Hd^{'}(c(x,z),c(y,z))\leq A.\mu(K)+ 2k.(\delta+d_B(b_{x,z},b_{y,z})). \]
Replacing $d_B(b_{x,z},b_{y,z})$ in the right hand expression  with its upper bound obtained above,  we get a constant  
$D^{'}_{\ref{triangle-thin}}$. Hence the lemma follows in this case by choosing
$D_{\ref{triangle-thin}}\geq D^{'}_{\ref{triangle-thin}}$.

{\bf Case $2$:} We consider the general case.
For the rest of the proof, we shall assume that all the paths
of the form $c(u,v)$ ($u,v \in X_3\cup X_4 \cup X_5$),
are constructed using the sections $X_3, X_4, X_5$ only, unless otherwise specified.
We first show that $Hd^{'}(c(x,z),\tilde{\gamma}_{x,y}\cup c(s_{x,y},z))$
is bounded by a constant depending on $k$ and $A$.

Let $\bar{b}$ be a nearest point projection of $b_{x,y}$ on $U(X_3,X_5)$.
By Lemma $\ref{subqc}$,
$d_B(\bar{b},b_{x,z})\leq D_{\ref{subqc}}=D_{\ref{subqc}}(\delta,K)$.
Let $\gamma_2$
be a geodesic joining $b_{x,y}$ to $\bar{b}$ and let $\widetilde{\gamma_2}$
be a lift of $\gamma_2$ in $X_3$. Note that
$\gamma_{x,y}\cup \gamma_2$ is a $(3+2K)$-quasi geodesic in $B$.
Thus the Hausdorff distance between $\gamma_{x,z}$ and $\gamma_{x,y}\cup \gamma_2$
is at most $\delta+D_{\ref{subqc}}+D_{\ref{stab-qg}}(\delta,3+2K)$. 
Hence the Hausdorff distance between $\tilde{\gamma}_{x,y}\cup c(s_{x,y},z)$
and $c(x,z)$, in $C_L(X_1,X_2)$, is at most
$2k.\{\delta+D_{\ref{subqc}}+D_{\ref{stab-qg}}(\delta,3+2K)\}+A=D_1$, say. 

Again by case $1$, we know that
$Hd(c(s_{x,y},z),c(t_{x,y},z))\leq D^{'}_{\ref{triangle-thin}}$.
Hence, $Hd(c(x,z),   \tilde{\gamma}_{x,y}\cup [s_{x,y},t_{x,y}]\cup c(t_{x,y},z))$ is
at most $A+D_1+D^{'}_{\ref{triangle-thin}}$. 
Also, if we define the paths $c(z,t_{x,y})$, $c(z,y)$
with respect to the sections $X_4,X_5$ by taking
$\gamma_{z,t_{x,y}}=\gamma_{z,y}$, the triangle formed by the 
paths $c(z,t_{x,y})$, $c(z,y)$ and $\tilde{\beta}_{y,t_{x,y}}$ is
$2k\delta$-slim. 

Thus by Corollary $\ref{curve-well-defined}$,
the triangle formed by the
paths $\tilde{\beta}_{y,t_{x,y}}$, $c(t_{x,y},z)$ and $c(y,z)$ is $D_2$-slim
where $D_2=2k.\delta+ 2.D_{\ref{curve-well-defined}}$.
Taking $D_{\ref{triangle-thin}}:= A+D_1+D^{'}_{\ref{triangle-thin}}+D_2$,
the  lemma follows. \end{proof}

\bigskip

\noindent {\bf Proof of Proposition $\ref{main-lem}$}

\smallskip

We verify that the set of paths $\{c(x,y)\}$ defined earlier in this
section satisfies the properties of Corollary $\ref{hyp-lemma}$. Then, as per the notation of Corollary $\ref{hyp-lemma}$,
let $D=L$, $C_1:=2c_2$,
$\Phi (N)=D_{\ref{property1}}(c_2,A,N)$ and $C_2= D_{\ref{triangle-thin}}(c_4,A)+2.D_{\ref{curve-well-defined}}(c_4,A)$.

{\bf Proof of properties $1$ and $2$:} These follow from Lemma $\ref{important-remark}(2)$ and Lemma
$\ref{property1}$ respectively.

{\bf Proof of property $3$:} Suppose $x,y\in C(X_1,X_2)$.
If $x^{'},y^{'}\in c(x,y)$ then  the segment of
$c(x,y)$ between $x^{'},y^{'}$, say $c(x,y)|_{[x^{'},y^{'}]}$, is
a possible candidate for the definition of $c(x^{'},y^{'})$.
Hence by Corollary $\ref{curve-well-defined}$, the Hausdorff distance
of $c(x,y)|_{[x^{'},y^{'}]}$ and $c(x^{'},y^{'})$ is bounded by
$D_{\ref{curve-well-defined}}(c_2,A)\leq C_2$.

{\bf Proof of property $4$:} Let $x,y,z\in C(X_1,X_2)$.
Then using Lemma $\ref{qi-section}$ we may assume, without loss
of generality, that $x,y,z$ are contained in three $c_4$-qi
sections $X_3,X_4,X_5$ respectively, where 
$X_4\subseteq C(X_1,X_5)$, $X_3\subseteq C(X_1,X_4)$. Now,
the triangle formed by the paths $c(x,y)$, $c(y,z)$, $c(x,z)$
defined using these sections is  $D_{\ref{triangle-thin}}(c_4,A)$-slim
by Lemma $\ref{triangle-thin}$. Hence, by  Corollary
$\ref{curve-well-defined}$,  any triangle with vertices $x, y, z$
formed by such paths is
$\{D_{\ref{triangle-thin}}(c_4,A)+2.D_{\ref{curve-well-defined}}(c_4,A)\}$-slim.
It follows from Corollary $\ref{hyp-lemma}$ 
 that $C_L(X_1,X_2)$ is  $\delta_{\ref{main-lem}}$-hyperbolic for some  $\delta_{\ref{main-lem}}\geq 0$.

By Lemma \ref{important-remark}(3), it follows that $X_1,X_2$ are the images of 
$2c_1-$quasi-isometric embeddings of $B$ into the
 $\delta_{\ref{main-lem}}$-hyperbolic metric
space $C_L(X_1,X_2)$. Thus, they are
$K_{\ref{main-lem}}:=D_{\ref{stab-qg}}(\delta_{\ref{main-lem}},2c_1)$-quasiconvex in $C_L(X_1,X_2)$.
This completes the proof of the first statement of the proposition.

From the given conditions it follows by Lemma $\ref{qc-level-set2}$ (2) that  $U(X_1,X_2)$ is  bounded. 
Hence, for any  $x\in X_1$
and $y\in X_2$ the $K_{\ref{hyp-lemma}}$-quasi-geodesic $c(x,y)$ passes through the (uniformly)
bounded set $p^{-1}(U(X_1,X_2))\cap C(X_1,X_2)$ (by Corollary $\ref{hyp-lemma}$). Since $C_L(X_1,X_2)$ has been
proven to be hyperbolic,  stability of quasi-geodesics (Lemma \ref{stab-qg})
completes the proof of the second statement of the proposition. 
$\Box$

\subsection{Hyperbolicity of ladders: General case}\label{big-ladder}

\begin{lemma}\label{easy-lemma}
There is a function $D_{\ref{easy-lemma}}:\mathbb R^{+}\rightarrow \mathbb R^{+}$
such that the following holds.\\ Suppose $I,J$ are intervals in $\mathbb R$ 
and $\phi:I\rightarrow J$ is a $k$-quasi-isometric embedding. Let 
$x_1,x_2,x_3\in I$, $x_1\leq x_2\leq x_3$, and suppose $\phi(x_1)$ belongs to
the interval with end points  $\phi (x_2)$, $\phi (x_3)$.
Then $x_2-x_1\leq D_{\ref{easy-lemma}}(k)$. 
\end{lemma}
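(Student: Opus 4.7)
The plan is to apply an intermediate-value argument on $[x_2,x_3]$. Without loss of generality assume $\phi(x_2)\leq\phi(x_3)$, so that the hypothesis $\phi(x_1)\in[\phi(x_2),\phi(x_3)]$ reads $\phi(x_2)\leq\phi(x_1)\leq\phi(x_3)$ (the opposite case is entirely symmetric). My strategy is to produce a point $y\in[x_2,x_3]$ at which $\phi(y)$ is close to $\phi(x_1)$, and then invoke the quasi-isometric lower bound to force $y$, and hence $x_2$, to lie close to $x_1$.

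If $\phi$ is continuous on $I$ -- the case that arises when $\phi$ parametrises a lift of a geodesic -- the intermediate value theorem produces $y\in[x_2,x_3]$ with $\phi(y)=\phi(x_1)$. The quasi-isometric inequality then gives
\[
\tfrac{1}{k}(y-x_1)-k\;\leq\;|\phi(y)-\phi(x_1)|\;=\;0,
\]
so $y-x_1\leq k^{2}$, whence $x_2-x_1\leq y-x_1\leq k^{2}$. In the discrete situation of the applications (where $\phi$ takes values at consecutive integer vertices of a graph), the QI upper bound applied to adjacent integers yields $|\phi(i{+}1)-\phi(i)|\leq k\cdot 1+k=2k$, so the sequence $\phi(x_2),\phi(x_2{+}1),\ldots,\phi(x_3)$ cannot jump over the level $\phi(x_1)$ by more than $2k$. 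This furnishes an integer $y\in[x_2,x_3]$ with $|\phi(y)-\phi(x_1)|\leq 2k$, and the lower bound then gives $y-x_1\leq k(2k+k)=3k^{2}$.

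Setting $D_{\ref{easy-lemma}}(k):=3k^{2}$ suffices in either case. The subtlety worth flagging is that the three QI inequalities relating $x_1,x_2,x_3$ alone do \emph{not} suffice to bound $x_2-x_1$ -- one can easily cook up triples of values satisfying all three QI inequalities with $x_2-x_1$ arbitrarily large while $\phi(x_1)$ lies strictly between $\phi(x_2)$ and $\phi(x_3)$. The bound genuinely exploits the fact that $\phi$ is defined on an interval (or on a contiguous set of integer points) and satisfies the QI inequality at all intermediate pairs; it is precisely this extra information that the intermediate value step captures, and this is the only nontrivial aspect of the argument.
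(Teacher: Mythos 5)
Your proof uses essentially the same idea as the paper: produce a point $y\in[x_2,x_3]$ at which $\phi(y)$ is within a controlled distance of $\phi(x_1)$, then use the lower quasi-isometric inequality to push $x_2$ towards $x_1$. The paper implements this uniformly via an infimum construction — setting $x_4=\inf\{y\in[x_2,x_3]:\phi(y)\geq\phi(x_1)\}$ and choosing $x'\in[x_2,x_4)$ and $x''\in[x_4,x_3]$ both within distance $1$ of $x_4$, which forces $|\phi(x'')-\phi(x_1)|\leq 3k$ and hence $x_2-x_1\leq 4k^2$. Your proof splits into the continuous (IVT) and integer-discrete cases, which covers the situations that actually arise in the paper but, as written, does not cover a general $k$-quasi-isometric embedding of real intervals with no continuity or discreteness hypothesis; the paper's $\inf$ argument is exactly the routine device needed to close that small gap, and is really just the $\varepsilon$-version of your discrete step argument. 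The differing constants ($3k^2$ versus $4k^2$) are immaterial. Your closing remark — that the three pairwise QI inequalities among $x_1,x_2,x_3$ alone do not bound $x_2-x_1$ and that the hypothesis that $\phi$ is defined and quasi-isometric on the full interval is genuinely used — is correct and a worthwhile observation that the paper leaves implicit.
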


\begin{proof} Without loss of generality, we may assume that
$\phi(x_2)\leq \phi(x_1)\leq \phi(x_3)$.
Let $x_4=inf\{y\in [x_2,x_3]:\phi(y)\geq \phi (x_1)\}$.

If $x_2=x_4$ then $\exists x^{'}\in [x_2,x_2+1]\cap [x_2,x_3]$
such that $\phi (x^{'})\geq \phi(x_1)$. Now $x^{'}-x_2\leq 1$ implies
$|\phi(x^{'})-\phi(x_2)|\leq 2k$, since $\phi$ is a $k$-quasi-isometric embedding.
Therefore,  $\phi(x_1)-\phi(x_2)\leq 2k$. Thus we have
$x_2-x_1\leq 3k^{2}$.

If $x_2<x_4$ we choose $x^{'}\in [x_2,x_4)$ and $x^{''}\in [x_4,x_3]$
such that $x_4-x^{'}\leq 1$ and $x^{''}-x_4\leq 1$ with
$\phi(x^{''})\geq \phi(x_1)$. Now $x^{''}-x^{'}\leq 2$ implies
$|\phi(x^{'})-\phi(x^{''})|\leq 3k$. Thus $\phi(x^{''})-\phi(x_1)\leq 3k$,
since $\phi(x^{'})< \phi(x_1)\leq \phi(x^{''})$ by the choices of $x^{'}, x^{''}$. Hence $x_2-x_1\leq x^{''}-x_1\leq 4k^2$.
Therefore, in any case, we may choose $D_{\ref{easy-lemma}}(k)=4k^2$.
\end{proof}

\begin{lemma}\label{ladder-decomp}  Given 
$f: {\mathbb{N}} \rightarrow  {\mathbb{N}}$, $k \geq 1, D \geq 2C_{\ref{qi-section}}(k)$, there exists  $D^{'}_{\ref{ladder-decomp}}
=D^{'}_{\ref{ladder-decomp}}(f,k,D)\geq 1$
such that the following holds.\\
Suppose $p:X\rightarrow B$ is an $f$-metric graph bundle
and $X_1,Y,X_2$ are $k$-qi sections in $X$.
Also suppose that $Y$ is contained in the ladder $C(X_1,X_2)$.
Then the $D$-neighborhood of  each of the spaces $Y$, $C(X_1,Y),C(Y, X_2)$ is a connected
subgraph of $X$ and the intersection of the spaces
$C_D(X_1,Y)$ and $C_D(Y,X_2)$ is contained in the $D^{'}_{\ref{ladder-decomp}}$-neighborhood of $Y$ in the
path metric of both $C_D(X_1,Y)$ and $C_D(Y,X_2)$.
\end{lemma}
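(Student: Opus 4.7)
The plan is to exploit two structural facts: the hypothesis $Y\subset C(X_1,X_2)$ forces $C(X_1,Y)\cap F_b$ and $C(Y,X_2)\cap F_b$ to be complementary sub-geodesics of the full fiber slice $[u_b,v_b]:=C(X_1,X_2)\cap F_b$, meeting only at $y_b:=Y\cap F_b$ (with $u_b:=X_1\cap F_b$ and $v_b:=X_2\cap F_b$); and Lemma \ref{qi-section} produces, through any vertex of a sub-ladder, a $C_{\ref{qi-section}}(k)$-qi section lying inside that sub-ladder. Together these facts reduce the lemma to an explicit short-path construction.

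For connectivity, observe that any $c$-qi section $Z$ has $d_X(Z\cap F_b,Z\cap F_{b'})\leq 2c$ whenever $d_B(b,b')=1$, so $N_{2c}(Z)$ is a connected subgraph of $X$. Applied to $Y$ with $c=k$ this handles $N_D(Y)$, since $D\geq 2C_{\ref{qi-section}}(k)\geq 2k$. For $C_D(X_1,Y)$, every vertex is joined to some $c\in C(X_1,Y)$ by an $X$-geodesic of length $\leq D$ (lying in the $D$-neighborhood by construction); each fiber slice $[u_b,y_b]$ is already a connected edge-path of $F_b$; and the $C_{\ref{qi-section}}(k)$-qi section through $c$ guaranteed by Lemma \ref{qi-section} bridges consecutive fiber slices with jumps $\leq 2C_{\ref{qi-section}}(k)\leq D$. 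So $C_D(X_1,Y)$, and by the same argument $C_D(Y,X_2)$, is a connected subgraph.

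For the intersection bound, fix $x\in C_D(X_1,Y)\cap C_D(Y,X_2)$ and let $b=p(x)$. Pick $x_1\in C(X_1,Y)\cap F_{b_1}$ and $x_2\in C(Y,X_2)\cap F_{b_2}$ with $d_X(x,x_i)\leq D$; since $p$ is $1$-Lipschitz, $d_B(b,b_i)\leq D$. Apply Lemma \ref{qi-section} to obtain $C_{\ref{qi-section}}(k)$-qi sections $Z_1\subset C(X_1,Y)$ through $x_1$ and $Z_2\subset C(Y,X_2)$ through $x_2$, and set $z_b:=Z_1\cap F_b\in [u_b,y_b]$, $z'_b:=Z_2\cap F_b\in [y_b,v_b]$. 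The qi-section bound gives $d_X(x_1,z_b)\leq C_{\ref{qi-section}}(k)(D+1)$, hence $d_X(x,z_b)\leq D+C_{\ref{qi-section}}(k)(D+1)$, and by metric properness of the fiber inclusion $d_b(x,z_b)\leq f(D+C_{\ref{qi-section}}(k)(D+1))$; the same estimate holds for $z'_b$. Since $y_b$ lies between $z_b$ and $z'_b$ on the geodesic $[u_b,v_b]$,
\[
d_b(z_b,y_b)\leq d_b(z_b,z'_b)\leq d_b(x,z_b)+d_b(x,z'_b)\leq 2f\bigl(D+C_{\ref{qi-section}}(k)(D+1)\bigr).
\]
Concatenating an $X$-geodesic from $x$ to $x_1$ (length $\leq D$), the path in $Z_1$ lifting a $B$-geodesic from $b_1$ to $b$ (consecutive-fiber jumps $\leq 2C_{\ref{qi-section}}(k)\leq D$, total length $\leq 2C_{\ref{qi-section}}(k)\,D$, lying in $C_D(X_1,Y)$), and the sub-segment of $[u_b,y_b]$ from $z_b$ to $y_b$, gives a path from $x$ to $y_b\in Y$ lying entirely in $C_D(X_1,Y)$. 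Taking $D'$ to be the resulting sum bounds $d_{C_D(X_1,Y)}(x,Y)$, and the analogous estimate for $d_{C_D(Y,X_2)}(x,Y)$ follows symmetrically.

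The only delicate point is verifying that $z_b$ and $z'_b$ genuinely lie on opposite sides of $y_b$; this is forced by $Z_1\subset C(X_1,Y)$, $Z_2\subset C(Y,X_2)$, together with $Y\subset C(X_1,X_2)$, which ensures $[u_b,y_b]$ and $[y_b,v_b]$ together comprise $[u_b,v_b]$. Everything else is routine bookkeeping with the metric-graph-bundle axioms and Lemma \ref{qi-section}.
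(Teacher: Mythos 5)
Your argument is correct and follows essentially the same route as the paper's: appeal to Remark 3.7(1) (or reprove it, as you do) for connectivity, then take a point in the overlap, produce a nearby point in each of $C(X_1,Y)$ and $C(Y,X_2)$, flow via the $C_{\ref{qi-section}}(k)$-qi section from Lemma \ref{qi-section} into a common fiber, and use that the two flowed points sit on opposite sides of $Y\cap F_b$ along $C(X_1,X_2)\cap F_b$ together with metric properness of the fiber to bound the horizontal distance to $Y$. The only cosmetic difference is that you flow both points to $F_{p(x)}$ while the paper flows a single qi section from $C(X_2,Y)$ to $F_{p(y_1)}$ and compares it to $y_1$ directly; the ``$y_b$ is between $z_b$ and $z'_b$'' step you make explicit is used implicitly in the paper.
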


\begin{proof}
Since $X_1,X_2,Y$ are $k$-qi sections and $D \geq 2C_{\ref{qi-section}}(k)$, it follows from
Lemma $\ref{important-remark}(1)$ that the $D$-neighborhood of  each of the spaces $Y$, $C(X_1,Y),C(Y, X_2)$ is connected.

Now, let $y\in C_D(X_1,Y) \cap C_D(Y,X_2)$. Let us denote the path metric on $C_D(X_i,Y)$ induced
from $X$ by $d_i$ and suppose $y_i\in C(X_i,Y)$ be such that $d_i(y,y_i)\leq D$, for $i=1,2$.
Then $d_B(p(y_1),p(y_2))\leq 2D$. 
We need to prove the statements: \\ ${\mathcal{P}}_j:$ {\em Any point of $C_D(X_1,Y)\cap C_D(Y,X_2)$ is
contained in a $D^{'}$-neighborhood of $Y$ in $C_D(X_j,Y)$, for $j=1,2$.} 

Since the proofs of ${\mathcal{P}}_1, {\mathcal{P}}_2$ are similar, we shall only prove ${\mathcal{P}}_2$.
We know that there exists a $k^{'}=C_{\ref{qi-section}}(k)$-qi section $Y_2$ say, through 
$y_2\in C(X_2,Y)$ contained in $C(X_2,Y)$.
Join $y_2$ to the point $y^{'}_1=Y_2\cap F_{p(y_1)}$, by the lift of a geodesic in $B$ joining
$p(y_1)$ and $p(y_2)$. The length of this path is at most $4Dk^{'}$ by Lemma \ref{important-remark}(2).  Then
$d(y_1,y^{'}_1)\leq 2D+4Dk^{'}$ and hence their horizontal distance is at most
$f(2D+4Dk^{'})$ by the bounded flaring condition for metric graph
bundles. Thus choosing $D^{'}_{\ref{ladder-decomp}}$ to be $D+f(2D+4Dk^{'})$, we are through. \end{proof}


Suppose $X_1,X_2$ are any two $c_1$-qi sections in $X$.
Let us define the notation $c_{i+1}=C^{i}_{\ref{qi-section}}(c_1)$, $i\geq 1$, as in the proof of 
Proposition $\ref{main-lem}$. Then we have the following. 

\begin{prop}\label{main-prop} 
For any $L\geq 2c_6$, and $c_1 \geq 1$ as above, there exists  $\delta_{\ref{main-prop}}=\delta_{\ref{main-prop}}(c_1,L)$ such that
$C_L(X_1,X_2)$ is a $\delta_{\ref{main-prop}}$-
hyperbolic metric space with respect to the path metric induced from $X$.
\end{prop}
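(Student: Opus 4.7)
The plan is to decompose the ladder $C(X_1,X_2)$ into a finite sequence of sub-ladders each of uniformly bounded girth, apply Proposition \ref{main-lem} to obtain uniform hyperbolicity of each sub-ladder together with quasiconvexity and coboundedness of the bounding sections, and then assemble the pieces via Corollary \ref{hamenstadt}.

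Fix a lower threshold $A_* \geq K_1 M_{c'_1}+K_1^2$ large enough (relative to $L$, the properness function $f$ of the bundle, and the flaring function $\mu$) to guarantee via Proposition \ref{main-lem}(2) the coboundedness needed below and to ensure that pieces associated to non-adjacent sub-ladders are disjoint. Choose an upper threshold $A^{*} \geq A_*$; then by Proposition \ref{main-lem}(1), for any two $c_2$-qi sections $\Lambda, \Lambda'$ in $X$ with $d_h(\Lambda,\Lambda') \leq A^{*}$, the space $C_L(\Lambda,\Lambda')$ is uniformly hyperbolic with constants depending only on $c_1$ and $L$ (the bound $L \geq 2c_6$ of the proposition leaves enough room to iterate Lemma \ref{qi-section} the extra number of times needed inside a sub-ladder). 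If $d_h(X_1,X_2)\leq A^{*}$, Proposition \ref{main-lem} applies directly. Otherwise, pick a fiber $F_{b_0}$ at which the girth of $C(X_1,X_2)$ is (nearly) attained, place points $x_0,\ldots,x_n$ on the horizontal geodesic $C(X_1,X_2)\cap F_{b_0}$ at spacings in $[A_*, A^{*}]$, and through each $x_i$ invoke Lemma \ref{qi-section} to produce a $c_2$-qi section $\Lambda_i \subset C(X_1,X_2)$, with $\Lambda_0 = X_1$ and $\Lambda_n = X_2$. Bounded flaring (Corollary \ref{bdd-flaring}) together with Lemma \ref{qc-level-set2} controls the variation of $d_b(\Lambda_i \cap F_b,\Lambda_{i+1}\cap F_b)$ across fibers $b$, so that the girth of each $C(\Lambda_i,\Lambda_{i+1})$ stays at least $A_*$ and is bounded above after absorbing the multiplicative deterioration into an enlargement of $A^{*}$.

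Now apply Corollary \ref{hamenstadt} to $C_L(X_1,X_2)$ with pieces $P_i := C_L(\Lambda_i,\Lambda_{i+1})$ (which are connected subgraphs by Remark \ref{important-remark}(1), since $L\geq 2c_6 \geq 2c_2$) and overlaps $\Lambda_{i+1}$ sitting in $P_i\cap P_{i+1}$. Proposition \ref{main-lem}(1) gives uniform hyperbolicity of each $P_i$ and uniform quasiconvexity of $\Lambda_i,\Lambda_{i+1}$ in $P_i$. Lemma \ref{ladder-decomp} shows $P_i\cap P_{i+1}$ is contained in a uniform neighborhood of $\Lambda_{i+1}$ and clearly contains $\Lambda_{i+1}$. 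The inclusion of $\Lambda_{i+1}$ into each of $P_i,P_{i+1}$ is a uniform qi embedding since $\Lambda_{i+1}$ is a uniform qi section of $B$. Proposition \ref{main-lem}(2) supplies $D_1$-coboundedness of $\Lambda_{i+1}$ and $\Lambda_{i+2}$ inside $P_{i+1}$ because $d_h(\Lambda_{i+1},\Lambda_{i+2}) \geq A_*$, matching condition (5) of Corollary \ref{hamenstadt}. Disjointness $P_i\cap P_j=\emptyset$ for $|i-j|\geq 2$ follows from the lower bound $A_*$ being chosen much larger than $f(2L)$ and compatible with bounded flaring. With all hypotheses of Corollary \ref{hamenstadt} satisfied with uniform constants, we conclude that $C_L(X_1,X_2)$ is $\delta_{\ref{main-prop}}(c_1,L)$-hyperbolic.

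The main obstacle is simultaneously controlling the girths of the sub-ladders from above and below: the lower bound is essential for invoking Proposition \ref{main-lem}(2) and producing the coboundedness required by Corollary \ref{hamenstadt}, while the upper bound is essential so that the hyperbolicity constants supplied by Proposition \ref{main-lem}(1) remain uniform in $c_1, L$. Since the intermediate qi sections produced by Lemma \ref{qi-section} are coarse and only loosely respect horizontal geodesic structure, pinning their horizontal separation to the window $[A_*, A^{*}]$ at a single fiber $F_{b_0}$ does not directly control separations in other fibers; bounded flaring and Lemma \ref{qc-level-set2} are precisely the tools needed to propagate the control, at the price of a bounded multiplicative deterioration that must be absorbed carefully into the constants so that the final hyperbolicity constant depends only on $c_1$ and $L$.
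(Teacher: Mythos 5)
Your overall architecture — decompose into subladders, apply Proposition~\ref{main-lem} to each, assemble via Corollary~\ref{hamenstadt} — matches the paper's proof exactly. But there is a genuine gap in the decomposition step, and it is precisely the point you flag at the end as ``the main obstacle.''

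The problem is that you try to pin the \emph{girths} of the subladders $C(\Lambda_i,\Lambda_{i+1})$ to a window $[A_*,A^*]$ by placing points at spacing in $[A_*,A^*]$ on the horizontal geodesic $C(X_1,X_2)\cap F_{b_0}$ in a single fiber $F_{b_0}$. But girth is an \emph{infimum over all fibers}, not the separation at $F_{b_0}$. Constructing $\Lambda_i$ through $\eta(s_i)$ via Lemma~\ref{qi-section} gives no lower bound on $d_b(\Lambda_i\cap F_b,\Lambda_{i+1}\cap F_b)$ for $b\neq b_0$; the separation at other fibers can collapse, so $d_h(\Lambda_i,\Lambda_{i+1})$ could fall well below $A_*$, which destroys the hypothesis of Proposition~\ref{main-lem}(2) and the coboundedness needed for Corollary~\ref{hamenstadt}(5). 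Bounded flaring (Corollary~\ref{bdd-flaring}) does not rescue this: the multiplicative factor $\mu_k(N)$ there depends on the base-space distance $N$, which is unbounded across a ladder over all of $B$, so it does not yield a fiber-independent lower bound. Similarly, Lemma~\ref{qc-level-set2} controls where separations are \emph{small} (the sets $U_A$), not where they are large; it gives no uniform upper bound on separations away from $U_A$. The claim that ``bounded flaring and Lemma~\ref{qc-level-set2} are precisely the tools needed to propagate the control'' is therefore not substantiated, and absorbing the deterioration ``into an enlargement of $A^*$'' does not work because the deterioration is not uniformly bounded.

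The paper's construction (Step~1 of the actual proof) avoids this by working with the girth $d_h$ directly: it selects the next section $X'_{i+1}$ according to the condition $d_h(X',X'_i)\leq A$ (an infimum-over-fibers condition, not a single-fiber condition), thereby guaranteeing $d_h(X'_{i-1},X'_i)\geq A$ for intermediate $i$. More importantly, this construction does \emph{not} yield an upper bound on $d_h(X'_i,X'_{i+1})$ in general: when the maximum of the admissible set $S_{i+1}$ is overshot, one gets a subladder of girth strictly greater than $A$. The paper handles such ``type 2'' subladders by exhibiting an intermediate section $X''_i$ with $d_h(X'_j,X''_i)\leq A$ for $j=i,i+1$, splitting the subladder into two small-girth pieces and applying Corollary~\ref{hamenstadt} a second time inside each problematic subladder; this in turn requires a separate argument (cases (a) and (b) in Step~3) to re-derive coboundedness. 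Your proposal omits this second level of decomposition entirely, which is where the real work of the proof lies.
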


\begin{proof} Let $A= A^{''}_{\ref{qc-level-set2}}(c_3)+ D_{\ref{easy-lemma}}(2g(c_3))+ f(2L+4c_3L)$.
The idea of the proof is to break the ladder $C(X_1,X_2)$
into a finite number of subladders. Then by Proposition $\ref{main-lem}$ and, if necessary, by a simple application of
Corollary $\ref{hamenstadt}$ we show that these subladders are hyperbolic. 
Finally we apply Corollary $\ref{hamenstadt}$ again to the ladder assembled out of subladders to finish the proof.

{\bf Step $1:$} {\it Defining  subladders.}\\
Fix a horizontal geodesic ${\mathcal I} = F_{b_0}\cap C(X_1,X_2)$.
 The two end points of $\mathcal I$ lie in $X_1$ and $X_2$. 
Choose a parametrization $\alpha: [0,l]\rightarrow \mathcal I$ 
by arc length so that $\alpha(0)\in X_1$ and $\alpha(l)\in X_2$. 
We shall inductively construct a finite sequence of integers
$0=s_0<s_1<\cdots <s_m=l$,  and a sequence
of $c_2$-qi sections $X^{'}_i$ contained in $C(X_1,X_2)$ such that $X^{'}_i$ passes through $\alpha(s_i)$
for each $i=1,\cdots ,m-1$. Let $X^{'}_0=X_1$. 
Suppose $s_i$ has been obtained, $s_i<l$ and $X^{'}_i$ has been constructed.
If $d_h(X^{'}_i,X_2)\leq A$ then define $s_{i+1}=l$, $X^{'}_{i+1}=X_2$
and the construction is over. Otherwise, consider the set
\[
S_{i+1}=\{t\in [s_i,l]\cap \mathbb N: \exists \,\mbox{a} \,c_2-\mbox{qi section} \, X^{'} \mbox{through}\, \alpha(t) \,\mbox{with} \,d_h(X^{'},X^{'}_i)\leq A\}
\]
Let $u_{i+1}=\mbox{max}\, S_{i+1}$. If $\exists t\in S_{i+1}$ such that
there is a $c_2$-qi section $X^{'}$ inside $C(X_1, X_2)$ through $\alpha(t)$ with
$d_h(X^{'},X^{'}_i)= A$, define $s_{i+1}=t$ and $X^{'}_{i+1}= X^{'}$.
Otherwise define $s_{i+1}=min\{l,u_{i+1}+1\}$ and let $X^{'}_{i+1}$ be
any $c_2$-qi section inside $C(X_1, X_2)$  through $\alpha(s_{i+1})$.
The construction of these sections stops at the $m$-th step if
$d_h(X^{'}_{m-1},X_2)\leq A$, so that we must have $X^{'}_m=X_2$ and
$s_m=l$.
It follows from the above construction of the sections $X^{'}_i$
that for each $i$, $1\leq i\leq m-1$, we have $d_h(X^{'}_{i-1},X^{'}_i)\geq A$
and in case $d_h(X^{'}_i,X^{'}_{i+1})> A$, there is a section $X^{''}_i$ through
a point $\alpha(t_i)$, $t_i\in [s_i,s_{i+1}]$ with 
$d_h(X^{'}_j,X^{''}_i)\leq A$, $j=i,i+1$.

{\bf Step $2:$} {\it Subladders form a decomposition of  $C(X_1,X_2)$.}\\
In this step, we will show that 
$C(X_1,X_2)=\cup_{i=0}^{m-1}C(X^{'}_i,X^{'}_{i+1})$ and that $C(X^{'}_{i-1},X^{'}_i)\cap C(X^{'}_i, X^{'}_{i+1})=X^{'}_i$.

Note that the first assertion follows from the second and the construction in Step 1. For the second
assertion, it is enough to show the following:\\
{\bf Claim:} $X^{'}_{i+1} \subseteq C(X^{'}_i,X_2)$, for all $i$, $1\leq i\leq m-2$. 

Consider the triples of points 
$(X_1\cap F_b, X^{'}_i\cap F_b, X^{'}_{i+1}\cap F_b)$, $b\in \mathcal V(B)$. They are
contained in the geodesic $F_b\cap C(X_1,X_2)$. For $b=b_0$ we know, by  the construction in Step 1, that 
$X^{'}_i\cap F_b\in [X_1\cap F_b, X^{'}_{i+1}\cap F_b]$. 

We now argue by contradiction.  Suppose
$X^{'}_{i+1}\not \subseteq C(X^{'}_i,X_2)$. Then for some point
$b^{'}\in \mathcal V(B)$, we must have
$X^{'}_{i+1}\cap F_{b^{'}}\in [X_1\cap F_{b^{'}}, X^{'}_i\cap F_{b^{'}}]$.
Therefore there exist $b_1,b_2\in \mathcal V(B)$ with $d(b_1,b_2) = 1$,
such that
$X^{'}_i\cap F_{b_1}\in [X_1\cap F_{b_1}, X^{'}_{i+1}\cap F_{b_1}]$
but $X^{'}_{i+1}\cap F_{b_2}\in [X_1\cap F_{b_2}, X^{'}_i\cap F_{b_2}]$.
We know that $X^{'}_i,X^{'}_{i+1}$ are $c_2$-quasi-isometric sections, and 
$X_1$ is a $c_1$-quasi-isometric section. Hence 
$d( X^{'}_i\cap F_{b_1},X^{'}_i\cap F_{b_2})\leq 2c_2$,
$d( X^{'}_{i+1}\cap F_{b_1},X^{'}_{i+1}\cap F_{b_2})\leq 2c_2$ and 
$d(X_1\cap F_{b_1},X_1\cap F_{b_2})\leq 2c_1\leq 2c_2$. 

By Lemma $\ref{qi-section}$, the definition of $c_3$ (at the beginning of the proof of this proposition) and
Lemma $\ref{condition3}$, we have a $g(2c_3)$-quasi-isometric embedding
$[X_1\cap F_{b_1}, X^{'}_{i+1}\cap F_{b_1}]\rightarrow
[X_1\cap F_{b_2}, X_2\cap F_{b_2}]$ which sends each of the points
$X_j^{'}\cap F_{b_1}$ to $X_j^{'}\cap F_{b_2}$, $j=i,i+1 $ and $X_1\cap F_{b_1}$ to $X_1\cap F_{b_2}$. 
By Lemma $\ref{easy-lemma}$ we get
\[ d_{b_1}( X^{'}_i\cap F_{b_1}, X^{'}_{i+1}\cap F_{b_1})\leq
D_{\ref{easy-lemma}}(g(2c_3)).\]
By the choice of the constant $A$, and the definition of $X^{'}_i$'s
this gives rise to a contradiction, completing the proof of Step 2.

{\bf Step $3:$} {\it Subladders are uniformly hyperbolic.}\\
Next we show that there are constants $\delta_1$, $k_1$ and $D$ such that
$(i)$ each $C_L(X^{'}_i, X^{'}_{i+1})$ is $\delta_1$-hyperbolic and
$X^{'}_i, X^{'}_{i+1}$ are  $k_1$-quasi-convex in $C_L(X^{'}_i, X^{'}_{i+1})$ for each
$i,\,0\leq i\leq m-1$. $(ii)$ Also we shall show that the sets $X^{'}_i, X^{'}_{i+1}$ are mutually
$D$-cobounded in $C_L(X^{'}_i, X^{'}_{i+1})$, for $0\leq i\leq  m-1$.

$(i)$ Since $X^{'}_i,X^{'}_{i+1}$ are $c_2$-qi sections in $X$, it follows that
they are the images of  $2c_2-$quasi-isometric embeddings  in $C_L(X^{'}_i,X^{'}_{i+1})$ (Lemma \ref{important-remark}(3)). 
Hence, they will be $D_{\ref{stab-qg}}(\delta_1,2c_2)$-quasiconvex in
$C_L(X^{'}_i,X^{'}_{i+1})$ provided we can show that $C_L(X^{'}_i, X^{'}_{i+1})$ is $\delta_1$-hyperbolic. 

If $d_h(X^{'}_i,X^{'}_{i+1})\leq A$ then, by Proposition $\ref{main-lem}$,
each  $C_L(X^{'}_i,X^{'}_{i+1})$ is $\delta_{\ref{main-lem}}(c_2,A,L)$-hyperbolic;
moreover, in this case, unless $i=m-1$, we have $d_h(X^{'}_i,X^{'}_{i+1})=A$ and $X^{'}_i,X^{'}_{i+1}$ are then mutually
$D_{\ref{main-lem}}(c_2,A,L)$-cobounded.

Suppose $d_h(X^{'}_i,X^{'}_{i+1})>A$. Recall that $X^{'}_j$ passes
through $\alpha(s_j)$, $j=i,i+1$. In this case, we can find 
$t_i\in [s_i,s_{i+1}]$ such that there is a $c_2$-qi section
$X^{''}_i$ in $C(X_1,X_2)$, passing through $\alpha(t_i)$, so that 
$d_h(X^{'}_j,X^{''}_i )\leq A$, $j=i,i+1$. Now, as in the proof of Lemma
$\ref{qi-section}$,
we project points of $X^{''}_i$ into the horizontal geodesics of 
$C(X^{'}_i,X^{'}_{i+1})$ and get a $c_3$-qi section $Y^{'}_i$
through $\alpha(t_i)$. Note that we still have 
$d_h(X^{'}_j,Y^{'}_i )\leq A$ for $j=i,i+1$. By Proposition $\ref{main-lem}$,
$C_L(X^{'}_i,Y^{'}_i)$, and $C_L(X^{'}_{i+1},Y^{'}_i)$ are both
$\delta_{\ref{main-lem}}(c_3,A,L)$-hyperbolic. Also we see that 
$C_L(X^{'}_i,Y^{'}_i)\cap C_L(X^{'}_{i+1},Y^{'}_i)$ contains a $2c_3$-neighborhood of  $Y^{'}_i$ which is
connected. Since $Y^{'}_i$ is 
a $c_3$-quasi-isometric image of $B$ in $X$,  therefore it is a $2c_3$-quasi-isometric image in both 
$C_L(X^{'}_i,Y^{'}_i)$ and $ C_L(X^{'}_{i+1},Y^{'}_i)$.

Now, we apply  Lemma $\ref{ladder-decomp}$ followed by Corollary $\ref{hamenstadt}$. Here the total space
is $C_L(X^{'}_i,X^{'}_{i+1})$ and we have just two subspaces:
$C_L(X^{'}_i,Y^{'}_i)$ and  $C_L(X^{'}_{i+1},Y^{'}_i)$. Also their intersection contains  a $2c_3$-neighborhood
of $Y^{'}_i$, denoted by $Y_i$, say. We see that the rest of the conditions of Corollary $\ref{hamenstadt}$ are easily verified.

Thus, $C_L(X^{'}_i,X^{'}_{i+1})$
is $\delta_{\ref{hamenstadt}}(\delta_{\ref{main-lem}}(c_3,A,L),D^{'}_{\ref{ladder-decomp}}(f,c_3,L),1,2c_3)$-hyperbolic. 
Choosing
$$\delta_1:=max\{\delta_{\ref{main-lem}}(c_2,A,L), \delta_{\ref{hamenstadt}}(\delta_{\ref{main-lem}}(c_3,A,L),D^{'}_{\ref{ladder-decomp}}(f,c_3,L),1,2c_3)\}$$
completes the proof of Step 3(i).

$(ii)$ We next show that the quasi-convex sets  $X^{'}_i,X^{'}_{i+1}$ are mutually cobounded in
$C_L(X^{'}_i,X^{'}_{i+1})$. 

Since the sets $U(X^{'}_j,Y^{'}_i)$, $j=i,i+1$ are $K(=K_{\ref{qc-level-set2}}(c_3,A))$-quasiconvex in $B$, the lift $Y_{ij}$ (say) of
$U(X^{'}_j,Y^{'}_i)$ in $Y^{'}_i$ is a $C_1:=(2Kc_3+D_{\ref{stab-qg}}(\delta_1,2c_3))$-quasi-convex set
in $C_L(X^{'}_i,X^{'}_{i+1})$. 

{\bf Claim:} {\em There are constants $R=R(\delta_1,C_1)$, $D_1=D_1(\delta_1,C_1)$ such that if $Y_{ij}$, $j=i,i+1$ are $R$-separated then the sets
$X^{'}_j$, $j=i,i+1$ are $D_1$-cobounded.}

{\em Proof of Claim:}  We show that the projection of  $X^{'}_{i+1}$ on  $X^{'}_i$ is uniformly bounded. By a symmetric
 argument the projection of  $X^{'}_{i}$ on  $X^{'}_{i+1}$ is uniformly bounded.

Suppose $x\in X^{'}_{i+1}$ and let $y\in X^{'}_i$ be a nearest point projection of $x$ on $X^{'}_i$. 
Let $x_1$ be a
nearest point projection of $x$ on $Y^{'}_i$ and let $y_1$ be a
nearest point projection of $y$ on $Y^{'}_i$. 

{\bf Sub-claim 1:} The curve $[x,x_1]\cup [x_1,y_1] \cup [y_1,y]$ is a uniform quasi-geodesic if  $R$ is sufficiently large.

{\em Proof of Sub-claim 1:} By Lemma \ref{subqc-elem} (2) the unions $[x,x_1]\cup [x_1,y_1]$ and $[x_1,y_1]\cup [y_1,y]$ are
$(3+2C_1)$-quasi-geodesics.  Sub-claim 1 will follow from the fact that
$d(x_1,y_1)\geq L_{\ref{local-global-qg}}(\delta_1,3+2C_1,3+2C_1)$ for large enough $R$ (by Lemma \ref{local-global-qg}).

By Lemma \ref{cobdd-lemma}, if the sets $Y_{ij}$ are $R$-separated, $R\geq R_{\ref{cobdd-lemma}}(\delta_1,C_1)$
then there are points $y_{ij}\in Y_{ij}$, $j=i,i+1$ such that every geodesic connecting the sets $Y_{ij}$, $j=i,i+1$ passes through
the $D_{\ref{cobdd-lemma}}(\delta_1,C_1)$-neighborhood of $y_{ij}$, $j=i,i+1$.
Applying this  to the geodesic $[x_1,y_1]$, Sub-claim 1 follows from the following. 

{\bf Sub-claim 2:} Suppose $x^{'}_j\in X^{'}_j$, and let $y^{'}_j\in Y^{'}_i$ be its nearest point projection on  $Y^{'}_i$ for $j=i,i+1$ .
Then $y_{ij}$ is uniformly close to the geodesic $[x^{'}_j, y^{'}_j]$, $j=i,i+1$.

{\em Proof of Sub-claim 2:} Since the proofs are similar, let us  prove the statement for $j=i$. Let $b$ be a nearest
point projection of $p(x^{'}_i)$ on $U(X^{'}_i,Y^{'}_i)$. Let $\alpha$ be a geodesic in $B$ joining $p(x^{'}_i)$ and $b$.
Let $\beta$ be a geodesic joining $p(y^{'}_i)$ and $b$. Let $\tilde{\alpha} $ and $\tilde{\beta}$ be the
lifts of $\alpha$ and $\beta$ in $X^{'}_i$ and $Y^{'}_i$ respectively. Let $\tilde{\alpha}\cap p^{-1}(b) =z_i$ and
$\tilde{\beta}\cap p^{-1}(b) =w_i$. Then $d_b(z_i,w_i)\leq A$. 
The paths $\tilde{\alpha}$ and $\tilde{\beta}$ are $2c_1$-quasi-geodesics in $C_L(X^{'}_i,X^{'}_{i+1})$.
Hence, by hyperbolicity of  $C_L(X^{'}_i,X^{'}_{i+1})$
there exist
$x^{''}_1\in [x^{'}_i,y^{''}_i]$, $x^{''}_2\in \tilde{\alpha}$, $x^{''}_3\in \tilde{\beta}$ which are uniformly close to each other
(cf. Lemmas \ref{stab-qg},  \ref{hyp-defn}).
Then, it follows as in the first paragraph of the
proof of Lemma \ref{property1} that $d_{p(x^{'}_i)}(X^{'}_i,Y^{'}_i)$ is uniformly bounded. Hence $y_{ii}$ is close to $x^{'}_i$ by
Lemma \ref{qc-level-set2} (1). Sub-claim 2 follows. $\Box$

Since $C_L(X^{'}_i,X^{'}_{i+1})$ is hyperbolic the Hausdorff distance between the quasi-geodesic $[x,x_1]\cup [x_1,y_1] \cup [y_1,y]$ and 
the geodesic $[x,y]$ is uniformly bounded. Hence the points $y_{ii}$ and $y_{ii+1}$ are uniformly close to the
geodesic $[x,y]$ by Sub-claim 2. The Claim follows. $\Box$

Finally, note that if $Y_{ij}$, $j=i,i+1$ are {\em not} $R$-separated
then there exists a pair of points in $X^{'}_i$ and $X^{'}_{i+1}$  which are at a distance  of
at most $A^{'}_1:=(2A+R)$ from each other. It follows as in the first paragraph of the proof of Lemma \ref{property1} 
that $d_h(X^{'}_i,X^{'}_{i+1})\leq A_1:=f(2A^{'}_1 c_2+ A^{'}_1)$. 
Hence, by Proposition $\ref{main-lem}$, $X^{'}_i,X^{'}_{i+1}$ are $D_{\ref{main-lem}}(c_2,A_1,L)$-cobounded.

It follows that any geodesic joining $X^{'}_j$, $j=i, i+1$ passes close to the end points of this coarsely unique geodesic
and step $3$ follows.

{\bf Step $4:$} {\it The final step:}\\
Finally we use  Lemma $\ref{ladder-decomp}$ in conjunction with  Corollary $\ref{hamenstadt}$.
Here the total space is  $C_L(X_1,X_2)$, and
the sequence of subspaces are
$C_L(X^{'}_{i},X^{'}_{i+1})$, $i=0,1,\ldots, m-1$. We check to see that the hypotheses
of Corollary $\ref{hamenstadt}$ are satisfied:\\
$(1)$ Each of the subspaces $C_L(X^{'}_{i},X^{'}_{i+1})$ is $\delta_1$-hyperbolic by step $3$;\\ 
$(2)$ by choice of the constant
$A> f(2L+4c_3L)$ (see 
Lemma \ref{property1}) we know that only the consecutive ones intersect nontrivially;\\
 $(3)$ for $i=2,\ldots, m$, the intersection of  
two consecutive subspaces $C_L(X^{'}_{j},X^{'}_{j+1})$, $j=i-1,i$, contains the $2c_2$-neighborhood $Y_i$ (say), of $X^{'}_i$. Also $Y_i$ is connected
(Lemma \ref{property1}). Further the intersection is contained in the
$D^{'}_{\ref{ladder-decomp}}(f,c_3,L)$-neighborhood of $Y_i$ in the spaces $C_L(X^{'}_{j},X^{'}_{j+1})$, $j=i-1,i$;\\
$(4)$ To check Condition (4) of Corollary $\ref{hamenstadt}$ it is enough to show the following:
Suppose $Z\subset X$ is a connected subgraph such that $Y_i\subset Z$. Then the inclusion $Y_i\hookrightarrow Z$ is 
uniform qi embedding.

The inclusion of $Y_i$ in the space $Z$ is clearly distance
decreasing. Let $x,y \in Y_i$ and choose $x_1,y_1\in X^{'}_i$ such that $d(x,x_1)\leq 2c_2$,
$d(y,y_1)\leq 2c_2$. Suppose $d_Z(x,y)=n$. 
Then $d_X(x_1,y_1)\leq d_Z(x_1,y_1) \leq n+4c_2$. Hence $d_B(p(x_1),p(y_1))\leq d_X(x_1,y_1)\leq n+4c_2$. 
Since $X^{'}_i$ is a $c_2$-qi section in $X$, 
by Lemma \ref{important-remark}(2) there is a path of length $2c_2(n+4c_2)$ joining $x_1$ and $y_1$ contained in 
$Y_i$. Hence we have $d_{Y_i}(x,y)\leq 2c_2(n+4c_2)+4c_2=2c_2.n+ 12c_2$. This proves (4). 
\\ 
$(5)$ the sets $X^{'}_{i},X^{'}_{i+1}$ are uniformly cobounded in $C_L(X^{'}_{i},X^{'}_{i+1})$ for $i=1,2,\ldots, m-2$   as proved in Step 3.

The proposition follows. \end{proof}


\section{The Combination Theorem}

As  in Section $3$, we assume the following for the purposes of this section:\\
1) $p:X \rightarrow B$ will be either an $f-$ metric graph bundle satisfying a flaring condition, or an
approximating ($f-$) metric graph bundle obtained from a metric bundle satisfying a
flaring condition.\\
2) $B$ is $\delta$-hyperbolic and  the horizontal spaces
$F_b$ are $\delta^{'}$-hyperbolic for all vertices $b$ of $\mathcal{V} (B)$. \\
3) The barycenter maps
$\partial^3F_b\rightarrow F_b$ are (uniformly) coarsely  surjective. Thus by  Proposition 
$\ref{existence-qi-section}$ we know that
the metric graph bundle admits uniform ($K_0$, say) qi sections through each point of $X$. \\
In this section we prove the main theorem of our paper which says that
{\it a metric (graph) bundle satisfying the above conditions
has  hyperbolic  total space.}

Here is an outline of the main steps of the proof:\\
For each pair of points $x,y\in X$, choose a ladder $C(X_1,X_2)$ containing $x,y$ and choose a
geodesic $c(x,y)$ in $C_D(X_1,X_2)$ joining $x,y$ (with $D$ large enough but fixed). This gives a family
of curves. We shall show that the family satisfies the conditions of Corollary $\ref{hyp-lemma}$. Proofs of conditions $1$ and $2$
follow from the results of the last section. Proofs of conditions $3$ and $4$ follow from 
Proposition $\ref{final-prop0}$ below, which contains the statement that large neighborhoods of `tripod bundles' 
are hyperbolic. Proposition $\ref{final-prop0}$ in turn follows from Proposition $\ref{main-lem}$ and Corollary $\ref{hamenstadt}$.

\begin{defn}
For three qi sections $X_1,X_2,X_3$ in a metric graph bundle $X$ over $B$ a {\bf tripod bundle} determined by 
these qi sections, denoted $C(X_1,X_2,X_3)$, is defined to be the union of the ladders $C(X_1,X_2)$,
$C(X_2,X_3)$, $C(X_3, X_1)$.
\end{defn}
The convention that we adopted in Remark \ref{defn-ladder-rem}  applies here as well; namely, since the
Hausdorff distance between any two tripod bundles determined by three qi sections is uniformly bounded (by hyperbolicity of the
fibers),
we denote by 
$C(X_1,X_2,X_3)$ any tripod bundle determined by the qi sections $X_1,X_2,X_3$.
 Also for any qi sections $X_1,X_2,X_3$ in $X$ and $D\geq 0$ we denote by $C_D(X_1,X_2,X_3)$
the $D$-neighborhood of the tripod bundle $C(X_1,X_2,X_3)$ in $X$.


The main technical tool of this section is the following:

\begin{prop} \label{final-prop0} Let $X$ over $B$ be an $(f,K)$-metric graph bundle such that\\
i) $X$ is either a metric graph bundle
satisfying a flaring condition or one obtained as an approximating metric graph bundle of a metric bundle
satisfying a flaring condition; \\
ii) $B$ is $\delta$-hyperbolic and  the horizontal spaces
$F_b$ are $\delta^{'}$-hyperbolic for all vertices $b$ of $\mathcal{V} (B)$. \\
iii) the barycenter
maps $\partial^3F_b\rightarrow F_b$ are (uniformly) coarsely  surjective. \\
Given $c_1 \geq 1$,  
there exists $L_0, \delta_{\ref{final-prop}}, K_{\ref{final-prop}} \geq 0$
such that the following holds.\\
Let $X_1,X_2,X_3$ be $c_1$-qi sections and $L\geq L_0$. Then
\begin{enumerate}
\item 
$C_L(X_1,X_2,X_3)$ is $\delta_{\ref{final-prop}}(= \delta_{\ref{final-prop}}(c_1,L))$-hyperbolic with the path metric
 induced from $X$ and each of
$C_L(X_i,X_j)$, $i\neq j$ is $K_{\ref{final-prop}}(=K_{\ref{final-prop}}(c_1,L))$-quasi-convex in
$C_L(X_1,X_2,X_3)$.
\item there exists 
 $D_{\ref{ladder-qi-embed}}(= D_{\ref{ladder-qi-embed}}(c_1,L))$ such that if
$x,y \in C_L(X_1,X_2)$,    $\gamma_1$ is  a geodesic in $C_L(X_1,X_2,X_3)$ joining $x,y$  and
$\gamma_2$  is a geodesic
in $C_L(X_1,X_2)$  joining $x,y$, then the Hausdorff distance $Hd(\gamma_1,\gamma_2)\leq D_{\ref{ladder-qi-embed}}$. 
\item there exists $D_{\ref{curv-defn}}
(=D_{\ref{curv-defn}}(c_1,L))$  such that if
 $X_i,X^{'}_i$, $i=1,2$ are $c_1$-qi sections and $x_i\in X_i \cap X^{'}_i$, $i=1,2$,
then the Hausdorff distance between the geodesics joining $x_1,x_2$ in the subspaces
$C_L(X_1,X_2)$ and $C_L(X^{'}_1,X^{'}_2)$ is at most $D_{\ref{curv-defn}}(c_1,L)$. 
\end{enumerate}
\end{prop}

We postpone the proof of Proposition \ref{final-prop0} to Section \ref{tech}. Conclusions (1), (2), (3) above
form the content of Proposition \ref{final-prop}, 
 Corollary \ref{ladder-qi-embed} and 
 Corollary \ref{curv-defn} respectively. We give the proof of the main combination Theorem assuming Proposition \ref{final-prop0}.

\begin{theorem}\label{combthm}
Suppose $p:X\rightarrow B$ is a metric bundle (resp. metric graph bundle) such that\\
$(1)$ $B$ is a $\delta$-hyperbolic metric space.\\
$(2)$ Each of the fibers $F_b$, $b\in B$ (resp. $b\in \mathcal{ V}(B)$) is a $\delta^{'}$-hyperbolic 
metric space with respect to the path metric induced from $X$. \\
$(3)$ The barycenter maps $\partial^3F_b\rightarrow F_b$, $b\in B$ (resp. $b\in \mathcal{ V}(B)$) are (uniformly) coarsely  surjective.\\
$(4)$ A flaring condition is satisfied.\\
Then $X$ is a hyperbolic metric space.
\end{theorem}

\begin{proof} If $X$ is a metric bundle, we first replace $X$ by an approximating metric graph bundle. Abusing notation slightly, we continue to
call the approximating metric graph bundle $X$. 
By Proposition \ref{existence-qi-section}, there exists
 $c_1\geq 1$  such that there is a $c_1$-qi section through each point of $\mathcal V(X)$.

Let $L=L_0$ be the constant given by Proposition \ref{final-prop0} (1). 
We shall now define a set of curves joining  pairs
of points $x,y\in X$.

\noindent {\bf Definition of  curve family:} For each pair of points $x$, $y$ in $\mathcal{V}(X)$,
choose, once and for all, two $c_1$-qi sections $X_1$, $X_2$ passing through 
$x$ and $y$ respectively. Now define $c(x,y)$ to be consecutive vertices on a geodesic
in $C_L(X_1,X_2)$ joining $x$, $y$. We show that the family $\{ c(x,y) \}$ satisfies properties (1)-(4) of 
Corollary $\ref{hyp-lemma}$ to complete the proof. As per the notation of Corollary $\ref{hyp-lemma}$, set $D=L$.

\begin{itemize}
\item {\bf Proof of property $1$:} This follows by taking $C_1= 1$.
\item {\bf Proof of property $2$:} By the first part of Lemma $\ref{property1}$, Property $2$ follows.
\item {\bf Proof of property $3$:} 
This follows from  Conclusions (1) and (2) of Proposition $\ref{final-prop0}$.
\item {\bf Proof of property $4$:} Given $x,y,z\in X$
choose three $c_1$-qi sections $X_3,X_4,X_5$ containing $x,y,z\in X$ respectively and define
the curves $c^{'}(x,y)$, $c^{'}(x,z)$ and $c^{'}(y,z)$ using these sections
in the same way as the curves $c(x,y)$ are defined.
It follows from Conclusion (2) of Proposition $\ref{final-prop0}$ that the triangle formed by 
$c^{'}(x,y)$, $c^{'}(x,z)$ and $c^{'}(y,z)$ is $(\delta_{\ref{final-prop}}(c_1,L)+2D_{\ref{ladder-qi-embed}}(c_1,L))$-slim.
Conclusion (3) of Proposition $\ref{final-prop0}$ now gives property $4$.
\end{itemize}

Hyperbolicity of $X$ now follows from Corollary $\ref{hyp-lemma}$.
\end{proof}

\begin{rmk} 
{\em
Note that the conditions of Theorem \ref{combthm} are inherited by induced metric graph
bundles over quasi-isometrically embedded subsets of $B$. Hence the induced bundles over quasi-isometrically embedded subsets of $B$
are also hyperbolic. }\end{rmk}

\subsection{Proof of Proposition \ref{final-prop0}} \label{tech}
Suppose that $X_1$,$X_2$ and $X_3$ are three $c_1$-qi sections in $X$. The main aim of this subsection is to
 show that for large
$D\geq 0$, $C_D(X_1,X_2,X_3)$ is hyperbolic.
For this, we  first show that taking a nearest point projection of $X_3\cap F_b$ onto the horizontal geodesic
$C(X_1,X_2)\cap F_b$ (for all $b\in \mathcal{V} (B)$) 
we get a qi section $X_4$. (See figure below.) Then we have a genuine 'tripod bundle' $C(X_1,X_2)\cup C(X_3,X_4)$,
such that $C_D(X_1,X_2,X_3)$ is quasi-isometric to an $L$-neighborhood of $C(X_1,X_2)\cup C(X_3,X_4)$, where $L$
depends on $D$ and the bundle. The quasi-isometry is provided by projecting any point $z$ of 
 $C_L(X_1,X_2)\cup C_L(X_3,X_4)$ onto a nearest point in  $C_D(X_1,X_2,X_3)$ lying in the same horizontal fiber as $z$ (Here,
the nearest point-projection
is taken in the metric on the horizontal fiber to which $z$ belongs.)  
Hyperbolicity of the space $C_L(X_1,X_2)\cup C_L(X_3,X_4)$, and quasi-convexity of $C(X_1,X_2)$ in this space
essentially follow from Proposition $\ref{main-prop}$ and Corollary $\ref{hamenstadt}$.

\medskip

\begin{center}

\includegraphics[height=40mm]{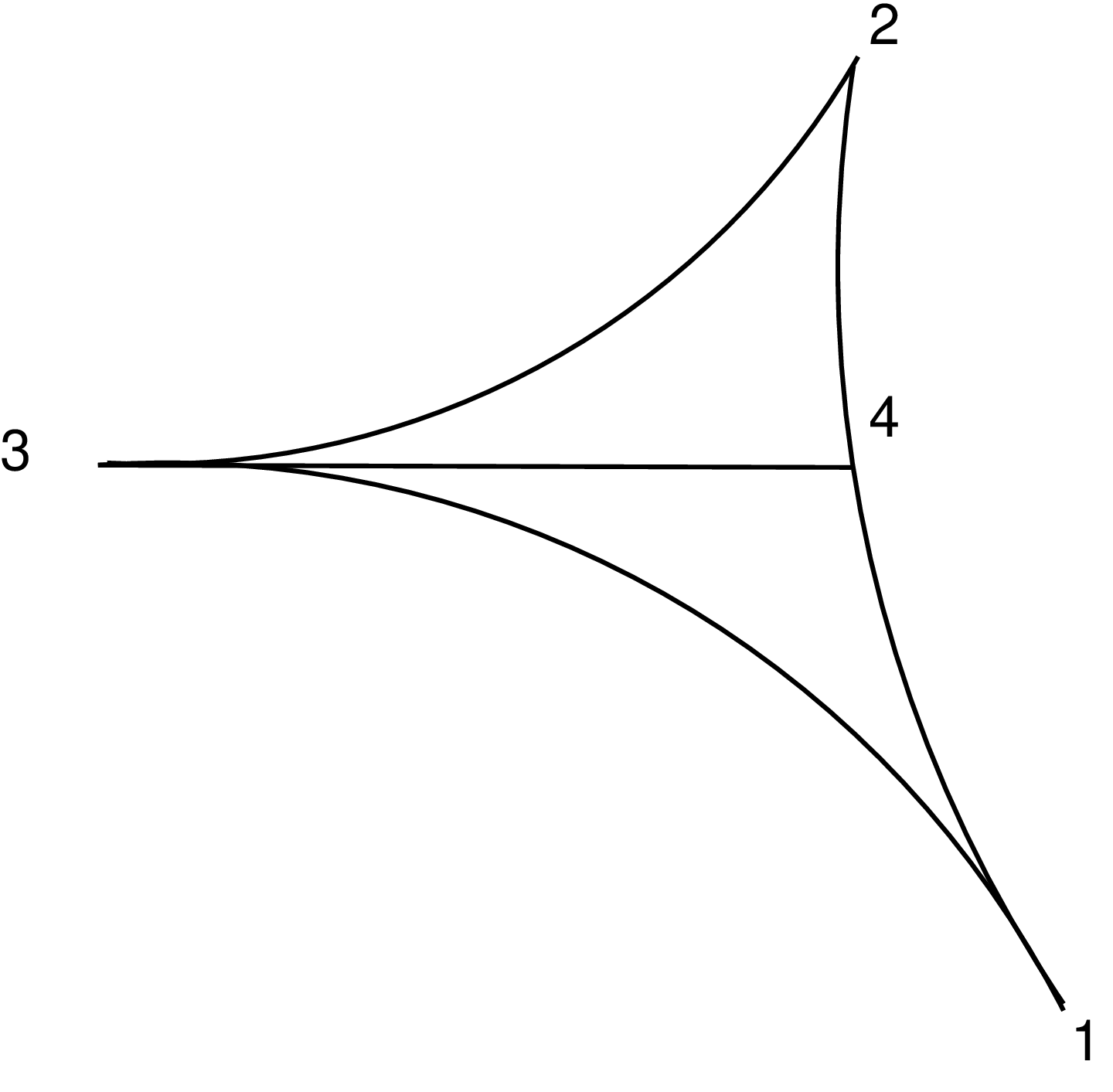}

\underline{{\it Tripod} }

\end{center}

Conclusion (1) of Proposition \ref{final-prop0} is given by the following.

\begin{prop}\label{final-prop} Given $c_1 \geq 1$,  
there exists $L_0, \delta_{\ref{final-prop}}, K_{\ref{final-prop}} \geq 0$
such that the following holds.\\
Let $X_1,X_2,X_3$ be $c_1$-qi sections and $L\geq L_0$. Then
$C_L(X_1,X_2,X_3)$ is $\delta_{\ref{final-prop}}(= \delta_{\ref{final-prop}}(c_1,L))$-hyperbolic with the induced path metric from $X$ and each of
$C_L(X_i,X_j)$, $i\neq j$ is $K_{\ref{final-prop}}(=K_{\ref{final-prop}}(c_1,L))$-quasi-convex in
$C_L(X_1,X_2,X_3)$.
\end{prop}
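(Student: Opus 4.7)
The plan is to reduce the tripod bundle $C(X_1,X_2,X_3)$ to a union of two ladders sharing a qi section, and then invoke Proposition \ref{main-prop} together with Corollary \ref{hamenstadt}.

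First, I would construct an auxiliary qi section $X_4$ as follows. For each $b\in\mathcal V(B)$ let $X_4\cap F_b$ be a nearest point projection (in the intrinsic metric $d_b$) of $X_3\cap F_b$ onto the horizontal geodesic $C(X_1,X_2)\cap F_b$. Lemma \ref{condition3} applied to pairs of adjacent vertices $b_1,b_2\in\mathcal V(B)$ supplies a $g(2c_1)$-quasi-isometry $F_{b_1}\to F_{b_2}$ carrying $X_i\cap F_{b_1}$ to $X_i\cap F_{b_2}$ for $i=1,2,3$; combining this with Lemma \ref{qi-comm-proj} (nearest-point projections commute with quasi-isometries up to bounded error) shows that $X_4$ is a $c'_1$-qi section with $c'_1:=2c_1+D_{\ref{qi-comm-proj}}(\delta',g(2c_1))$. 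Set $Z:=C(X_1,X_2)\cup C(X_3,X_4)$, which is a union of two ladders meeting in $X_4$.

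Next I would compare $C(X_1,X_2,X_3)$ with $Z$ fiber by fiber. In each $\delta'$-hyperbolic fiber $F_b$, the triangle $C(X_1,X_2,X_3)\cap F_b$ and the degenerate triangle $Z\cap F_b$ (geodesic $[X_1\cap F_b,X_2\cap F_b]$ together with the broken geodesic through $X_4\cap F_b$ to $X_3\cap F_b$) are at Hausdorff distance at most $D_1:=D_{\ref{stab-qg}}(\delta',3)$ by stability of quasi-geodesics. Taking $L_0\geq 2C^3_{\ref{qi-section}}(c'_1)+D_1$ and fixing $L\geq L_0$, the fiberwise nearest-point map $\Pi\colon C_L(X_1,X_2,X_3)\to C_L(Z)$ sending $x\in F_b$ to a nearest point of $Z\cap F_b$ inside $F_b$ is a coarse Lipschitz quasi-isometry of connected neighborhoods (the $L$-neighborhoods are connected by Remark \ref{important-remark}(1) applied to each of the constituent qi sections). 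Hence it suffices to establish hyperbolicity and quasi-convexity statements for $C_L(Z)$.

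Now I would apply Corollary \ref{hamenstadt} to $C_L(Z)$ with the two subspaces $X_0':=C_L(X_1,X_2)$ and $X_1':=C_L(X_3,X_4)$. Proposition \ref{main-prop} gives a uniform $\delta_{\ref{main-prop}}(c'_4,L)$ bound on the hyperbolicity of each subspace. Lemma \ref{ladder-decomp} with $Y:=X_4$ shows that the intersection $X_0'\cap X_1'$ contains a connected $2c'_1$-neighborhood of $X_4$ and is contained in a $D'_{\ref{ladder-decomp}}$-neighborhood of $X_4$, and that the inclusion of this neighborhood into either ladder is a $2c'_1$-qi embedding by Remark \ref{important-remark}(2). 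Since we have only two subspaces, the cobounded condition (5) of Corollary \ref{hamenstadt} is vacuous. Thus $C_L(Z)$, and hence $C_L(X_1,X_2,X_3)$, is $\delta_{\ref{final-prop}}$-hyperbolic for an appropriate constant depending on $c_1$ and $L$.

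For the quasi-convexity assertion, each $C_L(X_i,X_j)$ is $\delta_{\ref{main-prop}}$-hyperbolic by Proposition \ref{main-prop}, and its inclusion into $C_L(X_1,X_2,X_3)$ is a quasi-isometric embedding (the map $\Pi$ restricted to the ladder is a quasi-isometry onto the corresponding ladder in $Z$, which is one of the pieces of the tree-of-spaces decomposition). Stability of quasi-geodesics (Lemma \ref{stab-qg}) in the $\delta_{\ref{final-prop}}$-hyperbolic space $C_L(X_1,X_2,X_3)$ then yields the quasi-convexity constant $K_{\ref{final-prop}}$. The statement for $C_L(X_2,X_3)$ and $C_L(X_3,X_1)$ follows by repeating the construction with a different choice of the pair that is projected onto.

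The main obstacle I anticipate is step (3), namely making rigorous the quasi-isometry between $C_L(X_1,X_2,X_3)$ and $C_L(Z)$: one must check that $\Pi$ is coarsely Lipschitz, for which the key point is that moving one step in the base space moves a triangle by a bounded $d_b$-amount (Lemma \ref{condition3}), so consecutive projections remain uniformly close; the quasi-inverse is the inclusion. Once this is in place, everything else is a routine verification of the hypotheses of Corollary \ref{hamenstadt}.
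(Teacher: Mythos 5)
Your proposal is correct and follows essentially the same route as the paper: construct the auxiliary qi section $X_4$ by fiberwise nearest-point projection, reduce the tripod bundle to the genuine two-ladder union $Z=C(X_1,X_2)\cup C(X_3,X_4)$, apply Proposition \ref{main-prop} and Corollary \ref{hamenstadt} to get hyperbolicity of $C_L(Z)$ together with quasi-convexity of the ladder pieces, and then transfer the conclusion back to $C_L(X_1,X_2,X_3)$ via a fiberwise nearest-point map. The only cosmetic difference is that the paper runs the comparison map in the other direction ($Z\to C(X_1,X_2,X_3)$) and organizes the transfer through a path family and Corollary \ref{hyp-lemma} rather than directly as a quasi-isometry, but the content of your ``step (3)'' (Lipschitzness of the projection via Lemma \ref{condition3} and Lemma \ref{qi-comm-proj}, plus the bounded fiberwise displacement) is exactly what the paper's Lemmas \ref{pi-lip} and \ref{proj-dist} verify.
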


For ease of exposition, we break the proof up into several lemmas, many of which
 will be minor modifications of results we have shown already. 

For $b_1,b_2 \in \mathcal{V} (B)$ with $d(b_1,b_2)= 1$,
we have a $g(2c_1)$-quasi-isometry $F_{b_1}\rightarrow F_{b_2}$
by Lemma $\ref{condition3}$, which sends $X_i\cap F_{b_1}$ to $X_i\cap F_{b_2}$ for $i=1,2,3$.
Therefore, by Lemma $\ref{qi-comm-proj}$, choosing a nearest point projection of $X_3\cap F_b$
onto the horizontal geodesic $[X_1\cap F_b,X_2\cap F_b]$, for all $b\in B$, we get a
$c^{'}_1$-qi section of $B$ in $X$ where
$c^{'}_1:=2c_1+ D_{\ref{qi-comm-proj}}(\delta^{'}, g(2c_1))$.
Let us call this section $X_4$. Let $c^{'}_{i+1}:=C^i_{\ref{qi-section}}(c^{'}_1)$, $i\geq 1$.

Now we have the following analog of Lemma $\ref{ladder-decomp}$.
\begin{lemma}\label{ladder-decomp2}
For all $L\geq 2c^{'}_2$, there exists $D_{\ref{ladder-decomp2}}(=D_{\ref{ladder-decomp2}}(L))$ such that
the intersection $C_L(X_1, X_2)\cap C_L(X_3,X_4)$ is contained in the
$D_{\ref{ladder-decomp2}}$-neighborhood of $X_4$.
\end{lemma}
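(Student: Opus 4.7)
The plan is to take $y \in C_L(X_1,X_2) \cap C_L(X_3,X_4)$ with $b := p(y)$, transport $y$ into the fiber $F_b$ along each of the two ladders to obtain points $z_1,z_2 \in F_b$ at bounded ambient distance from $y$, and then exploit hyperbolicity of $F_b$ together with the defining property of $X_4$ as the fiberwise nearest-point projection of $X_3$ onto the horizontal geodesics of $C(X_1,X_2)$. The resulting short fiber-segments can then be assembled into bounded-length paths from $y$ to $X_4$ staying inside each ladder-neighborhood.

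Concretely, I would first pick $y_1 \in C(X_1,X_2)$ and $y_2 \in C(X_3,X_4)$ with $d_X(y,y_i) \leq L$. Using Lemma \ref{qi-section}, I would then choose a $c_2$-qi section $Y \subset C(X_1,X_2)$ through $y_1$ and a $c'_2$-qi section $Z \subset C(X_3,X_4)$ through $y_2$, and set $z_1 := Y \cap F_b$, $z_2 := Z \cap F_b$. Since $d_B(b,p(y_i)) \leq L$ and $Y,Z$ are uniform qi sections, one obtains $d_X(y,z_i) \leq R$ for some $R = R(L,c'_2)$. Uniform metric properness of the fiber inclusions then gives $d_b(y,z_i) \leq f(R)$ and hence $d_b(z_1,z_2) \leq 2f(R)$.

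The key step is the observation that $z_1$ lies on the horizontal geodesic $[X_1 \cap F_b, X_2 \cap F_b]$ while $z_2$ lies on $[X_3 \cap F_b, X_4 \cap F_b]$, and that $X_4 \cap F_b$ is by construction a nearest-point projection of $X_3 \cap F_b$ onto $[X_1 \cap F_b, X_2 \cap F_b]$ inside the $\delta'$-hyperbolic fiber $F_b$. Letting $w$ be a nearest-point projection of $z_2$ onto $[X_1 \cap F_b, X_2 \cap F_b]$, Lemma \ref{subqc} applied inside $F_b$ (with $K = 0$) yields $d_b(w, X_4 \cap F_b) \leq D_{\ref{subqc}}(\delta',0)$; combined with $d_b(z_2,w) \leq d_b(z_1,z_2) \leq 2f(R)$ (which holds because $z_1$ already sits on the target geodesic), this bounds $d_b(z_i, X_4 \cap F_b)$ by a constant depending only on $L$ and the bundle data.

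Finally I would assemble, for $i = 1,2$, an explicit path $y \to y_i \to z_i \to X_4 \cap F_b$: the first leg is any $X$-geodesic, which lies in the corresponding $C_L$-neighborhood because all of its points are within $L$ of $y_i$; the second leg runs along $Y$ or $Z$ (and hence in $C(X_1,X_2)$ or $C(X_3,X_4)$ respectively); and the third leg is a subsegment of the relevant horizontal geodesic of length at most $4f(R) + D_{\ref{subqc}}(\delta',0)$. Summing the length bounds defines the constant $D_{\ref{ladder-decomp2}}(L)$. The main obstacle is arranging the last leg to sit inside the correct ladder: this is why one must choose $z_2$ on $[X_3 \cap F_b, X_4 \cap F_b]$ (rather than, say, as a fiber nearest-point projection of $y$), so that the subsegment from $z_2$ to $X_4 \cap F_b$ lies automatically on the horizontal geodesic defining $C(X_3,X_4)$. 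The qi section $Z$ provided by Lemma \ref{qi-section} is tailored precisely for this purpose.
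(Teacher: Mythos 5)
Your overall strategy matches the paper's: the paper proves this by declaring it an "exact copy" of the proof of Lemma~\ref{ladder-decomp}, with the single extra observation that the curve $[X_3\cap F_b, X_4\cap F_b]\cup[X_4\cap F_b, X_i\cap F_b]$ ($i=1,2$) is a $(3,0)$-quasi-geodesic in $F_b$. You also transport the two nearby ladder-points into a single fiber, bound their horizontal distance, and exploit the special position of $X_4\cap F_b$ as a nearest-point projection. So the architecture of your argument is right.

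The gap is in the one quantitative step that carries the whole proof. You claim that if $w$ is a nearest-point projection of $z_2$ onto $[X_1\cap F_b, X_2\cap F_b]$, then Lemma~\ref{subqc} with $K=0$ gives $d_b(w, X_4\cap F_b)\le D_{\ref{subqc}}(\delta',0)$. But Lemma~\ref{subqc} does not say this. Its ``moreover'' clause compares $\pi_V(x)$ with $\pi_V(\pi_U(x))$ for nested quasiconvex sets $V\subseteq U$ and an external point $x$; taking $U=[X_1\cap F_b, X_2\cap F_b]$, $V=\{X_4\cap F_b\}$ and $x=z_2$, both projections to the singleton $V$ are trivially $X_4\cap F_b$, and the lemma returns $d(X_4\cap F_b, X_4\cap F_b)\le D_{\ref{subqc}}$, which is vacuous. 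The statement you actually need --- that the nearest-point projection onto $U$ of a point lying on the geodesic $[X_3\cap F_b, X_4\cap F_b]=[X_3\cap F_b,\pi_U(X_3\cap F_b)]$ is uniformly close to $\pi_U(X_3\cap F_b)$ --- is true, but it is a different fact and is nowhere established by the lemma you cite.

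The cleanest repair, and what the paper actually uses, is to avoid introducing $w$ altogether. The first clause of Lemma~\ref{subqc} (with $K=0$) gives precisely the paper's ``extra observation'': $\sigma:=[X_3\cap F_b, X_4\cap F_b]\cup[X_4\cap F_b, X_i\cap F_b]$ is a $(3,0)$-quasi-geodesic in $F_b$. Choose $i$ so that $z_1$ lies on $[X_4\cap F_b, X_i\cap F_b]$; then $z_2$ is on the first leg and $z_1$ on the second. Parametrizing $\sigma$ by arc length with $\sigma(s)=z_2$, $\sigma(s_0)=X_4\cap F_b$, $\sigma(t)=z_1$ and $s\le s_0\le t$, the lower quasi-geodesic inequality $\tfrac13(t-s)\le d_b(z_1,z_2)\le 2f(R)$ forces $s_0-s\le t-s\le 6f(R)$, so $d_b(z_2,X_4\cap F_b)\le 6f(R)$ and $d_b(z_1,X_4\cap F_b)\le 8f(R)$. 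This replaces your two-step estimate through $w$ and produces the absolute bound you need. With that substitution your argument, including the path-assembly at the end, goes through.
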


\begin{proof}
The proof is an exact copy of that of the proof of Lemma $\ref{ladder-decomp}$.
The only  observation we need to make is that the curve $[X_3\cap F_b, X_4\cap F_b]\cup [X_4\cap F_b, X_i\cap F_b]$,
$i=1,2$ is a $(3,0)$-quasi-geodesic in $F_b$ (Lemma \ref{subqc-elem} (1)).
\end{proof}

\begin{lemma}\label{T-space}
For all $c_1$ as above and  $L\geq 2c^{'}_6$, there exist $D_{\ref{T-space}}(=D_{\ref{T-space}}(c_1,L))$ 
and  $K_{\ref{T-space}} (=K_{\ref{T-space}}(c_1,L))$ such that
 the space $C_L(X_1,X_2)\cup C_L(X_3,X_4)$ is $D_{\ref{T-space}}$-hyperbolic 
and $C(X_1,X_2)$ is $K_{\ref{T-space}}$-quasi-convex in this space.
\end{lemma}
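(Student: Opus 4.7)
The plan is to apply Corollary~\ref{hamenstadt} to just two subgraphs of $X$ --- namely $\Sigma_0:=C_L(X_1,X_2)$ and $\Sigma_1:=C_L(X_3,X_4)$ --- whose intersection is a thickening of the common qi section $X_4$. Since the sequence has only two terms, condition (5) of Corollary~\ref{hamenstadt} (coboundedness of consecutive $Y_i$'s) is vacuous, which simplifies matters considerably.

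First I check hyperbolicity of the two pieces. Since $X_4$ is a $c'_1$-qi section with $c'_1\geq c_1$, the constants $c'_i:=C^{i-1}_{\ref{qi-section}}(c'_1)$ dominate the corresponding $c_i$ (using monotonicity of $C_{\ref{qi-section}}$); in particular $c'_6\geq c_6$, so the standing hypothesis $L\geq 2c'_6$ lets me apply Proposition~\ref{main-prop} to both $C_L(X_1,X_2)$ and $C_L(X_3,X_4)$. Each is then $\delta_0$-hyperbolic in the path metric induced from $X$, where $\delta_0:=\max\{\delta_{\ref{main-prop}}(c_1,L),\delta_{\ref{main-prop}}(c'_1,L)\}$, and each is connected by Remark~\ref{important-remark}(1).

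Next I identify the edge space. Since $X_4\subset C(X_1,X_2)\cap C(X_3,X_4)$ by construction, the $2c'_1$-neighborhood $Y$ of $X_4$ in $X$ lies in $\Sigma_0\cap\Sigma_1$ and is a connected subgraph. Lemma~\ref{ladder-decomp2} supplies the matching upper bound $\Sigma_0\cap\Sigma_1\subseteq N_{D_{\ref{ladder-decomp2}}(L)}(X_4)$, measured in the path metric of each $\Sigma_i$, so $Y$ serves as the connected ``edge space'' demanded by condition~(3) of Corollary~\ref{hamenstadt}. The inclusion $Y\hookrightarrow\Sigma_i$ is a uniform quasi-isometric embedding: by Remark~\ref{important-remark}(2) applied inside the ladder-neighborhood $\Sigma_i$, the $2c'_1$-thickening of the $c'_1$-qi section $X_4$ is a $2c'_1$-qi image of $B$ in $\Sigma_i$. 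Conditions (1)--(4) of Corollary~\ref{hamenstadt} are thereby verified, and the Corollary produces a hyperbolicity constant $D_{\ref{T-space}}$ for $\Sigma_0\cup\Sigma_1$, depending only on $c_1$, $L$, and the fixed bundle parameters.

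Finally, for quasi-convexity of $C(X_1,X_2)$ in the union: the proof of Corollary~\ref{hamenstadt} works by reconstructing a tree of hyperbolic spaces and invoking Proposition~\ref{hyp-tree}, which asserts that every vertex space is uniformly quasi-convex in the total space. Pulling this back across the quasi-isometry produced by that construction gives that $\Sigma_0=C_L(X_1,X_2)$ is uniformly quasi-convex in $\Sigma_0\cup\Sigma_1$. Since $C(X_1,X_2)\subseteq C_L(X_1,X_2)\subseteq N_L(C(X_1,X_2))$, quasi-convexity transfers to $C(X_1,X_2)$ at the cost of increasing the constant by $L$, giving $K_{\ref{T-space}}$. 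The only nontrivial point is the qi-embedding $Y\hookrightarrow\Sigma_i$, which reduces to Remark~\ref{important-remark}(2); everything else is assembly of tools already developed.
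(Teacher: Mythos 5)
Your proposal is essentially correct and follows the paper's strategy for the hyperbolicity half verbatim: apply Proposition~\ref{main-prop} to each of $C_L(X_1,X_2)$, $C_L(X_3,X_4)$, take a connected thickening of $X_4$ as the edge space (the paper uses a $c'_2$-neighborhood where you use $2c'_1$ --- a cosmetic choice), invoke Lemma~\ref{ladder-decomp2} for the upper bound on the intersection, note that the qi-embedding of the thickened $X_4$ comes from Remark~\ref{important-remark}(2), observe conditions~(2) and~(5) of Corollary~\ref{hamenstadt} are vacuous with only two pieces, and conclude. Your monotonicity observation $c'_6\geq c_6$ correctly justifies using the single threshold $L\geq 2c'_6$ for both pieces.

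For the quasi-convexity of $C(X_1,X_2)$ in the union you depart from the paper. You unwind the proof of Corollary~\ref{hamenstadt}, noting it factors through Proposition~\ref{hyp-tree}, which directly asserts uniform quasi-convexity of the vertex spaces in the tree of spaces, and then transport this statement back across the quasi-isometry that the corollary's proof constructs. This is legitimate --- quasi-convexity is a qi-invariant notion among subsets, and $C_L(X_1,X_2)$ does correspond to a vertex space of the tree --- but it requires the reader to re-open the internals of Corollary~\ref{hamenstadt} and Proposition~\ref{hyp-tree}. The paper instead gives a self-contained argument at the level of the lemma itself: any geodesic of $C_L(X_1,X_2)\cup C_L(X_3,X_4)$ between two points of $C(X_1,X_2)$ that exits $C_L(X_1,X_2)$ must, by Lemma~\ref{ladder-decomp2}, enter and exit through a bounded neighborhood of $X_4$; since $X_4$ is a qi-embedded copy of $B$ in the already-proven-hyperbolic union, it is quasi-convex there, so each excursion stays near $X_4\subset C_L(X_1,X_2)$. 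Both routes land in the same place; the paper's is more local and avoids re-examining the proofs of earlier results, while yours is shorter at the price of that re-examination. Either would be acceptable, and there is no gap in your argument.
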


\begin{proof}
The first part of the lemma follows as an application of Proposition $\ref{main-prop}$ and
Corollary $\ref{hamenstadt}$
(the proof is a replica of Step $3$ of the proof of Proposition $\ref{main-lem}$ which shows that  large girth subladders 
are hyperbolic). For completeness we briefly check the conditions of Corollary $\ref{hamenstadt}$.
\begin{enumerate}
\item Here we have only two subgraphs
$C_L(X_1,X_2)$ and $ C_L(X_3,X_4)$ which are hyperbolic by Proposition $\ref{main-lem}$.
\item Condition $(2)$ is trivially satisfied.
\item The intersection $C_L(X_1,X_2)\cap C_L(X_3,X_4)$ contains the $c^{'}_2$-neighborhood, say $Y$, of $X_4$
which is connected and the rest follows from Lemma $\ref{ladder-decomp2}$ above.
\item Since $X_4$ is $2c^{'}_1$-quasi-isometrically embedded in $C_L(X_1,X_2)\cup C_L(X_3,X_4)$, $Y=N_{2c^{'}_1}(X_4)$ is
also quasi-isometrically embedded.
\item Condition $5$ is trivially satisfied.
\end{enumerate}

For the second part of the lemma we note that any geodesic joining two points of $C(X_1,X_2)$ in 
$C_L(X_1,X_2)\cup C_L(X_3,X_4)$ and which leaves $C_L(X_1,X_2)$ must join two points
in a (uniformly bounded) neighborhood of $X_4$, by Lemma $\ref{ladder-decomp2}$.
Since $X_4$ is the image of a quasi-isometric embedding of $B$ in the
hyperbolic space $C_L(X_1,X_2)\cup C_L(X_3,X_4)$ it is quasi-convex also. The lemma follows.
\end{proof}

Clearly for all $b\in \mathcal V(B)$, $C(X_1,X_2,X_3)\cap F_b$ is $\delta^{'}$-quasi-convex in $F_b$.
Define a map $\Pi : Z=C_L(X_1,X_2)\cup C_L(X_3,X_4) \rightarrow X$ by sending any point 
$x \in Z\cap F_b$  to a nearest point in $C(X_1,X_2,X_3)\cap F_b$ (in the $d_b-$metric).

\begin{lemma}\label{pi-lip}  Given $c_1 \geq 1$ there exists $D_{\ref{pi-lip}}(=D_{\ref{pi-lip}}(c_1))$ such that 
the map $\Pi$ is $D_{\ref{pi-lip}}-$coarsely Lipschitz.
\end{lemma}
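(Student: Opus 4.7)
The plan is to reduce the Lipschitz bound to a pair of adjacent vertices, i.e.\ to show that there is a constant $D$ depending on $c_1$ (and the bundle parameters $f, K, \delta, \delta'$) such that for any two vertices $x, y \in Z$ with $d_X(x,y) \leq 1$, one has $d_X(\Pi(x), \Pi(y)) \leq D$. Concatenating this estimate along any edge path in $Z$ from $x$ to $y$ will then yield $d_X(\Pi(x), \Pi(y)) \leq D \cdot d_Z(x,y)$, so we may take $D_{\ref{pi-lip}} = D$.

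First I would handle the case where $x, y$ lie in a common fiber $F_b$. Since $d_X(x,y) \leq 1$ and the inclusion $F_b \hookrightarrow X$ is metrically proper with gauge $f$, we have $d_b(x,y) \leq f(1)$. The triangle $T_b := C(X_1,X_2,X_3) \cap F_b$ is a union of three geodesics in the $\delta'$-hyperbolic space $F_b$ sharing internal points within distance $4\delta'$ (by Lemma \ref{hyp-defn}), so it is uniformly quasi-convex in $F_b$, say $K_0$-quasi-convex. A standard consequence of Lemma \ref{stab-qg} (or one can invoke Lemma \ref{subqc}) is that nearest point projection onto a $K_0$-quasi-convex subset of a $\delta'$-hyperbolic space is coarsely Lipschitz with constants depending only on $\delta'$ and $K_0$. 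Hence $d_b(\Pi(x),\Pi(y))$, and therefore $d_X(\Pi(x),\Pi(y))$, is bounded in terms of $f(1), \delta', K_0$.

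Next I would treat the case where $x \in F_{b_1}$, $y \in F_{b_2}$ with $d_B(b_1,b_2)=1$. Using condition $2(i)$ of Definition \ref{defn-mgbdl} together with Lemma \ref{condition3}, pick a $g(2c_1)$-quasi-isometry $\phi : F_{b_1} \rightarrow F_{b_2}$ that sends each $X_i \cap F_{b_1}$ to a point within distance $2c_1$ of $X_i \cap F_{b_2}$ (in $F_{b_2}$), for $i=1,2,3$. By stability of quasi-geodesics, $\phi$ carries each side geodesic of $T_{b_1}$ to a uniform quasi-geodesic whose endpoints are close to the corresponding endpoints of the sides of $T_{b_2}$, and hence the Hausdorff distance (in $F_{b_2}$) between $\phi(T_{b_1})$ and $T_{b_2}$ is uniformly bounded. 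Three applications of Lemma \ref{qi-comm-proj} (one for each side geodesic of the tripod, together with the bounded Hausdorff comparison above) show that $\phi(\Pi(x))$ lies within a uniform distance of $\Pi(\phi(x))$ in $F_{b_2}$. Since $d_X(\phi(x), y) \leq 2$ gives $d_{b_2}(\phi(x), y) \leq f(2)$, the coarse Lipschitz property of nearest-point projection onto $T_{b_2}$ (as in Case 1) yields a uniform bound on $d_{b_2}(\Pi(\phi(x)), \Pi(y))$, and hence on $d_X(\Pi(x),\Pi(y))$.

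The main obstacle is the second case: one must justify that the tripods in adjacent fibers are carried to one another by the fiber-to-fiber quasi-isometry with uniformly bounded error. This requires combining the fact that $X_1,X_2,X_3$ are $c_1$-qi sections (so vertices of the tripods match up to bounded horizontal error) with the fact that the sides are geodesics in uniformly hyperbolic spaces (so quasi-geodesic images are close to geodesics, by Lemma \ref{stab-qg}), and then invoking Lemma \ref{qi-comm-proj} to transfer the nearest-point projection across $\phi$. All of this is routine hyperbolic geometry once the setup is in place; the only mild subtlety is keeping track of the three sides of the tripod simultaneously rather than a single geodesic.
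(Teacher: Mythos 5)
Your proof is correct and follows the same route as the paper's: reduce to adjacent vertices, handle the same-fiber case via quasiconvexity of the tripod $C(X_1,X_2,X_3)\cap F_b$ together with the coarse Lipschitzness of nearest-point projection onto quasiconvex subsets of a hyperbolic space, and handle the adjacent-fiber case by transporting through the fiber-to-fiber quasi-isometry of Lemma \ref{condition3} and invoking the coarse commutativity of nearest-point projection with quasi-isometries (the generalization of Lemma \ref{qi-comm-proj} to quasiconvex targets, exactly as in the proof of Theorem \ref{retract}). The paper compresses this into citations to Lemma 3.2 and Lemma 3.5 of \cite{mitra-trees} and the argument of Theorem \ref{retract}, while you spell out the intermediate steps; the only imprecision is that "three applications of Lemma \ref{qi-comm-proj}, one per side" does not immediately give the claim, since the nearest point on the tripod may lie on different sides for $x$ and $\phi(x)$ --- one also needs coarse uniqueness of nearest points on the quasiconvex tripod, which you implicitly use and which is standard.
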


\begin{proof}
We need to check that for any two adjacent vertices in the domain of $\Pi$, the image vertices are at
a uniformly bounded distance.
This breaks up into two cases. 

When the vertices are in the same horizontal space $F_b$ then since 
$C(X_1,X_2,X_3)\cap F_b$ is  (uniformly) quasiconvex in $F_b$, and since nearest point projections onto quasiconvex sets
in hyperbolic metric spaces are coarsely Lipschitz (cf. Lemma 3.2 of \cite{mitra-trees})
the claim follows.

When the vertices are not in same horizontal space then the same argument as in 
Lemma $\ref{qi-comm-proj}$ (also see \cite{mitra-trees, bowditch-ct}) shows that nearest-point projections and quasi-isometries
almost commute. The rest of the proof is a replica  of Theorem \ref{retract} 
\cite{mitra-ct}.
\end{proof}

\begin{rmk} In fact, 
 $\Pi$ restricted to $C(X_1, X_2)$ is simply an inclusion map.
Hence by Lemma \ref{property1}, $\Pi$ is a qi-embedding of $C(X_1, X_2)$ into any sufficiently large neighborhood of $C(X_1,X_2,X_3)$
 equipped with a path metric induced from $X$. \label{pi-lip-rmk} \end{rmk}

\begin{lemma}\label{proj-dist} Given $c_1 \geq 1$ and $L \geq 0$ as above there exists $D_{\ref{proj-dist}}(=D_{\ref{proj-dist}}(c_1,L))$
such that the following holds. \\
For all $x \in C_L(X_1,X_2)\cup C_L(X_3,X_4)$ the horizontal distance between $x$ and $\Pi(x)$  is at most $D_{\ref{proj-dist}}$.
\end{lemma}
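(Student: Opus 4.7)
The strategy is to reduce a bound in the ambient metric $d_X$ to a bound in the fiber (horizontal) metric $d_b$, and then exploit $\delta^{'}$-hyperbolicity of $F_b$ together with the specific definition of $X_4$ as a fiberwise nearest-point projection.

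Let $x \in F_b \cap (C_L(X_1,X_2) \cup C_L(X_3,X_4))$ and choose $y \in C(X_1,X_2) \cup C(X_3,X_4)$ with $d_X(x,y) \leq L$; write $y \in F_{b'}$, so $d_B(b,b') \leq L$. Assume first $y \in C(X_1,X_2)$ (the case $y \in C(X_3,X_4)$ is identical). By Lemma \ref{qi-section} there is a $c^{'}_2$-qi section $Y \subset C(X_1,X_2)$ passing through $y$. Set $y' = Y \cap F_b$, so $d_X(y,y') \leq 2c^{'}_2 L$ and hence $d_X(x,y') \leq (1+2c^{'}_2)L$. By the uniform properness of fibers (Condition 1 of Definition \ref{defn-mgbdl}), it follows that $d_b(x,y') \leq f((1+2c^{'}_2)L)$. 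Thus $y' \in (C(X_1,X_2) \cup C(X_3,X_4)) \cap F_b$ lies within a uniformly bounded $d_b$-distance of $x$.

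Next I produce a point $z \in C(X_1,X_2,X_3) \cap F_b$ with $d_b(y',z)$ uniformly bounded; this suffices, since $\Pi(x)$ is a $d_b$-nearest point of $C(X_1,X_2,X_3)\cap F_b$ to $x$, so $d_b(x,\Pi(x)) \leq d_b(x,z) \leq d_b(x,y') + d_b(y',z)$. If $y' \in C(X_1,X_2)\cap F_b = [X_1\cap F_b, X_2\cap F_b]$, then $y'$ already lies on a side of the triangle $C(X_1,X_2,X_3)\cap F_b$, and I take $z = y'$. If $y' \in C(X_3,X_4)\cap F_b = [X_3\cap F_b, X_4\cap F_b]$, recall that $X_4\cap F_b$ was defined to be a $d_b$-nearest point projection of $X_3\cap F_b$ onto $[X_1\cap F_b,X_2\cap F_b]$ in the $\delta^{'}$-hyperbolic space $F_b$. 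By Lemma \ref{subqc} the concatenation $[X_3\cap F_b,X_4\cap F_b]\cup[X_4\cap F_b,X_1\cap F_b]$ is a $3$-quasi-geodesic joining $X_3\cap F_b$ to $X_1\cap F_b$ in $F_b$, and by Lemma \ref{stab-qg} it lies at Hausdorff distance at most $D_{\ref{stab-qg}}(\delta^{'},3)$ from the triangle side $[X_3\cap F_b,X_1\cap F_b] \subset C(X_1,X_2,X_3)\cap F_b$. So I may pick $z$ on that side with $d_b(y',z) \leq D_{\ref{stab-qg}}(\delta^{'},3)$.

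Combining the two estimates, $d_b(x,\Pi(x)) \leq f((1+2c^{'}_2)L) + D_{\ref{stab-qg}}(\delta^{'},3)$, and we take $D_{\ref{proj-dist}}(c_1,L)$ to be this constant. The only step requiring care is the first one: transferring an $X$-distance bound to a $d_b$-distance bound within the same fiber. Once Lemma \ref{qi-section} lets us replace $y$ by $y' \in F_b$ without losing control of the $X$-distance, uniform properness of the fibers delivers the $d_b$-bound. The second step is entirely internal to the $\delta^{'}$-hyperbolic space $F_b$ and is standard slim-triangle geometry, the point being that the projection segment $[X_3\cap F_b,X_4\cap F_b]$ fellow-travels a side of the geodesic triangle on $\{X_1,X_2,X_3\}\cap F_b$.
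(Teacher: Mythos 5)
Your proof is correct and follows the same route as the paper's: the essential content in both cases is that in the $\delta^{'}$-hyperbolic fiber $F_b$ the triangle on $X_1\cap F_b,\ X_2\cap F_b,\ X_3\cap F_b$ and the tripod through the nearest-point projection $X_4\cap F_b$ are at bounded Hausdorff distance. The paper's one-line proof invokes only this fact; you are more careful in explicitly moving a point of the $L$-neighborhood (in $X$) of $C(X_1,X_2)\cup C(X_3,X_4)$ to a fiberwise-bounded neighborhood of $(C(X_1,X_2)\cup C(X_3,X_4))\cap F_b$ -- a step the paper leaves implicit -- and your mechanism for this (Lemma \ref{qi-section} to slide along a qi section into $F_b$, then uniform properness of fibers to convert the $X$-bound into a $d_b$-bound) is exactly the right one.
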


\begin{proof}
This follows from the fact that in any $\delta'-$ hyperbolic metric space (=$F_b$ in our case) the Hausdorff distance between
 a triangle with vertices $x, y, z$ and the tripod $[x,w]\cup[y,z]$ (where $w \in [y,z]$ is a nearest point projection of 
$x$ onto $[y,z]$) is bounded by  $\delta'$.
\end{proof}

\noindent {\bf Proof of Proposition \ref{final-prop}:}

Set $L_0=2c^{'}_6 +D_{\ref{pi-lip}}(c_1)$;  by assumption $L\geq L_0$. Let $L_1 = L + \delta'$.

First for every pair of points in $x,y \in C(X_1,X_2,X_3)$ we choose a geodesic
in (the  path metric induced on) $C_{L_1}(X_1,X_2)\cup C_{L_1}(X_3,X_4)$ joining $x, y$ and project it into $C(X_1,X_2,X_3)$  by $\Pi$.
 This defines a path in $C(X_1,X_2,X_3)$ say $c(x,y)$ joining $x,y$.  Note that by Lemma
$\ref{proj-dist}$ the paths $c(x,y)$ are {\em uniform quasi-geodesics in} $C_{L_1}(X_1,X_2)\cup C_{L_1}(X_3,X_4)$.
Now we need to check the conditions of Corollary $\ref{hyp-lemma}$. 

Here the whole space is $C_L(X_1,X_2,X_3)$ and the discrete set is the set of vertices
contained in $C(X_1,X_2,X_3)$. As per the notation of Corollary $\ref{hyp-lemma}$, set $D=L$. Next we note the following:
\begin{enumerate}
\item Condition $(1)$ of Corollary $\ref{hyp-lemma}$ follows from Lemma $\ref{pi-lip}$.
\item Condition $(2)$ of Corollary $\ref{hyp-lemma}$ follows from the observation that $C_L(X_1,X_2,X_3)$ is contained in 
$C_{L_1}(X_1,X_2)\cup C_{L_1}(X_3,X_4)$.
\item Conditions $(3)$, $(4)$ of Corollary $\ref{hyp-lemma}$ follow from Lemma $\ref{proj-dist}$, 
since the space $C_{L_1}(X_1,X_2)\cup C_{L_1}(X_3,X_4)$ is (uniformly) hyperbolic.
\end{enumerate}

Hence $C_L(X_1,X_2,X_3)$ is hyperbolic. From Lemmas $\ref{T-space}$, and $\ref{proj-dist}$ it follows
that $C(X_1,X_2)$ is the image of the quasi-convex set $C(X_1,X_2) \subset C_{L_1}(X_1,X_2)\cup C_{L_1}(X_3,X_4)$
under the quasi-isometric embedding $\Pi$
(cf. Remark \ref{pi-lip-rmk}). Hence it is quasi-convex in $C_L(X_1,X_2,X_3)$ and thus so is
$C_L(X_1,X_2)$.
This completes the proof. $\Box $

\smallskip

Conclusion (2) of Proposition \ref{final-prop0} is given by the  next Corollary, which
 is an immediate consequence of the fact that $C_L(X_1,X_2,X_3)$ is hyperbolic (cf. Proposition \ref{final-prop}) and
that the inclusion $C_L(X_1,X_2) \hookrightarrow C_L(X_1,X_2,X_3)$ is a qi embedding (cf. Remark \ref{pi-lip-rmk}).

\begin{cor}\label{ladder-qi-embed} Given $c_1 \geq 1$ and $L \geq L_0$ (where $L_0$ is as in Proposition \ref{final-prop}) there exists 
 $D_{\ref{ladder-qi-embed}}(= D_{\ref{ladder-qi-embed}}(c_1,L))$ such that if
 $x,y \in C_L(X_1,X_2)$,   $\gamma_1$ is  a geodesic in $C_L(X_1,X_2,X_3)$ joining $x,y$ and
$\gamma_2$  is a geodesic
in $C_L(X_1,X_2)$  joining $x,y$, then the Hausdorff distance $Hd(\gamma_1,\gamma_2)\leq D_{\ref{ladder-qi-embed}}$. 
\end{cor}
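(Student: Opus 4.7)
The plan is to show that the inclusion map $\iota : C_L(X_1,X_2) \hookrightarrow C_L(X_1,X_2,X_3)$ is a quasi-isometric embedding (with respect to the induced path metrics from $X$), and then to invoke stability of quasi-geodesics (Lemma \ref{stab-qg}) in the $\delta_{\ref{final-prop}}$-hyperbolic space $C_L(X_1,X_2,X_3)$.

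First I would establish the qi-embedding claim. The inclusion $\iota$ is $1$-Lipschitz by construction, so only a coarse lower bound on distances is needed. By Proposition \ref{final-prop}, the subspace $C_L(X_1,X_2)$ is $K_{\ref{final-prop}}$-quasi-convex in the $\delta_{\ref{final-prop}}$-hyperbolic ambient space $C_L(X_1,X_2,X_3)$. Given any two points $x,y \in C_L(X_1,X_2)$ and the geodesic $\gamma_1$ joining them in $C_L(X_1,X_2,X_3)$, quasi-convexity places $\gamma_1$ inside the $K_{\ref{final-prop}}$-neighborhood of $C_L(X_1,X_2)$. Using the fact that nearest point projection onto a quasi-convex subset of a hyperbolic metric space is coarsely Lipschitz (Lemma 3.2 of \cite{mitra-trees}, applied in the ambient hyperbolic space), one may replace successive vertices of $\gamma_1$ by nearby projections in $C_L(X_1,X_2)$ and connect them by uniformly short paths inside $C_L(X_1,X_2)$, producing a path in $C_L(X_1,X_2)$ from $x$ to $y$ of length bounded by $C_1\,d_{C_L(X_1,X_2,X_3)}(x,y) + C_1$ for a constant $C_1 = C_1(c_1,L)$. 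This gives the required lower bound and proves that $\iota$ is a quasi-isometric embedding.

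Having this, the image $\iota(\gamma_2)$ of the geodesic $\gamma_2 \subset C_L(X_1,X_2)$ is a uniform quasi-geodesic in $C_L(X_1,X_2,X_3)$, while $\gamma_1$ is a genuine geodesic with the same endpoints in the same $\delta_{\ref{final-prop}}$-hyperbolic space. Applying Lemma \ref{stab-qg} to this pair yields a constant $D_{\ref{ladder-qi-embed}} = D_{\ref{ladder-qi-embed}}(c_1,L)$ with $Hd(\gamma_1,\gamma_2) \leq D_{\ref{ladder-qi-embed}}$, where the Hausdorff distance is computed in $C_L(X_1,X_2,X_3)$ (and hence also in $C_L(X_1,X_2,X_3) \supseteq C_L(X_1,X_2)$ for the appropriate sides).

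The main obstacle is the qi-embedding step. The path metric on $C_L(X_1,X_2)$ is a priori larger than the restriction of $d_{C_L(X_1,X_2,X_3)}$, so one must rule out shortcuts through $C_L(X_2,X_3)\cup C_L(X_1,X_3)$. As noted, this is handled by the combination of ambient hyperbolicity (Proposition \ref{final-prop}), quasi-convexity of $C_L(X_1,X_2)$ in the ambient space (also Proposition \ref{final-prop}), and the standard coarse-Lipschitz property of nearest point projection onto quasi-convex sets in a hyperbolic space — all ingredients that are already in place in the paper. Everything else is a straightforward invocation of Lemma \ref{stab-qg}.
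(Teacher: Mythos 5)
Your high-level strategy — replace $\gamma_1$ by a nearby quasi-geodesic lying in $C_L(X_1,X_2)$ and then appeal to stability of quasi-geodesics — has the same skeleton as the paper's proof, and correctly identifies quasi-convexity from Proposition \ref{final-prop} as the input. The gap lies in the step where you assert that successive (ambient) nearest-point projections of vertices of $\gamma_1$ onto $C_L(X_1,X_2)$ ``can be connected by uniformly short paths inside $C_L(X_1,X_2)$.'' Coarse-Lipschitz-ness of ambient nearest-point projection onto a quasi-convex subset controls the distance between those projections only in the ambient metric of $C_L(X_1,X_2,X_3)$, not in the intrinsic path metric $d_1$ of $C_L(X_1,X_2)$. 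Passing from ambient distance to intrinsic distance in a subgraph is exactly the content of a uniform-proper-embedding (hence qi-embedding) statement for $C_L(X_1,X_2)$ inside $C_L(X_1,X_2,X_3)$, which is, up to constants, what you are in the middle of proving; quasi-convexity of a connected subgraph by itself does not deliver it.

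The paper avoids this by replacing ambient projection with the explicit fiber-wise projection $\Pi_1$ (nearest-point projection in each fiber $F_b$ onto the horizontal geodesic $C(X_1,X_2)\cap F_b$). The crucial feature of $\Pi_1$ is that its Lipschitz-ness \emph{into $C(X_1,X_2)$ with its induced path metric} is verified directly via the bundle structure — Lemma \ref{pi-lip}, which breaks into within-fiber steps (Lemma 3.2 of \cite{mitra-trees}) and cross-fiber steps (Lemma \ref{qi-comm-proj}), essentially replaying the proof of Theorem \ref{retract}. Coupled with the displacement bound $d(z,\Pi_1(z))\leq D_1$ for $z\in\gamma_1$ (coming from quasi-convexity and the width $L$), and the trivial reverse inequality $d_1(\Pi_1(x_1),\Pi_1(x_2))\geq d(\Pi_1(x_1),\Pi_1(x_2))\geq d(x_1,x_2)-2D_1$, the paper concludes $\Pi_1(\gamma_1)$ is a uniform quasi-geodesic in both $C_L(X_1,X_2)$ and $C_L(X_1,X_2,X_3)$, and finishes with two applications of stability. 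To repair your argument you could either invoke Theorem \ref{retract} to establish that $C(X_1,X_2)$ (hence $C_L(X_1,X_2)$) is qi-embedded in $X$, or simply substitute $\Pi_1$ and Lemma \ref{pi-lip} for the generic ambient projection.
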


Conclusion (3) of Proposition \ref{final-prop0} is given by the following.

\begin{cor}\label{curv-defn} Given $c_1 \geq 1$ and $L \geq L_0$ (cf. Proposition \ref{final-prop}) there exists $D_{\ref{curv-defn}}
(=D_{\ref{curv-defn}}(c_1,L))$  such that the following holds. \\
Suppose $X_i,X^{'}_i$, $i=1,2$ are $c_1$-qi sections and $x_i\in X_i \cap X^{'}_i$, $i=1,2$.
Then the Hausdorff distance between the geodesics joining $x_1,x_2$ in the subspaces
$C_L(X_1,X_2)$ and $C_L(X^{'}_1,X^{'}_2)$ is at most $D_{\ref{curv-defn}}(c_1,L)$. 
\end{cor}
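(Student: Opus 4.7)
The strategy is to use tripod bundles as ``bridges'' and invoke Corollary \ref{ladder-qi-embed} four times, chaining geodesics through a sequence of auxiliary spaces. Concretely, I would compare the geodesic $\gamma \subset C_L(X_1,X_2)$ joining $x_1,x_2$ with the geodesic $\gamma' \subset C_L(X_1',X_2')$ joining $x_1,x_2$ via a chain of intermediate geodesics, each pair differing in only one $c_1$-qi section.

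First I would consider the tripod bundle $C(X_1,X_2,X_1')$. Since $C_L(X_1,X_2)\subset C_L(X_1,X_2,X_1')$ and $x_1,x_2$ both lie in $C_L(X_1,X_2)$, Corollary \ref{ladder-qi-embed} gives a geodesic $\eta_1$ of $C_L(X_1,X_2,X_1')$ joining $x_1,x_2$ with $Hd(\gamma,\eta_1)\leq D_{\ref{ladder-qi-embed}}$. Next observe that $x_1\in X_1'$ and $x_2\in X_2$, so $x_1,x_2\in C_L(X_1',X_2)\subset C_L(X_1,X_2,X_1')$ as well. Applying Corollary \ref{ladder-qi-embed} to the same tripod bundle (just relabeling the three sections) produces a geodesic $\eta_2$ of $C_L(X_1',X_2)$ joining $x_1,x_2$ with $Hd(\eta_1,\eta_2)\leq D_{\ref{ladder-qi-embed}}$.

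Now I would switch the second section. Consider the tripod bundle $C(X_1',X_2,X_2')$; both $x_1,x_2$ lie in $C_L(X_1',X_2)\subset C_L(X_1',X_2,X_2')$, so Corollary \ref{ladder-qi-embed} yields a geodesic $\eta_3$ of $C_L(X_1',X_2,X_2')$ joining $x_1,x_2$ with $Hd(\eta_2,\eta_3)\leq D_{\ref{ladder-qi-embed}}$. Finally, $x_1\in X_1'$ and $x_2\in X_2'$ give $x_1,x_2\in C_L(X_1',X_2')\subset C_L(X_1',X_2,X_2')$, so one last application of Corollary \ref{ladder-qi-embed} produces $Hd(\eta_3,\gamma')\leq D_{\ref{ladder-qi-embed}}$. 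The triangle inequality for Hausdorff distance then gives $Hd(\gamma,\gamma')\leq 4D_{\ref{ladder-qi-embed}}$, so one may set $D_{\ref{curv-defn}}(c_1,L):=4D_{\ref{ladder-qi-embed}}(c_1,L)$.

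The only delicate point is bookkeeping: at each step one must check that both endpoints $x_1,x_2$ lie in the ``smaller'' ladder, so that Corollary \ref{ladder-qi-embed} actually applies, and that the common $c_1$-qi section among the three used in forming each intermediate tripod really is one of the relevant sections containing $x_1$ or $x_2$. No additional geometric input beyond Corollary \ref{ladder-qi-embed} (itself a consequence of Proposition \ref{final-prop}) is needed.
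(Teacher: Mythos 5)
Your proof is correct and follows essentially the same route as the paper's: the paper simply states that the result "follows from Proposition \ref{final-prop} and Corollary \ref{ladder-qi-embed} applied successively to the tripod bundles $C_L(X_1,X_1',X_2)$ and $C_L(X_1',X_2,X_2')$," and your four applications of Corollary \ref{ladder-qi-embed} across these same two tripod bundles (passing through the intermediate ladder $C_L(X_1',X_2)$, whose endpoints $x_1\in X_1'$, $x_2\in X_2$ make each step applicable) is exactly this successive argument spelled out, yielding $D_{\ref{curv-defn}}=4D_{\ref{ladder-qi-embed}}$.
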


\begin{proof}
This follows from Proposition $\ref{final-prop}$ and Corollary $\ref{ladder-qi-embed}$
applied successively to the tripod bundles $C_L(X_1,X^{'}_1,X_2)$ and $C_L(X^{'}_1,X_2,X^{'}_2)$.
\end{proof}


\noindent {\bf Concluding the proof of  Proposition \ref{final-prop0}:}
 Proposition \ref{final-prop}, 
 Corollary \ref{ladder-qi-embed} and 
Corollary \ref{curv-defn} together give precisely the statement of Proposition \ref{final-prop0}. $\Box$

\section{Consequences and Applications} A number of consequences of Theorem \ref{combthm} are collected together in this section.

\subsection{Sections, Retracts and Cannon-Thurston maps}

We shall say that an exact sequence of finitely generated groups 
$1\rightarrow K\rightarrow G\rightarrow Q\rightarrow 1$
satisfies {\it bounded flaring} if the associated metric graph bundle (cf. Example \ref{eg-mbdl}) of Cayley graphs does. An immediate consequence of Theorem \ref{combthm}
coupled with the existence of  qi-sections from Theorem \ref{qi-mosher} is the following {\it converse} to (the second part of) Mosher's Theorem \ref{qi-mosher}.

\begin{theorem} \label{combthmgps} 
Suppose that the short exact sequence of finitely generated groups 

\begin{center}
$1\rightarrow K\rightarrow G\rightarrow Q\rightarrow 1$.
\end{center}
satisfies a flaring condition such that $K, Q$ are word hyperbolic and $K$ is non-elementary. 
Then $G$ is hyperbolic. \end{theorem}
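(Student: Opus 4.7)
The plan is to recognize the short exact sequence as producing a metric graph bundle of the kind handled by Theorem~\ref{combthm}, and then to verify each of the four hypotheses of that theorem in turn. Starting from a generating set $S$ of $G$ containing a generating set $S_1$ of $K$, as in Example~\ref{eg-mbdl}, we obtain an $(f,K_0)$-metric graph bundle $p:\Gamma(G,S)\to \Gamma(Q,T)$ whose horizontal spaces are all abstractly isometric to $\Gamma(K,S_1)$ (for suitable parameters $f,K_0$). The induced path metric on a fiber $F_b$ is precisely the word metric on $K$ with respect to $S_1$, because the edges of $\Gamma(G,S)$ lying inside $F_b$ are exactly those labelled by elements of $S_1$.

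First I would check the coarse-geometric hypotheses (1), (2), (3) of Theorem~\ref{combthm}. For (1), $B=\Gamma(Q,T)$ is hyperbolic because $Q$ is word hyperbolic. For (2), each fiber $F_b$ is isometric to $\Gamma(K,S_1)$, hence uniformly $\delta'$-hyperbolic since $K$ is word hyperbolic. For (3), the coarse surjectivity of the barycenter maps is where non-elementarity of $K$ is used: since $K$ is non-elementary hyperbolic, $\partial K$ has infinitely many points and in particular contains three distinct fixed points $\xi_1,\xi_2,\xi_3$ of pairwise independent loxodromic elements. Let $b_0\in K$ be a barycenter of the triple $(\xi_1,\xi_2,\xi_3)$. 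For any $x\in K$, the translated triple $(x\xi_1,x\xi_2,x\xi_3)\in\partial^3 K$ has a barycenter within a bounded distance of $xb_0$, since $K$ acts by isometries on itself and on $\partial K$ and barycenters are coarsely equivariant. Hence every vertex of $\Gamma(K,S_1)$ lies within $d_{S_1}(e,b_0)+C$ of the image of the barycenter map, which is the desired uniform coarse surjectivity (uniformity across fibers being automatic since all fibers are isometric). Condition~(4), the flaring condition, is part of the hypothesis.

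With all four hypotheses of Theorem~\ref{combthm} verified, we conclude that the total space $\Gamma(G,S)$ is a hyperbolic metric space, and therefore $G$ is word hyperbolic. I do not anticipate any serious obstacle: the existence of the qi-section provided by Proposition~\ref{existence-qi-section} (equivalently Theorem~\ref{qi-mosher}) is already subsumed in the machinery of Theorem~\ref{combthm}, and the verification of the barycenter-surjectivity condition is the only hypothesis requiring a small argument beyond simply quoting the given data. The mildest subtlety is to confirm that the flaring condition as formulated for the exact sequence (in terms of $k$-qi lifts of geodesics in $\Gamma(Q,T)$) coincides with Definition~\ref{defn-flare} for the associated metric graph bundle; but this is immediate from the construction in Example~\ref{eg-mbdl}, since qi-lifts of geodesics in the base of the bundle are exactly the objects controlled by the flaring hypothesis on the exact sequence.
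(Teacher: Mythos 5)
Your proposal is correct and follows the same route as the paper: identify the exact sequence with the metric graph bundle of Example~\ref{eg-mbdl} and apply Theorem~\ref{combthm}. The only slight divergence is that you verify hypothesis~(3) directly, using the coarse equivariance of barycenters under the isometric left action of $K$ on $\Gamma(K,S_1)$ and $\partial K$, whereas the paper instead cites Mosher's Theorem~\ref{qi-mosher} to supply the qi sections that condition~(3) is ultimately used to produce (via Proposition~\ref{existence-qi-section}); your version is arguably the cleaner reading, since it matches the hypotheses of Theorem~\ref{combthm} exactly as stated.
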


 Theorem \ref{qi-mosher} was generalized by Pal \cite{pal-piams} as follows.

\begin{theorem} \label{qi-pal} {\bf (Pal \cite{pal-piams})} 
Suppose we have a short exact sequence of pairs of finitely generated groups 
\[
1\rightarrow (K,K_1)\rightarrow (G,N_G(K_1))\stackrel{p}{\rightarrow}(Q,Q_1)\rightarrow 1
\]
with $K$ strongly  hyperbolic relative to a subgroup $K_1$ such that $G$ preserves cusps, i.e.
 for all $g\in G$ there exists $h\in K$ with
 $gK_1g^{-1}=hK_1h^{-1}$. Then there exists a $(k,\epsilon)-$quasi-isometric section $s\colon Q \to G$ for some constants $k\geq1$,
$\epsilon\geq 0$. 
Further, $Q_1=Q$ and there is a quasi-isometric section $s\colon Q\to N_G(K_1)$ satisfying 
\[
\frac{1}{R}d_Q(q,q')-\epsilon \leq d_{N_G(K_1)}(s(q),s(q'))\leq Rd_Q(q,q')+\epsilon
\]
where $q,q'\in Q$ and $R\geq1$,
$\epsilon \geq 0$ are constants. In addition, if $G$ is weakly hyperbolic relative to $K_1$, then $Q$ is hyperbolic.
\end{theorem}

The setup of Theorem \ref{qi-pal} naturally gives a metric graph bundle $P: X \rightarrow Q$ of spaces, where $Q$ is the quotient group and fibers
are isometric to the coned off spaces $\hhat K$ obtained by electrocuting copies of $K_1$ in $K$.

We shall now use  Theorem \ref{retract}.
Theorem \ref{retract} is proven in \cite{mitra-ct} in the context of an exact sequence of finitely generated
groups $1\rightarrow N \rightarrow G \rightarrow Q\rightarrow 1$, with $N$ hyperbolic; but all that the proof requires is the existence of qi sections
(which follows in the context of groups by the qi section  Theorem \ref{qi-mosher} of Mosher).

As in \cite{mitra-ct}, the existence of a qi-section through each point of $X$ guarantees, via Theorem \ref{retract}, the existence of a continuous extension to the
boundary (also called a  Cannon-Thurston map \cite{CTpub} \cite{CT}) of the map $i_b: F_b \rightarrow X$ provided
$X$ is hyperbolic. The proof is identical to that in \cite{mitra-ct} and we omit it here,
 referring the reader to \cite{mitra-ct} for details. Combining this fact with Theorem \ref{combthm} we have the following.
\begin{theorem}  \label{ct}
Suppose $p:X\rightarrow B$ is a metric (graph) bundle with the following properties:\\
$(1)$ $B$ is a $\delta$-hyperbolic metric space.\\
$(2)$ Each of the fibers $F_b$, $b\in B$ ($b \in \mathcal{V} (B)$) is a $\delta^{'}$-hyperbolic 
metric space with respect to the induced path metric from $X$.\\
$(3)$ The barycenter maps $\partial^3F_b \rightarrow F_b$, $b\in B$ ($b \in \mathcal{V} (B)$) are uniformly coarsely  surjective.\\
$(4)$ The metric  (graph) bundle satisfies a flaring condition.\\
Then the inclusion $i_b: F_b \rightarrow X$ extends continuously to a map
$\hat{i} : \widehat{F_b} \rightarrow \widehat{X}$ between the Gromov compactifications.
\end{theorem}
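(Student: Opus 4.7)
The plan is to verify the Cannon--Thurston criterion of Mitra (see Lemma~$2.1$ of \cite{mitra-ct}): given a proper $1$-Lipschitz embedding $i\colon Y \to X$ of hyperbolic geodesic spaces with basepoint $p \in Y$, a continuous extension $\widehat{i}\colon \widehat Y \to \widehat X$ to the Gromov compactifications exists provided that for every $M \geq 0$ there exists $N \geq 0$ such that every $Y$-geodesic $\lambda$ with $d_Y(p, \lambda) \geq N$ has the property that any $X$-geodesic joining the endpoints of $i(\lambda)$ lies entirely outside $B_X(i(p), M)$. In our setting, $Y = F_b$ is $\delta'$-hyperbolic by hypothesis~$(2)$ and $X$ is hyperbolic by Theorem~\ref{combthm}; the inclusion $i_b$ is $1$-Lipschitz and metrically proper by Definition~\ref{defn-mbdle}, so the criterion applies.

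The geometric input needed to verify it is supplied by three results from earlier in the paper. First, Proposition~\ref{existence-qi-section} produces, for any pair of points $x, y \in F_b$, uniform $K_0$-qi sections $X_1 \ni x$ and $X_2 \ni y$; these determine the ladder $C(X_1, X_2)$ whose intersection with $F_b$ is precisely the horizontal fiber geodesic $\lambda = [x, y]_b$. Second, Proposition~\ref{main-prop} shows that a sufficiently large neighborhood of $C(X_1, X_2)$ with the induced path metric from $X$ is uniformly Gromov-hyperbolic. Third, Theorem~\ref{retract} supplies a uniform $C$-coarse Lipschitz retraction $\Pi = \Pi_{X_1, X_2}\colon X \to C(X_1, X_2)$ acting fiberwise by $d_z$-nearest-point projection onto $C(X_1, X_2) \cap F_z$; in particular $\Pi(p) \in \lambda$ is the $F_b$-nearest-point projection of $p$ onto $\lambda$, so $d_b(p, \Pi(p)) = d_b(p, \lambda)$.

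With these ingredients, the verification of Mitra's criterion proceeds verbatim as in \cite{mitra-ct}. Briefly, if some $X$-geodesic $[x, y]_X$ met $B_X(p, M)$ at a point $q$, coarse Lipschitzness of $\Pi$ would give $d_X(\Pi(p), \Pi(q)) \leq CM + C$. Combining this estimate with the hyperbolicity of $X$ and of $C(X_1, X_2)$ via the standard slim-triangle and retract bookkeeping of \cite{mitra-ct}, one extracts an $X$-distance bound $d_X(p, \Pi(p)) \leq \phi(M)$ depending only on $M$. The metric properness of $(F_b, d_b) \hookrightarrow X$ (Condition~$(1)$ of Definition~\ref{defn-mbdle}) then upgrades this to $d_b(p, \lambda) = d_b(p, \Pi(p)) \leq f(\phi(M))$, and choosing $N > f(\phi(M))$ completes the verification.

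The main obstacle, and the reason for the appeal to \cite{mitra-ct}, is the intermediate step of bounding $d_X(p, \Pi(p))$ by a function of $M$ alone: $\Pi$ is coarse Lipschitz in the ambient $X$-metric but the fiberwise projection makes $d_X(q, \Pi(q))$ hard to control directly, so one must exploit the hyperbolicity of the ladder together with the fiber-wise structure of $\Pi$ to force $\Pi(q)$ and $q$ into a bounded $X$-neighborhood of one another. Mitra's argument carries out precisely this reasoning in the Cayley-graph setting of a hyperbolic extension, using only the combinatorics of qi-sections, the hyperbolicity of ladders, and the coarse Lipschitz retract---all of which are now available in the general metric-bundle setting via Proposition~\ref{existence-qi-section}, Proposition~\ref{main-prop}, and Theorem~\ref{retract}. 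Hence his proof transposes without essential change.
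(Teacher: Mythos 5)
Your argument follows exactly the route the paper itself takes: establish hyperbolicity of $X$ via Theorem~\ref{combthm}, produce qi-sections via Proposition~\ref{existence-qi-section}, apply the coarse Lipschitz retraction of Theorem~\ref{retract} onto ladders, and then defer to Mitra's criterion and bookkeeping in \cite{mitra-ct}. The paper in fact contents itself with the single sentence ``the proof is identical to that in \cite{mitra-ct} and we omit it here,'' so your proposal merely spells out the ingredients it leaves implicit; the chain $d_X(q,\Pi(q))$ bounded $\Rightarrow d_X(p,\Pi(p))\leq \phi(M) \Rightarrow d_b(p,\lambda)\leq f(\phi(M))$ is the correct skeleton of Mitra's verification (noting that hyperbolicity of $X$ and Theorem~\ref{retract} already suffice for the quasigeodesic stability step, so the appeal to Proposition~\ref{main-prop} is not strictly necessary).
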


\subsection{Hyperbolicity of base and flaring}
In our main combination theorem $\ref{combthm}$ flaring was a sufficient 
condition. In this subsection and the next
we investigate its necessity. This issue is closely linked with 
hyperbolicity of the base space $B$.
We study it with special attention to hyperbolic and relatively 
hyperbolic groups as in Theorems \ref{qi-mosher} and Theorem \ref{qi-pal}.

A Theorem of Papasoglu (cf. \cite{papasoglu-thesis}, Lemma 3.8 of \cite{papasoglu-qiinv}) states the following.


\begin{theorem} \cite{papasoglu-thesis, papasoglu-qiinv} Let $G$ be a finitely generated group and let $\Gamma$
be the
 Cayley graph of $G$ with respect to  a finite generating set. If there is an $\epsilon$
such that geodesic bigons in $\Gamma$ are $\epsilon$-thin then $G$ is hyperbolic. \\
Similarly, let $X$ be a geodesic metric space such that
for every $K$ there exists $C$ such that $K$-quasigeodesic bigons are $C$-thin, then $X$ is hyperbolic. \\
In fact there is some (universal) constant $C>0$ such that
if $G$ is finitely generated and non-hyperbolic, then $\forall  R > 0$ there is some $R^\prime > R$ and a
$(C,C)$-quasi-isometric embedding of
a Euclidean circle of radius $R^\prime$ in $\Gamma$.\label{bigon} \end{theorem}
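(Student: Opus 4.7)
The plan is to treat the third (quantitative) statement as the main assertion; the first two will then follow by contrapositive reasoning. Indeed, a $(C,C)$-quasi-isometric embedding of a Euclidean circle of radius $R^\prime$ produces, by splitting the circle at two antipodal points, a geodesic (or quasi-geodesic) bigon of length $\pi R^\prime$ whose two sides are separated by $\Theta(R^\prime)$ in $\Gamma$; letting $R^\prime \to \infty$ immediately contradicts any uniform thin-bigons assumption, giving both the group statement and the quasi-geodesic statement.

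For the third statement, suppose $\Gamma=\Gamma(G,S)$ is not hyperbolic. The first step is to show that $\Gamma$ contains bigons of arbitrarily large \emph{fatness}: for every $N$ there exist geodesics $\alpha,\beta$ with common endpoints and a point $p\in\alpha$ with $d(p,\beta)\geq N$. One conceptual route is via asymptotic cones: non-hyperbolicity of $\Gamma$ is equivalent to some asymptotic cone $\Gamma_\omega$ failing to be an $\mathbb{R}$-tree, and any non-tree geodesic metric space carries a non-trivial simple geodesic bigon whose lifts to $\Gamma$ at the corresponding scales $s_n\to\infty$ give the desired fat bigons. Alternatively, one can argue directly by induction on the thin-bigons constant using local-to-global tricks for geodesics in $\Gamma$.

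The next and most delicate step is to refine such fat bigons so that the width $W$ (the maximal transverse distance between the two sides) and the length $L$ become comparable, $W\asymp L$, with a universal proportionality constant. The standard device is a subdivision argument: a bigon of length $L\gg W$ is sliced into roughly $L/W$ consecutive sub-bigons of length of order $W$ each, and a pigeon-hole together with geodesicity of the two sides forces at least one such sub-bigon to still have width of order $W$, thereby producing a bigon with length and width comparable. Once such a bigon is in hand, the concatenation $\alpha\cup\bar\beta$ of its two sides is the candidate $(C,C)$-quasi-isometric image of a Euclidean circle of radius $\asymp L$. The upper bound $d_\Gamma\leq C\,d_{\mathrm{circle}}+C$ is immediate since arc-length in the loop dominates circle-distance, while the lower bound reduces to two cases: for points on the same side it follows from the side being a geodesic, and for points on opposite sides it follows from the width estimate $W\asymp L$ combined with nearest-point projection onto a side.

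The hardest part is extracting the \emph{universal} constant $C$: the ratio $W/L$ obtained after the refinement must be bounded below independently of $G$, of the generating set $S$, and of the initial fat bigon. This is the true content of Papasoglu's argument. The options are either a careful combinatorial subdivision argument carried out inside $\Gamma$ itself, in which one tracks the constants through the iteration and verifies that they stabilize at an absolute value, or the more conceptual route through asymptotic cones, in which one invokes the fact that a simple closed curve inside a geodesic metric space can, by an averaging/smoothing procedure, be reparametrized as a uniformly bilipschitz round circle, and one then pulls this bilipschitz embedding back to $\Gamma$ through the ultralimit construction. Either way, this is where the bulk of the work lies and where the universality of $C$ genuinely enters.
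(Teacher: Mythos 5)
Your primary route---produce arbitrarily thick geodesic bigons when $G$ is not hyperbolic, localize around the fattest cross-section, truncate, and show the resulting loop is a uniform quasi-circle---is essentially Papasoglu's argument, and it coincides in spirit with the sketch the authors received from Papasoglu (the paper only cites the theorem, but a commented-out sketch, acknowledged to Papasoglu, is present in the source): one takes $t$ with $d(\alpha(t),\beta)=M$, lets $s_1,s_2$ be the first parameters on either side of $t$ at which the distance to $\beta$ falls to $M/3$, truncates at $t_i=\min(s_i,M)$, and closes the two cut ends to $\beta$ by shortest paths, producing a ``rectangle'' of perimeter $\Theta(M)$ whose $\alpha$- and $\beta$-sides stay $\geq M/3$ apart. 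Two adjustments to your framing. First, the pigeonhole over $L/W$ sub-windows is unnecessary: the window around the point of maximal width automatically inherits width $\Theta(W)$ under the truncation rule above (since it contains the witness of the width $W$, and the triangle inequality forces $s_i \geq 2M/3$), and this is precisely what makes the proportionality constant absolute. Second, after slicing you do not obtain a bigon but a quadrilateral: the cut ends must be closed by shortest paths to $\beta$, and it is in controlling these closing segments---bounding them above by the local cross-sectional distance, and keeping them away from the opposite corner---that the verification of the $(C,C)$-estimate actually lives. Your write-up, which speaks of ``the concatenation $\alpha\cup\bar\beta$ of its two sides,'' glosses over this.

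The more serious gap is the proposed ``conceptual alternative'' via asymptotic cones. It is not true that a simple closed geodesic bigon in an arbitrary geodesic metric space admits a reparametrization as a uniformly bi-Lipschitz round circle: a bigon can be pinched arbitrarily thin in the middle relative to its length, so no averaging or smoothing procedure can manufacture a universal bi-Lipschitz constant. Passing to a non-tree asymptotic cone yields a non-degenerate bigon, but not a balanced one; one must still truncate around the maximal-width cross-section exactly as in the combinatorial argument. The cone route therefore provides no shortcut to the universality of $C$, and your appeal to it would not close the proof.
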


We now look at short exact sequences of finitely generated groups.

\begin{prop} Consider a short exact sequence of finitely generated groups 
\begin{center}
$1\rightarrow K\rightarrow G\rightarrow Q\rightarrow 1$.
\end{center}
such that $K$ is non-elementary word hyperbolic but $Q$ is not hyperbolic. Then the short exact sequence cannot satisfy
a flaring condition. \label{converseflare}\end{prop}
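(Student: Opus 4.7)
The plan is to argue by contradiction: suppose the short exact sequence does satisfy a flaring condition, and derive that $Q$ must then be hyperbolic, giving the contradiction. The three main ingredients will be Mosher's qi section (Theorem \ref{qi-mosher}), the rigidity of the displacement function along geodesics forced by flaring (Lemma \ref{qc-level-set} and Corollary \ref{bdd-flaring}), and Papasoglu's characterization of hyperbolicity (Theorem \ref{bigon}).

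First I will set up the key geometric objects. By Theorem \ref{qi-mosher} applied to our exact sequence, there is a $k$-qi section $\sigma\colon Q \to G$ (with $K$ non-elementary hyperbolic, Mosher's hypotheses are satisfied). For any $h \in K$, the maps $\sigma_1 = \sigma$ and $\sigma_2 = h\sigma$ are two uniform qi sections of the metric graph bundle $p\colon \Gamma(G) \to \Gamma(Q)$ associated to the exact sequence (Example \ref{eg-mbdl}), since $K$ is normal and left multiplication by $h \in K$ is an isometry of $\Gamma(G)$ preserving fibers. Normalizing so that $\sigma(e) = e$, the fiberwise displacement function $\phi(b) = d_b(\sigma_1(b), \sigma_2(b))$ on $Q$ satisfies $\phi(e) = |h|_K$. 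Since $K$ is non-elementary hyperbolic, we can choose $h$ of arbitrarily large $K$-length; in particular, of length $\geq M_k$, so that $\phi$ exceeds the flaring threshold at some vertex.

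Under the flaring hypothesis, Lemma \ref{qc-level-set}, whose proof uses only the flaring condition (and not hyperbolicity of the base), yields strong rigidity of $\phi$ along geodesics in $Q$: on any geodesic $\gamma$ with $\phi(\gamma(0)) \geq M_k$, the function $\phi\circ\gamma$ must grow exponentially at rate $\lambda_k$ in at least one direction, and the sublevel set $U_A = \{b : \phi(b) \leq A\}$ is $K$-quasiconvex in $Q$ for large $A$. Additionally, Corollary \ref{bdd-flaring} yields the multiplicative Lipschitz control $\phi(b') \leq \mu_k(d(b,b')) \max(\phi(b),1)$. The main step is to couple this rigidity with Papasoglu's Theorem \ref{bigon}: since $Q$ is not hyperbolic, for every $R$ there exists a $(C,C)$-quasi-isometric embedding of a Euclidean circle of radius $R' > R$ into $\Gamma(Q)$. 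Translating such a circle by $Q$ (which acts on itself by isometries preserving the Cayley graph structure, though not preserving $\phi$), I can arrange for the circle to pass near a point where $\phi \geq M_k$. The circle is a closed quasigeodesic loop, so its $Q$-diameter is $\Theta(R')$, yet the multiplicative Lipschitz bound constrains $\phi$ on the circle in terms of a single base value. I plan to exhibit a long $Q$-geodesic that tracks a large arc of this qi-embedded circle, and show that along this geodesic $\phi$ is simultaneously forced to be bounded (by the periodic nature of the circle together with the multiplicative bound from Corollary \ref{bdd-flaring}) and forced to grow exponentially at rate $\lambda_k^{1/n_k}$ (by iterating the flaring condition); for $R'$ chosen large enough, these two constraints are incompatible, giving the desired contradiction.

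The main obstacle will be bridging the qi-embedded Euclidean circle, which is only a quasigeodesic loop in $Q$, with actual $Q$-geodesics to which the flaring condition applies. The technical crux is to exploit the fact that on a large Euclidean circle, short arcs are nearly straight and hence close to $Q$-geodesics of comparable length; concatenating and rerouting such pieces should produce $Q$-geodesic segments that stay within bounded $Q$-distance of the qi-embedded circle over long ranges. Making this calibration precise — so that the exponential flaring rate $\lambda_k^{1/n_k}$ outstrips the multiplicative Lipschitz constant $\mu_k$ along a sufficiently long tracking geodesic — will be the heart of the argument.
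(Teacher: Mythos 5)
Your setup mirrors the paper's exactly: Mosher's qi section theorem produces two uniform qi sections (your $\sigma_1 = \sigma$ and $\sigma_2 = h\sigma$ for $h\in K$ of large length is precisely how the paper realizes two sections $s_1,s_2$ with girth $> A_0$ on the circle), and Papasoglu's Theorem \ref{bigon} supplies arbitrarily large $(C,C)$-qi embedded Euclidean circles in $Q$. But the way you propose to extract the contradiction goes wrong on two counts. The calibration you sketch --- playing the flaring rate $\lambda_k^{1/n_k}$ against the bounded-flaring constant $\mu_k$ --- cannot produce a contradiction by itself: nothing in the definitions forces $\lambda_k^{1/n_k}$ to exceed $\mu_k(1)=g(2k)$, so the exponential lower bound from flaring (in at least one direction) and the Lipschitz upper bound from Corollary \ref{bdd-flaring} are entirely compatible along a geodesic. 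And your proposed bridge from the circle to genuine geodesics --- that short arcs of a large Euclidean circle are nearly straight and hence close to $Q$-geodesics --- presupposes exactly the hyperbolicity of $Q$ that is in question: a $(C,C)$-quasigeodesic arc in a non-hyperbolic Cayley graph can stray linearly far (in its own length) from the geodesic joining its endpoints, no matter how large the circle, so choosing $R'$ large does not straighten anything.

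The idea you are missing, and which makes the paper's proof short, is a maximality argument in place of a calibration. Having arranged $\phi(b)=d_b(s_1(b),s_2(b))$ to be $> A_0$ over the whole circle, pick the point $q$ on the circle where $\phi\circ\tau_l$ is maximal. Along the arc $\overline{q_1qq_2}$ of the circle through $q$ --- a $(C,C)$-quasigeodesic in $Q$, over which $s_1\circ\tau_l$ and $s_2\circ\tau_l$ are uniform qi lifts --- the fiberwise separation then attains a maximum at the interior point $q$. This is the opposite of what flaring dictates: flaring forces $\phi$ to grow by the factor $\lambda_k$ in at least one direction away from any point where $\phi\geq M_k$, so it cannot peak in the middle of the arc. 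The interior maximum is the entire contradiction; no tracking geodesic, arc-straightening, or rate comparison is needed, and that is the step your proposal does not have.
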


\begin{proof}
By Theorem \ref{bigon}, $Q$ contains $(C,C)$ qi embeddings
of Euclidean circles  of  arbitrarily large radius. Now, given any $l, A_0$
construct \\
a) a $(C,C)$ qi embedding $\tau_l$ of a  Euclidean circle $\sigma$ of circumference
$> 4l$ in $Q$\\
b) two qi sections $s_1, s_2$ of $Q$ into $G$ by Theorem \ref{qi-mosher}
such that $d_h(s_1\circ \tau_l (\sigma ), s_2\circ \tau_l(\sigma )) > A_0$.

Let $q\in \sigma$ be such that the horizontal distance $d_q(s_1\circ \tau_l (q ), s_2\circ \tau_l
(q))$ in the fiber $F_q$ over $q$ is maximal. Let the two arcs of length $l$ in $\tau_l$ starting
at $q$ (in opposite directions) end at $q_1, q_2$. Let $\overline{q_1qq_2}$ denote the union
of these arcs. Then the two quasigeodesics
$s_1\circ \tau_l (\overline{q_1qq_2} ), s_2\circ \tau_l (\overline{q_1qq_2} )$ violate flaring
as the  horizontal distance  achieves a maximum at the midpoint $q$. 
\end{proof}

We next turn to the relatively hyperbolic situation described in Theorem \ref{qi-pal} with $Q$ non-hyperbolic, i.e. we 
assume that $K$ is (strongly) hyperbolic relative to $K_1$. We have an analog of Proposition \ref{converseflare}
in this situation too. The proof is the same as that of the above proposition.
The existence of qi sections in this case, follows from Theorem $\ref{qi-pal}$.

\begin{lemma}
Suppose we have a short exact sequence of finitely generated groups 
\[
1\rightarrow (K,K_1)\rightarrow (G,N_G(K_1))\stackrel{p}{\rightarrow}(Q,Q_1)\rightarrow 1
\]
such that $K$ strongly hyperbolic relative to the cusp subgroup $K_1$ and $G$ preserves cusps, 
but $Q$ is not hyperbolic. Let $P: X \rightarrow Q$
be the associated metric graph bundle of spaces, where $Q$ is the quotient group and fibers $F_q$
are the coned off spaces $\hhat K$ obtained by electrocuting copies of $K_1$ in $K$. 
Then $X$ does not satisfy flaring.
\label{converseflarerh}\end{lemma}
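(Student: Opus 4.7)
The plan is to adapt the argument of Proposition \ref{converseflare} to the relatively hyperbolic setting, with two new ingredients: Theorem \ref{qi-pal} replaces Theorem \ref{qi-mosher} as the source of qi sections, and the normalizer structure of the section is exploited to control horizontal distances between its $K$-translates in the electrocuted fibers.

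First, by Theorem \ref{bigon}, the non-hyperbolicity of $Q$ furnishes a universal $C>0$ and, for every $R>0$, a $(C,C)$-quasi-isometric embedding $\tau_R$ of a Euclidean circle of radius $\geq R$ into $Q$. Second, Theorem \ref{qi-pal} yields a quasi-isometric section $s:Q\to N_G(K_1)\subset G$, which after post-composition with the electrocution of $K_1$-cosets in each fiber becomes a qi section $Q\to X$ (abusively still denoted $s$). The crucial feature of Pal's theorem is that $s$ lands in the normalizer: for any $k\in K$, left translation $L_k:X\to X$ is a fiber-preserving isometry of the bundle (since $K\triangleleft G$ and $L_k$ permutes $K_1$-conjugate cosets within each fiber), so $s_k:=L_k\circ s$ is another qi section with the same qi constants as $s$. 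Moreover, in the identification $F_q\cong \hat{K}$, conjugation by $s(q)^{-1}$ is an automorphism of $K$ preserving $K_1$ (because $s(q)\in N_G(K_1)$), hence an isometry of $\hat{K}$ fixing $1$; therefore
\[
d_{F_q}(s(q),s_k(q))=d_{\hat{K}}\bigl(1,\,s(q)^{-1}k\,s(q)\bigr)=d_{\hat{K}}(1,k),
\]
which is independent of $q$. Assuming $K$ is non-elementary relative to $K_1$ (so $\hat{K}$ is unbounded), we can pick $k$ with $d_{\hat{K}}(1,k)$ arbitrarily large, producing two qi sections $s,s_k$ of $Q$ into $X$ with constant horizontal distance equal to any prescribed $A_0$.

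With these two qi sections in hand, the remainder of the argument repeats verbatim the latter portion of the proof of Proposition \ref{converseflare}: given $l$ and $A_0$, choose $k\in K$ with $d_{\hat{K}}(1,k)>A_0$, a long $(C,C)$-qi embedded circle $\tau_R$ in $Q$ of circumference $>4l$, and any point $q$ on the circle. The restrictions of $s$ and $s_k$ to the two arcs of length $l$ on either side of $q$ are qi lifts along $\tau_R$ whose horizontal distance is constant (hence maximal at the midpoint $q$), so flaring is violated for these lifts. The main (and essentially only) obstacle is the mild bookkeeping step of passing from quasigeodesic arcs on the embedded circle to honest geodesics in $Q$ as required by Definition \ref{defn-flare}: stability of qi lifts together with bounded flaring (Corollary \ref{bdd-flaring}) shows that nearby geodesic segments in $Q$ admit qi lifts at bounded horizontal distance from the segments $s\circ\tau_R$, $s_k\circ\tau_R$, so the failure of flaring persists, contradicting any supposed flaring condition on the bundle.
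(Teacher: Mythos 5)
Your high-level strategy is the paper's own (it refers back to Proposition \ref{converseflare} and substitutes Theorem \ref{qi-pal} for Theorem \ref{qi-mosher} as the source of qi sections), but the key new step you supply is wrong. You claim that
\[
d_{F_q}(s(q),s_k(q))=d_{\hat K}\bigl(1,\,s(q)^{-1}k\,s(q)\bigr)=d_{\hat K}(1,k)
\]
is independent of $q$, on the grounds that conjugation by $s(q)^{-1}$ is an automorphism of $K$ preserving $K_1$ and therefore an isometry of $\hat K$. The first equality is fine (left translation by $s(q)^{-1}$ is an isometry of $X$ taking $F_q$ to $\hat K$). The second is false: an automorphism of $K$ preserving $K_1$ setwise need not carry the finite generating set $S_1$ of $K$ to a bounded set, so it need not be an isometry of $\hat K$. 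For example with $K=F(a,b)$ and $K_1=\langle a\rangle$, the $K_1$-preserving automorphism $b\mapsto ab$ sends $b$ (electric distance $1$ from $1$) to $ab$ (electric distance $2$). In fact $c_{s(q)^{-1}}$ is only a quasi-isometry of $\hat K$ whose constants degrade as $|s(q)|_G$ grows, so the horizontal distance between $s$ and $s_k$ is not constant in $q$; for a fixed $k\notin K_1$ it may even collapse to $O(1)$ when $s(q)^{-1}ks(q)$ happens to land in, or electrically near, $K_1$.

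To repair this one must choose $k$ anew for each circle, as the proof of Proposition \ref{converseflare} does implicitly: over the finitely many vertices $q$ on $\tau_l(\sigma)$ the maps $c_{s(q)^{-1}}$ are quasi-isometries of $\hat K$ with a common constant $L=L(l)$, and since $\hat K$ is unbounded one can pick $k$ with $d_{\hat K}(1,k)$ large enough that $d_q(s(q),s_k(q))$ exceeds the flaring threshold for every $q$ on that circle. Note that, unlike in the genuinely hyperbolic case of Proposition \ref{converseflare}, one cannot simply avoid a finite bad set of $k$'s here: electric balls around $1$ in $\hat K$ are infinite, since they contain all of $K_1$. With this girth lower bound in hand, and taking $q$ to be the actual maximizing point (as in the paper) rather than an arbitrary one, the rest of your circle argument goes through; your closing remark about passing from qi arcs to genuine geodesics is the same gloss the paper itself makes in Proposition \ref{converseflare}.
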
 

The rest of this subsection is devoted to proving the following.

\begin{prop}
Suppose we have a short exact sequence of finitely generated groups 
\[
1\rightarrow (K,K_1)\rightarrow (G,N_G(K_1))\stackrel{p}{\rightarrow}(Q,Q_1)\rightarrow 1
\]
with $K$  (strongly) hyperbolic  relative to the cusp subgroup $K_1$ such that $G$ preserves cusps. Suppose further that 
$G$ is  (strongly) hyperbolic  relative to $N_G(K_1)$. Then $Q$ is  hyperbolic. \label{mosher-genlzn} \end{prop}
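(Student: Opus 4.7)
The plan is to combine Proposition \ref{necflaring} with the contrapositive of Lemma \ref{converseflarerh}: the first says that a hyperbolic bundle with uniformly hyperbolic fibers flares, and the second says that if $Q$ is not hyperbolic, then our bundle cannot flare. Together they force $Q$ to be hyperbolic.

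First I would set up the electrocuted metric graph bundle $P: X \to Q$ already used in Lemma \ref{converseflarerh}. Take the Cayley graph $\Gamma(G)$ with a generating set compatible with the short exact sequence, and form the total space $X = \hat{G}$ by electrocuting the cosets of $N_G(K_1)$ in $G$. The natural projection $\Gamma(G) \to \Gamma(Q)$ descends to a map $P : X \to Q$. The condition that $G$ preserves cusps means that for every $g \in G$ there exists $k \in K$ with $gK_1g^{-1} = kK_1k^{-1}$; hence the electrocution of $N_G(K_1)$ in $G$ restricts inside each coset of $K$ to electrocution of the family of $K$-conjugates of $K_1$. Consequently each fiber $F_q = P^{-1}(q)$ is uniformly quasi-isometric to the coned-off space $\hat{K}$, and the resulting $P : X \to Q$ is a metric graph bundle in our sense. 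This is precisely the bundle appearing in Lemma \ref{converseflarerh}.

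Next I would observe that both the total space and the fibers are uniformly hyperbolic. By hypothesis $K$ is strongly hyperbolic relative to $K_1$, so $\hat{K}$ is $\delta_0$-hyperbolic for some uniform $\delta_0$, and hence the fibers $F_q$ satisfy the fiberwise hyperbolicity assumption of Proposition \ref{necflaring}. Similarly, since $G$ is strongly hyperbolic relative to $N_G(K_1)$, the total space $X = \hat{G}$ is hyperbolic. Proposition \ref{necflaring} then applies and yields that the metric graph bundle $P: X \to Q$ satisfies a flaring condition.

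Finally, suppose for contradiction that $Q$ is not hyperbolic. Then the hypotheses of Lemma \ref{converseflarerh} hold verbatim (we have exactly the short exact sequence of pairs, strong relative hyperbolicity of $K$ rel $K_1$, and cusp preservation by $G$), so the same metric graph bundle $P: X \to Q$ cannot satisfy flaring — directly contradicting the previous paragraph. Hence $Q$ is hyperbolic. The main obstacle is really the first step: ensuring that the electrocuted Cayley graph $\hat{G}$, together with its natural projection to $Q$, honestly gives a metric graph bundle whose fibers are uniformly isomorphic to $\hat{K}$, and in particular that the cusp-preservation hypothesis delivers the uniformity needed for Proposition \ref{necflaring} to apply; once this bundle is constructed correctly, the proof reduces to a clean sandwich between Propositions \ref{necflaring} and \ref{converseflarerh}.
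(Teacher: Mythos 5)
Your sandwich strategy — Proposition \ref{necflaring} gives flaring, Lemma \ref{converseflarerh} forbids flaring if $Q$ is not hyperbolic — is appealing, but the first half breaks down because the space you feed into Proposition \ref{necflaring} is not the right object.

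You propose to take $X = \hat{G}$, the electrocution of $G$ along cosets of $N_G(K_1)$. This space is indeed hyperbolic by the hypothesis on $G$, but it is \emph{not} a metric graph bundle over $Q$: each electrocuted set $hN_G(K_1)$ projects under $p$ onto all of $Q$ (since $Q_1 = Q$ here), so its cone point (or collapsed copy) has no well-defined image in $Q$, and the fibers of any attempted projection do not carry the induced $\hat{K}$-metric in a uniformly proper way. The object that genuinely fibers over $Q$ with fibers $\hat{K} = \EE(K,K_1)$ — and which is the bundle actually appearing in Lemma \ref{converseflarerh} — is $\EE(G,K_1)$, the electrocution along cosets of $K_1$, where each coset $gK_1$ projects to the single vertex $p(g)$. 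But $\EE(G,K_1)$ is \emph{not} known to be hyperbolic at this stage: the paper derives its hyperbolicity (in the proof of Proposition \ref{coco}) from Lemma \ref{pel}, and that argument requires hyperbolicity of $Q$ as an input, precisely what you are trying to prove. So either the total space is hyperbolic but not a bundle over $Q$, or it is a bundle over $Q$ but not known hyperbolic; in the second case the argument is circular.

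The paper therefore takes a more hands-on route. It works with the honest metric graph bundle $\Gamma(G)\to\Gamma(Q)$, notes that Pal's Theorem \ref{qi-pal} gives uniform qi sections, and if $Q$ were not hyperbolic it would contain Papasoglu-style qi-embedded Euclidean circles of arbitrary radius (Theorem \ref{bigon}). Using strong relative hyperbolicity of $G$ rel $N_G(K_1)$, one shows the image of each such circle under a qi section must lie $D$-close to a single coset of $N_G(K_1)$; interpolating qi sections inside the resulting ladder then produces, for arbitrarily large radii, two qi sections that start and end boundedly close but lie near distinct $N_G(K_1)$-cosets, contradicting strong relative hyperbolicity of $G$. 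If you want to salvage the bundle-theoretic route, you would need a replacement for the hyperbolicity of the total space that does not presuppose hyperbolicity of $Q$ — e.g. a version of Proposition \ref{necflaring} adapted to the relatively hyperbolic situation — and that is essentially what the paper's direct argument accomplishes.
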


\begin{proof}
We shall argue by contradiction.
Suppose $Q$ is not hyperbolic. 

Let $X$ be a Cayley graph of $G$ with respect to a finite generating set $S$ containing a finite generating set of $K$
(and, for good measure, a finite generating set of $K_1$).  Let $B$ be
the Cayley graph of $Q$ with respect to  $p(S)\setminus \{ 1 \}$.
Then the quotient map $G\rightarrow Q$ gives rise to a metric graph bundle $p:X\rightarrow B$ as before.
This metric graph bundle admits uniform qi sections through each point of $X$ by Theorem $\ref{qi-pal}$.
Also  $B$ is not a hyperbolic metric space. By Theorem \ref{bigon}, there exists $C > 0$ such that for all $r > 0$
we can construct a $(C,C)$-qi embedding $\tau_r$ of  a Euclidean circle $\sigma_r$ of radius bigger than $r$
 in $B$.

\noindent {\bf Claim:} Given
 $k > 0$ there exists $D=D(k)$  such that for any $k$-qi section $s:\mathcal{V} (B)\rightarrow X$ of the metric 
graph bundle $p:X\rightarrow B$, 
$s\circ \tau_r (\sigma_r)$ is contained in a $D$-neighborhood of a coset of $N_G(K_1)$.

\noindent {\em Proof of  claim:}  
Let $\tau=s\circ \tau_r$. Then $\tau$ is
a $k_1:=(kC+k)$-quasi-isometric embedding of $\sigma_r$ in $s(B)$.
Let $u,v$ be a pair of antipodal points of the circle and $a=\tau(u),b=\tau(v)$. 
Let $\sigma_r^1, \sigma_r^2$ be the two arcs of $\sigma_r$ joining $u, v$. Then $\tau(\sigma_r^1), \tau(\sigma_r^2)$
are $k_1-$quasigeodesics joining $a, b$.

Let $d_r$ denote the intrinsic path metric on $\sigma_r$. Since $\tau$ is a qi embedding, it follows that
 for all $C_1 \geq 0$,  there  exists $C_2 \geq 0$ such that for all $r> 0$ and $x\in \sigma_r^1, y \in \sigma_r^2$,
$d_r (x, \{ a, b\}) \geq C_2$ and  $d_r (y, \{ a, b\}) \geq C_2$ implies that $d_X(x,y) \geq C_1$.  Hence
$\tau(\sigma_r^1)\cup \tau(\sigma_r^2)$ is a `thick' quasigeodesic bigon, i.e. except for initial and final subsegments
of length $k_1C_2$, $\tau(\sigma_r^1)$ and $ \tau(\sigma_r^2)$ are separated from each other by at least $\frac{C_1}{k_1}$.

Since $G$ is strongly hyperbolic relative to $N_G(K_1)$, thick quasigeodesic bigons lie in a bounded 
neighborhood of a coset of $N_G(K_1)$ (see Definition \ref{brp} or \cite{farb-relhyp}). The claim follows. $\Box$

We continue with the proof of the proposition. For any $k-$qi section $s:\mathcal{V} (B)\rightarrow X$,
we shall refer to $s\circ \tau_r (\sigma_r) = \tau (\sigma_r)$ as a qi section of the   circle $\sigma_r$.
Let $Y_1,Y_2$ be two $k$-qi sections of a large Euclidean circle $\sigma_r$ in $B$, such that
$Y_1$ and $Y_2$ lie $D-$ close to two {\it distinct} cosets of $N_G(K_1)$ (with $D=D(k)$ as in the Claim above).
Let $W(Y_1,Y_2)$ be the union $\cup_{q\in \tau_r (\sigma_r)}\lambda_q$, where 
$\lambda_q$ is a horizontal geodesic in  $F_q$ joining $Y_1\cap F_q$ to $Y_2\cap F_q$.
Suppose $b,b^{'}$ are images (under $\tau_r$) of antipodal points on $\sigma_r$.
As in the proof of Lemma $\ref{qi-section}$, we know that there exists $k_1(=k_1(k))$
such that for each point $z$ of $\lambda_b$ there exists a
$k_1-$quasi-isometric section of $\sigma_r$ in $W(Y_1,Y_2)$; any such qi section is $D_1(=D_1(k_1))-$close to a coset of $N_G(K_1)$
by the Claim above.

Since $Y_1$, $Y_2$ are close to distinct
cosets of $N_G(K_1)$, we can find (as in Step 1 of Proposition \ref{main-prop}) 
two $k_1-$qi sections $Y^{'}_1$, $Y^{'}_2$ of $\sigma_r$ passing through
$z_1, z_2 \in \lambda_b$  with $d(z_1, z_2) = 1$ such that  $Y^{'}_1$, $Y^{'}_2$ are \\
a) $D_1-$close to two distinct cosets of $N_G(K_1)$,\\
b) both contained in $W(Y_1,Y_2)$.

Suppose $Y^{'}_1,Y^{'}_2$ intersect $\lambda_{b^{'}}$ in $z^{'}_1$ and $z^{'}_2$  respectively.
If $d(z^{'}_1,z^{'}_2)$ is large, in the same way as before, we can construct two $k_2(=k_2(k_1))-$qi sections $Y_3,Y_4$ of $\sigma_r$ contained in 
$W(Y^{'}_1,Y^{'}_2)$ such that \\
a) $Y_3,Y_4$  are $D_2(=D_2(D_1))-$ close to two distinct cosets of $N_G(K_1)$\\
b) $d(Y_3\cap \lambda_{b^{'}},Y_4\cap \lambda_{b^{'}})=1$. 

Thus we have two $k_2-$qi sections of long subarcs 
of $\sigma_r$ that start and end close by in $X$ but lie close to distinct cosets of $N_G(K_1)$.
Since  $r$ can be chosen to be arbitrarily large, this violates strong relative hyperbolicity
of $G$ with respect to the cosets of $N_G(K_1)$, proving the proposition.  
\end{proof}

\subsection{Necessity of Flaring}

In this subsection we prove that flaring is a necessary condition for hyperbolicity of a metric (graph) bundle:

\begin{prop} Let $P: X \rightarrow B$ be a metric (graph) bundle such that \\
$1.$ $X$ is $\delta$-hyperbolic \\
$2.$ There exist $\delta_0$ such that each of the fibers $F_z$, $z\in B$  $ (\mathcal{V}(B))$ is $\delta_0$-hyperbolic
equipped with the path metric  induced from $X$. \\
Then the metric bundle satisfies a flaring condition. 
In particular,  any exact sequence of finitely generated groups 
$1 \rightarrow N \rightarrow G \rightarrow Q \rightarrow 1$
with $N, G$ hyperbolic satisfies a flaring condition.
\label{necflaring} \end{prop}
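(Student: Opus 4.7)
The plan is to argue by contradiction, playing uniform properness of the fiber inclusion against stability of quasi-geodesics in the hyperbolic total space $X$. Suppose the flaring condition fails for some $k\geq 1$. Negating Definition~\ref{defn-flare}, for every choice of parameters $(M_k,\lambda_k>1,n_k)$ one can produce a geodesic $\gamma:[-n_k,n_k]\to B$ and two $k$-qi lifts $\tilde\gamma_1,\tilde\gamma_2$ of $\gamma$ with $M:=d_{\gamma(0)}(\tilde\gamma_1(0),\tilde\gamma_2(0))\geq M_k$ while $\max_{\pm}\, d_{\gamma(\pm n_k)}(\tilde\gamma_1(\pm n_k),\tilde\gamma_2(\pm n_k))<\lambda_k M$. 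Since fiber distance dominates the $X$-distance and the fiber inclusions are uniformly $f$-proper (Definition~\ref{defn-mgbdl}), one extracts the crucial lower bound $d_X(\tilde\gamma_1(0),\tilde\gamma_2(0))\geq f^{-1}(M)$, which tends to infinity with $M$.

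The main step compares two paths in $X$ with common endpoints $p_{\pm}:=\tilde\gamma_1(\pm n_k)$. Path $A$ is the $k$-quasigeodesic $\tilde\gamma_1|_{[-n_k,n_k]}$, while path $B$ is the concatenation of an $X$-geodesic of length $\leq \lambda_k M$ from $p_-$ to $\tilde\gamma_2(-n_k)$, the $k$-quasigeodesic $\tilde\gamma_2|_{[-n_k,n_k]}$, and an $X$-geodesic of length $\leq \lambda_k M$ from $\tilde\gamma_2(n_k)$ to $p_+$. Since $p:X\to B$ is $1$-Lipschitz we have $d_X(p_-,p_+)\geq 2n_k$, and once $n_k$ is chosen sufficiently large compared with $\lambda_k M$, the length of $B$ is comparable to $d_X(p_-,p_+)$, so $B$ becomes a uniform quasigeodesic. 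Lemma~\ref{stab-qg} then bounds the Hausdorff distance between $A$ and $B$ by a constant $D=D(\delta,k)$. Applying this to the midpoint $\tilde\gamma_2(0)\in B$, one finds $s$ with $d_X(\tilde\gamma_2(0),\tilde\gamma_1(s))\leq D$; since $p$ is $1$-Lipschitz we have $|s|\leq D$, and since $\tilde\gamma_1$ is a $k$-qi lift, $d_X(\tilde\gamma_1(s),\tilde\gamma_1(0))\leq kD+k$. The triangle inequality then gives $d_X(\tilde\gamma_1(0),\tilde\gamma_2(0))\leq (k+1)D+k$, an absolute bound, contradicting $d_X(\tilde\gamma_1(0),\tilde\gamma_2(0))\geq f^{-1}(M)$ as soon as $M_k>f((k+1)D+k)$.

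The main obstacle is parameter management: the quasi-geodesic quality of $B$ requires $n_k$ large compared with $\lambda_k M$, whereas the violating $M$ is only bounded below by $M_k$ and may in principle be arbitrarily large. This is dealt with by a two-regime argument. For $M$ in the tractable range $M\lesssim n_k/\lambda_k$ the bigon argument above applies directly. For $M$ outside this range, one invokes bounded flaring (Corollary~\ref{bdd-flaring}), which gives $M\leq \mu_k(n_k)\max\{d_{\gamma(\pm n_k)}(\tilde\gamma_1,\tilde\gamma_2),1\}$; coupled with the hyperbolicity of the fibers $F_b$, one can flow the midpoint of the fiber geodesic joining $\tilde\gamma_1(0),\tilde\gamma_2(0)$ along $\gamma$ to produce an intermediate qi section and iteratively halve the middle fiber distance. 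After finitely many halvings one enters the tractable regime, and the previous contradiction applies. Finally, the group-theoretic consequence is immediate: for an exact sequence $1\to N\to G\to Q\to 1$ with $N,G$ hyperbolic, Example~\ref{eg-mbdl} provides a metric graph bundle structure whose total space (the Cayley graph of $G$) is hyperbolic and whose fibers (all isometric to the Cayley graph of $N$) are uniformly hyperbolic, so the main statement of the proposition yields the flaring condition.
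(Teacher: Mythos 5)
Your plan gets the first regime roughly right, but the treatment of the second regime — where the middle fiber-distance $M$ is large relative to $n_k$ — has two genuine gaps, and that regime is precisely where the paper's proof invests most of its effort.

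First, a technical point about the bigon step. The concatenated path $B$ need not be a quasi-geodesic with constants depending only on $k$: the short segments glued to the ends of $\tilde\gamma_2$ have length up to $\lambda_k M$, and attaching a geodesic segment of length $L$ to a $(k,k)$-quasi-geodesic degrades the additive constant by a term of order $L$, so the stability constant $D$ would secretly depend on $M$. The idea is salvageable, but not by invoking Lemma~\ref{stab-qg} on $B$ directly; you should instead observe that $\tilde\gamma_1(0)$ lies within $D(\delta,k)$ of the geodesic $[p_-,p_+]$, that $\tilde\gamma_2(0)$ lies within $D(\delta,k)$ of $[\tilde\gamma_2(-n_k),\tilde\gamma_2(n_k)]$, that both of these interior points are at distance at least $n_k-D$ from the respective endpoints because $p$ is $1$-Lipschitz, and then use $\delta$-slimness to fellow-travel the two geodesics in their middles. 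This gives a bound on $d_X(\tilde\gamma_1(0),\tilde\gamma_2(0))$ that is independent of $M$ and $n_k$ — but only when $n_k$ dominates $\lambda_k M + 2\delta + D$. So the regime restriction is real and unavoidable.

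The fatal gap is in how you handle $M\gtrsim n_k/\lambda_k$. The claimed appeal to Corollary~\ref{bdd-flaring} yields $M\leq\mu_k(n_k)\max\{d_{\gamma(\pm n_k)},1\}$, and since $d_{\gamma(\pm n_k)}<\lambda_k M$, this reduces to $1\leq\mu_k(n_k)\lambda_k$, which is vacuous. More seriously, the ``iterative halving'' of the middle fiber distance cannot work as stated: each intermediate qi lift produced (cf.\ Remark~\ref{refrmk}, whose construction needs the $\delta_0$-hyperbolicity of the fibers via Lemma~\ref{qi-comm-proj}) has qi constant $C_{\ref{qi-section}}$ of the constant of the ambient ladder. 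Iterating $\approx\log(M)$ times therefore produces sections with qi constant $C^i_{\ref{qi-section}}(k)$ growing without bound as $M\to\infty$, while the constants $M_k,n_k,\lambda_k$ in Definition~\ref{defn-flare} must be fixed once and for all for each $k$. The paper avoids exactly this pitfall: it performs a \emph{single} decomposition of the ladder into subladders of two carefully designed special types (Lemmas~\ref{type1} and~\ref{type2}), each bounded by lifts whose qi constant is controlled by a fixed iterate of $C_{\ref{qi-section}}$ (never growing with $M$), proves via Lemma~\ref{diverge} (exponential divergence of geodesics) that each special subladder flares in at least one of the two directions, and then runs a pigeonhole argument on the partition of the midpoint horizontal geodesic $\eta$ to conclude that at least half of its length flares in the same direction. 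Your sketch contains none of this structure, and without it the large-$M$ case is not addressed.

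A minor additional remark: $f^{-1}(M)$ is not literally meaningful since $f$ is only a properness function; the clean statement is its contrapositive, namely $d_X(\tilde\gamma_1(0),\tilde\gamma_2(0))\leq C_0$ implies $M\leq f(C_0)$, which then contradicts $M\geq M_k$ once $M_k>f(C_0)$.
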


The proof will occupy the entire subsection.
Suppose $\gamma: [-L,L]\rightarrow B$ is a geodesic and $\alpha,\beta$ are two $K_1$-qi lifts of $\gamma$.
As in the construction of ladders, we define $Y$ to be the union of horizontal geodesics
$[\alpha(t),\beta(t)]\subset F_{\gamma(t)}$, $t\in [-L,L]$, and refer to it as the {\em ladder} formed by $\alpha$ and $\beta$. Let $\eta:[0,M]\rightarrow F_{\gamma(0)}$ be the geodesic $Y\cap F_{\gamma(0)}$.

A crucial ingredient is the following lemma which is a specialization
to our context of the fact that geodesics in a hyperbolic space diverge exponentially.
(See Proposition 2.4 and the proof of Theorem 4.11 in \cite{mitra-endlam}).

\begin{lemma}\label{diverge}
 Given $K_1 \geq 1, D \geq 0$ there exist $b=b(K_1,D)> 1$, 
$A=A(K_1,D)> 0$ and $C=C(K_1,D)> 0$ such that the following holds:

If $d(\alpha(0),\beta(0)) \leq D$ and there exists $T \in [0,L]$ with
$d(\alpha(T),\beta(T)) \geq C$ then any path joining $\alpha(T+t)$
to $\beta(T+t)$ and lying outside the union of the
$\frac{T+t-1}{2K_1}$-balls around $\alpha(0), \beta(0)$ has length
greater than $Ab^{t}$ for all $t \geq 0$ such that $T+t \in [0,L]$.
In particular, the horizontal distance between $\alpha(T+t)$ and $\beta(T+t)$ is greater than $Ab^{t}$
for all $t \geq 0$ such that $T+t\in [0,L]$.

\end{lemma}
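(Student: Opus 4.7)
The plan is to reduce this lemma to the standard fact that geodesic rays in a hyperbolic metric space diverge (at least) exponentially once they have separated sufficiently. Since $X$ is $\delta$-hyperbolic, the existence of such an exponential lower bound on divergence (often stated for a pair of geodesic rays emanating from a common basepoint avoiding a ball centered at that basepoint) is classical; I would use it in the form of Proposition 2.4 of \cite{mitra-endlam}, or alternatively the analogous statement from \cite{GhH} or \cite{bridson-haefliger}. Concretely, the target fact says: if $\gamma_1,\gamma_2$ are two geodesic rays starting at $p$ and $d(\gamma_1(S),\gamma_2(S))\geq C_0$ for some threshold $C_0=C_0(\delta)$, then for every $s\geq 0$ any path joining $\gamma_1(S+s)$ and $\gamma_2(S+s)$ outside a ball $B(p,R)$ with $R<S+s$ has length at least $A_0 b_0^{(S+s)-R}$ for constants $A_0>0$, $b_0>1$ depending only on $\delta$.

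First I would replace the $K_1$-qi lifts $\alpha,\beta$ by actual geodesics in $X$. By stability of quasi-geodesics (Lemma \ref{stab-qg}) the geodesics $[\alpha(0),\alpha(T+t)]$ and $[\alpha(0),\beta(T+t)]$ lie in a $D_{\ref{stab-qg}}(\delta,K_1)$-neighborhood of $\alpha|_{[0,T+t]}$ and $\beta|_{[0,T+t]}$ respectively. Taking $p=\alpha(0)$ (and absorbing the $\leq D$ error from $d(\alpha(0),\beta(0))$), the hypothesis $d(\alpha(T),\beta(T))\geq C$ for $C:=C(K_1,D)$ sufficiently large (to dominate $2D_{\ref{stab-qg}}(\delta,K_1)+D+C_0$) forces the two geodesics emanating from $p$ to be already separated by $\geq C_0$ at parameter value $S=T/K_1-O(1)$. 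The standard exponential divergence estimate then applies with this $S$ and with endpoints $\alpha(T+t),\beta(T+t)$, which lie at distance at least $(T+t)/K_1-K_1$ from $p$ by the $K_1$-qi property.

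Next I would verify that the radius $R=(T+t-1)/(2K_1)$ appearing in the lemma satisfies $R<(T+t)/K_1-K_1$ for $T+t$ sufficiently large (this is automatic once $T+t>2K_1^2-1$, and can be arranged by enlarging $C$ further), so both endpoints lie outside the designated balls. Applying the divergence estimate then gives a lower bound of the form $A_0b_0^{(T+t)/K_1-R}=A_0 b_0^{(T+t)/(2K_1)+O(1)}$, from which the claimed bound $Ab^t$ follows with $b=b_0^{1/(2K_1)}$ (absorbing the $T$-contribution into the multiplicative constant, since $T\geq 0$ only helps). The particular case about horizontal distance is then immediate: the horizontal geodesic in $F_{\gamma(T+t)}$ joining $\alpha(T+t)$ to $\beta(T+t)$ lies entirely in the fiber, and every point $z\in F_{\gamma(T+t)}$ satisfies $d_X(z,\alpha(0))\geq d_B(\gamma(T+t),\gamma(0))=T+t>(T+t-1)/(2K_1)$ because the projection $P$ is $1$-Lipschitz; similarly for $\beta(0)$. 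Thus the horizontal geodesic qualifies as such a path, and its length, which equals the horizontal distance, is at least $Ab^t$.

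The main obstacle I anticipate is the bookkeeping needed to pass from the qi-lift hypothesis (separation at time $T$) to the clean geodesic-rays-from-a-common-basepoint setup of the classical exponential divergence statement: one must ensure that the separation at $T$ persists in the associated geodesics after replacing $\alpha,\beta$ by $[p,\alpha(T+t)]$ and $[p,\beta(T+t)]$, and that the Gromov product $(\alpha(T+t)\cdot\beta(T+t))_p$ is controlled above by $T/K_1+O(1)$ (so that the exponent $T+t-R$ in the lower bound actually becomes the positive quantity we want). This amounts to checking that two $K_1$-qi rays which have separated by a distance much larger than $D_{\ref{stab-qg}}$ cannot re-synchronize, a routine consequence of stability together with thinness of triangles, but one that needs to be spelled out carefully to track the dependence of all constants on $K_1,D,\delta$.
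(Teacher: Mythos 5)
Your approach matches the paper's: the paper gives no proof of Lemma~\ref{diverge} beyond the sentence preceding it, which flags it as ``a specialization to our context of the fact that geodesics in a hyperbolic space diverge exponentially'' and cites Proposition~2.4 and the proof of Theorem~4.11 of \cite{mitra-endlam} --- exactly the reduction you propose. You supply more detail than the paper does, namely the passage from the $K_1$-qi lifts $\alpha,\beta$ to the geodesics $[\alpha(0),\alpha(T+t)]$ and $[\alpha(0),\beta(T+t)]$ using stability and thinness, and the observation that fibers project isometrically far away from $\alpha(0),\beta(0)$ because $p$ is $1$-Lipschitz, so the horizontal geodesic qualifies as a competitor path; both points are correct and are what a full proof would need.

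One piece of bookkeeping you state is too optimistic, and is worth flagging since it is precisely the step you yourself identify as the main obstacle. You claim the Gromov product $(\alpha(T+t)\cdot\beta(T+t))_{\alpha(0)}$ ``is controlled above by $T/K_1+O(1)$.'' For a general $K_1$-quasigeodesic the coarse branch point of $[\alpha(0),\alpha(T+t)]$ and $[\alpha(0),\beta(T+t)]$ occurs near the points $\alpha(T),\beta(T)$, which are at $X$-distance anywhere between $T/K_1-K_1$ and $TK_1+K_1$ from $\alpha(0)$, so all one can extract from $d(\alpha(T),\beta(T))\ge C$ and thinness is an upper bound of order $TK_1+O(1)$, not $T/K_1+O(1)$. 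With that weaker bound the quantity $R-(\alpha(T+t)\cdot\beta(T+t))_{\alpha(0)}$ (with $R=\frac{T+t-1}{2K_1}$) is not manifestly positive, and one needs an additional step --- a case split on whether the geodesic $[\alpha(T+t),\beta(T+t)]$ enters the excluded balls or not, and in the latter case a direct estimate on $d(\alpha(T+t),\beta(T+t))$ --- to close the argument. In the places where the paper applies Lemma~\ref{diverge} (Lemmas~\ref{type1} and~\ref{type2}), the parameter $T$ is taken to be a fixed constant (there called $C_1$, depending only on $K_1,M,\delta$), so this issue does not actually affect the downstream applications; but as written your sketch leaves that gap open, and you should make the dependence on $T$ explicit rather than absorbing it into the constants with the phrase ``$T\ge 0$ only helps.''
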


Now, we use Lemma \ref{diverge} to show that the ladder $Y$ flares in at least one direction of $\gamma$.
We start the proof by showing this in two special cases.
A general ladder is then broken into subladders of the special types 
by qi lifts of $\gamma$ as in Step 1 of the proof of Proposition \ref{main-prop}. 
(Recall that we get exactly two types of subladders in this way. 
This motivates us to consider the two types of special ladders here.)
We point out that

(1) the first type of ladder is of uniformly small (but not too small) girth;

(2) the second type of ladder is not necessarily of small girth but any qi lift
of $\gamma$ divides it into two subladders of small girth.\\
The proof of flaring for all ladders follows from this.

We shall need the following lemma also.
\begin{lemma} \label{diverge2}
1) Given $d_1,d_2, \delta \geq 0$ and $k\geq 1$ there are constants $C=C(d_1,d_2,k,\delta)$ and $D=D(k,\delta)$
such that the following holds:

Let  $X$ be a $\delta$-hyperbolic metric space and let $\alpha_1,\alpha_2:[-L,L]\rightarrow X$ be
$k$-quasi-geodesics.
Let $[a,b]\subset [-L,L]$ and suppose $d_1= d(\alpha_1(a),\alpha_2(a))$ and $d_2=d(\alpha_1(b),\alpha_2(b))$.
If $[t-C,t+C]\subset [a,b]$ for some $t\in [a,b]$ then $d(\alpha_1(t),\alpha_2(t))\leq D$. \\
2) Through each point of a ladder $Y$
formed by $K_1$-qi lifts of a geodesic $\gamma$ in $B$ there is a $C_{\ref{qi-section}}(K_1)$-qi lift
of $\gamma$ contained in $Y$. 
\end{lemma}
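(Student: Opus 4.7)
The plan is to exploit slimness of geodesic quadrilaterals in $\delta$-hyperbolic spaces together with the stability of quasi-geodesics (Lemma \ref{stab-qg}). Set $p=\alpha_1(a)$, $q=\alpha_1(b)$, $p'=\alpha_2(a)$, $q'=\alpha_2(b)$, so that $d(p,p')\leq d_1$ and $d(q,q')\leq d_2$, and let $D_0:=D_{\ref{stab-qg}}(\delta,k)$. Fix geodesics $[p,q]$, $[p',q']$, $[p,p']$, $[q,q']$; cutting the resulting geodesic quadrilateral by a diagonal and applying $\delta$-slimness to each of the two triangles shows that the whole quadrilateral is $2\delta$-slim.

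I would then choose $C$ much larger than $k(d_1+d_2+2\delta+D_0+k)$ so that whenever $[t-C,t+C]\subset [a,b]$, the $k$-quasi-geodesic estimate gives $d(p,\alpha_1(t))\geq (t-a)/k-k\geq C/k-k$ and similarly $d(q,\alpha_1(t))\geq C/k-k$. By Lemma \ref{stab-qg} pick $z\in [p,q]$ with $d(z,\alpha_1(t))\leq D_0$; the choice of $C$ then forces $d(p,z)>d_1+2\delta$ and $d(q,z)>d_2+2\delta$, so $z$ cannot lie within $2\delta$ of either short side $[p,p']$ or $[q,q']$. By the $2\delta$-slimness of the quadrilateral there is $z'\in [p',q']$ with $d(z,z')\leq 2\delta$. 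Applying Lemma \ref{stab-qg} once more to $\alpha_2$ yields some $s\in [a,b]$ with $d(z',\alpha_2(s))\leq D_0$, and therefore $d(\alpha_1(t),\alpha_2(s))\leq 2D_0+2\delta$.

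The final step converts this into a bound on $d(\alpha_1(t),\alpha_2(t))$ by controlling $|s-t|$. The triangle inequality gives $|d(p,\alpha_1(t))-d(p',\alpha_2(s))|\leq d_1+2D_0+2\delta$; combining this with the $k$-quasi-isometry inequalities for the subsegments of $\alpha_1$ from $a$ to $t$ and of $\alpha_2$ from $a$ to $s$ allows one to solve for $|s-t|$ in terms of $k,\delta,d_1$. The $k$-quasi-geodesic property of $\alpha_2$ then bounds $d(\alpha_2(s),\alpha_2(t))\leq k|s-t|+k$, and a triangle inequality yields the required estimate on $d(\alpha_1(t),\alpha_2(t))$.

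The main obstacle is the bookkeeping in the last step: since the parameterizations of $\alpha_1$ and $\alpha_2$ can differ by multiplicative factors of $k$, propagating the additive constants through the chain of triangle inequalities, stability, and slimness estimates to obtain $D$ with the stated dependence requires some care but introduces no new ideas.
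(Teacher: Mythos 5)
Your argument is sound, and faithful to what \cite{bridson-haefliger} Lemma III.H.1.15 actually delivers, up to the point where you produce $s\in[a,b]$ with $d(\alpha_1(t),\alpha_2(s))\leq 2D_0+2\delta$; that much is the content of the paper's citation and of Lemma \ref{asymp-ray}. The last step is a genuine gap. From $|d(p,\alpha_1(t))-d(p',\alpha_2(s))|\leq d_1+2D_0+2\delta$ and the $k$-quasigeodesic inequalities one gets only $(t-a)\leq k^2(s-a)+O(1)$ and $(s-a)\leq k^2(t-a)+O(1)$; these do \emph{not} bound $|s-t|$, because the two parametrizations can drift apart linearly while the points stay close. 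Indeed, in $X=\mathbb R$ (so $\delta=0$), take $\alpha_1(t)=t$ and $\alpha_2$ monotone with $\alpha_2(0)=0$, $\alpha_2(t)=2t$ for $0\leq t\leq M$, $\alpha_2(t)=2M+(t-M)/2$ for $M\leq t\leq 3M$; this is a $2$-quasigeodesic, and with $[a,b]=[0,3M]$ one has $d_1=d_2=0$ yet $d(\alpha_1(M),\alpha_2(M))=M\to\infty$. So $|s-t|$ cannot be bounded in terms of $k,\delta,d_1$ alone, and read literally for arbitrary $k$-quasigeodesics in a $\delta$-hyperbolic space the asserted conclusion $d(\alpha_1(t),\alpha_2(t))\leq D(k,\delta)$ fails.

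What makes the lemma usable, and what the paper is implicitly relying on, is that in Proposition \ref{necflaring} the two paths $\alpha_1,\alpha_2$ are qi lifts of a single geodesic $\gamma$ in the base $B$ of a metric (graph) bundle. Then $p\circ\alpha_i=\gamma$, the map $p$ is $1$-Lipschitz, and $\gamma$ is a genuine geodesic in $B$, so
\[
|t-s|=d_B(\gamma(t),\gamma(s))=d_B(p(\alpha_1(t)),p(\alpha_2(s)))\leq d_X(\alpha_1(t),\alpha_2(s))\leq 2D_0+2\delta,
\]
whence $d(\alpha_2(s),\alpha_2(t))\leq k(2D_0+2\delta)+k$ and the conclusion follows with $D$ depending only on $k,\delta$ as required. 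You should make this hypothesis explicit and use the projection to $B$ to control $|s-t|$, rather than try to extract the bound from the quasigeodesic inequalities alone; that intrinsic route cannot be made to work.
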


\begin{proof} (1) follows easily from stability of quasi-geodesics and slimness of triangles in $X$.
(See Lemma $1.15$ of Chapter $III.H$, \cite{bridson-haefliger} for instance).\\
(2) This is a replica of the proof of Lemma \ref{qi-section}.
\end{proof}

\begin{rem} \label{refrmk}
 We shall assume $L$ to be sufficiently large for the following arguments to go through. We give the proof for metric bundles.
The same proof works mutatis mutandis (replacing $B$ by $(\mathcal{V}(B))$ for instance) for  metric graph bundles.
\end{rem}

\noindent {\bf Flaring of ladders in special cases:}

Let $D=D_{\ref{diverge2}}(K_1,\delta)$ and $D^{'}_1=C_{\ref{diverge}}(K_1,D)$.
Since the horizontal spaces in $X$ are uniformly properly embedded in $X$ there is a $D_1$
such that for all $v\in B$    and $x,y\in F_v$ if $d_v(x,y)\geq D_1$ then $d(x,y)\geq D^{'}_1$.
Let $K_{i+1}=C^{i}_{\ref{qi-section}}(K_1)$, $i=1,2,3$. Also suppose that $d_{\gamma(0)}(\alpha(0),\beta(0))=M$.

\begin{lemma}\label{type1} {\bf Ladders of type $1$:} For $K_1,D, D_1$ as above and $M\geq D_1$,
there exists $n_1=n_1(K_1,M)$ 
such that 
$max\{d_{\gamma(-t)}(\alpha(-t),\beta(-t)),d_{\gamma(t)}(\alpha(t),\beta(t))\}\geq 8M$ for all
$t\geq n_1$. 
\end{lemma}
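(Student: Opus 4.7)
The plan is to prove the contrapositive by contradiction, with
\[
n_1(K_1, M) := C_{\ref{diverge2}}(4M, 4M, K_1, \delta).
\]
Fix $t \geq n_1$ and suppose, aiming for a contradiction, that both $d_{\gamma(t)}(\alpha(t),\beta(t)) < 4M$ and $d_{\gamma(-t)}(\alpha(-t),\beta(-t)) < 4M$. Since the inclusion $F_{\gamma(\pm t)} \hookrightarrow X$ is $1$-Lipschitz (for the intrinsic fiber metric), these bounds pass to the ambient metric: $d(\alpha(\pm t), \beta(\pm t)) < 4M$ in $X$.

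Next I will apply Lemma \ref{diverge2} to the two $K_1$-quasigeodesics $\alpha|_{[-t,t]}, \beta|_{[-t,t]}$ in the $\delta$-hyperbolic space $X$, with $d_1 = d_2 = 4M$ and interval $[a,b] = [-t,t]$. The choice of $n_1$ ensures that $0$ has a $C_{\ref{diverge2}}$-margin inside $[-t,t]$, so the lemma yields
\[
d(\alpha(0), \beta(0)) \leq D = D_{\ref{diverge2}}(K_1, \delta).
\]
On the other hand, the hypothesis $M \geq D_1$, together with the way $D_1$ was chosen (so that a fiber distance of at least $D_1$ forces an $X$-distance of at least $D'_1$), gives
\[
d(\alpha(0), \beta(0)) \geq D'_1 = C_{\ref{diverge}}(K_1, D).
\]
Since the divergence threshold in Lemma \ref{diverge} is strictly larger than the close-distance parameter, $D'_1 > D$, and the two inequalities contradict each other.

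The main conceptual point is the interplay of two metrics. Lemma \ref{diverge2} is a Morse-type stability statement in the ambient space which forces two quasigeodesics with close ends to stay uniformly close at the center, while the hypothesis $M \geq D_1$ uses the calibration of $D_1$ (which records the disparity between fiber and ambient metrics) to rule out such closeness. The only technical point to verify is the inequality $D'_1 > D$, which is essentially the content of Lemma \ref{diverge} being nonvacuous: a distance bounded by $D$ cannot simultaneously attain the threshold $C(K_1, D)$ beyond which exponential divergence begins. The proof uses only the lower bound $M \geq D_1$; the complementary upper bound on $M$ that makes this a \emph{small}-girth case will presumably be relevant only when assembling the type $1$ and type $2$ cases into the general statement.
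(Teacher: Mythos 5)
Your proof is correct and takes a genuinely different, and shorter, route than the paper's. The paper's argument is a direct two-case analysis driven by Lemma~\ref{diverge} (exponential divergence): it compares $d(\alpha(C_1),\beta(C_1))$ to the threshold $D_2 = C_{\ref{diverge}}(K_1,M)$ and, depending on the outcome, deduces exponential growth of the horizontal distance either in the forward direction (Case 1) or, after bringing in Lemma~\ref{diverge2} to find a "pinch point" at $\frac{C_1-1}{2}$, in the backward direction (Case 2). Yours instead argues by contradiction using Lemma~\ref{diverge2} alone: if both fiber distances at $\pm t$ were $<4M$ with $t\geq n_1$, the ambient midpoint distance $d(\alpha(0),\beta(0))$ would be $\leq D = D_{\ref{diverge2}}(K_1,\delta)$, which is incompatible with $d_{\gamma(0)}(\alpha(0),\beta(0)) = M \geq D_1$ forcing $d(\alpha(0),\beta(0)) \geq D'_1 = C_{\ref{diverge}}(K_1,D) > D$. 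Your proof is cleaner, but it establishes only the bound $\geq 4M$ claimed in the lemma, whereas the paper's direct route additionally exhibits the full exponential divergence (which the paper does not end up needing downstream, so nothing is lost).

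Two small technical points are worth making explicit. First, your application of Lemma~\ref{diverge2} uses $C_{\ref{diverge2}}(4M,4M,K_1,\delta)$ as a margin when the actual endpoint distances $d_1, d_2$ are merely $< 4M$, not $= 4M$; this requires that $C_{\ref{diverge2}}(d_1,d_2,k,\delta)$ be nondecreasing in $d_1,d_2$, which is clear from the thin-triangle argument sketched for Lemma~\ref{diverge2} but is not stated in the paper. Second, the inequality $D'_1 = C_{\ref{diverge}}(K_1,D) > D$ is the crux, and you correctly flag it; as you say it must hold for Lemma~\ref{diverge} to be non-vacuous (and in any case one may always enlarge the threshold $C_{\ref{diverge}}$ without affecting the validity of Lemma~\ref{diverge}, so one can arrange $D'_1 > D$). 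Neither of these is a gap — just bookkeeping you might want to spell out — and the argument as a whole is sound.
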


\begin{proof}
Let $D_2=C_{\ref{diverge}}(K_1,M)$ and let $C_1:=1+2.C_{\ref{diverge2}}(M,D_2,K_1,\delta)$.
If $d(\alpha(C_1),\beta(C_1))\geq D_2$ then 
for all $t\geq 0$ the length of the horizontal geodesic joining $\alpha(C_1+t)$ to $\beta(C_1+t)$
is greater than or equal to $A_1.b^{t}_1$ for some $A_1=A_{\ref{diverge}}(K_1,M),\, b_1=b_{\ref{diverge}}(K_1,M)$.
Choose $t_1>0$ such that for all $t\geq t_1$, $A_1.b^{t_1}_1\geq 8M$.

Else, suppose $d(\alpha(C_1),\beta(C_1))<D_2$.
In this case, by Lemma \ref{diverge2}, $d(\alpha(\frac{C_1-1}{2}),\beta(\frac{C_1-1}{2}))\leq D$.
By the choice of the constants $D,D_2$ we can again apply Lemma $\ref{diverge}$ so that for all
$t\geq 0$ the length of a horizontal geodesic $Y\cap F_{\gamma(-t)}$ is greater than or equal to
$A_2.b^{t}_2$, where the constants $A_2,b_2$ depend on $K_1$ and $D$. Choose $t_2>0$ such that
for all $t\geq t_2$, $A_2.b^t_2\geq 8M$. Now let $n_1= max\{C_1+t_1,t_2\}$. Thus we have
$max\{d_{\gamma(-t)}(\alpha(-t),\beta(-t)),d_{\gamma(t)}(\alpha(t),\beta(t))\}\geq 8M$ for all
$t\geq n_1=n_1(M,K_1)$.   
\end{proof}

\begin{lemma}{\bf Ladders of type $2$:}\label{type2}
Suppose $l>0$ and that for any $s\in [0,M-1]$ there is a $K_2$- qi
lift $\alpha_1$ of $\gamma$ in $Y$ through $\eta(s)$ such that  $d(\alpha(t),\alpha_1(t))\leq l$
for some $t\in [-L,L]$. There are $n_2=n_2(K_1,l)$  and $D_4=D_4(K_1,l)$ such that 
for all $t\geq n_2$ we have
\[max\{ d_{\gamma(t)}(\alpha(t),\beta(t)),d_{\gamma(-t)}(\alpha(-t),\beta(-t))\}\geq 8M \,\,\mbox{if} \,\,M\geq D_4+1. \]
\end{lemma}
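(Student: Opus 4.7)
The plan is to exploit the hypothesis for the specific value $s=M-1$, so that the associated $K_2$-qi lift $\alpha_1:=\alpha_1^{M-1}$ is contained in $Y$, passes through $\eta(M-1)$ (just one vertex from $\beta(0)$), and comes within $X$-distance $l$ of $\alpha$ at some $t_s\in[-L,L]$. Because $\alpha_1\subset Y$, the point $\alpha_1(u)$ lies on the horizontal geodesic $[\alpha(u),\beta(u)]=Y\cap F_{\gamma(u)}$ for every $u$, so any lower bound on $d_{\gamma(u)}(\alpha(u),\alpha_1(u))$ passes directly to a lower bound on $d_{\gamma(u)}(\alpha(u),\beta(u))$. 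Thus it suffices to prove exponential divergence of $\alpha$ and $\alpha_1$.

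Set $k'=\max(K_1,K_2)$ and let $C_0=C_{\ref{diverge}}(k',l)$, $A=A_{\ref{diverge}}(k',l)$, $b=b_{\ref{diverge}}(k',l)$; choose $D_4=f(C_0)$, where $f$ is the uniform properness function. Under the hypothesis $M-1\geq D_4$, the equality $d_{\gamma(0)}(\alpha(0),\alpha_1(0))=M-1\geq f(C_0)$ combined with the contrapositive of uniform properness gives $d(\alpha(0),\alpha_1(0))\geq C_0$, upgrading the horizontal-distance information at $u=0$ into the $X$-distance hypothesis that Lemma \ref{diverge} requires.

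Without loss of generality, assume $t_s\geq 0$; the opposite case is symmetric and will yield the positive-direction bound. I reparametrize around $t_s$: set $\tilde\alpha(u)=\alpha(t_s-u)$ and $\tilde\alpha_1(u)=\alpha_1(t_s-u)$ for $u\in[0,t_s+L]$. These remain $k'$-qi lifts of a geodesic in $B$, with $d(\tilde\alpha(0),\tilde\alpha_1(0))\leq l$ and $d(\tilde\alpha(t_s),\tilde\alpha_1(t_s))\geq C_0$. Applying Lemma \ref{diverge} with $D=l$ and $T=t_s$ produces $d(\tilde\alpha(t_s+t),\tilde\alpha_1(t_s+t))\geq Ab^t$ for $0\leq t\leq L$, which in original parameters is $d(\alpha(-t),\alpha_1(-t))\geq Ab^t$. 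By the containment $\alpha_1\subset Y$ described above, this yields $d_{\gamma(-t)}(\alpha(-t),\beta(-t))\geq Ab^t$, and choosing $n_2$ so that $Ab^{n_2}\geq 4M$ (mild logarithmic dependence on $M$, parallel to the $n_1(K_1,M)$ appearing in Lemma \ref{type1}) gives the required bound on the negative side; the $t_s\leq 0$ case covers the positive side.

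The main technical obstacle is the bookkeeping with the reversed parametrization around $t_s$ and the careful interplay between the $X$-metric and the horizontal metric via the properness function $f$; specifically, one must use $f$ in both directions (once to go from horizontal to $X$ at $u=0$ in order to feed Lemma \ref{diverge}, and once to pass from $X$ back to horizontal at $u=-t$ in order to conclude flaring). Everything else is a direct application of divergence together with the fact that any $\alpha_1\subset Y$ automatically sits on the relevant horizontal geodesic, which is what makes the hypothesis of type~2 ladders strong enough to force flaring.
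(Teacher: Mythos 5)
Your proposal has a genuine gap: the constant $n_2$ it produces depends on $M$, but the lemma requires $n_2 = n_2(K_1,l)$ to be uniform in $M$. You acknowledge this in the final step, where you choose $n_2$ so that $A b^{n_2} \geq 4M$, noting a ``mild logarithmic dependence on $M$.'' The parallel you draw with Lemma \ref{type1} is in fact the source of the confusion: there the constant $n_1 = n_1(K_1,M)$ is genuinely allowed to depend on $M$, and this is harmless in the general-case assembly because the type-1 subladders that arise there have girth bounded above by the fixed constant $M_K$. By contrast, the whole point of Lemma \ref{type2} is to handle subladders whose girth $M$ has no a priori upper bound, so the flaring threshold must not grow with $M$. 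Your single-pair argument (take one $K_2$-qi lift $\alpha_1$ through $\eta(M-1)$, show it diverges exponentially from $\alpha$) is local to one pair and only produces a lower bound of the form $A b^t$; to beat $4M$ this needs $t$ of order $\log M$, which is exactly what is not permitted.

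The missing idea is an additive decomposition of the ladder. The hypothesis that a close-to-$\alpha$ $K_2$-qi lift exists through \emph{every} $\eta(s)$, $s\in[0,M-1]$, is much stronger than what you use. One fixes a width $D_4 = D_4(K_1,l)$, cuts $[0,M-1]$ into roughly $N \approx (M-1)/D_4$ subintervals, and produces qi lifts $\alpha_0 = \alpha, \alpha_1, \ldots, \alpha_N$ inside $Y$ with $d_{\gamma(0)}(\alpha_i(0),\alpha_{i+1}(0)) = D_4$, all coming $X$-close at a common time $t_0$. Each pair $(\alpha_i,\alpha_{i+1})$ then flares at a uniform exponential rate $A_3 b_3^t$ depending only on $K_1,l$, and, because the $\alpha_i(t)$'s remain in order on the horizontal geodesic $Y\cap F_{\gamma(t)}$ (via Lemma \ref{easy-lemma} and bounded flaring, as in Step 2 of the proof of Proposition \ref{main-prop}), these lower bounds add: $d_{\gamma(t)}(\alpha(t),\beta(t)) \geq N \cdot A_3 b_3^t$. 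Since $N$ scales like $M/D_4$, the $M$ on both sides of the target inequality $N A_3 b_3^t \geq 4M$ cancels, leaving a threshold $n_2$ for $t$ that depends only on $K_1,l$. Your divergence estimate is the correct local ingredient, but without the subdivision and summation the threshold cannot be made independent of $M$.
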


\begin{proof}
Let $C_3=C_{\ref{diverge}}(K_3,l)$, $A_3=A_{\ref{diverge}}(K_3,l), b_3=b_{\ref{diverge}}(K_3,l)$.
Let $m_0 := min \{m\in \mathbb N : A_3.b^m_3\geq D_{\ref{easy-lemma}}(g(2K_3))\}$, where $g$ refers
to the function defined in  Lemma $\ref{condition3}$. 
It follows easily from the bounded flaring condition that there is a constant $D^{'}_4$ such that
the following is true:
 
Suppose we have two $K_3$-qi lifts $\alpha^{'},\alpha^{''}:[-L,L]\rightarrow X$ of the geodesic
$\gamma :[-L,L]\rightarrow B$ with $d_{\gamma(0)}(\alpha^{'}(0),\alpha^{''}(0))\geq D^{'}_4$ then 
$d(\alpha^{'}(t),\alpha^{''}(t))\geq D_{\ref{easy-lemma}}(g(2K_3))$ for all
$t\in [0,m_0]$.

Let $D_4=max\{D^{'}_4, C_3\}$ and let $M-1=N.D_4+r$ where $N\in \mathbb N$ and $0\leq r<D_4$.
Now construct a $K_2$-qi section $\beta_1$ in the ladder $Y$
such that $d_{\gamma(0)}(\beta(0),\beta_1(0))=r+1$ and $d(\alpha(t_0),\beta_1(t_0))\leq l$ for some $t_0\in [-L,L]$.
Without loss of generality we assume that $t_0\in [-L,0]$.
We now use Lemma \ref{diverge2} (2) to break the subladder of $Y$, formed by $\alpha$ and $\beta_1$,
by $K_3$-qi lifts $\alpha_0=\alpha,\alpha_1,\cdots,\alpha_N=\beta_1$ of $\gamma$ such that
$d_{\gamma(0)}(\alpha_i(0),\alpha_{i+1}(0))=D_4$. We have $d(\alpha_i(t_0),\alpha_{i+1}(t_0))\leq l$.
Thus by the choice of the constant $D_4$,
$d_{\gamma(t)}(\alpha_i(t),\alpha_{i+1}(t))\geq max\{D_{\ref{easy-lemma}}(g(2K_3)), A_3.b^t_3\}$ for all $t\geq 0$. Also
 (as in Step 2 of the proof of Proposition \ref{main-prop}) $\cup [\alpha_i(t),\alpha_{i+1}(t)]$
is a partition of the horizontal geodesic segment
$[\alpha(t),\beta_1(t)]\subset Y\cap F_{\gamma(t)}$,   for all $t\in [0,m_0]$.
Therefore, we can choose $n_2=n_2(K_1,l)$ such that for all $t\geq n_2$,
\[max\{ d_{\gamma(t)}(\alpha(t),\beta(t)),d_{\gamma(-t)}(\alpha(-t),\beta(-t))\}\geq 8M \,\,\mbox{if} \,\,M-1\geq D_4=D_4(K_1,l). \] 
\end{proof}

{\bf Flaring of general ladders:} In the general case, first of all, we break the ladder $Y$
into subladders of special types as described above (see figure below where horizontal and vertical directions
have been interchanged for aesthetic reasons). 

Let us assume that $Y$ is bounded by $K-$ qi lifts $\alpha$, $\beta$ of a geodesic
$\gamma:[-L,L]\rightarrow X$. 
Let $\eta:[0,M]\rightarrow F_{\gamma(0)}$ be the geodesic $Y\cap F_{\gamma(0)}$.
Let $K_i=C^i_{\ref{qi-section}}(K)$, and $l=D_{\ref{easy-lemma}}(g(2.K_2))$.
Let $M_K:=max\{D_1(K_1),D_4(K_1,l)\}$, and $n_K:=max\{n_1(K_1,M_K),n_2(K_1,l)\}$
where the functions $D_1,D_4,n_1,n_2$ are as in the proof of the flaring for the special ladders.

{\bf Claim:} If $M\geq M_K$ then we have
\[
max\{d_{\gamma(-n_K)}(\alpha(-n_K),\beta(-n_K)),d_{\gamma(n_K)}(\alpha(n_K),\beta(n_K))\}\geq 2.d_{\gamma(0)}(\alpha(0),\beta(0)).
\]

To show this we inductively construct $K_1$-qi sections $\alpha_0=\alpha,\alpha_1,\cdots,\alpha_i=\beta$
in $Y$ to decompose it into subladders of the two types we mentioned above. This is done as in Step 1 of the proof of
Proposition \ref{main-prop}. Nevertheless we include a sketch of the argument for completeness.

Since $M\geq M_K$, therefore
by Lemma \ref{refrmk} (2), we can construct a $K_1$-qi section $\alpha_1$ through $\eta(M_K)$. 
Now, suppose $\alpha_1,\cdots,\alpha_j$ has been
constructed through the points $\eta(s_1),\cdots,\eta(s_j)$ respectively. 
If $d_{\gamma(0)}(\alpha_j(0),\beta(0))\leq M_K$ define $\alpha_{j+1}=\beta$. Otherwise, if there is a $K_1$-qi
section through $\eta(M_K+s_j)$ in the ladder formed by $\alpha_j$ and $\beta$, define it to be $\alpha_{j+1}$.
If neither  happens then consider the following set: {\small
\[ {\mathcal{T}}_j=
\{t\geq s_j+M_K:\,\, \exists \,\mbox{a}\,K_1 \mbox{-qi section through} \,\,\eta(t) \,\mbox{ entering the ladder formed by } \,\alpha_j \,\mbox{and} \, \alpha\} 
\] }
Let $t_j = \mbox{sup} {\mathcal{T}}_j$ be the supremum of this set. Define $\alpha_{j+1}$ to be a $K_1$-qi section 
through $s_{j+1}:=t_j+1$, in the ladder formed
by $\alpha_j$ and $\beta$ that does not enter the ladder formed by $\alpha_j$ and $\alpha$.

\begin{center}

\includegraphics[height=40mm]{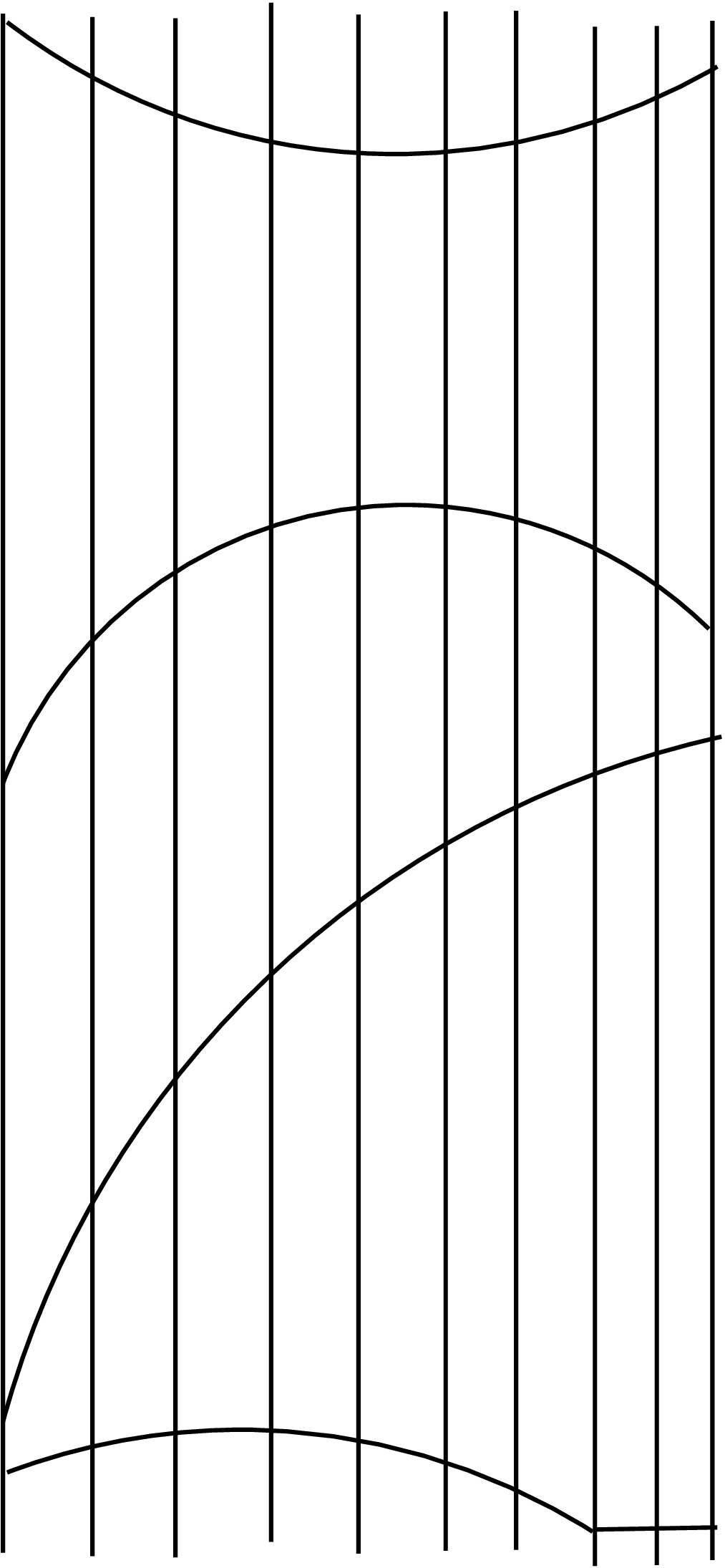}\\
 {\it Flaring subladders } 

\end{center}

For each $j$, $\alpha_j$ and $\alpha_{j+1}$ form a special ladder (except possibly for the last one) and hence it must flare.
Thus $\eta$ can be expressed as the disjoint union of subsegments that flare to the left and the union of the
subsegments that flare to the right respectively. The total length of one of these
types  must be at least one-fourth of the length of $\eta$.
The claim follows. $\Box$ 

The first statement of Proposition \ref{necflaring} follows immediately.
The last statement follows from Example \ref{eg-mbdl} and the first part of this Proposition. $\Box$

\subsection{An Example}
 Let $(Teich(S), d_T)$ be the Teichmuller space  of a closed surface $S$ equipped with the Teichmuller metric $d_T$. 
Teichmuller space can also be equipped with an electric metric $d_e$ by electrocuting the thin parts
(see \cite{farb-relhyp} for details on electric geometry and the introduction to this paper for a quick summary
and relevant notation).
Note (as per work of Masur-Minsky \cite{masur-minsky}, see also \cite{mahan-jrms}) that $(Teich(S), d_e)$ is quasi-isometric to the curve complex  $CC(S)$.
Let $E: (Teich(S), d_T) \rightarrow (Teich(S), d_e)$ be the identity map from  the
Teichmuller space of $S$ equipped with the Teichmuller metric $d_T$ to  the
Teichmuller space of $S$ equipped with the electric metric $d_e$. 

We shall need the following Theorem due to Hamenstadt \cite{hamenst-gd} which used an idea of Mosher \cite{mosher-gt03} in its proof.

\begin{theorem} { \bf Hamenstadt \cite{hamenst-gd}:} For every $L > 1$ there exists $D>0$ such that the following holds. \\
Let $f: {\mathbb{R}} \rightarrow (Teich(S), d_T)$ be a Teichmuller  $L$-quasigeodesic such that $E \circ f:     {\mathbb{R}} \rightarrow (Teich(S), d_E)$ 
is also an  $L$-quasigeodesic.
Then for all $a, b \in \mathbb{R}$ there is a
Teichmuller geodesic $\eta_{ab}$ joining $f(a), f(b) \in Teich(S)$ such that the Hausdorff distance $d_H (f([a,b]), \eta_{ab} ) \leq D$.
\label{stability} \end{theorem}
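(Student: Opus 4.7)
The plan is to exploit Gromov hyperbolicity of the electrocuted Teichmuller space together with contraction properties of cobounded Teichmuller geodesics. By Masur–Minsky, $(Teich(S), d_e)$ is quasi-isometric to the curve complex $CC(S)$, which is $\delta_0$-hyperbolic for some $\delta_0$. Hence $(Teich(S), d_e)$ is hyperbolic, and standard stability of quasigeodesics (Lemma \ref{stab-qg}) applies in the electric metric.

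First, since $E\circ f$ is an $L$-quasigeodesic in the hyperbolic space $(Teich(S), d_e)$, there is $D_0 = D_0(L) $ such that the $d_e$-Hausdorff distance between $E\circ f([a,b])$ and any electric geodesic $\widehat{\eta}_{ab}$ from $f(a)$ to $f(b)$ is at most $D_0$. Next, choose any Teichmuller geodesic $\eta_{ab}$ joining $f(a), f(b)$. By work of Masur–Minsky (and Rafi), $E(\eta_{ab})$ is a uniform electric quasigeodesic in $(Teich(S), d_e)$; combining this with the first step yields $D_1 = D_1(L)$ such that $E\circ f([a,b])$ and $E(\eta_{ab})$ have electric Hausdorff distance at most $D_1$.

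The heart of the matter is then to promote electric fellow-traveling to genuine Teichmuller fellow-traveling. The approach is a ``bounded penetration'' analysis relative to the thin parts of moduli space (which play the role of horoballs). The key inputs are Minsky's contraction theorem for cobounded Teichmuller geodesics (projections onto such geodesics are coarsely Lipschitz from the Teichmuller metric) and Minsky's product region theorem (the geometry inside each thin part is quasi-isometric to a product of a bounded-diameter factor with a horoball). The hypothesis that $E\circ f$ is an $L$-electric quasigeodesic forces $f$ to enter each thin part through a uniformly bounded ``door'' and to exit through a similar uniformly bounded region — otherwise $E\circ f$ would backtrack too much in the electric metric, contradicting the quasigeodesic condition. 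The same bounded-penetration property holds automatically for the Teichmuller geodesic $\eta_{ab}$. Hence $f$ and $\eta_{ab}$ enter and exit each common thin region at uniformly close pairs of points.

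Outside the thin parts, Minsky's contraction theorem gives Teichmuller-fellow-traveling of $f$ with $\eta_{ab}$: if $f$ is far from $\eta_{ab}$ in $d_T$, then its nearest-point projection onto $\eta_{ab}$ is a coarsely constant long segment, contradicting the fact that $f$ is a $d_T$-quasigeodesic whose endpoints lie on $\eta_{ab}$. Inside each thin region, the product region model reduces the question to fellow-traveling of quasigeodesics with controlled entry/exit data in a product of a hyperbolic factor and a horoball, which is standard. Assembling these pieces gives the desired $D = D(L)$ bounding the Hausdorff distance $d_H(f([a,b]), \eta_{ab})$ in the Teichmuller metric. The main obstacle is the bounded-penetration step: making precise that the electric quasigeodesic condition forces controlled behavior inside the thin parts, and patching together the thick-part contraction with the thin-part product-region analysis to produce a single uniform Hausdorff constant; this is where Mosher's idea (using active intervals/``quasi-convexity of the thick part along $f$'') is decisive.
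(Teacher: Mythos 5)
The paper does not prove this theorem: it is quoted verbatim and attributed to Hamenstadt \cite{hamenst-gd}, with the remark that Hamenstadt's proof uses an idea of Mosher \cite{mosher-gt03}. There is therefore no ``paper's own proof'' against which to compare your argument, and I will assess it on its own merits.

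Your high-level strategy --- use hyperbolicity of $(Teich(S), d_e)$ to get electric fellow-traveling, then promote this to Teichmuller fellow-traveling via thick-part contraction and thin-part/product-region control --- is in the right circle of ideas (Masur--Minsky, Minsky's contraction and product-region theorems, Rafi, Mosher). But there are concrete gaps. First, your claim that $E(\eta_{ab})$ is a \emph{parameterized} electric quasigeodesic is false in general; what Masur--Minsky prove is that the curve-complex shadow of a Teichmuller geodesic is an \emph{unparameterized} quasigeodesic, and the distinction is exactly the phenomenon at stake, since a Teichmuller geodesic may linger in a coned-off thin region while making no electric progress. (The step can be salvaged because stability in a hyperbolic space also controls unparameterized quasigeodesics with common endpoints, but the claim as written is incorrect.) Second, and more seriously, the decisive observation is not cleanly isolated: the two hypotheses on $f$ together force $f$ to lie in a uniformly thick part of $Teich(S)$. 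If $f$ penetrated deeply into an $\epsilon$-thin region, the Teichmuller-quasigeodesic hypothesis would force a long parameter interval there (the Teichmuller metric sees the thin part as large), while the electric diameter of that region is uniformly small, contradicting the lower quasigeodesic bound for $E\circ f$. Your ``bounded door'' phrasing suggests you allow $f$ to penetrate with controlled entry/exit; the correct conclusion is stronger and simpler --- $f$ is cobounded with a constant depending only on $L$. Third, you assert without justification that $\eta_{ab}$ enjoys the same bounded-penetration property ``automatically''; a priori the Teichmuller geodesic between two thick points can enter thin parts, and showing it stays thick typically comes \emph{after} establishing the fellow-traveling you are trying to prove (so the argument as sketched is circular unless one is careful, e.g.\ by first producing a cobounded geodesic via a limiting/Mosher-style active-interval argument and then invoking Minsky contraction relative to that). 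With these points repaired --- isolate the coboundedness of $f$, apply the contraction theorem for the (now necessarily cobounded) geodesic $\eta_{ab}$, and use Masur--Minsky correctly as an unparameterized statement --- your outline becomes a viable reconstruction of the cited result.
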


We are now in a position to prove a rather general combination  proposition for metric bundles over quasiconvex
subsets of $CC(S)$.
For $j: K \rightarrow (Teich(S), d_T)$ a map, let
$U(S,K)$ denote the pullback (under $j$) of the universal curve over $Teich(S)$
equipped with the natural path metric. Also, the universal cover of the  universal curve over $Teich(S)$
is a hyperbolic plane bundle over $Teich(S)$. Let  $\widetilde{U(S,K)}$ denote the pullback to $K$ of this hyperbolic plane bundle.

\begin{prop}
Let $(K,d_K)$ be a hyperbolic metric space satisfying the following:\\
There exists $C > 0$ such that
for any two points $u, v\in K$, there exists a bi-infinite $C$-quasigeodesic $\gamma \subset K$ with $d_K(u, \gamma ) \leq C$
and $d_K(v, \gamma ) \leq C$. \\
Let $j: K \rightarrow (Teich(S), d_T)$ be a quasi-isometric embedding
such that $E\circ j: K \rightarrow (Teich(S), d_e)$ is also a quasi-isometric embedding.  Then
$\widetilde{U(S,K)}$ is a hyperbolic metric space.  \label{eg} \end{prop}

 \begin{proof} Clearly, $\widetilde{U(S,K)}$ is a metric bundle over $K$ (since the universal curve over 
$Teich(S)$ is topologically a product $S \times Teich(S)$ and the latter is equipped with a foliation by
totally
geodesic copies of $Teich(S)$). Hence, by Theorem \ref{combthm} 
it suffices to prove flaring.  Let $S_x$ denote the fiber
of $U(S,K)$ over $x \in i(K)$.

Let $[a,b]$ be a geodesic segment
of sufficiently large length in $K$. By the hypothesis on $K$, there exists a bi-infinite geodesic passing within a bounded neighborhood
of $[a,b]$. Hence without loss of generality, we may assume that $a, b$ lie on a  
bi-infinite geodesic in $K$.

Since $j$ and $E\circ j$ are both quasi-isometric embeddings, it follows that there exists $\epsilon > 0$ such that
for all $x \in K$, the injectivity radius of $j(x) \in Teich(S)$ is greater than $\epsilon$. We shall refer to geodesics
lying in the $\epsilon -$ thick part of $ Teich(S)$ as fat Teichmuller geodesics.
By Theorem \ref{stability} we may assume that $j(a), j(b)$ lie in a uniformly bounded neighborhood of a fat Teichmuller geodesic $\eta_{ab}$ whose end-points in the Thurston boundary
$\partial Teich(S)$ are two singular foliations $\FF_+, \FF_-$. Let $d_s$ be the singular Euclidean metric on $S$ induced by the pair
of singular foliations $\FF_+, \FF_-$.

The rest of the argument follows an argument of Mosher \cite{mosher-hbh}. Let $x$ be some point
on the fat Teichmuller geodesic $\eta_{ab}$
 obtained in the previous paragraph. Given any geodesic segment $\lambda \subset \widetilde{S_x}$ of length $l(\lambda )$,
there are two projections $\lambda_+$ and $\lambda_-$ onto (the universal covers of) $\FF_+, \FF_-$ in $\widetilde{S_x}$. At least one of these
projections is of length at least $\frac{l(\lambda )}{2}$. If $u, v$ are two points on either side of $x$ such that $d_T (u, x) \geq m$
and $d_T (v, x) \geq m$, then the length of $\lambda$ in at least one of  $\widetilde{S_u}$ and  $\widetilde{S_v}$
is greater than $ \frac{l(\lambda )}{2}(e^m)$. 

Since all the surfaces with piecewise Euclidean metric involved in the above argument 
can be chosen to have uniformly bounded diameter, their universal
covers are all uniformly quasi-isometric to a fixed Cayley graph of $\pi_1(S)$. 
Flaring follows. \end{proof}

The same proof goes through for $S^h -$ a finite area surface with cusps, {\it provided we equip the cusps of $S^h$ with
a zero metric}.  (This is the electric metric on cusps in the terminology of \cite{farb-relhyp}.) Flaring in this situation
is proved in Section 4.4 of \cite{mahan-reeves}. The next proposition states this explicitly assuming that  $S^h$ (resp. the universal cover
$\widetilde{S^h}$) comes equipped with the zero metric on cusps (resp. lifts to $\widetilde{S^h}$).

\begin{prop} \label{egrmk}
Let $(K,d_K)$ be a hyperbolic metric space satisfying the following:\\
There exists $C > 0$ such that
for any two points $u, v\in K$, there exists a bi-infinite $C$-quasigeodesic $\gamma \subset K$ with $d_K(u, \gamma ) \leq C$
and $d_K(v, \gamma ) \leq C$. \\
Let $j: K \rightarrow (Teich(S^h), d_T)$ be a quasi-isometric embedding
such that $E\circ j: K \rightarrow (Teich(S^h), d_e)$ is also a quasi-isometric embedding.  Then
$\widetilde{U(S^h,K)}$ is a hyperbolic metric space.  \end{prop}

From Proposition
 \ref{egrmk} we obtain directly the following consequence (see \cite{farb-mosher} for definitions).

Consider a surface $S^h$ with finitely many
punctures. Let $K=\pi_1(S^h)$ and let $\KK = \{ K_1, \cdots , K_n\}$ be the
collection of peripheral subgroups. The pure mapping class group is the subgroup
of the mapping class group that preserves individual punctures. 

\begin{prop} \label{coco}
Let $K=\pi_1(S^h)$  be the fundamental group of a surface with finitely many punctures
and let $K_1, \cdots, K_n$ be its peripheral subgroups.  Let $Q$ be a convex cocompact subgroup of the  
{\em pure} mapping class group of $S^h$.
Let
\[
1\rightarrow K\rightarrow G\stackrel{p}{\rightarrow}Q\rightarrow 1
\]
and
\[
1\rightarrow K_i\rightarrow N_G(K_i) \stackrel{p}{\rightarrow}Q_i\rightarrow 1
\]
be the induced short exact sequences of   groups.
Then
$G$ is  strongly hyperbolic  relative to the collection $\{ N_G(K_i)\}, i=1, \cdots, n$. 

Conversely, if $G$ is  (strongly) hyperbolic  relative to the collection $\{ N_G(K_i)\}, i=1, \cdots, n$,
 then $Q$ is convex-cocompact.  \end{prop}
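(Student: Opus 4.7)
The plan is to handle the two directions separately, in both cases exploiting the natural metric graph bundle structure coming from the short exact sequence. Choose generating sets so that the Cayley graphs $X$ of $G$ and $B$ of $Q$ fit into a metric graph bundle $p: X \to B$ with fibers copies of the Cayley graph of $K$ (Example \ref{eg-mbdl}). Coning off the cosets of $K_1$ in each fiber (and correspondingly the cosets of $N_G(K_1)$ in $X$) yields a metric graph bundle $\hat p: \hat X \to B$ whose fibers $\hat K$ are uniformly hyperbolic, being quasi-isometric to $\mathbb H^2$ with horodisks electrocuted. Since $K$ acts cocompactly and isometrically on $\hat K$, the barycenter maps $\partial^3 \hat K \to \hat K$ are uniformly coarsely surjective.

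For the forward direction, recall the Kent--Leininger / Hamenst\"adt characterization: $Q$ is convex cocompact in the mapping class group of $S^h$ iff the orbit map $j: Q \to (\mathrm{Teich}(S^h),d_T)$ is a quasi-isometric embedding and $E\circ j$ is also a quasi-isometric embedding into $(\mathrm{Teich}(S^h),d_e)$. Since $Q$ is then hyperbolic, its Cayley graph has the bi-infinite quasigeodesic property required in Proposition \ref{eg}; by that Proposition together with Remark \ref{egrmk} (which permits a puncture, equipping the cusps with the zero metric), the associated metric bundle with electrified fibers is Gromov hyperbolic. This bundle is quasi-isometric to $\hat X$, so $\hat X$ is hyperbolic, which is exactly weak relative hyperbolicity of $G$ with respect to $N_G(K_1)$. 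To upgrade to strong relative hyperbolicity one must verify bounded coset penetration for the cosets of $N_G(K_1)$. For this I would apply Lemma \ref{pel} to the partial electrocution of $X$, using that the fibers $K$ are themselves strongly hyperbolic relative to $K_1$; the hyperbolicity of ladders (Proposition \ref{main-prop}) in the metric bundle, applied to ladders whose horizontal geodesics pass through horocyclic cosets, gives uniform quasiconvexity of the $N_G(K_1)$-cosets in the partially electrocuted $X$, from which bounded coset penetration follows by standard arguments.

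For the converse, assume $G$ is strongly hyperbolic relative to $N_G(K_1)$. Then $\hat X$ is hyperbolic, and Proposition \ref{mosher-genlzn} gives that $Q$ is hyperbolic. The bundle $\hat p: \hat X \to B$ has uniformly hyperbolic fibers and base, uniformly coarsely surjective barycenter maps on fibers, and hyperbolic total space, so Proposition \ref{necflaring} yields flaring. To conclude convex cocompactness, the plan is to produce a $Q$-equivariant quasi-isometric embedding of $Q$ into the curve complex $CC(S^h)$ and to show every infinite-order element of $Q$ is pseudo-Anosov. For the pseudo-Anosov property, a non-pseudo-Anosov element of $Q$ would fix a curve (up to powers) and thus produce a $\mathbb Z$-direction in $B$ along which the corresponding qi sections in $\hat X$ do not flare, contradicting Proposition \ref{necflaring}. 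For the quasi-isometric embedding, take any qi section $s : B \to \hat X$ (whose existence is guaranteed by Proposition \ref{existence-qi-section}); projecting to the subsurface / curve-complex coordinates of $\mathrm{Teich}(S^h)$ converts flaring of two qi sections in $\hat X$ into exponential divergence in $CC(S^h)$, forcing the orbit map $Q \to CC(S^h)$ to be a quasi-isometric embedding. Kent--Leininger / Hamenst\"adt then deliver convex cocompactness.

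The hardest step in the forward direction is the upgrade from weak to strong relative hyperbolicity, i.e., verifying bounded coset penetration; one has to carefully exploit the strong relative hyperbolicity of the fiber $K$ relative to $K_1$ together with the combination machinery of Section 3. In the converse, the main technical obstacle is the translation of abstract flaring in the bundle $\hat p: \hat X \to B$ into the concrete statement that the $Q$-orbit in $CC(S^h)$ is quasi-isometrically embedded; this is essentially a reverse direction of the construction in Proposition \ref{eg} and requires that geodesics in $\mathrm{Teich}(S^h)$ (or in the curve complex) are detected by the flaring of qi sections, which is delicate when the fibers carry cusps.
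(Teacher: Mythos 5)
Your converse direction follows the paper's route essentially step by step: Proposition \ref{mosher-genlzn} for hyperbolicity of $Q$, $\EE(G,K_1)$ hyperbolic as a bundle and Proposition \ref{necflaring} for flaring, and then translating flaring into a Teichm\"uller-metric lower bound before invoking the Farb--Mosher machinery for convex cocompactness. The paper is content to quote Farb--Mosher (Theorem~1.2 and \cite{ap} for the punctured case) rather than rederiving the quasi-isometric embedding into $CC(S^h)$ and the pseudo-Anosov criterion as you sketch, but that is a difference in exposition only.

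The forward direction, however, has a real gap. You correctly note that Remark \ref{egrmk} (and Theorem \ref{combthm}) deliver hyperbolicity of $\EE(G,K_1)$, i.e.\ \emph{weak} relative hyperbolicity. But the upgrade to \emph{strong} relative hyperbolicity of $G$ rel $N_G(K_1)$ is not addressed by the tools you invoke. Lemma \ref{pel} takes strong relative hyperbolicity as input and produces hyperbolicity of the partially electrocuted space plus quasiconvexity of the $L_\alpha$ --- it cannot be used to \emph{establish} bounded coset penetration, which is what you need; your argument here is circular. Likewise, quasiconvexity of the cosets of $N_G(K_1)$ in a hyperbolic space by itself does not give strong relative hyperbolicity. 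What the paper actually does is reduce the problem to showing that $\EE(G,K_1)$ is strongly hyperbolic relative to the family $\mathcal{Q}$ of translates of $Q$ sitting inside it as qi sections, and then reduces that (via \cite{brahma-ibdd}) to uniform \emph{mutual coboundedness} of distinct elements of $\mathcal{Q}$. Coboundedness is then proved by contradiction: flaring forces two sections $Q_1,Q_2$ that fail coboundedness to remain within a fixed horizontal distance $D_0$ over arbitrarily long geodesic segments of the base, and since there are only finitely many fiber elements of length at most $D_0$, a pigeonhole argument produces a nontrivial $q\in Q$ commuting with some $h\in K\setminus K_1$, contradicting convex cocompactness of $Q$. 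This coboundedness-via-commutation argument is the crux of the forward direction and is entirely absent from your proposal; replacing ``bounded coset penetration follows by standard arguments'' with this argument is necessary to close the gap.
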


\begin{proof} Suppose that $Q$ is convex cocompact. 
Then $Q$ is hyperbolic by \cite{farb-mosher}, \cite{kl}. Also, by Theorem \ref{qi-pal},
 $Q=Q_i$ for all $i$. This is because $K$ is strongly hyperbolic relative to $K_i$
for {\it each} $i=1, \cdots, n$.
Let $\EE(G, K_1,\cdots, K_n)$ denote the electric space obtained from (the Cayley graph of)
$G$ after coning off translates of (the Cayley graphs of) $K_i$ for all $i=1, \cdots, n$. 
Then $\EE(G, K_1,\cdots, K_n)$
  is a metric graph bundle quasi-isometric to a $\widehat{K}(=\EE(K, K_1,\cdots, K_n))$-bundle over $Q$ where $\widehat{K}$
denotes $K$ with copies of $K_i$ coned off for all $i=1, \cdots, n$. .
The flaring condition for this bundle  and hence weak relative hyperbolicity of the pair $(G, 
\{K_1,\cdots, K_n\})$ follow from Proposition \ref{egrmk}. 

Let $\mathcal Q_i$ denote the collection of translates of (Cayley graphs of)
$Q(=N_G(K_i)/K_i)$ in
 $\EE(G, K_1,\cdots, K_n)$, where each copy of $Q$ in $\EE(G,K_1,\cdots, K_n)$
 is a copy of the electric space $\EE(N_G(K_i), K_i)$
obtained by coning off $K_i$ in  translates of (Cayley graphs of)
$N_G(K_i)$.
Let $\mathcal Q = \cup_i \mathcal Q_i$. 

To prove that $G$ is  strongly hyperbolic  relative to the $N_G(K_i)$'s,  it suffices to prove
that $\EE(G,K_1, \cdots, K_n)$
 is  strongly hyperbolic  relative to $\mathcal Q$, as $K$ is already strongly hyperbolic relative to $K_1, 
\cdots, K_n$
by \cite{farb-relhyp}.  That $\EE(G,K_1, \cdots, K_n)$
 is  strongly hyperbolic  relative to $\mathcal Q$ would in turn follow 
\cite{brahma-ibdd} from (uniform) mutual coboundedness of pairs of elements in $\mathcal Q$.
Note also that each $Q_j \in \mathcal{Q}$ is  quasi-isometrically embedded and hence a quasiconvex subset of $\EE
(G,K_1, \cdots, K_n)$.
Any two such $Q_1, Q_2$'s define a ladder $C(Q_1, Q_2)$ by regarding $Q_1, Q_2$ as qi sections of the 
metric graph bundle $\EE(G,K_1, \cdots, K_n)$ over $Q$.
Each
$C(Q_1, Q_2)$ is hyperbolic by Proposition \ref{main-prop}. 
Hence,  the ladder  $C(Q_1, Q_2)$ also satisfies flaring by Proposition \ref{necflaring}.

To establish mutual coboundedness, we argue by contradiction.  Let $d_h$ denote the horizontal
distance  in $\EE(G,K_1, \cdots, K_n)$.
Suppose that elements of the collection $\mathcal Q$ 
do not satisfy (uniform) mutual coboundedness.  
Then there exists 
$ D_0 > 0$  such that for any $l>0$, there exists a pair $Q_1, Q_2 \in \mathcal{Q}$ 
and a geodesic segment $r: [0,l]\rightarrow Q$ such that  $d_h(s_1\circ r (t), s_2\circ r (t))
\leq D_0$ for all $t \in [0,l]$,
  where $s_i:Q 
\rightarrow \EE(G,K_1, \cdots, K_n)$ are quasi-isometric embeddings defining the sections $Q_1, Q_2$. Since the number of elements in $K$ of length
at most $D_0$ is bounded it follows that 
for sufficiently large $l$, there exists $q \in Q, q \neq 1$ and a 
non-peripheral element $h \in K $ such that $s(q), h$ commute.
This is impossible for $Q$ convex cocompact, proving the forward direction of the Proposition.

We now prove the converse direction. Hyperbolicity of $Q$ follows from
Proposition \ref{mosher-genlzn}. To prove convex cocompactness,
it is enough to show by \cite{farb-mosher} that {\it some} orbit of the action of $Q$ on 
$(Teich(S), d_T)$ is quasiconvex.

Since $G$ is strongly hyperbolic relative to the collection
$\{ N_G(K_i)\}$, it follows from Lemma \ref{pel} that $\EE(G,K_1, \cdots, K_n)$ is strongly hyperbolic relative 
to the collection $\mathcal Q$ of translates of (Cayley graphs of)
$Q(=N_G(K_i)/K_i)$ in $\EE(G,K_1, \cdots, K_n)$ as defined above. Since $Q$ is hyperbolic, it follows 
(cf. \cite{bowditch-relhyp} Section 7) that $\EE(G,K_1, \cdots, K_n)$ is hyperbolic. Thus,  $\EE(G,K_1, \cdots, K_n)$
is a hyperbolic metric graph bundle over $Q$. Hence,  from  Proposition \ref{necflaring}, the bundle $\EE(G,K_1, \cdots, K_n)$ over $Q$ satisfies flaring.
The logarithm of the stretch factor
guaranteed by flaring gives a lower bound on the Teichmuller distance.

The remainder of the argument
 is an exact replica of the proof of Theorem 1.2 of \cite{farb-mosher} (Section 5.2 of \cite{farb-mosher} in particular),
which proves the analogous statement for surfaces without punctures.
We do not reproduce  the argument here but point out that the only place in the proof where
explicit use is made of closedness of $S$ is Theorem 5.5 of \cite{farb-mosher}, which, in turn is taken from
 \cite{mosher-gt03}. A straightforward generalization of this fact to  the punctured surface case 
is given in \cite{ap}.
\end{proof}

\begin{rmk} It is worth noting that a group $G$ as in Proposition \ref{coco} cannot act freely, properly discontinuously by isometries
on a Hadamard manifold of pinched negative curvature unless $Q$ is virtually cyclic, as the normalizer $N_G(K_i)$ is not nilpotent.
\end{rmk}

As an application of Proposition \ref{eg} we give the first examples of surface bundles over hyperbolic disks, with
 Gromov-hyperbolic universal cover.  It has been an open question (cf. \cite{kl} \cite{farb-mosher}) to find purely pseudo Anosov surface groups in $MCG(S)$.
The example below is a step towards this.

In \cite{ls-disk} Leininger and Schleimer construct examples of disks $(Q, d_Q)$ quasi-isometric to ${\mathbb{H}}^2$
and quasi-isometric embeddings $j: Q \rightarrow (Teich(S), d_T)$ 
such that $E\circ j: Q \rightarrow (Teich(S), d_e)$ is also a quasi-isometric embedding. 
By Proposition \ref{eg}, the hyperbolic plane bundle
$\widetilde{U(S,Q)}$ is a hyperbolic metric space.

A brief sketch of Leininger-Schleimer's construction \cite{ls-disk} follows:\\
The curve complex $CC(S,x)$ of a surface with one puncture (or
equivalently, a marked point $x$) admits a surjective map to $CC(S)$
such that the fiber over $\eta \in CC(S)$ is the Bass-Serre tree of the
splitting of $\pi_1(S)$ over the cyclic groups represented by the simple
closed curves in $\eta$.  Suppose $\gamma$ is a bi-infinite geodesic in
$CC(S)$ coming from a geodesic in $Teich(S)$ lying in the thick part.
Inside $CC(S,x)$ one has the space of trees over $\gamma$, and the authors of 
\cite{ls-disk} construct lines in each tree over $\gamma$ whose union $Q_1$
is quasi-isometric to the hyperbolic plane.  Using a branched cover-trick,
they construct from $Q_1$ a new disk $Q \subset
CC(S^{\prime})$ (for a closed surface $S^{\prime}$, which is
a branched cover of $S$ branched over the marked point of $S$) such that $Q$
satisfies the hypotheses of Proposition \ref{eg}.


\bibliography{mbdl_feb12}

\newcommand{\etalchar}[1]{$^{#1}$}
\providecommand{\bysame}{\leavevmode\hbox to3em{\hrulefill}\thinspace}
\providecommand{\MR}{\relax\ifhmode\unskip\space\fi MR }
\providecommand{\MRhref}[2]{%
  \href{http://www.ams.org/mathscinet-getitem?mr=#1}{#2}
}
\providecommand{\href}[2]{#2}
\begin{thebibliography}{ABC{\etalchar{+}}91}

\bibitem[ABC{\etalchar{+}}91]{Shortetal}
J.~Alonso, T.~Brady, D.~Cooper, V.~Ferlini, M.~Lustig, M.~Mihalik, M.~Shapiro,
  and H.~Short, \emph{Notes on word hyperbolic groups}, Group {T}heory from a
  {G}eometrical {V}iewpoint ({E}. {G}hys, {A}. {H}aefliger, {A}. {V}erjovsky
  eds.) (1991), 3--63.

\bibitem[BF92]{BF}
M.~Bestvina and M.~Feighn, \emph{A {C}ombination theorem for {N}egatively
  {C}urved {G}roups}, J. Differential Geom., vol 35 (1992), 85--101.

\bibitem[BH99]{bridson-haefliger}
M.~Bridson and A~Haefliger, \emph{Metric spaces of nonpositive curvature},
  Grundlehren der mathematischen Wissenchaften, Vol 319, Springer-Verlag
  (1999).

\bibitem[Bow97]{bowditch-relhyp}
B.~H. Bowditch, \emph{Relatively hyperbolic groups}, preprint, Southampton
  (1997).

\bibitem[Bow07]{bowditch-ct}
\bysame, \emph{The {C}annon-{T}hurston map for punctured surface groups}, Math.
  Z. 255 ((2007)), 35--76.

\bibitem[CDP90]{CDP}
M.~Coornaert, T.~Delzant, and A.~Papadopoulos, \emph{Geometrie et theorie des
  groupes}, Lecture Notes in Math., vol.1441, Springer Verlag (1990).

\bibitem[CT85]{CT}
J.~Cannon and W.~P. Thurston, \emph{Group {I}nvariant {P}eano {C}urves},
  preprint, Princeton (1985).

\bibitem[CT07]{CTpub}
\bysame, \emph{Group {I}nvariant {P}eano {C}urves}, Geom. Topol. 11 (2007),
  1315--1355.

\bibitem[Far98]{farb-relhyp}
B.~Farb, \emph{Relatively hyperbolic groups}, Geom. Funct. Anal. 8 (1998),
  810--840.

\bibitem[FM02]{farb-mosher}
B.~Farb and L.~Mosher, \emph{Convex cocompact subgroups of mapping class
  groups}, Geom. Topol. 6 (2002), 91--152.

\bibitem[Gd90]{GhH}
E.~Ghys and P.~{de la Harpe(eds.)}, \emph{Sur les groupes hyperboliques d'apres
  {M}ikhael {G}romov}, Progress in Math. vol 83, Birkhauser, Boston Ma. (1990).

\bibitem[Gro85]{gromov-hypgps}
M.~Gromov, \emph{Hyperbolic {G}roups}, in Essays in Group Theory, ed. Gersten,
  MSRI Publ.,vol.8, Springer Verlag (1985), 75--263.

\bibitem[Gro93]{gromov-ai}
\bysame, \emph{Asymptotic {I}nvariants of {I}nfinite {G}roups}, in Geometric
  Group Theory,vol.2; Lond. Math. Soc. Lecture Notes 182, Cambridge University
  Press (1993).

\bibitem[Ham05]{hamenst-word}
U.~Hamenstadt, \emph{Word hyperbolic extensions of surface groups}, preprint,
  arXiv:math/0505244 (2005).

\bibitem[Ham07]{hamenst-teich}
\bysame, \emph{Geometry of complex of curves and teichmuller spaces}, in
  Handbook of Teichmuller Theory Vol. 1, EMS (2007), 447--467.

\bibitem[Ham10]{hamenst-gd}
\bysame, \emph{{Stability of Quasigeodesics in Teichmuller Space}}, Geom.
  Dedicata 146 (2010), 101--116.

\bibitem[Kap08]{kap-prob}
M.~Kapovich, \emph{{Problems on Boundaries of groups and Kleinian Groups}},
  http://www.aimath.org/pggt/Boundaries boundaries-version4.pdf (2008).

\bibitem[KL08]{kl}
R.~P. KentIV and C.~Leininger, \emph{Shadows of mapping class groups: capturing
  convex cocompactness}, Geom. Funct. Anal. 18 (2008), 1270--1325.

\bibitem[LMS11]{mahan-uct}
C.~Leininger, M.~Mj, and S.~Schleimer, \emph{{The universal Cannon--Thurston
  maps and the boundary of the curve complex }}, Comment. Math. Helv. 86(4),
  arXiv:0808.3521 (2011), 769--816.

\bibitem[LS11]{ls-disk}
C.~Leininger and S.~Schleimer, \emph{Hyperbolic spaces in {T}eichmuller
  spaces}, arXiv:1110.6526, preprint (2011).

\bibitem[Mit97]{mitra-endlam}
M.~Mitra, \emph{Ending {L}aminations for {H}yperbolic {G}roup {E}xtensions},
  Geom. Funct. Anal. 7 (1997), 379--402.

\bibitem[Mit98a]{mitra-ct}
\bysame, \emph{Cannon-{T}hurston {M}aps for {H}yperbolic {G}roup {E}xtensions},
  Topology 37 (1998), 527--538.

\bibitem[Mit98b]{mitra-trees}
\bysame, \emph{Cannon-{T}hurston {M}aps for {T}rees of {H}yperbolic {M}etric
  {S}paces}, J. Differential Geom. 48 (1998), 135--164.

\bibitem[Mj06]{mahan-split}
M.~Mj, \emph{{Cannon-Thurston Maps for Surface Groups}}, preprint,
  arXiv:math.GT/0607509 (2006).

\bibitem[Mj09]{mahan-jrms}
\bysame, \emph{{Mapping class groups and interpolating complexes: Rank}}, J.
  Ramanujan Math. Soc. 24, no. 4 (2009), 341--357.

\bibitem[Mj11]{brahma-ibdd}
\bysame, \emph{{Cannon-Thurston Maps, i-bounded Geometry and a Theorem of
  McMullen}}, {Actes du s{\'e}minaire Th{\'e}orie spectrale et
  g{\'e}om{\'e}trie 28, Ann{\'e}e 2009-10}, arXiv:math.GT/0511104 (2011),
  63--108.

\bibitem[MM99]{masur-minsky}
H.~A. Masur and Y.~N. Minsky, \emph{Geometry of the complex of curves {I}:
  {H}yperbolicity}, Invent. Math.138 (1999), 103--139.

\bibitem[Mos96]{mosher-hypextns}
L.~Mosher, \emph{Hyperbolic {E}xtensions of {G}roups}, J. Pure Appl. Algebra
  110(3) (1996), 305--314.

\bibitem[Mos97]{mosher-hbh}
\bysame, \emph{A hyperbolic-by-hyperbolic hyperbolic group}, Proc. Amer. Math.
  Soc. 125 (1997), 3447--3455.

\bibitem[Mos03]{mosher-gt03}
\bysame, \emph{{Stable Teichmuller quasigeodesics and ending laminations }},
  Geom. Topol. 7 (2003), 33--90.

\bibitem[MP11]{mahan-pal}
M.~Mj and A.~Pal, \emph{{Relative Hyperbolicity, Trees of Spaces and
  Cannon-Thurston Maps}}, arXiv:0708.3578, Geom. Dedicata 151(1) (2011),
  59--78.

\bibitem[MR08]{mahan-reeves}
M.~Mj and L.~Reeves, \emph{{A Combination Theorem for Strong Relative
  Hyperbolicity}}, Geom. Topol. 12 (2008), 1777-- 1798.

\bibitem[Pal10]{pal-piams}
A.~Pal, \emph{{Relatively Hyperbolic Extensions of Groups and Cannon-Thurston
  Maps}}, Proc. Indian Acad. Sci. Math. Sci. 120 (1), arXiv:0801.0933 (2010),
  57--68.

\bibitem[Pal11]{ap}
\bysame, \emph{{Notes on Stable Teichmuller quasigeodesics}}, preprint,
  arXiv:1109.3794 (2011).

\bibitem[Pap95]{papasoglu-thesis}
P.~Papasoglu, \emph{Strongly geodesically automatic groups are hyperbolic},
  Invent. math. 121, (1995), 323-- 334.

\bibitem[Pap05]{papasoglu-qiinv}
\bysame, \emph{Quasi-isometry invariance of group splittings}, Ann. Math., vol.
  161, no.2 (2005), 759--830.

\end{thebibliography}
\bibliographystyle{amsalpha}

\end{document}